\newcommand{\abs}[1]{\lvert#1\rvert}
\def\rp#1{^{\!#1}}
\def\O{\mathscr{O}}
\def\B{\mathscr{B}}
\def\T{\mathcal{T}}
\def\ck{\check}
\def\bm{\textbf{m}}
\def\Ga{\Gamma}
\def\pa{\partial}
\def\Pp{\mathbb{P}}
\def\ep{\epsilon}
\def\ve{\varepsilon}
\def\f12{{\frac{1}{2}}}
\def\p {\partial}
\def\E{\mathcal{E}}
\def\I{\mathcal{I}}
\def\lF{\hat{F}}
\def\tF{\tilde{F}}
\def\ET{\T}
\def\tE{\tilde {\ud E}}
\def\tH{\tilde{\ud H}}
\def\hE{\hat E}
\def\hH{\hat H}
\newcommand{\Real}{\mathbb{R}}
\def\ud#1{\underline{#1}}
\def\D{\mathcal{D}}
\def\div{\text{div}\,}
\newcommand{\Lb}{{\underline{L}}}
\def\sD{\slashed{D}}
\def\sl#1{\slashed{#1}}
\newcommand{\sJ}{\slashed{J}}
\newcommand{\ab}{\underline{\a}}
\newcommand{\les}{\lesssim}
\def\L{\mathcal L}
\def\M{\mathcal{M}}
\def\a{\alpha}
\def\b{\beta}
\def\H{\mathcal{H}}
\def\c{\cdot}
\def\eb{\ud e}
\newcommand{\sn}{\slashed{\nabla}}
\newcommand{\BT}{\mathbb{T}}
\newcommand{\dn}{\Delta_0}
\def\nn{\nonumber}
\def\sJ{\mathscr{J}}
\def\sI{\mathscr{I}}
\def\ga{\gamma}
\def\jb#1{\langle{#1}\rangle}
\def\Nb{{\underline{N}}}
\def\nt#1{\tensor[^\natural]{#1}{}}
\newcommand{\bfe}{\emph{\textbf{e}}}
\newcommand{\curl}{\text{curl}\,}
\newcommand{\Tb}{{\underline{T}}}
\newcommand{\mub}{{\underline{\mu}}}
\def\si{\sigma}
\def\gb{\underline{g}}
\def\ib{{\underline{i}}}
\def\Hb{\underline{\H}}
\newtheorem{prop}{Proposition}
\newtheorem{lemma}{Lemma}
\newtheorem{corollary}{Corollary}
\newtheorem{theorem}{Theorem}
\newtheorem{remark}{Remark}
\numberwithin{equation}{section}
\title{Global solution for Massive Maxwell-Klein-Gordon equations with large Maxwell field}
\author{Allen Fang \and Qian Wang \and Shiwu Yang}
\date{}
\begin{document}

\begin{abstract}
We derive the global dynamic properties of the mMKG system (Maxwell coupled with a massive Klein-Gordon scalar field) with a general, unrestrictive class of data, in particular, for Maxwell field of  arbitrary size, and by a gauge independent method. Due to the critical slow decay expected for the Maxwell field, the scalar field exhibits a loss of decay at the causal infinities within an outgoing null cone. To overcome the difficulty caused by such loss in the energy propagation, we uncover a hidden cancellation contributed by the Maxwell equation, which enables us to obtain the sharp control of the Maxwell field under a rather low regularity assumption on data. Our method can be applied to other physical field equations, such as the Einstein equations for which a similar cancellation structure can be observed.
\end{abstract}

 	\maketitle

	\section{Introduction}\label{intro}
	
	In this paper, we study the global dynamics of solutions for the massive Maxwell-Klein-Gordon equations in $\Real^{3+1}$. To define the equations, let $A = A_\mu\,dx^\mu$ be a 1-form. The covariant derivative associated to this 1-form is
\begin{equation*}
		D_\mu = \partial_\mu + \sqrt{-1}A_\mu,
	\end{equation*}
	which can be viewed as a $U(1)$ connection on the complex line bundle over $\Real^{3+1}$ with the standard Minkowski metric $\bm_{\mu\nu} = \text{diag}\,(-1,1,1,1)$. The associated curvature 2-form $F$  is  given by
\begin{equation}\label{curv_2}
		F_{\mu\nu} = -i[D_\mu, D_\nu] = \partial_\mu A_\nu -\partial_\nu A_\mu = (dA)_{\mu\nu}.
	\end{equation}
In particular $F$ is a closed 2-form, that is, it satisfies the Bianchi identity
\begin{equation*}
		\partial_\gamma F_{\mu\nu} + \partial_\mu F_{\nu\gamma} + \partial_\nu F_{\gamma\nu} =0.
	\end{equation*}
	The massive Maxwell-Klein-Gordon equation (mMKG) system is a system for the connection field $A$ and the complex scalar field $\phi$: \begin{equation}\label{eqn::mMKG}\tag{mMKG}
\left\{
		\begin{array}{lll}
			&\partial^\nu F_{\mu\nu} = \Im(\phi\cdot\overline{D_\mu\phi}) = J_\mu[\phi],\\
			&D^\mu D_\mu \phi -m^2 \phi = \Box_A\phi-m^2\phi=0.
		\end{array}\right.
	\end{equation}
For simplicity we will normalize the mass to be $1$, that is, $m^2=1$.
The system is gauge invariant in the sense that given a solution $(A, \phi)$, $(A-d\chi, e^{i\chi}\phi)$ solves the same equation system for any potential function $\chi$.
	
	In this paper we consider the Cauchy problem for \eqref{eqn::mMKG} with initial data \begin{footnote}
{If a covariant component of  a tensor field is contracted by $\p_t$, the component is denoted by $0$.}
\end{footnote}
 \begin{equation*}
		F_{0i}(t_0,x) = E_i(x), \quad \tensor[^\star]{F}{_{0i}}(t_0, x)=H_i(x),\quad \phi(t_0, x)=\phi_0,\quad D_0\phi(t_0,x)=\phi_1,
	\end{equation*}
	where ${}^\star F$, the Hodge dual of the 2-form $F$, is defined by ${}^\star F_{\mu\nu}=\f12 \tensor{\ep}{_{\mu\nu}^{\ga\delta}}F_{\ga\delta}$,\begin{footnote}
 {We will frequently adopt the Einstein summation convention in tensor calculus, and use the Minkowski metric to lower and lift the indices of a tensor field, unless specified otherwise. We also fix the convention for the range of indices in summation that the Greek letters $\mu, \nu, \ga,\cdots=0,1,2,3,$ and  $A, B, C=1,2$. As indices in a summation $i,j,k,l,\ell=1,2,3$, otherwise they are considered to be nonnegative integers.}
\end{footnote} with $\ep$ the volume form of the Minkowski space.
We say that the initial data is  admissible if the following compatibility condition is satisfied: \begin{equation*}
		\div E = \Im(\phi_0\cdot\overline{\phi_1}),\quad \div H=0
	\end{equation*}with the divergence taken on the initial hypersurface $\Real^3$. For solutions of \eqref{eqn::mMKG},  the total charge\begin{equation}\label{1.07.2.19}
		q_0 = \frac{1}{4\pi}\int_{\Real^3}\Im(\phi\cdot \overline{D_0\phi})\,dx = \frac{1}{4\pi}\int_{\Real^3}\div E\,dx
	\end{equation} is conserved.
	
	The global well-posedness result of  \eqref{eqn::mMKG} was established by the pioneering work  \cite{Eardley1982,Eardley1982a} of Eardley-Moncrief  for sufficiently smooth initial data in $\Real^{3+1}$. For the massless case of \eqref{eqn::mMKG} (where $m=0$), by introducing bilinear estimates on null forms,  Klainerman-Machedon \cite{Klainerman1994} extended the global regularity result to the more general data $(\phi, d A)$  with merely bounded energy.  There has been extensive attention drawn by  the asymptotic behavior of smooth solutions to the  massless MKG equations in $\Real^{3+1}$.  The global asymptotic behavior for small data solutions was given by Lindblad-Sterbenz \cite{LindbladMKG} by combining the standard commuting vector field approach with conformal methods. The improved result of this which allows large initial Maxwell data was established by Yang in \cite{Yang2015} with the help of the  multiplier method due to Dafermos-Rodnianski \cite{Dafermos2009}. It was further improved by Yang-Yu in \cite{Yang2018}, by adapting the canonical conformal compactification and rekindling the trick of using Kirchhoff formula in Eardley-Moncrief. This result gives a full depiction of the global asymptotic behavior  for large initial data in $\Real^{3+1}$.

The key techniques leading to the result of Yang-Yu \cite{Yang2018} are based on the important fact that the two equations of the massless MKG equations are both scaling invariant. This feature allows a variety  of combinations of physical methods for nonlinear wave equations. With the presence of the nontrivial mass,  the scalar field equation is no longer scaling invariant, so is the whole system. This basic fact significantly restricts the physical methods for treating the system, compared with those for the massless case. Moreover, the Maxwell field  plays the role of the background geometry in the nonlinear analysis, with the optical decay rate merely  $(r+t)^{-1}$ in the entire spacetime, similar to a free wave solution. This makes the nonlinear analysis for  the wave Klein-Gordon system much more challenging than for a pure Klein-Gordon system (c.f. \cite{Kl:KG:85}).
	
As a consequence, the decay properties for the massive MKG \eqref{eqn::mMKG} are much less understood.  By using the hyperboloidal method introduced by Klainerman in \cite{Kl:KG:85} \footnote{A similar result of \cite{Kl:KG:85}  based on Fourier methods  and renormalization   is due to   J. Shatah
 \cite{shatah}.  Note that  \cite{shatah}  does not require   restrictions on the data. }, a set of quantitative decay estimates was established in  \cite{Psarelli1999} by Psarelli in the interior of a forward light cone with essentially compactly supported small data. This is certainly  restrictive and unsatisfactory. The restrictions were rooted in the hyperboloidal method.  The method is actually a robust tool to tackle the non-scaling-invariant issue of the Klein-Gordon equations. Nevertheless, the family of hyperboloids for carrying out the fundamental analysis can not be constructed outside of the outgoing cone, which means it does not cover the full spacetime.  To complement this physical method, in \cite{KWY} Klainerman-Wang-Yang removed the restriction to the compactly supported data of scalar fields by developing a neat hierarchic multiplier regime, and thus gave decay properties of solutions of (mMKG) in the exterior of the light cone, provided that the generic data set is sufficiently small on ${\mathbb R}^3\setminus B_R$.

 In this paper, we consider a  general class of data on the full initial slice ${\mathbb R}^3$, or identically  $\{t=t_0=2R\}$, ($R=1$ without loss of generality). By merely assuming smallness for the data of scalar field in terms of the standard weighted Sobolev norm, our assumption allows the Maxwell field to have finite weighted energy initially, without any restriction on the size. We will give a full account of the asymptotic behavior of solutions in the entire spacetime, which is divided by the outgoing lightcone $C_0=\{t-t_0=r-R, t\ge t_0\}$ into  interior and exterior parts.
\subsection{The Scheme of Reduction}
Based on the principle of divide and conquer, we first reduce the large data problem to a perturbation problem, and then  separate the energy estimates in the interior of the cone $C_0$ from the energy propagation  in its exterior. We now outline the scheme of the reduction.

{\bf Step 1.} {\it Reduction from the large data problem to a perturbation problem.}

One difference that makes  our result stronger than a standard global nonlinear stability result for Maxwell-Klein-Gordon equations lies in that we do not assume smallness on the initial weighted energy of the Maxwell field. Similar to the work of \cite{Yang2015} for the massless MKG, to treat the Maxwell field,
we start with  decomposing
\begin{equation}\label{eq:decomposition4F}
F=\tF+\hat F,
\end{equation}
where the linear part $\hat F$  and the perturbation part $\tF$ verify
\begin{equation}\label{eq:eq4lFandNLF}
\p^\nu \hat F_{\mu\nu}=0,\qquad \partial^\nu \tF_{\mu\nu} = J_\mu[\phi]
	\end{equation}
with initial data
\begin{align*}
 &\lF(t_0, x)=E^{df}(x),\quad {}^{\star}\lF(t_0, x)=H(x), \\
 &\tF(t_0, x)=E^{cf}(x),\quad {}^{\star}\tF(t_0, x)=0, \quad \phi(t_0, x)=\phi_0,\quad D_0 \phi(t_0, x)=\phi_1,\quad
\end{align*}
where  $E(t_0,x)$ is decomposed into the divergence-free part and curl-free part $E=E^{df}+E^{cf}$ such that
\[
\div{E^{df}}=0,\quad \curl{E^{cf}}=0,\quad \div{E^{cf}}=\Im(\phi_0\cdot\overline{\phi_1}).
\]
Due to  the elliptic theory of the Hodge system, $E^{cf}$ is uniquely determined by $\phi_0$ and $\phi_1$. We thus can freely assign $E^{df}$, $H$ and $\phi_0$, $\phi_1$ as long as $E^{df}$ and $H$ are  divergence free.

Let $1<\ga_0<2$ be fixed.  We define the weighted energy norms
\begin{align}
\M_{k, \ga_0}&= \sum\limits_{l\leq k}\int_{\Sigma_{t_0}}(1+r)^{\ga_0+2l}(|\bar\nabla^l E^{df}|^2+|\bar\nabla^l H|^2)dx, \label{IDMKG}\\
\E_{k,\ga_0}&=\sum\limits_{l\leq k}\int_{\Sigma_{t_0}}(1+r)^{\ga_0+2l}(|\bar D\bar D^l\phi_0|^2+|\bar D^l\phi_1|^2+|\bar D^l\phi_0|^2)dx,\label{data_scf}
\end{align}
where $\bar D$ is the induced covariant derivative on $\Sigma_{t_0}$ with respect to the connection $A$, and $\bar\nabla$ denotes the spatial derivative on $\Sigma_{t_0}$.

Throughout this paper, we assume
\begin{equation}\label{asmp}
\M_{2,\ga_0}\le \M_0^2, \quad \E_{2,\ga_0}\le 1,
\end{equation}
where $\M_0>0 $ can be arbitrary, and the bound of $\E_{2,\ga_0}$ will be further assumed to be sufficiently small.   Without loss of generality, we may assume that $\M_0\ge 1$.

{\bf Goal}: For any admissible data with $\M_{2,\ga_0}<\infty$, $\hat F$ is uniquely determined  directly due to the linear equation. Hence we can easily  obtain its asymptotic behavior.  Our aim is to show  the following global nonlinear stability result of any such $\hat F$,
 \begin{theorem}\label{thm1}
Let $0<\delta\ll \frac{1}{3} $ be fixed and arbitrarily small. Suppose that the admissible data $(E, H, \phi, \phi_1)$  verify (\ref{asmp}) on the hyperplane $\Sigma_{t_0}:=\{t=t_0\}$ with $t_0=2R$.
 There exists $\ve_0>0$ sufficiently small, depending on $\M_0$, $\ga_0$ and $\delta$, such that, if $\E_{2,\ga_0}\le \ve_0^2$ then
	 there exists a unique (up to gauge transformation) global smooth solution of \eqref{eqn::mMKG} in $(\Real^{3+1}, \bm)$ for $t\ge t_0$.
Moreover, it satisfies the uniform decay estimates  in $\{t-t_0\ge r-R, t\ge t_0\}$ with $R=1$ (without loss of generality)
 \begin{align*}
|\phi(t,x)|&\les \ve_0\jb{\tau_+}^{-\frac{3}{2}}\jb{\tau}^{\delta}, \quad |\tF(t,x)|\les \ve_0\jb{\tau_+}^{-1}\jb{\tau_-}^{-1},
\end{align*}
where $\tau_+=t+r, \tau_-=t-r, \tau=(\tau_+\tau_{-})^\f12$, and $\jb{f}=1+|f|$ for real-valued scalar functions $f$. The constants \begin{footnote}{ $A\les B$ stands for $A\le C B$,  with $C>0$ a fixed constant. If $A\les B$ and $B\les A$, we denote it as $A\approx B$ for short. }\end{footnote} in the bounds depend on $\M_0$, $\ga_0$ and $\delta$. In the exterior region $\{t-t_0\le r-R, t\ge t_0\}$, the chargeless part of $\tF$ and $\phi$ have similar decay properties as in \cite{KWY}.
\end{theorem}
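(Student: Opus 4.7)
My plan is to prove Theorem \ref{thm1} by a bootstrap argument that exploits the decomposition $F=\tF+\hat F$ already set up. Since $\hat F$ solves the source-free Maxwell system with arbitrarily large but finite weighted data $\M_{2,\ga_0}\le \M_0^2$, it can be treated once and for all: commuting with the translations, rotations and Lorentz boosts and running an $r^p$-type hierarchy yields pointwise peeling bounds of the schematic form $|\hat F|\lesssim \M_0 \jb{\tau_+}^{-1}\jb{\tau_-}^{-1+c\delta}$ in the interior (with the standard better decay for the good null components), independently of any gauge choice. These bounds are then frozen and used as background when estimating the perturbation pair $(\tF,\phi)$.

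Inside the outgoing cone $\{t-t_0\ge r-R\}$ I would bootstrap on the claimed pointwise bounds for $(\tF,\phi)$ together with the corresponding weighted $L^2$ energies of commuted quantities. For $\Box_A\phi-\phi=0$ I would follow Klainerman's hyperboloidal method, foliating the interior by hyperboloids $\tau=\mathrm{const}$ and commuting with $\p_\mu$ and the covariant analogues of the Lorentz boosts. A Sobolev inequality on the hyperboloids then converts sufficiently many $L^2$-bounded commuted energies into the $\jb{\tau_+}^{-3/2}\jb{\tau}^{\delta}$ pointwise bound. The logarithmic loss $\jb{\tau}^{\delta}$ is forced by the slow $\jb{\tau_+}^{-1}$ decay of $\hat F$ acting on $\phi$ through commutator terms of the type $\hat F^{\mu\nu}\p_\nu\phi$, which saturate the available decay budget.

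For $\tF$ one has $\p^\nu \tF_{\mu\nu}=J_\mu[\phi]=\Im(\phi\,\overline{D_\mu\phi})$, and I would run $r^p$-multiplier and conformal energy estimates with the same commutators. I expect the main obstacle to lie here: at the low regularity $k=2$ assumed in \eqref{asmp}, a naive estimate of the source integral $\int \tF^{\mu\nu}J_\mu[\phi]\, X_\nu$ against a conformal multiplier $X$ barely fails at causal infinity because of the loss in $\phi$ just described. This is where the hidden cancellation advertised in the abstract must be invoked. Concretely, I would decompose $\tF$ and $J_\mu[\phi]$ in the null frame so that the bad component $\alpha[\tF]$ is paired only with good angular derivatives of $\phi$, and use the Maxwell equation itself to rewrite the offending contribution as a spacetime divergence plus cubic remainders carrying extra decay. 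This structural reorganization is the crux that allows the sharp bound $|\tF|\lesssim \ve_0\jb{\tau_+}^{-1}\jb{\tau_-}^{-1}$ to close at this level of regularity.

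In the exterior region $\{t-t_0\le r-R\}$ I would invoke the hierarchic multiplier scheme of \cite{KWY}, where the mass term furnishes the extra decay of $\phi$ while the chargeless part of $\tF$ exhibits standard peeling; since only smallness of $\E_{2,\ga_0}$ is assumed there, the method applies directly. Interior and exterior estimates are then glued along $C_0$ using the trace of the bootstrap quantities on the cone produced by each analysis. Closing the continuity argument reduces to checking that every nonlinear source term carries at least two factors of $\ve_0$ beyond the linear contribution, since the nonlinearity is quadratic in $\phi$; the $\M_0$ dependence is absorbed into the fixed implicit constants, so taking $\ve_0$ small depending on $\M_0,\ga_0,\delta$ suffices to conclude global existence and the asserted decay.
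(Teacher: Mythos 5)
Your outline captures the paper's macro-architecture: the decomposition $F=\tF+\hat F$ with $\hat F$ frozen as a large linear background, a hyperboloidal bootstrap for $(\tF,\phi)$ in $\{t-t_0\ge r-R\}$, an integration by parts that invokes the Maxwell equation $\p^\mu(\L_Z^2\tF)_{\mu\nu}=(\L_Z^2 J[\phi])_\nu$ to defuse the top-order trilinear term, and a gluing across $C_0$. That is indeed the paper's scheme, and your heuristic that the $\jb{\tau}^{\delta}$ loss on $\phi$ is forced by the $\jb{\tau_+}^{-1}$ critical decay of $\hat F$ is also correct. A minor imprecision: in the interior the paper uses the scaling vector field $S$ (not an $r^p$ multiplier) on $\tF$, paired with $T_0$ on $\phi$, which is what makes the cancellation structure $|\sJ^\mu S^\nu G_{\mu\nu}|\les|\sJ(\Omega)||\nt{G}|$ available; the $r^p$ hierarchy belongs to the exterior region and to the linear analysis of $\hat F$.

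The genuine gap is your last paragraph. You assert that in the exterior region the hierarchic scheme of \cite{KWY} ``applies directly'' since only $\E_{2,\ga_0}$ is assumed small. This is false: \cite{KWY} allowed only the \emph{charge} part of the Maxwell field to be large, and that charge solution has the special exact form $q_0 r^{-2}\,dt\wedge dr$. Here $\hat F$ is an arbitrarily large, \emph{general} source-free field whose generic data gives it decay in the exterior no better than that of $\tF$ — in particular some null components of $\hat F$ peel only at the free-wave rate. Consequently $\hat F$ dominates $F$ there, and the error integrals containing $\L_Z^{\le 1}F$ cannot be treated as a small perturbation. Closing the exterior energy hierarchy then requires a new induction on the signature $\zeta(Z^k)$ of the commuting strings (lower signature gives better $|u|$-decay), together with an auxiliary $|u|$-weighted flux bound for $u_+^{-\zeta(Z^2)}D_Z^2\phi$ on the outgoing cones before the top-order fluxes can be recovered. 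Your plan contains none of this, and without it the exterior bootstrap does not close.

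A secondary omission: gluing across $C_0$ is not a matter of ``using the trace of the bootstrap quantities.'' The interior estimates are run with the $S$-multiplier on $\tF$, whereas the exterior result controls fluxes of the \emph{chargeless} part $\ck F$; the two differ by the charge 2-form, and Lie-differentiating that 2-form with boost fields is delicate. A quantitative comparison of all null components of $\L_Z^{\le 2}\tF$ and $\L_Z^{\le 2}\ck F$ on $C_0$, with exactly the $\jb{\tau_-}$- rather than $\jb{\tau_+}$-growth (crucial since $\tau_-\approx R$ there), is needed to make the boundary flux bound $\E_k^S[\tF](C_0)\les\ve_0^2$ true; without it the sharp boundedness of the interior Maxwell energy is lost. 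Similarly the data on $\H_{\tau_0}$ must first be produced from $\Sigma_{t_0}$ by a short local-in-time argument. These two steps are what let the interior bootstrap start at all, and your proposal should state how they are obtained.

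Finally, ``closing reduces to checking that every nonlinear source carries two factors of $\ve_0$'' misstates the difficulty. The hard terms, both the commutator term $\hat F_{Z\Tb}D^{\Tb}\phi\cdot D_{T_0}D_Z\phi$ in the scalar energy and the trilinear term in the top Maxwell energy, do carry the expected smallness yet still threaten a logarithmic divergence in $\tau$; the issue is not power counting in $\ve_0$ but the borderline $\tau$-integrability, and the whole cancellation machinery exists to upgrade $\int \tau^{-1}$ to $\int\tau^{-1-\ep}$ or $\int\tau^{-1+2\delta}$ in the right places.
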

 The complete statement of the main result will be given in Theorem \ref{ext_stb} and Theorem \ref{them::mainThm}  in Section \ref{1.08.1.19} with the linear behavior of $\hat F$ stated in Lemma \ref{lemma:decay:lF} and Proposition \ref{linear_ex}.

{\bf Step 2.} {\it Control the energy propagation in the exterior region.}

 In \cite{KWY}, we proved the exterior stability result provided that the weighted Sobolev norm of the initial data is sufficiently small, without any restriction on the size of the charge $|q_0|$ of (mMKG). This result is completely independent of the data on $\{|x|\le R, t=t_0\}$.  In order to prove Theorem \ref{thm1}, we first extend in the exterior region  the global well-posedness result of \cite{KWY} for the admissible data verifying the assumption in Theorem \ref{thm1}, particularly incorporating large Maxwell field. The result is given in Theorem \ref{ext_stb} and proven in Section \ref{ext}. Note that by following the method in \cite{KWY}, we can immediately obtain the linear behavior in Proposition \ref{linear_ex} for $\hat F$ in the exterior region. Due to the smallness assumption on the scalar field and (\ref{1.07.2.19}),  the charge verifies
\begin{equation}\label{10.30.5.18}
|q_0|\les \|\phi_0\|_{H^1(\Real^3)}\|\phi_1\|_{L^2(\Real^3)}\le \ve_0^2.
\end{equation}
Consequently $|q_0|$ can be  made sufficiently small.

With the help of the bounds on $\hat F$ and $|q_0|$, in Section \ref{ext},  the global asymptotic behavior of the chargeless part of $\tF$ and $\phi$ in the exterior region is derived  completely  independent of the  propagation of the solution in the interior of the cone $C_0$ and independent of any particular choice of the gauge. 
The result in Theorem \ref{ext_stb} supplies us with the control of the boundary fluxes on $C_0$ (see details in Proposition \ref{11.8.1.18}), which plays a fundamental role for bounding the interior energies.

The exterior result can be adapted to allow $|q_0|$  to be arbitrary, which is not of interest for proving Theorem \ref{thm1}.

 {\bf Step 3.} {\it Construct the global solution in the interior of the cone $C_0$.}

As the main building block for proving  Theorem \ref{thm1}, we control the energy propagation  in the interior of $C_0$  in three steps, with the conclusion given in the main theorem, Theorem \ref{them::mainThm}.

(1)  We first derive the asymptotic behavior for $\hat F$ within $C_0$ for $t\ge t_0$, by  applying the multiplier introduced in \cite{LindbladMKG} for the massless MKG equations, $\tau_{-}^p\Lb+\tau_+^p L, \, 1<p<2$,  to obtain the weighted energies for $\hat F$, where
\begin{equation}\label{1.07.6.19}
\tau_{-}=t-r,\,\, \tau_+=t+r, \,\, L=\p_t+\p_r, \,\,  \Lb=\p_t-\p_r.
\end{equation}
This result is given in Lemma \ref{lemma:decay:lF}.

(2) Let $\tau_*>\tau_0=\sqrt{3} R$ be a fixed number.  To prove  the main theorem,  Theorem \ref{them::mainThm}, in the interior of $C_0$  for $(\phi, \tF)$, we rely on the bootstrap argument in the region
\begin{equation}\label{1.07.3.19}
\D^{\tau_*}=\bigcup_{\tau_0\le \tau'\le \tau_*} \H_{\tau'},
\end{equation}
 with the truncated hyperboloid  $\H_\tau:= \{(t,x):\sqrt{t^2-|x|^2}=\tau, t-t_0\ge r-R\}$ for $\tau\ge \tau_0$.
 We will improve the set of assumptions (\ref{BA}) on energies of  $(\phi, \tF)$.

 Since the initial data is given at $\{t=t_0=2R\}$, we need to  determine the initial energies  on $\H_{\tau_0}$.  Moreover  the energies of $(\phi, \tF)$ are derived by using  the divergence theorem in $\D^{\tau_*}$, which requires the corresponding  energy fluxes of $(\phi,\tF)$ on the null boundary $C_0\cap \{\tau_0\le \tau\le \tau_*\}$.  The boundary fluxes are controlled by using Theorem \ref{ext_stb} and the initial energy replies on a local extension of solution and local energy estimates in $\{t-t_0\ge r-R, t_0\ge t\ge R\}.$ The bounds of the initial energies  and boundary energy fluxes are given in Proposition \ref{11.8.1.18}.


(3) The last step, which is the main step, is to improve the bootstrap assumptions by  controlling the long time energy propagation in $\D^{\tau_*}$  via the hyperboloidal method and using the bounds of the boundary fluxes and initial energies derived in (2) (Proposition \ref{11.8.1.18}).

 The global result in Theorem \ref{them::mainThm} is then proved by the continuity argument in the entire interior region $$\D^+=\bigcup_{\tau'\ge \tau_0} \H_{\tau'}.$$
 One can refer to Figure \ref{fig1} for the division of the spacetime in the reduction scheme.
 \begin{figure}[ht]
\centering
\includegraphics[width = 0.8\textwidth]{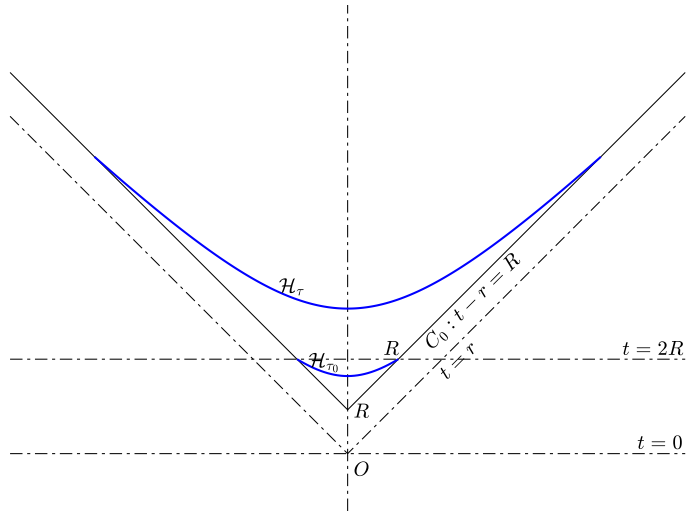}
 \vskip -0.8cm
  \caption{Illustration of the division of the spacetime}\label{fig1}
\end{figure}

   \subsection{Strategy of the proof in the interior region}
   Next, we  explain the  main ingredient of our proof for improving the bootstrap assumptions in $\D^{\tau_*}$.

In  the only work \cite{Psarelli1999} which contained the decay properties of the solutions of (mMKG) in the interior of $C_0$,  the method relied heavily on fixing the Cronst\"{o}m gauge.  The disadvantages  of choosing this gauge  lie in that
  \begin{itemize}
 \item This gauge is defined by a transport equation along any timelike geodesic initiated from the origin, and hence is only well-defined in the interior of a lightcone, i.e. $\{t> r\}$. This only works if the data of the scalar field has compact support.
      \item The control of the gauge  relies on a transport equation. This costs one more derivative than allowed by our initial data.
  \end{itemize}
In order  to cope with the general class of the initial data, we need to develop a gauge independent method to directly take advantage of the exterior result established in Step 2.

However, the reason for using the Cronstr\"{o}m gauge in  \cite{Psarelli1999}  is deeply connected to the non-scaling-invariant issue of the KG equation and the slow decay of the Maxwell field. The scaling vector field $S=x^\mu\p_\mu$ can not be used as the commuting vector field for the linear KG equation which leads to the weak decay property of  the solution along $S$  compared with the directions  tangential to hyperboloids  $\H_\tau$.  In \cite{Psarelli1999}, the use of Cronstr\"{o}m gauge,  with which the Lie derivative of the scalar field therein is introduced, was suggested by D. Christodoulou to cancel the weak decay component of the scalar field in the nonlinear analysis. This was the key to accomplish the nonlinear analysis and to give the sharp energy control on the scalar field.  (See \cite[Page 6, 39]{Psarelli1999}.) Even using the sharp control of the scalar field, the outcome of \cite{Psarelli1999} still loses the sharp decay for the Maxwell field. Surprisingly, our gauge-independent proof obtains the sharp decay of the Maxwell field without requiring the sharp bound on the scalar field. It is miraculously achieved by exploiting a hidden cancellation structure contributed  by the Maxwell equations.


As explained, to treat KG equations, one usually follows the hyperboloidal method of  Klainerman in  \cite{Kl:KG:85}, which uses the generators of the Lorentz group, $\Omega=\{\Omega_{\mu\nu}=x_\mu \p_\nu-x_\nu \p_\mu, 0\le \mu<\nu\le 3\}$,\begin{footnote}
  {We fix the convention that $x^0=t$ and $x_\mu=\textbf{m}_{\mu\nu} x^\nu$.}
\end{footnote}
 to derive the  weighted energy on hyperboloids in the interior of the cone $C_0$ and consequently the pointwise decay of the solution  by using the Sobolev inequalities on hyperboloids.

To illustrate our treatment for (mMKG) particularly with large Maxwell field, low regularity data, and without fixing a gauge, we will take the estimate of the first  order energy  as an example. For convenience, we denote the energy by  $\E^{T_0}[f](\tau)=\int_{\H_\tau}\T[ f,0](T_0, \Tb)(\H_\tau) d\H_{\tau}$, with $f=D_{\Omega_{0a}}\phi$, \, $a=1,2,3$, and with $\T$ denoting the standard stress energy tensor of (mMKG). (See the definition of $\T$ in Section 2, and  the estimation in Section \ref{scal_1} Step 1.) It requires the bound on
$$\iint_{\D^\tau} |(\Box_A-1) D_{\Omega_{0a}}\phi||D_{T_0} D_{\Omega_{0a}}\phi| dx dt$$ with $\D^\tau $
 similarly defined as in (\ref{1.07.3.19}) for $\tau_0<\tau\le \tau_*$. For the commutator (see Lemma \ref{lem::Commutator1})
$$
Q(F, \phi, \Omega_{0a}):=[\Box_A, D_{\Omega_{0a}}]\phi=(\Box_A-1) D_{\Omega_{0a}}\phi, \quad a=1,2,3
$$
we can calculate that
\begin{equation}\label{11.14.4.17}
|Q(F, \phi, Z)|\les |F_{Z\mu}D^\mu\phi|+\cdots
\end{equation}
\begin{footnote}{We often hide better terms, which have no cancelation with the leading term, by using the notation ``$\cdots$". }\end{footnote}where $Z$ is the commuting vector field $\Omega_{0a}$ with $a=1,2,3$ here and in the sequel.

    With $\Tb =\tau^{-1} S$ the future directed unit normal of $\H_\tau$ in $({\mathbb R}^{3+1}, \textbf{m})$, the component of Maxwell field  $F_{Z\Tb}$ which is paired with the weak decay component $D_\Tb \phi$, is expected to satisfy
\begin{equation*}
|F_{Z \Tb}|\les \tau^{-1},
\end{equation*}
as a result of boundedness of the weighted energy by applying $S$ as the multiplier to the Maxwell equation. Substituting this decay into (\ref{11.14.4.17}) gives
\begin{equation}\label{11.14.5.18}
|Q(F, \phi, Z)|\les |F_{Z\mu}D^\mu\phi|\les \tau^{-1}|D_\Tb \phi|+\cdots.
\end{equation}
To control this term we rely on the decomposition of vector fields in (\ref{dcp_3}), the property of the boost vector fields and $r\le t$ in $\D^{\tau^*}$ to derive
\begin{equation}\label{1.09.1.19}
|D_\Tb \phi|\les \frac{\tau}{t}| D_{T_0} \phi|+\tau^{-1}\sum_{a=1}^3|D_{\Omega_{0a}}\phi|, \mbox{ where }T_0=\p_t.
\end{equation}
This implies \begin{footnote}{We may hide the area element of the integral in $\D^\tau$ for convenience.}\end{footnote}
\begin{equation}\label{1.07.5.19}
\begin{split}
\iint_{\D^\tau}|F_{Z\Tb} D^{\Tb}\phi||D_{T_0} D_{\Omega_{0a}}\phi| &\les\int_{\tau_0}^\tau (s^{-1} \E^{T_0}[\phi]^\f12(s)\\
&+s^{-2}\sum_{b=1}^3\E^{T_0}[D_{\Omega_{0b}}\phi]^\f12(s)) \E^{T_0}[D_{\Omega_{0a}}\phi]^\f12(s) ds.
\end{split}
\end{equation}
Thus, due to the weak decay rate $s^{-1}$, $\E^{T_0}[D_{\Omega_{0a}}\phi](\tau)$ can not be bounded  uniformly in $\tau$.

Thanks to the linearity, we can adopt  the multiplier $\tau_{-}^{1+2\ep}\Lb+\tau_+^{1+2\ep} L$ with $\ep>0$ to derive the weighted energy for $\hat F$, which implies
\begin{equation*}
|\hat F_{Z\Tb}|\les \tau^{-1}\tau_{-}^{-\ep}.
\end{equation*}
The same trick does not work for $\tF$, since applying the same multiplier to the Maxwell equation of $\tF$ requires more decay from the scalar field  for the term $\L_Z\rp{\le 2} J[\phi]_S$ \begin{footnote}{$\L_Z\rp{\le k} $ stands for $\L_Z\rp{l} , 0\le l\le k$. The same interpretation applies to ``$\le k$" in the notations $D_Z\rp{\le k} f$, $\E_{\le k}^X[f, G](\Sigma)$, etc. in later sections.}\end{footnote} than the standard linear behavior of a Klein-Gordon solution.
 This is one of  the main differences between the massive MKG and the massless (or linear) MKG.

In order to transform the $\tau_{-}^{-\ep}$ into the stronger decay $\tau^{-\ep}$, we need the bound
\begin{equation*}
\|(\frac{t}{\tau})^\f12 D\phi\|_{L^2(\H_\tau)}\le \E^{T_0}[D_\p \phi]^\f12(\tau).
\end{equation*}
Thus if  $\{\p_\mu, \mu=0, \cdots 3\}$ are also employed as  commuting vector fields, we can  gain the weight of $\frac{t}{\tau}$ on the left hand in the above inequality,  which leads to
\begin{align}
\iint_{\D^\tau}| \hat F_{Z\Tb}D^\Tb \phi||D_{T_0}D_{\Omega_{0a}}\phi|\les\int_{\tau_0}^{\tau} s^{-1-\ep}\sum_{Y\in \Pp}\E^{T_0}[D_Y \phi]^\f12(s)\E^{T_0}[D_{\Omega_{0a}}\phi]^\f12(s) ds,\label{1.10.1.19}
\end{align}
 where  $ \Pp=\Omega\cup\{\p_\mu\}_{\mu=0}^3 $ consists of the generators of Poincar\'{e} group.  Using all the elements in $\Pp$ as the commuting vector fields, the above term can then be treated by Gronwall's inequality. This procedure does not require smallness of the initial energy of $\hat F$.

 Nevertheless,  with $\ve_0$  the bound of the data of scalar field,  by assuming the sharp decay
  \begin{equation}\label{1.07.4.19}
  |\tF_{\Omega_{0a}\Tb}|\les \ve_0 \tau^{-1},\quad a=1,2,3,
   \end{equation}
 the corresponding part in (\ref{1.07.5.19}) still leads to a growth of $\ln(\tau+1)$  in the first order energy for the scalar field. Such loss is unavoidable for (mMKG).

  On the other hand, by using the vector field $X$ as the multiplier for bounding the weighted energy of $\L_Z^k\tF$, with $\L_Z^k :=\L_{Z_1}\cdots \L_{Z_k}$, and $Z_1, \cdots, Z_k\in \Pp$,  the main nonlinear term is (see  (\ref{10.27.13.18}) in Section \ref{maxwell_1}),
  \begin{equation}\label{I1}
  I=\iint_{\D^\tau}\L_Z^k J[\phi]^\mu \L_Z^k \tF_{X\mu}.
  \end{equation}
   Due to the weak decay of $\L_Z^k J[\phi]_\Tb$, the best possible choice is to set $X=S$ so that the first factor can be fully evaluated by vector fields tangential to $\H_\tau$, thus having much better asymptotic behavior than the component of $\Tb$.

  A rough treatment in \cite{Psarelli1999} of   $I$ leads to the growth of energies for Maxwell field, this means (\ref{1.07.4.19}) was not achieved. Together with the loss due to (\ref{1.07.5.19}), we encounter a double-loss situation, that is, neither the energies for the scalar field are  expected to be uniformly bounded in $\tau$, nor are the energies for Maxwell field if treated as in \cite{Psarelli1999}. The principle of closing the bootstrap argument is to achieve sharp decay  for at least one quantity involved in the nonlinear analysis. We need to add two more orders of derivatives to the initial data in order to improve the decay of the scalar field by using $L^\infty-L^\infty$ estimates (see \cite{Sungjin}), which exceeds the regularity  of the result in \cite{Psarelli1999} by one order.  Thus under our assumption, the gauge independent method has a fundamental difficulty in closing the bootstrap argument for the energy propagation of $\phi, \tF$.

The way in  \cite{Psarelli1999}  to solve the issue of double-loss  is to cancel the bad term in the energy estimate by fixing the Cronstr\"{o}m gauge.  By considering the energy of $\L_{\Omega_{0a}}\phi$ with $\L$ the Lie derivative in \cite{Psarelli1999}, the counterpart of (\ref{1.07.5.19}) in the first order energy estimate for $\phi$ takes the form
\begin{equation*}
\iint_{\D^\tau}|\L_{\Omega_{0a}} A^\mu D_\mu \L_{\Omega_{0a}} \phi || D_{T_0} \L_{\Omega_{0a}} \phi|+\cdots.
\end{equation*}
The Cronstr\"{o}m gauge is chosen so that $A_S=0$ and consequently $\L_{\Omega_{0a}}A_S=0$, which eliminates the bad component  $D_\Tb \L_{\Omega_{0a}} \phi$ immediately. This allows the energy  of $\L_{\Omega_{0a}}\phi$ to be bounded without loss in \cite{Psarelli1999}.

Our method  is based on a new observation on the trilinear structure in $I$ in (\ref{I1}).  For bounding the top (second) order weighted energy of $\L_Z^2 \tF$, with $Z_1, Z_2\in \Pp$, we uncover  a structure on the following borderline term in $I$
\begin{equation*}
I=\iint_{\D^\tau} S^\nu\Im(\phi\cdot\overline{D^\mu D_Z^2 \phi}) (\L_Z^2 \tF)_{\mu\nu}+\cdots,
\end{equation*}
where for the better terms of $\L_Z^2 J^\mu[\phi]$ we refer to Lemma \ref{lem::Commutator1}. The integrand is bounded by
\begin{equation*}
|S^\nu\Im(\phi\cdot\overline{D^\mu D_Z^2 \phi}) (\L_Z^2 \tF)_{\mu\nu}|\les |\phi||D_\Omega D_Z^2 \phi||\L_Z^2 \tF|_h
\end{equation*}
where the Riemannian metric $h_{\mu\nu}:=\textbf{m}_{\mu\nu}+2\Tb_\mu\Tb_\nu$, and $\Omega$ represents all the elements in $\{\Omega_{\mu\nu},0\le  \mu<\nu\le 3\}$. (See (\ref{eq:keyObservation})).
Even assuming the sharp bound $|\phi|\les \ve_0\tau_+^{-\frac{3}{2}}$, the term on the right hand side still leads to a growth of $\ln (\tau+1)$ in the energy of the Maxwell field $\L_Z^2 F$.

We observe that the term  $\Im(\phi\cdot\overline{D^\mu D_Z^2 \phi})$ exhibits a typical low-high interaction in terms of the order of derivatives.
Our approach is to carry out the trilinear analysis by virtue of integration by parts, which passes the third derivative from the highest order derivative term  to the lower order ones  in $I$.   Thus  bounding the top order energy for the Maxwell equations is reduced to controlling
\begin{align*}
|I|&\les \iint_{\D^\tau} |S^\nu \Im(\phi\cdot \overline{D_Z^2 \phi})\p^\mu (\L_Z^2 \tF)_{\mu\nu}|+|S^\nu \Im(D^\mu \phi\cdot \overline{D^2_Z \phi})(\L_Z^2 \tF)_{\mu\nu}|\\
&\qquad+|\mbox{ boundary terms }|+\cdots.
\end{align*}
In the first term on the right, $\p^\mu (\L_Z^2 \tF)_{\mu\nu}$ seems to have one more derivative than allowed by the data. Fortunately the divergence form  allows us to take advantage of  the Maxwell equation of $\tF$ to reduce it to $-\L^2_Z J[\phi]_\nu$. Thus the  integrand becomes quartic  in terms of the scalar field and its derivatives, and  exhibits sufficient decay. The second term involves only the tangential components of $D^\mu \phi$ to the hyperboloids,  and hence can be controlled with the help of the weighted Sobolev inequality.

This treatment gives  sharp bound of energies and the decay (\ref{1.07.4.19}) for the Maxwell field $\tF$,  and allows the energies for the scalar field to have  a small controllable growth. \begin{footnote}{This treatment was mentioned in \cite[Section 1]{Wang2016} and plays a crucial role in \cite{Wang2019p} for Einstein equations with massive scalar field under the maximal foliation gauge.}\end{footnote}	

\subsection{Remarks on the technicalities}

{\bf (i)}  In the region $\D^{\tau_*}$, the energies of the Maxwell field induced by the multiplier of $S$  take a homogeneous form by using   the hyperboloidal orthonormal frames. (See the definition of the frames in Section 2 and the form  of energy density in (\ref{11.13.3.18}).) Thus it is much simpler to use the hyperboloidal orthonormal frames  to determine the asymptotic behavior for the Maxwell field than using the null tetrad. Moreover, by using the  hyperboloidal frames and taking advantage of the simple fact $\L_\Omega \Tb=0$, the comparison argument, which is used to obtain the bounds in the  $H^2\hookrightarrow L^\infty $ weighted Sobolev inequality from the weighted energies, is much simpler than \cite{Psarelli1999}. The latter  relied only on the null tetrad, which followed the same calculations as in \cite[Chapter 7]{CK}.
  Since $\L_{\Omega_{0a}} X$ with $X=L, \Lb$,  exhibits various nondegenerate asymptotic behaviors, it requires  involved analysis even in the Minkowski space. (See \cite[Section 4.3]{Psarelli1999}, which contains incompleteness and minor error. One can actually sample the flavor of such calculation from our Lemma \ref{12.13.1.17}.)

  Moreover, since we use $T_0$ and $S$ as multipliers for the energies of the scalar field and Maxwell field respectively, and the set $\Pp$ as commuting vector fields,  the Maxwell field  involved in the error integral is constantly evaluated by both  the Cartesian frames and hyperboloidal frames. In view of (\ref{11.16.1.18}) and (\ref{dcp_3}), each transformation between these two frames leads to a loss of the weight of $\frac{t}{\tau},$ which is $\les t^\f12$ in $\D^{\tau_*}$. As seen in (\ref{1.10.1.19}), such weight is crucial for transforming the additional weak decay of  $\hat F$ in $\tau_{-}^{-\ep}$ to the stronger decay $\tau^{-\ep}$.   Hence it is important not to lose the weight in the analysis. This is mainly achieved by taking advantages of the null conditions, the anti-symmetric structure of $2$-forms, and the decomposition of vector fields such as (\ref{1.09.1.19}). We systematize these calculations in Lemma \ref{11.13.5.18}. It vastly simplifies the nonlinear analysis.

  {\bf (ii)}  We briefly explain the strategy of the analysis in the region $\{t-t_0\le r-R, t\ge t_0\}$.

	Recall from (\ref{eq:decomposition4F}) that the full Maxwell field $F$  is decomposed into the linear part $\hat{F}$ and the perturbation part $\tF$. The latter will be further decomposed into the small charge and chargeless part in (\ref{12.31.2.18}). The main result in \cite{KWY} addressed the case when $\hat{F}=0$ but with large charge $q_0$.
  The charge part is the simplest nontrivial linear solution of Maxwell equations in the exterior region. In this sense, both Theorem \ref{ext_stb} and the main result in \cite{KWY} are dealing with the global exterior stability of large linear Maxwell fields. However, the charge solution has the simplest  special form, while, in Theorem \ref{ext_stb}, the large linear field $\hat F$ takes the general form. It is easily seen  as a direct consequence of \cite{KWY} that  some component of $\hat F$  still exhibits the typical critical decay similar to the free wave solution.  Different from the case in $\D^+$, the generic data assumption in Theorem \ref{thm1} does not give better  decay properties for $\hat F$ than for $\tF$ in the exterior region. Since $\hat F$  has no smallness compared with $\tF$,  $\hat F$ becomes the dominant part of $F$,  which makes it hard to directly treat the error integrals involved with $\L_Z\rp{\le 1}F$ as a small perturbation.

    The influence  of the general large linear field $\hat{F}$ can be more obviously seen in  bounding the higher order energy fluxes for the scalar field. Our observation is that if the signature\begin{footnote}{For the definition of the signature function, we refer to Theorem \ref{ext_stb}.}\end{footnote} for $D_Z^l\phi$, i.e. $\zeta(Z^l)$, $1\le l\le 2$  has a smaller value, the error integral in the corresponding energy estimates is simpler and the fluxes are expected to decay better in $|u|$ where $2u=\tau_{-}-t_0$.  Thanks to these facts,  we  bound the energy fluxes for $D_Z\phi$ inductively on the value of $\zeta(Z)$. To control the top order energy, for the same purpose of gaining sufficient $|u|$-decay, we first bound the flux for $|u|^{-\zeta(Z^2)} D_Z^2 \phi$ on the outgoing null cones $\H_u$ (see the definition in Section \ref{1.08.1.19}), which gives  the necessary improvement for deriving the full set of energy fluxes for the scalar fields in Theorem \ref{ext_stb}.
	\medskip

	\section{Set-up and Main results}\label{sec_2}
	
	This section will present the main results of this paper, while also introducing notations that will be used throughout the paper.
	\subsection{Decomposition of tensor field}
	Our analysis in the exterior region  relies on the peeling properties of the solution in terms of the null frame
$\{L, \Lb, e_1, e_2\}$ with  $\{e_1, e_2\}$ the orthonormal basis of the sphere $S_{t, r}=\Sigma_t\cap \{|x|=r\}$.\begin{footnote}{We will also employ the notations of $S_{\tau, r}$, $S_{u,v}$  later, which are intersections of the level sets of the two functions in the subscripts. }\end{footnote} We denote $T_0=\p_t$ and $N=\p_r$. We  use $\sD$ to denote the covariant derivative associated to the connection field $A$ on the sphere $S_{t, r}$, which is defined by $\sD_\nu = \tensor*[]{\Pi}{_\nu^\mu}D_\mu$, with $\tensor*[]{\Pi}{_\nu^\mu} = \delta_\nu^\mu +\frac{1}{2}(L^\mu\Lb_\nu + L_\nu\Lb^\mu)$. For any 2-form $G$, denote the null decomposition under the above null frame by
 \begin{equation}\label{12.9.2.17}
		\a_B[G] = G_{L e_B},\quad\ab_B[G] = G_{\Lb e_B},\quad \rho[G]=\frac{1}{2}G_{\Lb L},\quad \sigma[G]=G_{e_1e_2},\quad B=1,2.
	\end{equation}
Here  we interpret $\a, \ab$ as 1-forms tangent to the spheres $S_{t,r}$ (referred to as $S_{t,r}$-tangent tensors) whereas $\rho, \sigma$ are scalar functions. For convenience, $\hat \a=\a[\hat F]$ and $\tilde\a=\a[\tilde F]$ and other null components of $\hat F$ and $\tilde F$ are defined similarly.

For convenience,  slightly different from $\tau_{-}$ and $\tau_+$ in (\ref{1.07.6.19}), we set a second pair of optical functions, which is mainly used in the exterior region,
\begin{equation}\label{12.31.1.18}
u=\frac{t-t_0-r}{2}, \qquad v=\frac{t-t_0+r}{2}.
\end{equation}	

 In  the interior region $\D^+$, we will frequently decompose tensor fields by the  hyperboloidal frames.
 To define them,
 recall that, for $\tau\ge \tau_0$,  $\H_\tau$ is the truncated hyperboloid $\{(t, x)| \sqrt{t^2-|x|^2}=\tau, t-t_0\ge r-R\}$.
The future directed unit normal to the hyperboloid $\H_\tau$ is computed by  $\Tb^\mu=-\p^\mu \tau$, consequently verifies  $\Tb=\tau^{-1}S$. Let
$
\bar \Pi_{\mu\nu}=\Tb_\mu\Tb_\nu+\bm_{\mu\nu}
$
which is the projection tensor field to the tangent space of $\H_\tau$. It also gives the induced metric $\gb$ on $\H_\tau$. We denote by $\ud \nabla$ the Levi-Civita connection of $\ud g$  on $\H_\tau$.

 A vector field $V$ is $\H_\tau$-tangent if $ V(\Tb)=0$, and the same definition applies to  any tensor field if the contraction vanishes for every component.  For an $S_{t,r}$ or $S_{\tau, r}$-tangent tensor field, the norm is taken relative to the induced metric $\ga$ on the $2$-spheres and the norm of an $\H_\tau$-tangent tensor field is taken by $\gb$ as our default.  All such norms are denoted by $|\cdot|$ as long as no confusion is possible.

We will constantly employ the orthonormal basis $\{\eb_i\}_{i=1}^3$ on $\H_\tau$ relative to the metric  $\gb$. $\{\Tb, \eb_i, i=1, \cdots 3\}$ forms the hyperboloidal orthonormal basis in the Minkowski space.

 Let $\Nb$ be the outward unit radial normal, which is tangent to the hyperboloid, and $\{e_1, e_2\}$  be an orthonormal basis of the 2-sphere $S_{\tau, r}$.  We refer to $\{\Tb, \Nb, e_1, e_2\}$ as the hyperboloidal (radial) tetrad although it is also orthonormal, in order to distinguish it from the aforementioned set of frames. The relations between the vector fields $(\Tb, \Nb),$ $(T_0, N)$ and $(L, \Lb)$  are given below,
 \begin{eqnarray}
 \tau \Tb=t T_0+r N, &&   \tau \Nb=r T_0+t N,\label{11.16.1.18}\\
	   2\tau \Tb = \tau_+ L + \tau_{-} \Lb, &&  2\tau \Nb = \tau_+ L -\tau_{-}\Lb,\label{11.11.1.18}\\
\tau T_0= t \Tb-r \Nb,  && \tau N= t\Nb-r \Tb\label{dcp_3}.
\end{eqnarray}	
 Hence we can obtain the relation between components of a $2$-form $G$ in the null tetrad and the hyperboloidal tetrad:
	\begin{equation}
	\label{eqn::MaxwellHyperboloidalandNull}
	\begin{split}
	    G_{\Tb\Nb} = \f12 G_{\Lb L},\quad
	    2G_{\Tb e_A} = \frac{\tau_+}{\tau}\a_A+ \frac{\tau_-}{\tau}\ab_A, \quad
	    2 G_{\Nb e_A} = \frac{\tau_+}{\tau}\a_A - \frac{\tau_-}{\tau}\ab_A,
	\end{split}
	\end{equation}
where $A=1,2$, and the lefthand side of the identities are interpreted as a scalar function and two 1-forms on spheres respectively.

A tensor field $V$ decomposed by either of the two hyperboloidal frames is denoted by $\nt{V}$ for short. The norm of $V$ can be evaluated by the Riemannian metric $h_{\mu\nu}=\bm_{\mu\nu}+2 \Tb_\mu\Tb_\nu$. Clearly, for a 2-form $G$,
\begin{equation}\label{1.10.2.19}
|\nt{G}|= |G_{\Tb\Nb}|+|G(\Tb)|_\ga+|G(\Nb)|_\ga+|\sigma[G]|\approx |G(\Tb)|_{\gb}+\sum_{1\le i<j\le 3}|G(\eb_{i},\eb_j)|\approx |G|_h,
\end{equation}
where $\ga$ denotes the induced metric on the $2$-spheres $S_{\tau, r}$.

  We will also employ the electric-magnetic decomposition for a 2-form $G$ by the hyperboloidal orthonormal basis in Section 4, which in view of (\ref{1.10.2.19}) is equivalent to $\nt{G}$ in terms of the norm $\abs{\cdot}$.

Finally, if a spacetime tensor field $V$ is decomposed by the standard Cartesian frame, $\BT=\{\p_\mu\}_{\mu=0}^3 $ with $\p_0= T_0$,
  $|V|:=|V|_{\bar\delta}$ where $\bar \delta_{\mu\nu}=2{T_0}_\mu{T_0}_\nu+\bm_{\mu\nu}$.
\subsection{The basic energy identity}
 For any complex scalar field $f$ and any 2-form $G$, define the gauge invariant Maxwell-Klein-Gordon energy momentum 2-tensor as \begin{equation*}
		\ET[f, G]_{\mu\nu} = \frac{1}{2}( G_{\mu\delta}{G_{\nu}}^\delta +
		\tensor[^\star]{G}{_{\mu\delta}} \tensor[^\star]{G}{_\nu^\delta })
		+ \Re(\overline{D_\mu f}D_\nu f) - \frac{1}{2}\bm_{\mu\nu}(D^\delta f\overline{D_\delta f} +m^2 f\overline{f}),
	\end{equation*}
where  $\bm_{\mu\nu}$ is the Minkowski metric and $\tensor[^\star]{G}{}$ represents the Hodge dual of $G$. We may also use $\ET[f]$ or $\ET[G]$ to be shorthand for $\ET[f, 0]$ or $\ET[0, G]$ respectively.
	
We will rely on the following energy identity for the energy estimates in the region of $\D^+$.
	
	\begin{prop}\label{prop::EnergyEqMKGModified}
		There holds the following energy identity
 \begin{equation}
 \label{eq:EnergyID}
 \begin{split}
\int_{\H_\tau}\ET[f, G](X, \Tb)\,d\H_\tau& =\int_{\H_{\tau_0}}\ET[f, G](X, \Tb)\,d\H_{\tau_0}+\int_{C_0^\tau} \T[f,G](X, L) d\mu_\ga dt\\
&-\int_{\mathcal{D}^{\tau}}\left(\p^\mu \ET[f, G]_{\mu\nu}{X}^\nu+ \ET[f, G]_{\mu\nu}(\pi^X)^{\mu\nu}\right)\,d\H_{\tau'}d\tau'
\end{split}
		\end{equation}
for any smooth vector field $X$, scalar field $f$ and closed 2-form $G$. Here $\pi^X_{\mu\nu}=\frac{1}{2} (\p_\mu X_{\nu}+\p_{\nu}X_{\mu})$ is the deformation tensor of the vector field $X$, $\D^\tau$ has been defined in (\ref{1.07.3.19}) and  $C_0^\tau=\{\tau_0\le \tau'\le\tau\} \cap C_0$.  The area element on a sphere of radius $r$ is denoted by $d\mu_\ga=r^2 d\omega,\, \omega\in {\mathbb S}^2.$
	\end{prop}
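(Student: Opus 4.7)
The plan is to derive the identity from the divergence theorem applied to the vector field $J^\mu := \ET[f,G]^{\mu\nu} X_\nu$ over the region $\D^\tau$. Since $\ET[f,G]$ is symmetric in its two indices, a direct calculation gives
\begin{equation*}
\p_\mu J^\mu = \p^\mu \ET[f,G]_{\mu\nu}\,X^\nu + \ET[f,G]_{\mu\nu}(\pi^X)^{\mu\nu},
\end{equation*}
where only the symmetric part of $\p X$ contributes to the contraction with $\ET[f,G]$, which explains the appearance of the deformation tensor $\pi^X$. This algebraic step requires nothing beyond the symmetry of $\ET$ and the Leibniz rule on flat Minkowski space.

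Next I would identify the boundary of $\D^\tau$ as $\H_{\tau}\cup\H_{\tau_0}\cup C_0^\tau$ (with orientations as outward normals of $\D^\tau$) and apply the Minkowski divergence theorem. For the two hyperboloidal caps, the future-directed unit normal is $\Tb$ (with the appropriate sign on $\H_{\tau_0}$), and the induced volume form is $d\H_{\tau'}$. The contribution is therefore
\begin{equation*}
\int_{\H_\tau}\ET[f,G](X,\Tb)\,d\H_\tau - \int_{\H_{\tau_0}}\ET[f,G](X,\Tb)\,d\H_{\tau_0},
\end{equation*}
after contracting $\ET^{\mu\nu}X_\nu$ with the unit normal.

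The main subtlety, and the step I would treat carefully, is the null lateral boundary $C_0^\tau = \{t-t_0 = r-R\}\cap\{\tau_0\le \tau'\le\tau\}$. Since $C_0^\tau$ is null, there is no canonical unit normal; rather one picks a null generator $L=\p_t+\p_r$ and pairs it with the volume form $d\mu_\ga\,dt = r^2\,d\omega\,dt$ so that Stokes' theorem produces the flux $\int_{C_0^\tau}\ET[f,G](X,L)\,d\mu_\ga\,dt$. One way to see this cleanly is to rewrite the ambient volume form in double-null coordinates $(u,v)$ near $C_0$ and evaluate $\p_\mu J^\mu\,dt\,dx$ using $dt\,dx = \tfrac{1}{2}\,du\,dv\,d\mu_\ga$; the $du$-integration against the defining function of $C_0$ extracts precisely the flux of $J$ against $L$, with the sign matching the outward orientation from $\D^\tau$.

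Collecting the three boundary contributions with the correct signs and moving the bulk divergence term to the right-hand side yields \eqref{eq:EnergyID}. No cancellations or equation of motion for $(f,G)$ are used here, which is consistent with the statement being purely geometric. The only mild obstacle is bookkeeping the orientations on $C_0^\tau$ and on $\H_{\tau_0}$, and confirming that the area element $d\mu_\ga\,dt$ paired with $L$ is indeed the correct null Stokes contribution; this can be verified via the coordinate computation above or by testing on a model current for which the flux is known.
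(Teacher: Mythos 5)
Your proposal is correct and takes exactly the standard route: apply the divergence theorem to the current $J^\mu=\ET[f,G]^{\mu\nu}X_\nu$ over $\D^\tau$, split the boundary into $\H_\tau\cup\H_{\tau_0}\cup C_0^\tau$, and track the orientations. The paper does not actually spell out a proof of this proposition, treating it as the standard energy identity, so your argument supplies the implicit derivation; one small point worth making explicit is that the co-area factors cancel so that the Minkowski volume element $dt\,dx$ coincides with $d\H_{\tau'}\,d\tau'$ (since $dt\,dx=\tfrac{\tau}{t}\,d\tau\,dx$ and $dx=\tfrac{t}{\tau}\,d\H_\tau$), which justifies the form of the bulk integral as written in \eqref{eq:EnergyID}.
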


For any complex scalar field $f$ and closed 2-form $G$, define the 1-forms $J[f]$ and $J[G]$ as follows:
\[
J[f]_{\mu}=\Im(f\cdot \overline{D_{\mu}f}),\quad J[G]_{\mu}=\p^{\nu}G_{\mu\nu}.
\]	
We can then compute that
\begin{equation}
\label{eq:div4TfG}
\p^\mu \ET[f, G]_{\mu\nu} = G_{\mu\nu}J[G]^\mu +\Re(\overline{(\Box_A-1)f}D_\nu f) +F_{\nu\mu}J^\mu[f].
\end{equation}

In terms of the null decomposition of the 2-form $G$ and the definition of $\ET[f,G]$, we derive 
\begin{align}
		4\ET[f, G](T_0, \Tb) =& \frac{\tau_{-}}{\tau}(|\ab|^2 + |D_\Lb f|^2)
		+ \frac{\tau_+}{\tau}(|\a|^2 + |D_L f|^2)\nn\\
&+\frac{2t}{\tau}(\rho^2+ \sigma^2 + |\sD f|^2 + |f|^2),\label{11.13.1.18}\\
		4 \ET[G](S, \Tb)=&
		\frac{\tau_{-}^2}{\tau}|\ab|^2 + \frac{\tau_+^2}{\tau}|\a|^2
		+2 \tau (\rho^2+\sigma^2) ,\label{11.13.2.18}
	\end{align}
in which $\ab$, $\a$, $\rho$ and $\sigma$ are null components for the 2-form $G$.

	Using the hyperboloidal frame, we can instead write
	\begin{equation}
	    \label{11.13.3.18}
	    \T[G](S, \Tb)\approx \tau |\nt{G}|^2.
	\end{equation}
Recall that $\Pp$, $\Omega$ denote the sets of generators of Poincar\'{e} group and Lorentz group. For the set of translation vector fields $\BT$, we often simply use $\{\p\}$ with subindices omitted. To distinguish the angular momentums from the boost vector fields in $\Omega$, we denote the former  by $\O=\{\Omega_{ij},1\le i< j\le 3\}$, the latter  by $\B=\{\Omega_{0i}, 1\le i\le 3\}$.

 By a slight abuse of notation, we may denote the generic  elements of a set by the name of the set. For instance, $\O(f)=\Omega_{ij}f,\, \forall\,  1\le i<j\le 3,$  where $f$ is any scalar function, while $|\O f|$ means the sum  $\sum_{Z\in \O}|Zf|$.

For any scalar field $f$ and  2-form $G$, define the $k$-th order energy on the hyperboloid $\H_{\tau}$:
\begin{equation}\label{eqn::QPDefinition}
		\mathcal{E}_k^X[f, G](\tau) = \sum\limits_{Z^k\in \Pp^k}\int_{\H_\tau}\T[D_Z^k f , \L_Z^k G](X,\Tb)\,d\H_\tau,
	\end{equation}
\begin{footnote}{Here $D_Z^l=D_{Z^l}:=D_{Z_1} \ldots D_{Z_l},\,  \L_Z^l=\L_{Z^l}:=\L_{Z_1}\ldots \L_{Z_l}$, and $D_{Z^0}, \L_{Z^0}$ are both the identity map.} \end{footnote}where in general $\mathcal{K}^k:=\{ Z_1 Z_2 \cdots Z_k, Z_i \in \mathcal{K}, i=1,\cdots, k\}$, if $\mathcal{K}\subset \Pp$, and $\mathcal{K}^0=\{\mbox{id}\}$.

In general, denote by $\E_k^X[f,G](\Sigma)$  the energy on a hypersurface $\Sigma$ with $\mathbf{n}$ the surface normal
\begin{equation*}
		\mathcal{E}_k^X[f, G](\Sigma) = \sum\limits_{Z^k\in \Pp^k}\int_\Sigma\T(D_Z^k f , \L_Z^k G)(X,\mathbf{n})\,d\mu_\Sigma.
	\end{equation*}
If $Z^k\in \BT^k$ or $Z^k\in \Omega^k$ in the above definition, we  denote them as  $\mathcal{E}_{\BT, k}^X[f, G](\Sigma)$ and $\E_{\Omega, k}^X[f, G](\Sigma)$ respectively.

We may use $\mathcal{E}_k^X[f](\Sigma) $, $\mathcal{E}_k^X[G](\Sigma)$ to be shorthand for $\mathcal{E}_k^X[f, 0](\Sigma)$, $\mathcal{E}_k^X[0, G](\Sigma)$ respectively. The same interpretation applies to the notations $\mathcal{E}_{\BT, k}^X[f](\Sigma) $ and  $\mathcal{E}_{\Omega, k}^X[G](\Sigma)$.

\subsection{Main results}\label{1.08.1.19}
Note that in the exterior region, the charge of $\tF$ influences our analysis. We denote the chargeless part of $\tF$ as $\ck F$, and the corresponding electric part as $\ck {\tilde{E}}$. At $\Sigma_{t_0}$, $\ck{\tilde{E}}=\ck E^{cf}$, which reads \begin{footnote}{We define the chargeless part of the data in $\{ r\ge\frac{R}{2}\}$ to ensure  the differentiability of $\ck F$ on $C_0$. $\frac{R}{2}$ can be replaced by any $R'$ with $0<R'<R$.}\end{footnote}
\[
\ck E^{cf}_i=E^{cf}_i-q_0 r^{-2}\chi_{\{\frac{R}{2}\leq r\}} \omega_i, \quad \mbox{ where } \omega_i=\frac{x_i}{r}.
\]
 We have
\[
\ck{F}:=\tF-q_0 r^{-2}\chi_{\{t-t_0+\frac{R}{2}\leq r\}}dt\wedge dr.
\]
By the decomposition in (\ref{eq:decomposition4F})
\begin{equation}\label{12.31.2.18}
F=\ck{F}+\hat F+q_0 r^{-2}\chi_{\{t-t_0+\frac{R}{2}\leq r\}}dt\wedge dr,
\end{equation}
where $q_0$ verifies (\ref{10.30.5.18}).

We will state the main theorems of the paper, which are  Theorem \ref{ext_stb} for the result in  the exterior region $\{0\le t-t_0\le r-R\}$, Theorem \ref{them::mainThm} for the result in the entire interior region $\D^+$, with the linear behavior of $\hat F$ given in  Lemma \ref{lemma:decay:lF} and Proposition \ref{linear_ex}.
We first state the result in the exterior region.  For this purpose, we recall a few notations from \cite{KWY}.

  Let $\mathcal{H}_{u}$  denote the outgoing null hypersurface $\{t-t_0-r=2u, 0\le t-t_0\le r-R\}$ and $\Hb_v$  the incoming null hypersurface $\{t-t_0+r=2v, 0\le t-t_0\le r-R\}$, where $u,v$ are optical functions defined in (\ref{12.31.1.18}). We also use $\H_{u}^{v}$ and $\Hb_{v}^{u}$ to denote the truncated hypersurfaces
\begin{align*}
&\H_{u}^{v}:=\{(t, x): \,t-t_0-|x|=2u, \quad -2u\leq  t-t_0+|x|\leq 2v\};\\
&\Hb_{v}^{u}:=\{(t, x):\,t-t_0+|x|=2v, \quad -2v\leq t-t_0-|x|\leq 2u\}.
\end{align*}
On the initial hypersurface $\{t=t_0\}$, define $\Sigma^e_{t_0}={\mathbb R}^3\cap \{r\ge R\}$ and
let $\mathcal{D}_{u}^{v}$ be the domain
bounded by $\H_{u}^{v}$, $\Hb_{v}^{u}$ and the initial hypersurface:
\[
\mathcal{D}_{u}^{v}:=\{(t, x):\, t-t_0-|x|\leq 2u,\quad t-t_0+|x|\leq 2v, \quad t\ge t_0\}.
\]

We denote by  $E[f, G](\Sigma)$  the  appropriate  energy-flux of the 2-form $G$ and  complex scalar field $f$ along the hypersurface $\Sigma$. For the hypersurfaces  of interest to us, \begin{footnote}{ We may hide the notation of the area element if it is the standard area element for the region of the integral.}\end{footnote}
\begin{equation}
\label{generalizedenergy-norms}
\begin{split}
 E[f, G](\Sigma^e_{t_0})&=\int_{\Sigma_{t_0}^e}(|G|^2+|Df |^2+|f|^2)dx,\quad |G|^2=\rho^2+|\si|^2+\frac{1}{2}(|\a|^2+|\underline{\a}|^2),\\
 E[f, G](\H_u^v)&=\int_{\H_u^v}(|D_L f|^2+|\sl D f|^2+|f|^2+\rho^2+\si^2+|\a|^2) ,\\
 E[f, G](\Hb_v^u)&=\int_{\Hb_v^u}(|D_{\Lb}f|^2+|\sl D f|^2+|f|^2+\rho^2+\si^2+|\ab|^2),
\end{split}
\end{equation}
where $\a, \ab, \rho,\sigma$ are the components of $G$ defined in (\ref{12.9.2.17}).

Throughout the paper, we assume $R=1$ without loss of generality and $t\ge R$.
\begin{theorem}\label{ext_stb}
 Consider the Cauchy problem for (\ref{eqn::mMKG}) with the admissible initial data set $(\phi_0, \phi_1, E, H)$. Under the assumption of (\ref{asmp}) with $\M_0$ being fixed, there exists a positive constant $\ve_0$, depending only on $1<\ga_0<2$, $\M_0$ and the arbitrary small constant $\ep>0$ such that if $\E_{2,\ga_0}\le \ve_0^2$,  the unique local solution  $(F, \phi)$ of (\ref{eqn::mMKG}) can be globally extended \begin{footnote}{We refer to the reader to \cite{Eardley1982, Eardley1982a}  for the standard global existence proof without showing the global decay properties.}\end{footnote} in time on the exterior region $\{(t, x):\, t-t_0+R\leq |x|\}$.

\begin{itemize}
\item[(1)] The global solution verifies the following pointwise estimates,
\begin{align*}
r^2|\sD\phi|^2+u_+^2|D_{\Lb}\phi|^2+r^2|D_L \phi|^2 &\leq C \E_{2,\ga_0} r^{-\frac{5}{2}+\ep}u_+^{\f12-\ga_0},\quad  |\phi|^2 \leq C \E_{2,\ga_0} r^{-3}u_+^{-\ga_0};\\
|\ck\rho|^2+|\ck\a|^2+|\ck\si|^2&\leq C \E_{2,\ga_0} r^{-2-\ga_0}u_+^{-1},\quad  |\ck\ab|^2\leq C \E_{2,\ga_0} r^{-2}u_+^{-\ga_0-1},
\end{align*}
where $u_+=|u|$. Here the null components are for $\ck F$, the chargeless part of $\tF$ defined by the decomposition (\ref{12.31.2.18}), c.f.  $\ck \rho=\rho[\ck F]$.
\item[(2)] The  following generalized energy estimates  hold true
\begin{align*}
&E[D_Z^k\phi, \L_Z^k\ck F](\H_{u_1}^{-u_2})+E[D_Z^k\phi, \L_Z^k\ck F](\Hb_{-u_2}^{u_1})\le C (u_1)_+^{-\ga_0+2\zeta(Z^k)}\E_{2,\ga_0} ,\\
&\int_{\H_{u_1}^{-u_2}}r|D_L D_Z^k\phi|^2 +\int_{\Hb_{-u_2}^{u_1} }r(|\sD D_Z^k\phi|^2+|D_Z^k\phi|^2) \le C(u_1)_+^{1-\ga_0+2\zeta(Z^k)}\E_{2,\ga_0},\\
&\int_{\H_{u_1}^{-u_2}}r^{\ga_0}|\a[\L_Z^k \ck F]|^2 +\iint_{\mathcal{D}_{u_1}^{-u_2}}r^{\ga_0-1}|(\a, \rho,\sigma)[\L_Z^k\ck F]|^2 \\
&\qquad \qquad+\int_{\Hb_{-u_2}^{u_1} }r^{\ga_0}(|\rho[\L_Z^k\ck F]|^2+|\si[\L_Z^k\ck F]|^2)  \le C (u_1)_+^{2\zeta(Z^k)}\E_{2,\ga_0}
\end{align*}
for all $u_2<u_1\leq -\frac{R}{2}$, $Z^k=Z_1 Z_2\ldots Z_k$ with $k\le 2$ and $Z_i\in \Pp$, where $\Pp$ is the set of  generators
  of the Poincar\' e group, and  the  signature function  $\zeta:\Pp^{\ell}\rightarrow {\mathbb Z}, \ell\le 2$ is defined  by
  $$ \zeta(\Omega)=0,\, \zeta(\p)=-1 ,  \zeta(Z^0)=0; \zeta(Z_1 Z_2)=\zeta(Z_1)+\zeta(Z_2).$$
\end{itemize}
 The constant $C$ in (1) and (2) depends only on $\ga_0$, $\M_0$  and $\ep$.
\end{theorem}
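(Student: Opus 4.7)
The plan is to extend the framework of \cite{KWY} to accommodate the large linear part $\hat F$ of the Maxwell field. The starting point is the decomposition $F=\hat F+\ck F+q_0 r^{-2}\chi_{\{t-t_0+\frac{R}{2}\le r\}}dt\wedge dr$ from \eqref{12.31.2.18}. The asymptotic properties of $\hat F$ are provided separately by Proposition \ref{linear_ex}, while the charge contribution is explicit; the true unknown in the exterior is the chargeless perturbation $\ck F$, which verifies an inhomogeneous Maxwell equation with source $J_\mu[\phi]$, coupled with the covariant Klein--Gordon equation for $\phi$. I would run a bootstrap argument in the region $\D_{u_1}^{-u_2}$, assuming on each slice $\H_u^v,\Hb_v^u$ the generalized energies stated in (2) (with signature-weighted constants) together with the pointwise bounds in (1), and then improve them.

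The mechanism of propagation combines two multiplier schemes: the $r^p$-weighted multiplier $r^p L$ with $p\in[\varepsilon,\gamma_0]$ applied simultaneously to $\ck F$ and $\phi$, producing flux estimates on $\H_u^v, \Hb_v^u$ with the required $r$-weights, together with the standard $T_0$-energy. Commuting with the Poincar\'e generators $\Pp$ produces error integrals of the schematic form $\L_Z^{\le k}\!F\cdot D_Z^{\le k}\phi$ and, for the scalar equation, $[\Box_A,D_Z^k]\phi$; since $F=\hat F+\ck F+\text{charge}$, the large piece $\hat F$ cannot be absorbed by smallness and must be handled through the sharp decay provided by Proposition \ref{linear_ex}, while the $\ck F$ piece and the charge source are small. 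The signature function $\zeta$ records the simple fact that $\partial$ in the exterior can be expressed as a linear combination of the angular/boost generators weighted by $r^{-1}$ (equivalently $|u|^{-1}$ after peeling), so using a $\partial$ effectively transfers one unit of $|u|$-weight; this hierarchy is essential for closing the estimates at order $k=2$.

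I would then run the induction on $k=0,1,2$. The zero-th order case repeats the linear-plus-source argument of \cite{KWY}, now absorbing the $\hat F$ contributions via Proposition \ref{linear_ex}. For $k=1$, commutators are quadratic in $(D\phi,F)$ and one uses the elementary weighted Sobolev trace on the spheres $S_{u,v}$ combined with the bootstrap flux bounds to close. The crucial top order $k=2$ is where the hint in the introduction is used: before attempting the full energy on $\Hb_v^u$, one first bounds the weighted flux of $|u|^{-\zeta(Z^2)}D_Z^2\phi$ on the outgoing cones $\H_u^v$, which is the natural quantity given by the signature hierarchy, and then feeds this into the incoming energy estimate via the divergence theorem in $\D_{u_1}^{-u_2}$. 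Pointwise decay (1) follows from the energies (2) by the weighted Sobolev inequality on the 2-spheres $S_{u,v}$ together with a Hardy-type inequality along $\Hb_v^u$; the null decomposition of $\ck F$ then yields the stated $r$- and $u_+$-weights peeling rates.

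The main technical obstacle I anticipate is precisely the handling of the large $\hat F$ in the commutator integrals at the top order. Because $\hat F$ fails to be a small perturbation, the error terms of the type $\L_Z^{\le 2}\hat F \cdot D\phi \cdot D_Z^2\phi$ cannot be closed by Gronwall on energies alone; one must exploit the sharp null-component decay of $\hat F$ from Proposition \ref{linear_ex} (in particular the improved decay of $\a[\hat F]$ and $\rho[\hat F]$ versus $\ab[\hat F]$) and match each component with the corresponding tangential derivative of $\phi$, using the anti-symmetry of the two-form and the null structure of $J[\phi]$, so that the bad component $D_\Lb\phi$ is always paired with the best-decaying null components of $\hat F$. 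The signature bookkeeping is the device that makes this matching systematic; once it is in place, the final continuity/continuation step in the exterior is standard and yields global existence on $\{t-t_0+R\le|x|\}$.
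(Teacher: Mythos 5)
Your proposal follows the same strategy as the paper: the three-way decomposition $F=\hat F+\ck F+q_0 r^{-2}\,dt\wedge dr$ from \eqref{12.31.2.18}, the decay of $\hat F$ imported wholesale from Proposition \ref{linear_ex}, a bootstrap on the signature-indexed generalized energies of $(\ck F,\phi)$, and the same resolution of the top-order obstruction by first bounding the $u_+^{-\zeta(Z^2)}$-weighted flux of $D_Z^2\phi$ on outgoing cones before closing the incoming energy via Gronwall. The one point worth flagging is that for the borderline commutator term $Q(\L_Y\hat F,\phi,X)$ at $k=2$ the pointwise decay of $\L_Y\hat F$ is \emph{not} sufficient (the paper says so explicitly, since the linear field is large), and the paper closes it instead with the $L^4$ sphere estimates \eqref{11.02.2.18} combined with H\"older on $S_{u,v}$ — but that is exactly the weighted Sobolev-trace-on-spheres technique you already invoke for $k=1$, just pushed one derivative higher, so no essential idea is missing.
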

The proof of Theorem \ref{ext_stb} is presented in Section \ref{ext}. We emphasize that the proof is completely independent of propagation of the solution in $\{t-t_0\ge r-R, t\ge t_0\}.$

\begin{remark}
The result can be easily adapted to the case when the size of the charge $q_0$ is arbitrary by separating the terms related to $q_0$ and following the methods in \cite{KWY}. We disregard this subtlety since it is irrelevant to the result of Theorem \ref{thm1}. 
\end{remark}

Next, we state the behavior of the Maxwell field $\hat F$.
\begin{lemma}
  \label{lemma:decay:lF}
  Let $0<\ep\ll 1$ be fixed.  There hold the following  estimates for the linear Maxwell field $\hat F$ in the entire interior region $\D^+=\bigcup_{\tau'\ge \tau_0} \H_{\tau'}$, in terms of the null decomposition,
  \begin{equation}
    \label{eq:decay:lF}
    \begin{split}
   & \tau_+^{2+\ep}(|\hat{\a}|+\frac{\tau}{\tau_+}|\hat{\rho}|+\frac{\tau}{\tau_+}|\hat{\sigma}|)+\tau_+ \tau_{-}^{1+\ep}  |\hat{\underline{\a}}|\les 1 ,\\
&\hat P_k(\tau):=\int_{\H_{\tau}}\frac{\tau_{-}^{2+2\epsilon}}{\tau}|\ab(\L_{Z}^k \lF)|^2+\frac{\tau_+^{2+2\epsilon}}{\tau}|\a(\L_{Z}^k \lF)|^2+\tau \tau_+^{2\epsilon}(|\rho(\L_{Z}^k \lF)|^2+|\sigma(\L_{Z}^k \lF)|^2)\les 1;
  \end{split}
  \end{equation}
   and with $\hat P_k:=\hat P_k(\tau)$,  there hold that
\begin{align}
&\int_{\H_\tau} \tau \tau_{-}^{2\ep} |\nt\L\rp{ k} _Z {\hat F}|^2 \les \hat P_k\les 1, \label{11.13.4.18}\\
&\tau_{-}^\ep \tau_+\tau |\nt{\hat F}|\les 1, \label{11.16.2.18}
\end{align}
where $Z^k\in \Pp^k, \, k\le 2$  and the constant bounds in all the above estimates merely depend on the fixed numbers $\M_0$.
\end{lemma}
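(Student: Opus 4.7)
Since $\hat F$ satisfies the free Maxwell equations and $\Pp=\Omega\cup\{\p_\mu\}$ consists of Killing vector fields of $\bm$, each $\L_Z^k\hat F$ also satisfies $\p^\nu(\L_Z^k\hat F)_{\mu\nu}=0$, and hence $\p^\mu\T[0,\L_Z^k\hat F]_{\mu\nu}=0$ by (\ref{eq:div4TfG}). My plan is to first establish the weighted energy bound $\hat P_k(\tau)\les 1$ by a $p$-multiplier argument, then deduce (\ref{11.13.4.18}) and the pointwise bounds via frame conversion and Sobolev embedding.

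I apply the Lindblad--Sterbenz multiplier $X_p=\tau_-^{p}\Lb+\tau_+^{p}L$ with $p=1+2\ep$ in the region $\D^\tau$. A direct computation using the null decomposition of $\T[G]$ shows that $\T[\L_Z^k\hat F](X_p,\Tb)$ is comparable to the density of $\hat P_k$ on $\H_\tau$, while the deformation tensor $\pi^{X_p}$ gives a nonnegative bulk contribution against $\T[\L_Z^k\hat F]$ when $1<p<2$. Proposition \ref{prop::EnergyEqMKGModified}, applied with $f=0$, $G=\L_Z^k\hat F$, $X=X_p$, then yields
\begin{equation*}
\hat P_k(\tau)\les \hat P_k(\tau_0)+\int_{C_0^\tau}\T[0,\L_Z^k\hat F](X_p,L)\,d\mu_\ga dt.
\end{equation*}
The initial flux $\hat P_k(\tau_0)$ is bounded by $\M_{2,\ga_0}\les \M_0^2$ via a short backward propagation of the linear Maxwell field into the compact region bounded by $\H_{\tau_0}$ and $\Sigma_{t_0}\cap\{r\le R\}$, on which $\tau_\pm$ are uniformly comparable to $R$; the $C_0^\tau$-flux is controlled by the exterior estimate Proposition \ref{linear_ex} specialized to $\hat F$. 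Together these give the energy bound $\hat P_k(\tau)\les 1$, which is the second part of (\ref{eq:decay:lF}).

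The frame-conversion estimate (\ref{11.13.4.18}) follows algebraically: substituting (\ref{eqn::MaxwellHyperboloidalandNull}) into $|\nt G|^2$ and using $\tau_-\le \tau_+$ gives the pointwise bound
\begin{equation*}
\tau\tau_-^{2\ep}|\nt G|^2\les \tau\tau_+^{2\ep}(\rho^2+\sigma^2)+\frac{\tau_+^{2+2\ep}}{\tau}|\a|^2+\frac{\tau_-^{2+2\ep}}{\tau}|\ab|^2,
\end{equation*}
so integrating over $\H_\tau$ with $G=\L_Z^k\hat F$ yields (\ref{11.13.4.18}) directly. For the pointwise bound (\ref{11.16.2.18}), I invoke a weighted Sobolev embedding on $\H_\tau$ applied to $|\nt{\hat F}|$ together with its Lie derivatives, using that $\Omega\subset \Pp$ contains a set of vector fields spanning the tangent space of $\H_\tau$ and that (\ref{11.13.4.18}) for $k\le 2$ supplies the required $L^2$ norms. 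For the null-frame pointwise estimates on the first line of (\ref{eq:decay:lF}), I use a spherical Sobolev inequality on $S_{t,r}$ (the two angular momenta in $\O\subset \Pp$ provide the two spherical derivatives) combined with the $r$-weighted $L^2$ bounds encoded in $\hat P_k$; the favorable weights on $\a,\rho,\sigma$ produce the $\tau_+^{2+\ep}$-decay, while the weight on $\ab$ produces the $\tau_+\tau_-^{1+\ep}$-decay.

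The main technical obstacle is the bound on the initial flux $\hat P_k(\tau_0)$: since $\H_{\tau_0}$ lies strictly in the past of $\Sigma_{t_0}$, the linear Maxwell solution must be propagated backward to reach this hyperboloid. Fortunately the region involved is compact and the weights are uniformly controlled, so only standard local energy estimates are required. A secondary point requiring care is tracking the $\tau_-$ vs.\ $\tau_+$ weights across the frame conversions in order to extract the sharp pointwise decay rates rather than merely averaged ones.
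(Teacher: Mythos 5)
Your proposal follows essentially the same route the paper intends. The paper itself does not give a self-contained proof of Lemma~\ref{lemma:decay:lF}: the introduction (Step~3(1)) explicitly states that the weighted energies $\hat P_k$ are obtained by "applying the multiplier introduced in \cite{LindbladMKG}..., $\tau_-^p\Lb+\tau_+^p L,\,1<p<2$"; the remark following the lemma says that (\ref{11.13.4.18}) and (\ref{11.16.2.18}) are algebraic consequences of (\ref{eq:decay:lF}), (\ref{eqn::MaxwellHyperboloidalandNull}) and (\ref{1.10.2.19}); and the initial flux on $\H_{\tau_0}$ and the boundary flux on $C_0$ are handled exactly as you describe, paralleling the treatment of the nonlinear fields in Lemma~\ref{2.03.1.19} and Proposition~\ref{11.8.1.18} (backward propagation in the compact slab $R\le t\le t_0$, and Proposition~\ref{linear_ex} on $C_0$). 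Your algebraic verification that $\tau\tau_-^{2\ep}|\nt G|^2\les$ the $\hat P_k$-density using $\tau_-\le\tau_+$ is exactly the paper's intended computation, and your observation that the multiplier with $p=1+2\ep$ reproduces the weights in $\hat P_k$ (using $\tau_-^p\tau_++\tau_+^p\tau_-\approx\tau^2\tau_+^{2\ep}/\tau=\tau\tau_+^{2\ep}$) is correct.

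A few points to tighten. First, for the $C_0$ flux, $\T[\L_Z^k\hat F](X_p,L)\approx\tau_-^p(\rho^2+\sigma^2)+\tau_+^p|\a|^2$ on $C_0$ where $\tau_-\approx R$, and boundedness of the weighted flux from Proposition~\ref{linear_ex} requires $p\le\ga_0$; since $p=1+2\ep$ and $1<\ga_0<2$, you need $\ep$ small enough that $1+2\ep\le\ga_0$ — this is implicit in "$\ep\ll 1$" and worth stating since $\ga_0$ is fixed but otherwise arbitrary in $(1,2)$. Second, your one-line claim that $\T[\L_Z^k\hat F]_{\mu\nu}(\pi^{X_p})^{\mu\nu}\ge 0$ deserves a reference or a computation; the favourable sign in the interior $\D^+$ (where $\tau_->0$) is known from the Lindblad--Sterbenz machinery but is not immediate. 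Third, when you deduce (\ref{11.16.2.18}) from $\hat P_{\le 2}\les 1$ by Sobolev embedding on $\H_\tau$, you need to commute the weight $\tau_-^\ep$ past $\L_\B$; this works because $\Omega_{0a}(\tau_-)=-\frac{x^a}{r}\tau_-$, so $|\B^{\le 2}(\tau_-^\ep)|\les\tau_-^\ep$, but this should be noted. Alternatively — and this is what the paper's remark intends — (\ref{11.16.2.18}) is a direct pointwise consequence of the first line of (\ref{eq:decay:lF}) via $|\nt\hat F|\approx|\hat\rho|+|\hat\sigma|+\frac{\tau_+}{\tau}|\hat\a|+\frac{\tau_-}{\tau}|\hat\ab|$, with the $\ab$-term dominating since $\tau_-^{-\ep}\ge\tau_+^{-\ep}$; so you have two consistent routes. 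Finally, your sketch for the pointwise bounds in the first line of (\ref{eq:decay:lF}) (spherical Sobolev on $S_{t,r}$ plus the $r$-weighted $L^2$ norms) is the standard device from the massless literature and is plausible, though it implicitly also requires a trace inequality on $\H_\tau$ (compare Lemma~\ref{sob_1}) to pass from the hyperboloid $L^2$ bounds in $\hat P_k$ to sphere $L^2$ bounds; this intermediate step should be made explicit.
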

\begin{remark}
{ (\ref{11.13.4.18})  and (\ref{11.16.2.18}) are consequences of (\ref{eq:decay:lF}), (\ref{eqn::MaxwellHyperboloidalandNull}) and (\ref{1.10.2.19}). The decay properties in the exterior region are included in Proposition \ref{linear_ex}.}
\end{remark}
We  choose $\ep$ in Lemma \ref{lemma:decay:lF} to be identical as  in Theorem \ref{ext_stb} for simplicity and  state below the main result in  the entire interior region  $\D^+$.
 \begin{theorem}\label{them::mainThm}
Let  $0<\delta\le\f12(1-\ep)$  be  arbitrarily small and fixed, with  $\ep$ the number fixed in Lemma \ref{lemma:decay:lF}. There exists a constant $\ve_0>0$  such that Theorem \ref{ext_stb}  holds, which can be further refined to be sufficiently small, depending on $\delta$, $\ep$ and $\M_0$. With this bound, if $\E_{2,\ga_0}\le \ve_0$, the local solution  $(F, \phi)$ for the admissible  data can be extended uniquely for all $t\ge t_0$. In  the interior region $\D^+$, there hold  \begin{footnote}{ We regard a constant depending only on quantities among $\delta, \ep, \M_0, \ga_0$  as a universal constant and  adopt $\les$ to keep track of  the universal constants in Section 3 and 4. The dependence on $\delta $ only occurs in Section 4.4.} \end{footnote}for  $\phi$ and the perturbation part $\tF$ defined in (\ref{eq:decomposition4F}) and (\ref{eq:eq4lFandNLF})
 \begin{itemize}
 \item[(1)]  the pointwise estimates  under the null tetrad
 \begin{align*}
& \sup_{\H_\tau}(\tau_+^2|\tilde\a| + \tau_+^\frac{3}{2}\tau_-^\frac{1}{2}(|\tilde\rho| + |\tilde\sigma|) + \tau_-\tau_+|\tilde\ab|)\les  \ve_0, \\
 &\sup_{\H_{\tau}}\left(\tau_+^\frac{3}{2}(|D_L\phi|+|\slashed{D}\phi|+|\phi|) +
			\tau_+\tau_-^\frac{1}{2}|D_\Lb\phi|\right)  \les \ve_0 \jb{\tau}^{\delta};
 \end{align*}
\item[(2)]  the energy estimates
\begin{equation*}
\E^{T_0}_k[\phi](\tau)\les \ve_0^2 \jb{\tau}^{2\delta}, \quad \E^{T_0}_0[\phi](\tau)+\E^S_k[\tF](\tau)\les \ve_0^2,\, \,  0\le k\le 2, \, \tau\ge \tau_0.
\end{equation*}
		\end{itemize}
	\end{theorem}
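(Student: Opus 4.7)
The plan is to run a bootstrap argument on $\D^{\tau_*}$ for arbitrary $\tau_* > \tau_0$ under assumptions matching the conclusions (1)--(2) with slightly relaxed constants, and then pass to all of $\D^+$ by a standard continuity argument. First I would prepare the initial energies on $\H_{\tau_0}$: since the data is prescribed on $\Sigma_{t_0}$, a short-time local extension combined with standard local energy estimates on $\{R \le t \le t_0,\, t-t_0 \ge r-R\}$ produces finite initial energies on $\H_{\tau_0}$. The boundary fluxes of $(\phi,\tF)$ on $C_0 \cap \{\tau_0 \le \tau \le \tau_*\}$ are supplied by Theorem \ref{ext_stb} together with the conversion \eqref{eqn::MaxwellHyperboloidalandNull}, giving Proposition \ref{11.8.1.18}. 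These ingredients, inserted into \eqref{eq:EnergyID}, reduce the whole analysis via \eqref{eq:div4TfG} to controlling the spacetime error integrals.

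For the homogeneous Maxwell part $\hat F$, the linear multiplier $\tau_-^{1+2\ep}\Lb + \tau_+^{1+2\ep} L$ applied to $\p^\nu \hat F_{\mu\nu} = 0$, commuted with $\Pp^{\le 2}$, yields the weighted energies $\hat P_k(\tau) \les 1$ of Lemma \ref{lemma:decay:lF}, and the pointwise bounds \eqref{11.13.4.18}--\eqref{11.16.2.18} follow from weighted Sobolev embeddings on $\H_\tau$ in the hyperboloidal frame, exploiting $\L_{\Omega_{0a}}\Tb = 0$ to bypass delicate tetrad-commutator analysis. The extra factor $\tau_-^{-\ep}$ is essential: combined with the $\tfrac{t}{\tau}$ factor gained by including $\BT = \{\p_\mu\}$ in the commuting set $\Pp$, it upgrades to an integrable weight $\tau^{-1-\ep}$ in the scalar-field error as in \eqref{1.10.1.19}, letting the $\hat F$ contribution be absorbed via Gronwall without any smallness on $\M_0$.

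The heart of the argument is the coupled energy estimate for $(\phi,\tF)$. I would use $T_0$ as multiplier for $\phi$ and $S$ for $\tF$, each commuted with $Z^k \in \Pp^k$, $k \le 2$. Using the commutator formula of Lemma \ref{lem::Commutator1}, the decomposition $|D_\Tb \phi| \les \tfrac{\tau}{t}|D_{T_0}\phi| + \tau^{-1}|D_\B \phi|$ from \eqref{1.09.1.19}, and the bootstrap sharp decay $|\tF_{\Omega_{0a}\Tb}| \les \ve_0 \tau^{-1}$, the main scalar-field error contributes an unavoidable $\ln \tau$ growth, which is precisely absorbed into the target $\jb{\tau}^{2\delta}$ via Gronwall. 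The delicate point is therefore the Maxwell side: without the sharp bound $\E_k^S[\tF](\tau) \les \ve_0^2$ uniformly in $\tau$, the double-loss destroys the bootstrap. The principal obstacle is thus the borderline top-order trilinear term
\[
I = \iint_{\D^\tau} S^\nu \, \Im\bigl(\phi \cdot \overline{D^\mu D_Z^2 \phi}\bigr)\, (\L_Z^2 \tF)_{\mu\nu},
\]
which a naive H\"older bound cannot close without $\ln \tau$ loss. Following the strategy outlined after \eqref{I1}, I would integrate by parts in $\p^\mu$: one branch places the derivative on $\tF$, where $\p^\mu(\L_Z^2 \tF)_{\mu\nu} = -\L_Z^2 J[\phi]_\nu$ via the Maxwell equation, yielding a quartic-in-$\phi$ integrand with ample decay; the other branch places the derivative on $\phi$ or $D_Z^2 \phi$, producing only tangential derivatives $D_\Omega$ controllable by the weighted Sobolev inequality on $\H_\tau$. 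The boundary contributions on $\H_{\tau_0} \cup C_0$ are harmless by Proposition \ref{11.8.1.18}.

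Once the sharp Maxwell energies are closed, the pointwise decay in (1) follows from weighted Sobolev inequalities on $\H_\tau$ applied to the top-order energies, with null-tetrad components extracted via \eqref{eqn::MaxwellHyperboloidalandNull}. Throughout, the weight-preserving identities (Lemma \ref{11.13.5.18}) must be applied carefully so that the $\tfrac{t}{\tau}$ factor is never wasted in transitions between Cartesian and hyperboloidal frames --- this is what makes the $\tau_-^{-\ep} \to \tau^{-\ep}$ upgrade effective and ultimately closes the bootstrap, yielding the global solution on all of $\D^+$ by continuity.
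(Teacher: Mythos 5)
Your plan is correct and follows the paper's proof essentially step for step: bootstrap on $\D^{\tau_*}$, initial data on $\H_{\tau_0}$ and fluxes on $C_0$ from Proposition~\ref{11.8.1.18}, the $T_0$/$S$ multiplier split, the $\tau_-^{-\ep}\to\tau^{-\ep}$ upgrade via including $\BT\subset\Pp$, the admissible $\jb{\tau}^{2\delta}$ growth for the scalar energy, and --- crucially --- the integration by parts on the top-order trilinear term $I$ with the Maxwell equation converting $\p^\mu(\L_Z^2\tF)_{\mu\nu}$ into $-\L_Z^2 J[\phi]_\nu$. One small imprecision: after integrating by parts, the boundary contributions on the hyperboloids $\H_{\tau_0}$ and $\H_\tau$ do not need Proposition~\ref{11.8.1.18} --- they vanish identically since the surface normal $\Tb$ is parallel to $S$ and $\L_Z^2\tF$ is a $2$-form, so $(\L_Z^2\tF)_{\Tb S}=0$; only the null boundary $C_0^\tau$ contributes and is controlled by the flux bound in \eqref{11.7.3.18}.
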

	\section{Preliminary results}
We will need two types of preliminary results for proving the main results.  One is to provide the initial energies and boundary fluxes, which are given in Proposition \ref{11.7.1.18} and Proposition \ref{11.8.1.18}. The other is to give the weighted Sobolev inequalities on $\H_\tau$, c.f. Proposition \ref{lemma::ScalarDecayExterior} and Proposition \ref{sob}, without additionally requiring the weighted trace bound on the intersection sphere with $C_0$.
\subsection{Initial energies and boundary fluxes}
We first give the initial weighted energy for $\ck F$ in $\Sigma_{t_0}^e$ and  for $\tF$  in $\{|x|\le R\}$.
\begin{prop}\label{11.7.1.18}
\begin{equation}\label{data_1}
 \sum\limits_{l\leq k}\{\int_{\Sigma_{t_0}\cap\{r\le R\}}|\bar\nabla^l E^{cf}|^2 dx+\int_{\Sigma_{t_0}\cap \{r\ge R\}}(1+r)^{\ga_0+2l}|\bar\nabla^l{\ck E}^{cf}|^2dx\}\les  \E_{k,\ga_0},\quad k\le 2.
\end{equation}
\end{prop}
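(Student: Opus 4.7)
The strategy is to recover $E^{cf}$ from the Hodge system via the Newtonian potential and to gain asymptotic decay by passing to the chargeless potential. Since $\curl E^{cf}=0$ on $\Real^3$ and $E^{cf}\to 0$ at infinity, write $E^{cf}=\bar\nabla\psi$ with $\psi$ solving the Poisson equation $\bar\Delta\psi=\rho$, $\rho:=\Im(\phi_0\overline{\phi_1})$, $\psi\to 0$. Explicitly $\psi(x)=-\frac{1}{4\pi}\int \rho(y)/|x-y|\,dy$. Exploiting $4\pi q_0=\int\rho\,dy$ together with $\bar\Delta(1/r)=0$ for $r>0$, the ``chargeless'' potential $\ck\psi:=\psi+q_0/r$ satisfies $\bar\Delta\ck\psi=\rho$ on $\{r>0\}$, coincides with the potential of $\ck E^{cf}$ on $\{r\ge R\}$, and obeys the improved kernel bound
\[
|\ck\psi(x)|\les \int \frac{|y|}{|x-y|\,|x|}|\rho(y)|\,dy, \qquad |x|\ge R,
\]
which yields $|\ck\psi|\les r^{-2}$ at infinity; correspondingly $|\bar\nabla^{l}\ck E^{cf}|=|\bar\nabla^{l+1}\ck\psi|$ gains one order of decay over a generic Newtonian gradient.

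The interior piece $\{r\le R\}$ is handled by standard interior elliptic regularity applied to $\bar\Delta\psi=\rho$ with cutoffs, giving $\|\bar\nabla^l E^{cf}\|_{L^2(\{r\le R\})}\les \|\rho\|_{H^{\max(l-1,0)}(\Real^3)}+\|E^{cf}\|_{L^2(B_{2R})}$ for $l\le 2$, and the last term is controlled by Young's inequality (or Hardy--Littlewood--Sobolev) applied to the Newtonian convolution. For the exterior piece I would run a weighted $L^2$ energy identity: multiply $\bar\Delta(\bar\nabla^{l-1}\ck\psi)=\bar\nabla^{l-1}\rho$ by $(1+r)^{\ga_0+2l}\bar\nabla^{l-1}\ck\psi$ and integrate by parts over $\{r\ge R\}$. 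The zero-order contribution from $\bar\nabla((1+r)^{\ga_0+2l})$ is absorbed by a weighted Hardy inequality on the exterior; the boundary term at $\{r=R\}$ is dominated by the interior estimate just derived; and the $O(r^{-2})$ decay of $\ck\psi$ kills the contribution at infinity. The choice $\ga_0\in(1,2)$ keeps $\ga_0+2l$ away from the indicial roots of $\bar\Delta$ on $\Real^3$, so the estimate closes without loss.

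It then remains to bound $\|(1+r)^{(\ga_0+2(l-1))/2}\bar\nabla^{l-1}\rho\|_{L^2(\Real^3)}$ by $\E_{k,\ga_0}^{1/2}$ for $1\le l\le k\le 2$. Expanding $\bar\nabla^{l-1}\rho$ via Leibniz produces a gauge-invariant sum of terms $\Im(\bar D^a\phi_0\,\overline{\bar D^b\phi_1})$ with $a+b\le l-1$, since the pure gauge contributions cancel between the two factors. A weighted Cauchy--Schwarz argument placing the lower-order factor in $L^\infty$ (using the weighted $H^2\hookrightarrow L^\infty$ embedding encoded in $\E_{2,\ga_0}$) and the higher-order factor in weighted $L^2$ yields a bound of the form $\E_{2,\ga_0}\cdot \E_{k,\ga_0}$; invoking the smallness $\E_{2,\ga_0}\le 1$ from \eqref{asmp} reduces this to the linear bound $\E_{k,\ga_0}$ stated in the proposition.

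The main technical obstacle is the exterior weighted elliptic step: one must track carefully the boundary contribution on $\{r=R\}$ (gluing it to the interior estimate without loss), and verify that the weighted Hardy inequality has the correct sign and constant for every weight $(1+r)^{\ga_0+2l}$ with $\ga_0\in(1,2)$ and $l\le 2$. The substitution $\psi\to \ck\psi$ is indispensable here: $\psi\sim -q_0/r$ is not square integrable against $(1+r)^{\ga_0}$ for $\ga_0>1$, whereas $\ck\psi=O(r^{-2})$ lies comfortably in the weighted energy space determined by the exponents in the statement.
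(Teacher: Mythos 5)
Your strategy---replacing the black-box weighted elliptic theory by an explicit Newtonian-potential argument---is a genuinely different route from the paper, which disposes of the exterior estimate in a single line by citing Yang \cite{Yang2015} (Lemma~11, Corollary~10), in turn resting on the weighted Fredholm theory of McOwen and Choquet-Bruhat--Christodoulou. The structural observations driving your approach are correct: the chargeless potential $\ck\psi=\psi+q_0/r$ with $\ck E^{cf}=\bar\nabla\ck\psi$, the kernel cancellation in $\tfrac{1}{|x-y|}-\tfrac{1}{|x|}$ giving the $r^{-2}$ decay, and the gauge-covariant Leibniz expansion of $\bar\nabla^{l}\rho$ (pure-gauge terms do cancel). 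However, the exterior weighted energy-identity step does not close as written, for two reasons.

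First, there is an off-by-one in the bookkeeping. Since $\ck E^{cf}=\bar\nabla\ck\psi$, the target $\int_{r\ge R}(1+r)^{\ga_0+2l}|\bar\nabla^l\ck E^{cf}|^2$ equals $\int(1+r)^{\ga_0+2l}|\bar\nabla^{l+1}\ck\psi|^2$. Multiplying $\bar\Delta(\bar\nabla^{l-1}\ck\psi)=\bar\nabla^{l-1}\rho$ by $(1+r)^{\ga_0+2l}\bar\nabla^{l-1}\ck\psi$ controls $\int(1+r)^{\ga_0+2l}|\bar\nabla^{l}\ck\psi|^2=\int(1+r)^{\ga_0+2l}|\bar\nabla^{l-1}\ck E^{cf}|^2$, which is one order too low (and carries the wrong weight relative to the statement). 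The identity must start from $v=\bar\nabla^l\ck\psi$, so the source is $\bar\nabla^l\rho$; at top order $l=2$ this requires $\bar D^2\phi_0$ and $\bar D^2\phi_1$, which is fine since both appear in $\E_{2,\ga_0}$, but it is not what your proposal estimates.

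Second, and more seriously, the ``absorption by a weighted Hardy inequality'' is not available for these weights. With $\chi=(1+r)^{\alpha}$, $\alpha=\ga_0+2l$, integration by parts gives
\begin{equation*}
\int_{r\ge R}\chi|\nabla v|^2-\tfrac12\int_{r\ge R}(\Delta\chi)\,v^2 = -\int_{r\ge R}\chi v\,\Delta v + (\text{boundary}),
\end{equation*}
and $\Delta\chi\approx\alpha(\alpha+1)(1+r)^{\alpha-2}>0$, so the zero-order term has the \emph{wrong} sign. The sharp radial Hardy inequality on $\{r\ge R\}\subset\Real^3$ gives $\int(1+r)^{\alpha-2}v^2\le\frac{4}{(\alpha+1)^2}\int(1+r)^\alpha|\partial_r v|^2+\ldots$, so absorption would require $\frac{\alpha(\alpha+1)}{2}\cdot\frac{4}{(\alpha+1)^2}=\frac{2\alpha}{\alpha+1}<1$, i.e.\ $\alpha<1$. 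Since $\alpha=\ga_0+2l\ge\ga_0>1$, this fails at every order $l\ge 0$. The ``non-resonance'' condition $\ga_0\in(1,2)$ does make $\bar\Delta$ an isomorphism between the relevant weighted spaces, but that invertibility is proved via Mellin/spherical-harmonic analysis, not by a polynomial multiplier plus Hardy; your sentence about ``keeping $\ga_0+2l$ away from the indicial roots'' conflates the Fredholm theorem with the multiplier calculation.

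Your approach can be salvaged without McOwen by \emph{not} absorbing and instead running an induction on $l$: the offending term $\int(1+r)^{\alpha-2}|\bar\nabla^l\ck\psi|^2 = \int(1+r)^{\ga_0+2(l-1)}|\bar\nabla^{l-1}\ck E^{cf}|^2$ is exactly the $(l-1)$-st term in the statement, so each level is bounded by the previous one plus a source term. The base case $l=0$ then needs $\int_{r\ge R}(1+r)^{\ga_0-2}|\ck\psi|^2\les\E_{0,\ga_0}$, which does follow from your kernel bound $|\ck\psi|\les r^{-2}\int|y||\rho|\,dy+r^{-\ga_0-1}$ together with $\int|y||\rho|\les\|(1+r)^{\ga_0/2}\phi_0\|_{L^2}\|(1+r)^{\ga_0/2}\phi_1\|_{L^2}$, provided you also invoke the weighted Sobolev pointwise bound on $\rho$ to control the near-diagonal contribution $|y|\approx|x|$ (this silently uses $\E_{2,\ga_0}\le 1$). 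With those corrections the strategy becomes sound, but as written the exterior step has a genuine gap.
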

The first part of \eqref{data_1} can follow from the standard elliptic estimate on $\Sigma_{t_0}$ for the Hodge system
\begin{equation*}
 \div{E^{cf}}=\Im(\phi_0\cdot\overline{\phi_1}), \quad  \curl{E^{cf}}=0.
\end{equation*}
We refer the proof of the second part to  \cite[Lemma 11 and Corollary 10]{Yang2015}, which is  based on the result \cite[Theorem 0]{McOwen} or \cite[Theorem 5.1]{Choquet_Chris}.

To prove Theorem \ref{them::mainThm}, we need to run energy estimates on $\D^{\tau_*}$ with $\tau_*$ a fixed large number. Due to the fundamental divergence theorem, we need the initial energies on $\H_{\tau_0}$ and the boundary fluxes on $C_0$, which are given in the following result.  They are derived with the help of  Theorem \ref{ext_stb} and a set of local-in-time energy estimates.
\begin{prop}\label{11.8.1.18}
 The following properties hold for $\tF$ and $\phi$ on $C_0$ and $\H_{\tau_0}$,
\begin{align}
&\sum_{k=0}^2( \E^{T_0}_k[\phi](\tau_0)+\E^{S}_k[\tF](\tau_0))\les \ve_0^2  \label{11.7.2.18},\\
&\sum_{k=0}^2 (\E^{T_0}_k[\phi](C_0)+\E^{S}_k[\tF](C_0))\les   \ve_0^2\label{11.7.3.18}.
\end{align}
\end{prop}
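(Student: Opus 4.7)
The plan is to treat the two bounds (\ref{11.7.2.18}) and (\ref{11.7.3.18}) separately: the boundary flux estimate will follow directly from Theorem \ref{ext_stb} after converting between the $S$-multiplier flux on $C_0$ and the generalized energy norms of (\ref{generalizedenergy-norms}), while the initial energy estimate on $\H_{\tau_0}$ will be obtained by backward local well-posedness from $\Sigma_{t_0}\cap B_R$ combined with the energy identity (\ref{eq:EnergyID}) over a compact region.

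For (\ref{11.7.3.18}), note that $C_0$ is the outgoing null cone $\{u=-R/2\}$ in the notation of Theorem \ref{ext_stb}, and on $C_0$ we have $\tau_-=R$ and $\tau_+=2r+R\approx r$. Using (\ref{11.13.2.18}) with $X=S$, the flux $\E^S_k[\tF](C_0)$ is, after splitting $\tF=\ck F+q_0 r^{-2}dt\wedge dr$ as in (\ref{12.31.2.18}), equivalent up to the charge contribution to a sum over $k\le 2$ and $Z^k\in\Pp^k$ of integrals of the form $\int_{C_0}\left(r^3|\a[\L_Z^k\ck F]|^2+r^2(\rho^2+\sigma^2)[\L_Z^k\ck F]\right)d\omega\, dv$. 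The first piece is dominated, because $\ga_0>1$, by the weighted outgoing-cone flux $\int_{\H_{-R/2}}r^{\ga_0}|\a[\L_Z^k\ck F]|^2\les\ve_0^2$ in Theorem \ref{ext_stb}, while the second piece is dominated by the standard flux $E[\L_Z^k\ck F](\H_{-R/2}^{-u_2})\les\ve_0^2$ from (\ref{generalizedenergy-norms}). An analogous null-frame calculation for $\T[D_Z^k\phi](T_0,L)$ reduces $\E^{T_0}_k[\phi](C_0)$ to sums of $E[D_Z^k\phi](\H_{-R/2}^{-u_2})\les\ve_0^2$. Finally, restricting the charge $2$-form $q_0r^{-2}dt\wedge dr$ to $C_0$ gives only the radial component $\rho=q_0r^{-2}$ (with $\a=\ab=\sigma=0$), whose contribution to the flux is $\les|q_0|^2\int_R^\infty r^{-2}dr\les\ve_0^4\ll\ve_0^2$ by (\ref{10.30.5.18}); the commuted charge fields $\L_Z^k(q_0r^{-2}dt\wedge dr)$ for $Z\in\Pp$, $k\le 2$, are easily seen to be $O(q_0)$ with comparable or faster $r$-decay, so their contributions are controlled identically.

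For (\ref{11.7.2.18}), the truncated hyperboloid $\H_{\tau_0}$ (with $R=1$, $\tau_0=\sqrt{3}$) is the graph $t=\sqrt{r^2+3R^2}$ for $0\le r\le R$; hence $\H_{\tau_0}\subset\{\sqrt{3}R\le t\le t_0,\;r\le R\}$, which is contained in the backward domain of dependence of $\Sigma_{t_0}\cap B_R$ and meets $\Sigma_{t_0}$ exactly at the sphere $S_{t_0,R}$. By standard local well-posedness for (\ref{eqn::mMKG}) on this compact short-time slab with data of size (\ref{asmp}), one obtains a smooth extension of the solution together with uniform $C^{k}$-bounds ($k\le 2$) in a neighborhood of $\H_{\tau_0}$, with constants depending only on $\M_0$ and $\E_{2,\ga_0}$. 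At $t=t_0$, the zeroth-order energies of $(\phi,\tF)$ are bounded by $\E_{2,\ga_0}$ together with Proposition \ref{11.7.1.18} applied to $E^{cf}$ on $B_R$, and the commuted quantities $D_Z^k\phi$, $\L_Z^k\tF$ are reduced to spatial derivatives of the data by trading time derivatives through (\ref{eqn::mMKG}) and (\ref{eq:eq4lFandNLF}) --- note crucially that $\tF$ does not contain the large piece $\hat F$, so its initial energies are all $\les\ve_0^2$. Applying the identity (\ref{eq:EnergyID}) with multiplier $X=T_0$ (for $\phi$) and $X=S$ (for $\tF$) on the compact region between $\H_{\tau_0}$ and $\Sigma_{t_0}\cap B_R$, and using that $\pi^{T_0}=0$ while the trace term $\T[\tF]_{\mu\nu}(\pi^S)^{\mu\nu}$ vanishes by tracelessness of the Maxwell stress-energy, the remaining error integrals are controlled via (\ref{eq:div4TfG}) by lower-order commuted energies multiplied by the $C^1$-bounds of $F=\tF+\hat F$ in the region, and a standard Gronwall iteration on $k\le 2$ yields (\ref{11.7.2.18}).

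The main obstacle is the bookkeeping in the last step: because $F$ includes the large linear piece $\hat F$, the coefficients in the error integrals of the commuted energy identities are not themselves small, and one must argue that the short-time, compact-domain nature of the slab between $\H_{\tau_0}$ and $\Sigma_{t_0}\cap B_R$ reduces the Gronwall factor to a universal constant depending only on $\M_0$, preserving the $\ve_0^2$-smallness that is carried by the scalar-field data and the curl-free part $E^{cf}$ of the initial Maxwell data.
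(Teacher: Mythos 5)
Your overall strategy for both halves coincides with the paper's. For (\ref{11.7.3.18}), the reduction to the weighted exterior fluxes of Theorem \ref{ext_stb} via the null-frame formula $2\T[G](S,L)=\tau_+|\a|^2+\tau_-(\rho^2+\sigma^2)$ and the decomposition (\ref{12.31.2.18}) is exactly what the paper does, and your observations that $\ga_0>1$ dominates the $\tau_+\approx r$ weight and that the $\rho^2+\sigma^2$ piece falls under the standard flux are correct. For (\ref{11.7.2.18}), your Gronwall argument over the compact slab between $\Sigma_{t_0}\cap B_R$ and $\H_{\tau_0}$, with the remark that the large $\hat F$ only enters as a bounded coefficient on a fixed time interval, is in substance the bootstrap in the paper's Lemma \ref{2.03.1.19}; the paper first establishes translation-invariant energies on $\widetilde\Sigma_t$ ($R\le t\le 2R$) and then converts to $\E^{T_0}_k[\phi],\E^S_k[\tF]$ on $\H_{\tau_0}$ using the smallness of $\tau_+$ in the lens region, but this is an organizational variant of what you describe.

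The genuine gap is in your treatment of the commuted charge fields, which you dismiss as ``easily seen to be $O(q_0)$ with comparable or faster $r$-decay, so their contributions are controlled identically''. This conceals precisely the subtlety that the paper's Lemma \ref{12.13.1.17} (and the Remark following it) exists to address. The $S$-flux on $C_0$ weights $|\a|^2$ by $\tau_+\approx r$ and $\rho^2+\sigma^2$ only by $\tau_-\approx 1$. For the undifferentiated charge form only $\rho\sim q_0 r^{-2}$ is present and the flux integrand $\sim q_0^2 r^{-2}$ converges. But a boost $\Omega_{0i}$ generically creates nonzero $\a$ and $\ab$ components, and if $\a[\L_{\Omega_{0i}}(\tF-\ck F)]$ decayed merely like $r^{-2}$ (``comparable'' to $\rho$), the $\tau_+$-weighted $\a$-flux integrand would be $\sim q_0^2 r^{-1}$ and diverge logarithmically. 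The argument closes only because $\a[\L_{\Omega_{0i}}\cdot]$ picks up the factor $\tau_-/r$ (bounded on $C_0$) coming from $[\Omega_{0i},L]$, giving the strictly faster decay $q_0\jb{\tau_-}r^{-3}$, whereas the component that picks up the unfavorable $\tau_+/r$ factor is $\ab$, which does not appear in $\T[G](S,L)$ on the outgoing cone. The two-commutator case is subtler still (one must avoid computing $\sl{\L}_Z^2\omega_{iA}$ directly, and instead establish $|\L_Z^{\le 2}L(\O)|\les\jb{\tau_-}$). You would need to supply these frame-comparison computations — or at least flag that the boost Lie derivatives split asymmetrically between $\a$ and $\ab$ — to make ``controlled identically'' an argument rather than an assertion.
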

The estimate (\ref{11.7.2.18}) provides the initial energies for the set of interior energy estimates. It will be proved by a bootstrap argument shortly. The estimate (\ref{11.7.3.18}) provides the control on the boundary fluxes for the interior energy estimates.

 As explained in Section \ref{intro}, the sharp boundedness of the weighted energy  for the Maxwell field $\tF$ is crucial for proving Theorem \ref{them::mainThm}. This requires the boundedness of the fluxes for $\tF$ on $C_0$, which is presented in (\ref{11.7.3.18}). It is derived by using the result of Theorem \ref{ext_stb}.  However, Theorem \ref{ext_stb} only provides the derivative control on $\ck F$, instead of $\tF$.  Since the fluxes for $\tF$ in (\ref{11.7.3.18}) are defined naturally by the null tetrad, in the following result we will carry out a careful comparison between the null components of $\L_Z^{\ell} \tF$ and of $\L_Z^{\ell} \ck F$, for $Z^\ell\in \Pp^\ell, 0\le \ell\le 2 $.

\begin{lemma}\label{12.13.1.17}
There hold on $C_0$ for all $Y\in \Pp$ that
\begin{equation}\label{1.13.2.19}
\begin{split}
&|\a[\L_Y (\tF-\ck F)]|\les \jb{\tau_{-}}r^{-3}|q_0| ,\quad  |\rho[\L_Y (\tF-\ck F)]|\les r^{-2+\zeta(Y)}|q_0|,\\
&|\ab[\L_Y (\tF-\ck F)]|\les r^{-2+\zeta(Y)}|q_0|, \quad \sigma[\L_Y \tF]= \sigma[\L_Y \ck F],
\end{split}
\end{equation}
and the symbolic rough version
\begin{equation}\label{12.15.1.17}
|\L_Y (\tF-\ck F)|\les r^{-2+\zeta(Y)}|q_0|.
\end{equation}
For the second order derivatives, there hold for $X, Y\in \Pp$ that
\begin{equation}
|(\a,  \sigma)[\L_Y \L_X (\tF-\ck F)]|\les |q_0| r^{-3} \jb{\tau_{-}},\quad |\rho[\L_Y \L_X(\tF-\ck F)]|\les |q_0|r^{-2}\label{11.8.2.18}.
\end{equation}
\end{lemma}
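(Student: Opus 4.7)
The plan is to compute directly, exploiting the explicit form of the difference. By \eqref{eq:decomposition4F} and \eqref{12.31.2.18},
\begin{equation*}
\tF - \ck F = K := q_0 r^{-2}\chi_{\{t-t_0+R/2 \le r\}}\, dt\wedge dr,
\end{equation*}
and on $C_0$ the condition $t - t_0 + R/2 \le r$ is equivalent to $-R/2 \le 0$, so the cutoff is identically one in an open neighborhood of $C_0$. Hence all the claimed bounds reduce to differentiating the smooth, stationary, spherically symmetric, purely electric 2-form $K$, whose only non-vanishing null component is $\rho[K] = q_0 r^{-2}$. Two observations are then immediate: $\L_{\p_t} K = 0$ and $\L_{\Omega_{ij}} K = 0$ by stationarity and rotational symmetry; and $\sigma[\L_Y K] = 0$ for every $Y \in \Pp$, because each term in the Leibniz expansion of $\L_Y K$ is of the form $dt \wedge dx_i$, $dx_i \wedge dr$, or $dt \wedge dr$, all of which have vanishing $(e_1, e_2)$-contraction. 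This is the $\sigma$ identity in \eqref{1.13.2.19}.

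For the remaining first-order estimates, the case $Y = \p_j$ is immediate since $|\p_j K|_{\bar\delta} \les |q_0| r^{-3}$, which matches \eqref{12.15.1.17} and \eqref{1.13.2.19} with $\zeta(\p_j) = -1$. The substantive case is $Y = \Omega_{0i}$. Using $Y(t) = -x_i$, $Y(r) = -t\omega_i$, $Y(r^{-2}) = 2t\omega_i r^{-3}$, and expanding via the Leibniz rule together with $d\omega_i = r^{-1}(dx_i - \omega_i dr)$ gives
\begin{equation*}
\L_{\Omega_{0i}} K = -q_0 r^{-2}\, dx_i \wedge dr - q_0 t r^{-3}\, dt \wedge dx_i + 3 q_0 t\omega_i r^{-3}\, dt \wedge dr.
\end{equation*}
Projecting with $dt(L) = dt(\Lb) = 1$, $dr(L) = -dr(\Lb) = 1$, $dx_i(L) = -dx_i(\Lb) = \omega_i$, $dx_i(e_A) = e_A^i$, yields
\begin{equation*}
\rho = 2q_0 t\omega_i r^{-3},\quad \a_B = -q_0(t-r) e_B^i r^{-3},\quad \ab_B = -q_0(t+r) e_B^i r^{-3},\quad \sigma = 0.
\end{equation*}
Since $t \approx r$ and $t - r = t_0 - R$ is bounded on $C_0$, this matches \eqref{1.13.2.19} with $\zeta(\Omega_{0i}) = 0$. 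The $\jb{\tau_-}$ factor in the $\a$-bound is precisely the $(t - r)$ produced by the null-frame cancellation between the two contributions $(dx_i \wedge dr)(L, e_A) = -e_A^i$ and $(t/r)\,(dt \wedge dx_i)(L, e_A) = (t/r)\, e_A^i$.

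For the second-order bounds \eqref{11.8.2.18} I apply a second $\L_X$, $X \in \Pp$, to the explicit expressions above. Each Lie derivative lands either on the scalar coefficient—producing one more power of $r^{-1}$ together with a monomial of degree one in $t$ or $x_k$—or on a basis 1-form, noting that $\L_{\p_\mu} dx^\a = 0$ and $\L_{\Omega_{\mu\nu}} dx^\a$ lies in $\mathrm{span}\{dt, dx_k\}$. Hence $(\L_X \L_Y K)_{\mu\nu}$ in Cartesian components expands as a finite sum of monomials $q_0\, t^a x^b r^{-c}$ with $a + b - c \le -2$, giving the uniform pointwise bound $|\L_X \L_Y K|_{\bar\delta} \les |q_0| r^{-2}$ on $C_0$; this yields the $\rho$-estimate in \eqref{11.8.2.18}. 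The sharper $\jb{\tau_-}r^{-3}$ bound for the $\a$ and $\sigma$ components stems from the same mechanism as at first order: when the null components are extracted, contributions from $dt \wedge dr$, $dt \wedge dx_i$, $dx_i \wedge dr$ acquire the cancellation $-e_A^i + (t/r) e_A^i = (t - r) e_A^i / r$ after contracting with $L$ (producing the extra $(t - r)$), while genuinely angular $dx_i \wedge dx_j$ pieces that can reach $\sigma$ necessarily carry an extra factor of $r^{-1}$ from $d\omega_i$ and hence are already of size $r^{-3}$.

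The main obstacle is purely organizational bookkeeping: tracking which of the many Leibniz terms produce a $(t - r)$ factor rather than a $(t + r)$ factor. I plan to sidestep a brute-force case analysis by working consistently in Cartesian components, retaining the schematic form $q_0 t^a x^b r^{-c}$ for all coefficients, and passing to the null frame only at the end via \eqref{12.9.2.17}. The identities $e_A^l x_l = 0$ and $L^l - \Lb^l = 2\omega^l$ then automatically generate the needed $(t - r)$ factors in the $\a$ and $\sigma$ components.
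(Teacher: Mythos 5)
Your first-order computations are correct and reproduce the paper's explicit formulas (compare your expression for $\L_{\Omega_{0i}}K$ and its null decomposition with (\ref{1.11.1.19})--(\ref{1.11.3.19})); your route is also genuinely different from the paper's, which works entirely in the null frame via the Christodoulou--Klainerman commutator identity (\ref{2.07.1.19}) together with the structure $\a[K]=\ab[K]=\sigma[K]=0$, $\rho[K]=q_0 r^{-2}$, whereas you compute Lie derivatives in Cartesian 1-form calculus and project to the null frame only at the end. Your degree count likewise gives the second-order $\rho$-bound in (\ref{11.8.2.18}).

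However, the second-order $\a$ and $\sigma$ estimates in (\ref{11.8.2.18}), which are the real content of the lemma, are not proved. They require improving the crude Cartesian bound $|q_0|r^{-2}$ by a full power of $r$ (equivalently a factor $\tau_-/r$ on $C_0$), and this does not follow automatically from the mechanism you invoke. Two specific objections. First, your claim that the $dx_i\wedge dx_j$ pieces are ``already of size $r^{-3}$'' from $d\omega_i$ is false: already $(\L_{\Omega_{0k}}K)_{ij}=q_0 r^{-3}(\delta_{jk}x_i-\delta_{ik}x_j)$ is of size $|q_0|r^{-2}$, and a further boost Lie derivative only produces more $r^{-2}$-sized $dx_i\wedge dx_j$ terms; the $\sigma$-contraction becomes small only after cancellations enforced by $e_A^i x_i=0$, which you do not exhibit. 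Second, for $\a$ the cancellation is a genuine multi-term one: taking $X=Y=\Omega_{01}$ and writing $\L_X\L_YK=-q_0\left[\,d(X^2t)\wedge d(r^{-1})+2\,d(Xt)\wedge d(Xr^{-1})+dt\wedge d(X^2r^{-1})\,\right]$, the middle piece alone contributes $2x^1r^{-4}(r-3t)e_A^1$ to $\a_A$, which is $\approx r^{-2}$ on $C_0$ with no $\tau_-$ factor; only the algebraic identity $r(r-3t)+(3t^2-r^2)=3t(t-r)$, combining it with the contribution $2x^1e_A^1 r^{-5}(3t^2-r^2)$ from the third piece, produces the factor $\tau_-=t-r$. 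Your assertion that $e_A^lx_l=0$ and $L^l-\Lb^l=2\omega^l$ ``automatically generate the needed $(t-r)$ factors'' is precisely what has to be established, and you yourself flag the step as unfinished (``I plan to sidestep a brute-force case analysis\ldots''). By contrast, the paper's iterated null-frame formula carries the $\tau_-$ factor in the coefficient $Q_A$ from (\ref{2.07.2.19}) at every stage, so the smallness is manifest term by term and no final cancellation is needed; the price is the auxiliary estimates (\ref{1.12.3.19})--(\ref{1.18.2.19}) on $|\L_Z^\ell L(\O)|$.
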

\begin{remark}
This result is  important to guarantee the boundedness of fluxes on $C_0$ in the  second estimate of (\ref{11.7.3.18}). Note that $\tau_{-}=R$ on $C_0$.
In particular, if  the larger quantities  $r$ or $\jb{\tau_+}$ appeared in the first inequalities in  (\ref{11.8.2.18}) and (\ref{1.13.2.19}),  instead of the smaller one $\jb{\tau_{-}}$ we obtained,  it would cause the loss of the sharp boundedness  in (\ref{11.7.3.18}).  The proof of this lemma  is based on  delicate calculations and  the fact that $r\approx \tau_+$  on $C_0$.
\end{remark}
\begin{proof}
Let $\{e_A\}_{A=1}^2$ be the orthonormal basis on the 2-sphere $S_{u,v}$. We can decompose the standard cartesian frame $\pa_i$ as $$\partial_i= \omega_i \pa_r+ \omega_i^A e_A.$$ It is easy to see that $|\omega_i^A|_\ga\le 1$. It is straightforward to derive
\begin{equation}\label{12.21.1.17}
\begin{split}
&[\Omega_{0i}, L]=\omega_i L -\frac{\tau_{-}}{r} \omega_i^A e_A,\quad  [\Omega_{0i}, \Lb]=-\omega_i \Lb+\frac{\tau_+}{r} \omega_i^A e_A, \, i=1,2,3,\\
&[L, \p_0]=[\Lb, \p_0]=0, \,  [L, \p_i]=-\frac{\omega_i^A}{r} e_A=- [\Lb, \p_i],\quad i=1,2,3\\
&[L, \Omega_{ij}]=[\Lb, \Omega_{ij}]=0,\quad 1\le i<j\le 3.
\end{split}
\end{equation}
The first derivative results in Lemma \ref{12.13.1.17} are included in the following results.
 \begin{enumerate}
 \item[(I)] If $Y=\Omega_{0i}, i=1,2,3$
 \begin{align}
& (\rho, \sigma)[\L_Y (\tF-\ck F)]=(q_0 Y(r^{-2}), 0) \label{1.11.1.19},\\
&\a[\L_Y (\tF-\ck F)]_A=-q_0 \frac{\tau_{-}\omega_{iA}}{r^3} =q_0 r^{-2} \L_{\Omega_{0i}}L^\mu \Pi_{\mu\nu} e_A^\nu,\label{1.11.2.19}\\
&\ab[\L_Y (\tF-\ck F)]_A=-q_0\frac{\tau_+ \omega_{iA}}{r^3} =-q_0 r^{-2} \L_{\Omega_{0i}} \Lb^\mu \Pi_{\mu\nu} e_A^\nu.\label{1.11.3.19}
 \end{align}
 \item[(II)]
 \begin{itemize}
\item[(a)]   If $ Y= \O, \p_t$,  $\L_Y\ck F=\L_Y \tF$; and $\L_Y G=\sl{\L}_Y G$ holds for $2$-forms $G$.
\item[(b)] If $Y=\p$, $\L_Y G-{\sl\L}_Y G=O(r^{-1}) G$ for $2$-forms $G$, \begin{footnote}{Here $U=O(f)$ is either a scalar or an $S_{u,v}$-tangent tensor field  verifying $|U/f|\les 1$.  }\end{footnote} where $G$ on the right hand side represents symbolically any component of the 2-form. There also hold
\begin{equation}\label{1.18.4.19}
|(\a, \ab)[\L_Y(\tF-\ck F)]|\les r^{-3} |q_0|,\quad    (\rho, \sigma)[\L_Y(\tF-\ck F)]=(q_0 Y(r^{-2}), 0).
\end{equation}
\end{itemize}
\end{enumerate}
The  $\sl{\L}_Y G$  in (a) and (b)  represents the projections of $\L_Y G$ on $S_{u,v}$.
With $\a, \ab, \rho, \sigma$  the null  components of the 2-form $G$, $\sl{\L}_Y G$ consists of  two scalar functions  ${\sl\L}_Y\rho:=Y(\rho)$, ${\sl\L}_Y\sigma:=Y(\sigma)$, and a pair of $S_{u,v}$-tangent $1$-tensor fields, which  are defined  \begin{footnote}{In general, for an $S_{u,v}$-tangent vector field $V^\mu$,
$
{\sl\L}_Y V_A=\L_Y V^\mu  {e_A}^\nu.
$}
\end{footnote}below
\begin{equation*}
{\sl\L}_ Y \a_A := \L_Y(L^\mu G_{\mu\nu'} \Pi_\nu^{\nu'})e_A^\nu, \quad {\sl\L}_Y\ab_A:=\L_Y(\Lb^\mu G_{\mu\nu'}\Pi_\nu^{\nu'}) e_A^\nu.
\end{equation*}
To compare these components with the corresponding null components of $\L_Y G$, we adopt the decompositions  for $Z\in \Pp$ in \cite[Page 152-154]{CK}
\begin{equation*}
[Z, \Lb]=\ud Q_A e_A+\ud M \Lb, \quad [Z, L]=Q_A e_A +M L,
\end{equation*}
to derive
\begin{equation}\label{2.07.1.19}
\begin{split}
&\a[\L_Z G]_A =\sl{\L}_Z \a_A-M \a_A +Q_A \rho+\ep_{AB} Q_B \sigma,\\
&\ab[\L_Z G]_A=\sl{\L}_Z \ab_A-\ud M \ab_A-\ud Q_A \rho+\ep_{AB}\ud Q_B \sigma,\\
&\rho[\L_Z G]=\sl{\L}_Z \rho+\f12 \ud Q_A \a_A-\f12 Q_A \ab_A,\\
&\sigma[\L_Z G]=\sl{\L}_Z \sigma-\f12 \ud Q_A{}^\star\a_A-\f12 Q_A {}^\star \ab_A,
\end{split}
\end{equation}
where $\ep_{\mu\nu}=\f12 \ep_{\mu\nu\Lb L}$,  ${}^\star\a_A=\ep_{AB} \a_B$ and ${}^\star \ab=\ep_{AB}\ab_B$ are the Hodge duals on $S_{u,v}$ of $\a$ and $\ab$ respectively.
By using (\ref{12.21.1.17}), we can obtain  for the nontrivial cases that
\begin{align}
& \mbox{if } Z= \Omega_{0i}:  M=\omega_i=-\ud M,\quad  Q_A=-\frac{\tau_{-}}{r} \omega_{iA}\quad \ud Q_A=\frac{\tau_+}{r} \omega_{iA} ;\label{2.07.2.19}\\
&\mbox{ if } Z=\p_i: M=\ud M=0, \quad Q_A =\frac{\omega_{iA}}{r}=-\ud Q_A.   \label{2.07.3.19}
\end{align}

Now we prove (I) and (II) by using  the following facts
\begin{equation}\label{1.18.1.19}
\a[\tF]=\a[\ck F], \,\, \ab[\tF]=\ab[\ck F],\,\,  \sigma[\tF]=\sigma[\ck F],\,\,  \rho[\tF-\ck F]= q_0r^{-2}.
\end{equation}

  Let $Y=\Omega_{0i}, i=1,2,3$ in (\ref{2.07.1.19}). By using  (\ref{2.07.2.19}) we can derive
\begin{align}
\a[\L_Y G]_A&={\sl\L}_Y \a_A-\omega_i \a_A- \frac{\tau_{-}\omega_{iA}}{r} \rho-\frac{\tau_{-}}{r} \ep_{AB} \omega_{iB}\sigma,\label{1.14.03.19}\\
\ab[\L_Y G]_A&={\sl\L}_Y \ab_A+\omega_i \ab_A- \frac{\tau_{+}\omega_{iA}}{r} \rho+\frac{\tau_{+}}{r} \ep_{AB} \omega_{iB}\sigma,\nn\\
\rho[\L_Y G]&=Y(\rho)+\frac{\tau_+}{2r} \omega_i^A \a_A+\frac{\tau_{-}}{2r}\omega_i^A \ab_A,\nn\\
\sigma[\L_Y G]&=Y(\sigma)-\frac{\tau_+}{2r} \omega_i^A{}^\star \a_A+\frac{\tau_{-}}{2r}\omega_i^A{}^\star \ab_A\nn.
\end{align}
 Hence, in view of (\ref{1.18.1.19}) and (\ref{12.21.1.17}), we can obtain   (\ref{1.11.2.19})
 and (\ref{1.11.3.19}).
Also there holds the symbolic formula
\begin{equation}\label{1.13.3.19}
(\rho, \sigma)[\L_Y G]-Y(\rho,\sigma)=O(\frac{\tau_{+}}{r})\a+O(\frac{\tau_{-}}{r})\ab.
\end{equation}
Applying the above identity to $G=\tF$ and $\ck F$, with the help of (\ref{1.18.1.19}), we can obtain (\ref{1.11.1.19}).

The result $\L_Y G={\sl\L}_Y G$ in (II) (a) holds for $Y=\p_t, \O$,   by using  the fact that $[Y, L]=[Y, \Lb]=0$  in (\ref{12.21.1.17}) and (\ref{2.07.1.19}). Hence, by using (\ref{1.18.1.19}), $\L_Y(\tilde F-\ck F)=0$ holds for $Y=\p_t, \O$. Consequently  (II) (a) is proved.

If $Y=\p_i$, using (\ref{2.07.1.19}) and (\ref{2.07.3.19}),   we can derive $\L_Y G-\sl{\L}_Y G=O(r^{-1}) G$ for any $2$-form $G$, and
$
(\rho,\sigma)[\L_Y(\tF-\ck F)]=Y(q_0 r^{-2}, 0).
$
For the $\a, \ab$ components, we note by using (\ref{2.07.1.19}) and (\ref{2.07.3.19}) that
\begin{equation}
\a[\L_Y(\tF-\ck F)]_A=q_0r^{-2}\L_Y L_\nu e_A^\nu=q_0 r^{-3} \omega_{iA}=\ab[\L_Y(\tF-\ck F)]_A \label{1.18.3.19}.
\end{equation}
Combining the results for all the components, we can obtain (\ref{1.18.4.19}). Hence we completed the proof of (I) and (II).
\medskip

Next we consider the second order derivatives.
By (II) (a), if  $X=\p_t, \O$, then $\L_Y \L_X \tF=\L_Y \L_X\ck F$.

  For other combinations of vector fields $X, Y$, note that Lie differentiating  (\ref{1.11.2.19}) and (\ref{1.18.3.19}) directly involves $\sl{\L}_Z^2 {\omega_i}_A$, which could be difficult to compute. Since for an $S_{u,v}$-tangent vector field $V$, $|V(\O)|\approx r|V|_\ga$,  we will instead rely on the  preliminary estimates
\begin{equation}\label{1.12.3.19}
\sum_{\ell=0}^2\sum_{Z^\ell\in \Omega^\ell}|\L_Z^{\ell}L(\O)|\les \jb{\tau_{-}}, \quad \sum_{Z\in \Omega}|\L_\p \L_Z  L(\O)|\les 1,
\end{equation}
and
\begin{equation}\label{1.18.2.19}
|\L_X \L_\p L(\O)|\les  r^{\zeta(X)}, \quad X\in \Pp,
\end{equation}
which will be confirmed in the end.

It is straightforward to calculate,
\begin{equation*}
\sl{\L}_Y(\L_X  L^\mu \Pi_{\mu\nu})e_A^\nu=\L_Y (\L_X  L^\mu \Pi_{\mu\nu})e_A^\nu= \L_Y \L_X L^\mu {e_A}_\mu+\f12 \L_X L^\mu \Lb_\mu \L_Y L_\nu e_A^\nu.
\end{equation*}
  In view of  (\ref{12.21.1.17}), $|\L_X L^\mu\Lb_\mu|\le 1$, which  vanishes if $X\in \Pp\setminus\B$. Hence the second term on the right is bounded by $r^{-1}|\L_Y L(\O)|$ and vanishes if $X\in \Pp\setminus\B$.
Consequently,
\begin{equation}\label{1.19.1.19}
|\sl{\L}_Y(\L_X  L^\mu \Pi_{\mu\nu})e_A^\nu|\les r^{-1}(|\L_Y \L_X L(\O)|+|\L_Y L(\O)|)
\end{equation}
and the last term  on the right hand side vanishes unless $X\in \B$. Next we will employ this estimate to control $\sl{\L}_Y\a[\L_X (\tF-\ck F)]$.

With the help of (\ref{1.19.1.19}), by using (\ref{1.12.3.19}) and (\ref{1.11.2.19}), we directly compute
\begin{equation}\label{1.13.1.19}
|\sl{\L}_Y\a[\L_X (\tF-\ck F)]|\les |q_0|(1+\tau_{-}^{\zeta(Y)+1} )r^{-3} \quad \mbox{ if }  X\in \Omega, Y\in \Pp;
\end{equation}
and  by using (\ref{1.18.2.19}) and (\ref{1.18.3.19})
\begin{equation}\label{1.18.5.19}
|\sl{\L}_Y\a[\L_{\p_i} (\tF-\ck F)]|\les |q_0|r^{-3+\zeta(Y)}, \quad \mbox{ if } Y\in \Pp,
\end{equation}
where for both we employed $|Y(r^{-2})|\les r^{-2+\zeta(Y)}$ on $C_0$.

\medskip
With $X=\Omega_{0i},\,  \p_i,  1\le i\le 3$, we now discuss the following three cases.

\noindent{ \bf Case 1.} If $Y=\Omega_{0j}$ where $1\le j\le 3 $, by using (\ref{1.14.03.19}), there holds
\begin{equation*}
\a[\L_Y\L_X (\tF-\ck F)]_A=\sl{\L}_Y(\a[\L_X (\tF-\ck F)]_\nu)e_A^\nu-\omega_j \a[\L_X(\tF-\ck F)]_A-\frac{\tau_{-}\omega_{iA}}{r} \rho[\L_X(\tF-\ck F)].
\end{equation*}
For $(\rho,\sigma)[\L_Y\L_X(\tF-\ck F)]$, we employ (\ref{1.13.3.19}) with $G=\L_X(\tF-\ck F)$ to derive
\begin{align*}
&(\rho,\sigma)[\L_Y\L_X(\tF-\ck F)]-Y((\rho, \sigma)[\L_X (\tF-\ck F)])=\left(O(\frac{\tau_{+}}{r})\a+O(\frac{\tau_{-}}{r})\ab\right)[\L_X (\tF-\ck F)].
\end{align*}
Note that it follows by using (\ref{1.11.1.19}), (II) and (\ref{1.18.4.19}) that
\begin{equation}\label{2.08.1.19}
(\rho, \sigma)[\L_X (\tF-\ck F)]= (X(q_0 r^{-2}), 0)\quad X\in \Pp.
\end{equation}
By using (\ref{2.08.1.19}), $Y((\rho, \sigma)[\L_X (\tF-\ck F)])=Y(X(q_0 r^{-2}), 0)$, which is bounded by $|q_0|r^{-2+\zeta(X)}$ by direct computation. The right hand side of the above two identities of the second derivative components can be bounded  by using (\ref{1.13.1.19})-(\ref{1.18.5.19}) and the estimates of $\a$, $\ab$ and $\rho$ in (\ref{1.13.2.19}).  In view of $\tau_+\approx r$ on $C_0$, this implies for $ Y\in \B$  and $ X \in \Pp$ that
 \begin{equation*}
| (\a, \sigma)[\L_Y \L_X (\tF-\ck F)]|\les |q_0|\jb{\tau_{-} }r^{-3},\,|\rho[\L_Y \L_X (\tF-\ck F)]|\les |q_0|r^{-2+\zeta(X)}.
 \end{equation*}

\noindent{\bf Case 2.} If $Y=\p$, by using (II)(b),
\begin{equation*}
\L_Y \L_X (\tF-\ck F)-{\sl\L}_Y \L_X (\tF-\ck F)=O(r^{-1}) \L_X (\tF-\ck F).
\end{equation*}
In view of (\ref{12.15.1.17}) and (\ref{1.13.1.19})-(\ref{2.08.1.19}),  this  implies
\begin{equation*}
|(\a, \rho, \sigma)[\L_Y \L_X (\tF-\ck F)]|\les |{\sl\L}_Y (\a, \rho, \sigma) [\L_X (\tF-\ck F)]|+ r^{-3}|q_0|\les r^{-3}|q_0|.
\end{equation*}

\noindent{\bf Case 3.} If $Y=\O$,  by using (II) (a),
\begin{equation*}
\L_Y \L_X (\tF-\ck F)={\sl\L}_Y \L_X (\tF-\ck F).
\end{equation*}
By using  (\ref{2.08.1.19}),  $Y((\rho, \sigma)[\L_X (\tF-\ck F)])=\big(q_0[Y, X](r^{-2}),0\big)$.
Thus noting $[Y,X]\in \Pp$ with $\zeta([Y,X])=\zeta(X)$, we derive
\begin{equation*}
|(\rho, \sigma)[\L_Y \L_X (\tF-\ck F)]|\les |q_0| r^{-2+\zeta(X)}.
\end{equation*}
For the $\a$ component, we employ (\ref{1.13.1.19}) and (\ref{1.18.5.19}) to obtain
$$
|\a[\L_Y \L_X(\tF-\ck F)]|\le |{\sl\L}_Y\a[\L_X(\tF-\ck F)]|\les |q_0|\jb{\tau_{-}}r^{-3}, \quad X\in \Omega
$$
and
\begin{equation*}
|\a[\L_Y \L_{\p_i}(\tF-\ck F)]|\le |{\sl\L}_Y\a[\L_{\p_i}(\tF-\ck F)]|\les |q_0|r^{-3}.
\end{equation*}

Hence (\ref{11.8.2.18}) holds for all the cases.
\medskip

It remains to prove (\ref{1.12.3.19}) and (\ref{1.18.2.19}).
Consider (\ref{1.12.3.19}). Note that the $\ell=0$ case is trivial. We first show that
\begin{equation}\label{1.14.02.19}
|\L_X L(Z)|\les \jb{\tau_{-}}^{1+\zeta(Z)}, \quad X\in \Omega, Z\in\Pp,
\end{equation}
which implies the $\ell=1$ case.
  It is straightforward to check that   $\L_\O L=0$. Otherwise since for $X\in \B$, $[X, \O]=\sum_{i=1}^3 C_i \Omega_{0i}$ with constants $|C_i|=1$,
\begin{equation}\label{1.14.01.19}
 \L_X L(\O)=X(L(\O))-L([X,\O])= \sum_{i=1}^3C_i(L, t \p_i+x^i \p_t)=\sum_{i=1}^3C_i\tau_{-}\frac{x^i}{r},
 \end{equation}
 where we used
 \begin{equation}\label{1.14.3.19}
 (L, t\p_i+x^i \p_t)=\tau_{-}\frac{x^i}{r}.
 \end{equation}
In view of
$
\L_X L(\Omega_{0i})=X(L(\Omega_{0i}))-L([X, \Omega_{0i}]),
$
due to $X\in \Omega$,  by using $|X(\frac{\tau_{-}}{r})|\les \frac{\tau_{-}}{r}$ on $C_0$, $[X, \Omega_{0i}]\in \Omega$  and (\ref{1.14.3.19}), we can obtain
\begin{equation*}
|\L_X L(\Omega_{0i})|\les\sum_{j=1}^3| X\rp{\le 1}(\tau_{-}\frac{x^j}{r})|\les \jb{\tau_{-}}.
\end{equation*}
If $Z=\p$, we can directly check that (\ref{1.14.02.19}) holds with the bounds $\les 1$. Thus (\ref{1.14.02.19}) is proved.

It remains to derive the second order estimates. In view of
 \begin{equation*}
 \L_Y \L_X L(\O)=Y(\L_X L(\O))-\L_X L  ([Y, \O]),
 \end{equation*}
 if $X, Y\in \Omega$, due to $[Y,\O]\in \Omega$, (\ref{1.14.02.19}), $\L_O L=0$ and (\ref{1.14.01.19}), we can derive  that
 \begin{equation*}
 |\L_Y \L_X L(\O)|\les \sum_{i=1}^3|Y(\tau_{-}\frac{x^i}{r})|+|\L_X L(\Omega)|\les \jb{\tau_{-}}.
 \end{equation*}
 Thus we have obtained the first result of (\ref{1.12.3.19}). If $Y=\p$ and $X\in \Omega$, by repeating the above calculation,   also noting that $[Y, \O]\in \{\p\}$ and (\ref{1.14.02.19}), we can obtain
\begin{align*}
& |\L_\p \L_X L(\O)|\les \sum_{i=1}^3|\p(\tau_{-}\frac{x^i}{r})|+|\L_X L(\p)|\les 1.
\end{align*}
Consequently the second inequality in (\ref{1.12.3.19}) is proved.

 Note $\L_Y \L_{\p_t}L(\O)=0$. To see (\ref{1.18.2.19}),   it suffices to consider the case where $\p$ is $\p_i$, with $i=1,2,3$ fixed. We first have
$$
\L_Y \L_{\p_i} L(\O)=-Y(L([\p_i, \O]))-\L_{\p_i}L([Y, \O]).
$$
By a straightforward calculation $L([\p_i, \O])=\sum_{j=1}^3C^i_j \omega_j$ with constants $|C^i_j|=1$.
Thus
\begin{equation*}
\L_Y \L_{\p_i} L(\O)=-\sum_{j=1}^3C^i_j Y(\omega_j)-\L_{\p_i}L([Y, \O]).
\end{equation*}
Note that $|\L_Y \O|\le r^{\zeta(Y)+1}.$ Thus in view of (\ref{12.21.1.17}),
$
|\L_{\p_i}L([Y, \O])|\les r^{\zeta(Y)}.
$
By direct calculation,
$
|Y(\omega_i)|\les r^{\zeta(Y)}.
$
Combining the two estimates yields (\ref{1.18.2.19}).

 We therefore have completed the proof of Lemma \ref{12.13.1.17}.
\end{proof}

\begin{proof}[Proof of Proposition \ref{11.8.1.18}]
We first consider  (\ref{11.7.3.18}).
The proof of the estimate for $\E^{T_0}_k[\phi](C_0)$ can follow directly by  Theorem \ref{ext_stb}. The estimate for the curvature fluxes requires a comparison of the curvature flux of $\L^k_Z \tF$ and $\L^k_Z \ck F$, for $k\le 2$.

 By using $2S=\tau_{+}L+\tau_{-}\Lb$ in (\ref{11.11.1.18}),  for any smooth $2$-form $G$,
\begin{align*}
2\ET[G](S, L)&=2(G_{S\a} \tensor{G}{_L^\a}-\frac{1}{4}\bm(S, L) G_{\a\b} G^{\a\b})\\
&= G_{(\tau_+ L+\tau_- \Lb)\a} \tensor{G}{_L^\a}-\frac{1}{4}\bm(\tau_+ L+\tau_- \Lb, L)G_{\a\b} G^{\a\b}\\
&= \tau_+ |\a[G]|^2+\tau_- (\rho[G]^2+\sigma[G]^2).
\end{align*}
Since $\tau_{-}\approx u_+\approx 1$ and $\tau_+\approx r$ on the cone $C_0:=\{t-t_0=r-R, t\ge t_0\}$, by using Lemma \ref{12.13.1.17}, for $Z^k\in \Pp^k$, we can bound
\begin{align*}
\ET[\L_Z^k \tF](S, L)&\les \ET[\L_Z^k \ck F](S,L)+\tau_+ \jb{\tau_{-}}^2 r^{-6} |q_0|^2+\jb{\tau_{-}} r^{-4}|q_0|^2\\
&\les \ET[\L_Z^k \ck F](S,L) +|q_0|^2 ( r^{-5}+r^{-4}).
\end{align*}
This implies the energy estimate of $\E^S_k[\tF][C_0]$ in (\ref{11.7.3.18}) by direct integration on $C_0$ with the help of  Theorem \ref{ext_stb} (2), the facts that $r\approx t$ on $C^0$, and $|q_0|\les \E_{0,\ga_0}$ in (\ref{10.30.5.18}).

\medskip
Next we prove (\ref{11.7.2.18}). Let $\tilde\Sigma_t=\Sigma_t\cap \{t\ge r+R\}, R\le t\le 2R$.  The existence and uniqueness of the solution in the region  $\{t-t_0\ge r-R, R\le t\le t_0\}$ follow directly from the classical theory.  We will focus on deriving the following energy bound in  this region with data on  $\tilde\Sigma_{t_0}$.
\begin{lemma}\label{2.03.1.19}
With $Z^k\in \mathbb{T}^k$ in the definition (\ref{eqn::QPDefinition}) and $0\le k\le 2$, there holds
\begin{align}
&\E^{T_0}_{\BT, k}[\phi, \tF](\tilde\Sigma_t)+\E^{T_0}_{\BT, k} [\phi, \tF](\tau_0) \les \ve_0^2, \quad R\le t\le 2R \label{11.08.1.18}.
\end{align}
\end{lemma}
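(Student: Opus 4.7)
The plan is to run a local-in-time energy estimate on the lens-shaped compact region $\Omega := \{(t',x) : R\le t'\le t_0,\ t'-t_0 \ge |x|-R\}$, bounded above by $\tilde\Sigma_{t_0}$, below by $\tilde\Sigma_t$ (or by the hyperboloid $\H_{\tau_0}$ in the second estimate), and on the lateral side by $C_0\cap\{R\le t'\le t_0\}$. Since $\Omega\subset\{|x|\le R\}\times[R,2R]$ is compact, the entire argument reduces to a Gr\"onwall-type estimate on a unit time interval, with the multiplier $X=T_0=\partial_t$ (whose deformation tensor $\pi^{T_0}\equiv 0$) applied in the identity \eqref{eq:EnergyID} to each commuted pair $(D_Z^k\phi,\L_Z^k\tF)$ with $Z^k\in\BT^k$, $k\le 2$.

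First I would record the initial energies on $\tilde\Sigma_{t_0}$. For $\phi$ this is immediate from the data bound \eqref{data_scf} and $\E_{2,\ga_0}\le \ve_0^2$. For $\tF$ we use that $\tF(t_0,\cdot)=E^{cf}$ and ${}^\star\tF(t_0,\cdot)=0$, so by Proposition \ref{11.7.1.18} the first part of \eqref{data_1} gives $\sum_{\ell\le 2}\int_{\tilde\Sigma_{t_0}}|\bar\nabla^\ell E^{cf}|^2\lesssim \E_{2,\ga_0}\le \ve_0^2$. The lateral flux on $C_0\cap\{R\le t'\le t_0\}$ is then controlled by the already-established bound \eqref{11.7.3.18}: indeed $\T[D_Z^k\phi,\L_Z^k\tF](T_0,L)$ is nonnegative and pointwise dominated (up to a universal constant) by $\T[D_Z^k\phi,\L_Z^k\tF](S,L)$ on $C_0$, where $\tau_-\approx 1$ and $\tau_+\approx r$. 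Thus both the initial slice term and the boundary term contribute at most $O(\ve_0^2)$.

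For the bulk term, I would run a bootstrap: assume $\E^{T_0}_{\BT,k}[\phi,\tF](\tilde\Sigma_{t'})\le 2C_0\ve_0^2$ for all $t'\in[t,t_0]$ and all $k\le 2$, and improve the constant. Using \eqref{eq:div4TfG} on the commuted fields, the bulk error has the schematic form
\[
\iint_\Omega \bigl(|F||D_{Z}^{\le k}\phi|^2+|\L_Z^{\le k}F|\,|\phi|\,|D_{Z}^{\le k}\phi|+|\L_Z^{\le k}F|\,|\L_Z^k\tF|\,|D_{Z}^{\le k}\phi|+\text{lower order}\bigr),
\]
where $F=\hat F+\tF$ and the commutator contributions $[\Box_A,D_Z^k]\phi$ for $Z\in\BT$ produce factors of the curvature $F$ and of $\p^\nu F_{\mu\nu}=J_\mu[\phi]$ (cubic in $\phi$). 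On the compact region $\Omega$ all these terms are handled by H\"older's inequality combined with the Sobolev embedding $H^2(\Omega)\hookrightarrow L^\infty$: the large linear piece $\hat F$ contributes a finite factor $C(\M_0)$, via the bound $\|\hat F\|_{W^{1,\infty}(\Omega)}\lesssim \|\hat F\|_{H^3}\lesssim\M_0$ (using one extra derivative on the bounded and smooth linear solution $\hat F$, obtained by commuting the free Maxwell equations with translations), while the perturbation $\tF$ and the scalar field are bounded in the relevant $L^p$ norms by the bootstrap assumption. Each bulk term is therefore at most $C(\M_0)\int_t^{t_0}\E^{T_0}_{\BT,\le 2}[\phi,\tF](\tilde\Sigma_{t'})\,dt'+\ve_0^4$, and Gr\"onwall over the unit-length interval $[R,2R]$ closes the bootstrap with $C_0$ depending only on $\M_0$. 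The estimate on $\H_{\tau_0}$ is obtained by the same divergence identity on the compact slab between $\H_{\tau_0}$ and $\tilde\Sigma_{t_0}$.

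The main obstacle to flag is the presence of the large field $\hat F$ in the commutator terms: because $\hat F$ is not small, one cannot absorb these contributions via a smallness factor and must instead rely on the compactness of $\Omega$ and the higher-order regularity (one spare derivative coming from the linearity of the equation $\p^\nu\hat F_{\mu\nu}=0$) to convert the weighted $L^2$ data bound $\M_0$ into a uniform pointwise bound on $\hat F$, $\L_Z\hat F$ on $\Omega$. Once this $L^\infty(\Omega)$ bound is in hand, the time interval being of unit length makes the Gr\"onwall constant harmless, and the smallness of $\ve_0$ on the scalar field and on $E^{cf}$ carries through to the final bound.
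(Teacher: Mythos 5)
Your overall skeleton — energy identity on the lens region with multiplier $T_0$, Gr\"onwall over the unit time interval, bootstrap, compactness, $L^\infty$ control via Sobolev on the compact slices — is the same as the paper's, but there is a genuine gap in how you propose to handle the large linear field. You claim $\|\hat F\|_{W^{1,\infty}(\Omega)}\lesssim\|\hat F\|_{H^3}\lesssim\M_0$, and explain this as ``one spare derivative coming from the linearity of the equation.'' There is no such free derivative: the regularity of the free Maxwell solution $\hat F$ is set by the regularity of the data $(E^{df},H)$, and the assumption \eqref{asmp} only gives $\M_{2,\ga_0}\le\M_0^2$, i.e., an $H^2$-type weighted bound, not $H^3$. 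So a pointwise bound on $\L_Z\hat F$ is not available from the hypotheses, and the step fails. The paper avoids this precisely: the only pointwise curvature bound it uses is $|\hat F|\lesssim\M_0$ from $H^2(\widetilde\Sigma_t)\hookrightarrow L^\infty$ (cf.\ \eqref{11.14.2.18}); whenever a commutator produces a factor $\L_\p^{\le 1}F$, the paper keeps it in $L^2$ (or $L^4$ via $H^1$) and instead places the scalar-field factors $\phi,\,D\phi$ in $L^\infty$ using \eqref{11.8.5.18}, so that only $\E^{T_0}_{\BT,\le 2}[\hat F]$, and never a third derivative, is needed.

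A smaller point: the lateral boundary of your region is $C_0^-=\{t-t_0=r-R,\ R\le t\le t_0\}$, which lies \emph{below} $\tilde\Sigma_{t_0}$; the bound \eqref{11.7.3.18} concerns the cone $C_0$ with $t\ge t_0$ and does not apply to $C_0^-$. Invoking it here would also be circular, since \eqref{11.7.3.18} and the present lemma are two halves of the same Proposition \ref{11.8.1.18}. But this is a non-issue in the end: you yourself observe that $\T[D_Z^k\phi,\L_Z^k\tF](T_0,L)\ge 0$ on $C_0^-$, and that sign means the lateral flux can simply be dropped from the divergence identity — which is exactly what the paper does.
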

The estimate in (\ref{11.7.2.18}) follows as a consequence of the second inequality in (\ref{11.08.1.18}). Indeed,
we can derive
$$\E^{T_0}_k[\phi](\tau_0)+\E^S_k[\tF](\tau_0)\les \E^{T_0}_{\BT, \le k} [\phi, \tF](\tau_0)$$
 by using the following facts:
\begin{itemize}
\item [(1)]
 For scalar functions $f$ and tensor fields $U$ there hold that
\begin{equation*}
|D_Z^l f|\les |D_\p\rp{\le l} f|, \quad |\L_Z^l U|\les |\L_\p \rp{\le l} U|, \,  \mbox{if }  \tau_+\les 1 \end{equation*}
for $Z^l\in \Pp^l$ with $l\in {\mathbb N}$;
\item [(2)] In the region of $\bigcup_{R\le t\le 2R} \widetilde{\Sigma}_t$, since $ \tau_+\les 1$ and $\tau\approx 1$, there hold for $2$-forms $G$ that
\begin{equation*}
\T[G](S, \Tb)\les \T[G](T_0, \Tb),
\end{equation*}
which can be directly seen by setting $f=0$ in (\ref{11.13.1.18}),  and comparing  (\ref{11.13.1.18}) with (\ref{11.13.2.18}).
\end{itemize}
Now we prove the first estimate in (\ref{11.08.1.18}) by  a bootstrap argument. It will immediately give the second one  by running the standard energy estimates.

With the initial bound for the energies of $(\tF, \phi)$ on $\widetilde{\Sigma}_{t_0}$  given in the first part of  Proposition \ref{data_1} and the bound of $\E_{2,\ga_0}$,   in the region $\D_{-}^{t_*}=\bigcup_{\{R< t_*\le t\le 2R \}}\widetilde{\Sigma}_t$, we assume that
\begin{equation}\label{11.8.4.18}
\E^{T_0}_{\BT,k}[\phi, \tF](\widetilde{\Sigma}_t)\le 2\dn^2, \mbox{ where } k=0, 1,2
\end{equation}
with $\dn>\ve_0$ to be chosen later.
The right hand side of (\ref{11.8.4.18}) will be improved to  $<2\dn^2$. Then (\ref{11.8.4.18}) will hold true for $R\le t\le 2R$ by the continuity argument.

We will constantly employ the result  in $\{R\le t\le 2R, t\ge r+R\}$  of the linear part of the Maxwell field $\hat F$,
\begin{equation}\label{11.14.2.18}
\E^{T_0}_{\BT, \le 2}[\hat F](\widetilde{\Sigma}_t)\les \M_0^2;\qquad |\hat F| \les \M_0,
\end{equation}
where the last estimate is a consequence of the first one and the standard Sobolev embedding on $\widetilde{\Sigma}_t.$ Again, since the bounds are  the universal constants, we will denote them as $\les 1$.

 By using the standard Sobolev inequality  and (\ref{11.8.4.18}), we can obtain the pointwise bounds
  \begin{equation}\label{11.8.5.18}
 \|\tF\|_{L^\infty(\D_{-}^{t_*})}+ \|D\rp{\le 1}\phi\|_{L^\infty(\D_{-}^{t_*})}\les \dn, \qquad |F|\les 1,
  \end{equation}
where the last estimate  is a consequence of $|\hat F|\les 1$ in (\ref{11.14.2.18}) and the first estimate of (\ref{11.8.5.18}).

  Now we give the standard energy inequalities. Let $C_0^{-}=\{t-t_0=r-R,  R\le t\le t_0\}.$ Recall the definition of $\ET[f, G]_{\mu\nu}$ from Section \ref{sec_2}. The energy density on $C_0^{-}$ verifies $\ET[f, G](T_0, L)(C_0^{-})\ge 0$.
  Hence it follows by the divergence theorem and (\ref{eq:div4TfG}) that
  \begin{align}
  \E_{\BT, k}^{T_0}[\phi,\tF](\widetilde{\Sigma}_t)&\le\sum_{Z^k\in \BT^k} \abs{\iint_{\D^t_{-}}\L_Z^k \tF_{\mu T_0} J[\L_Z^k \tF]^\mu+\Re(\overline{(\Box_A-1)D_Z^k \phi} D_{T_0} D_Z^k \phi)+F_{T_0\mu}J^\mu[D_Z^k \phi]}\nn\\
  &+ \E_{\BT, k}^{T_0}[\phi, \tF](\widetilde{\Sigma}_{t_0})\nn\\
  & \les \ve_0^2+\sum_{Z^k\in \BT^k}\iint_{\D^t_{-}} |J[D_Z^k \phi]|+|J[\L_Z^k \tF]|^2+|(\Box_A-1)D_Z^k \phi|^2\nn\\
  &\les \ve_0^2+\sum_{Z^k\in \BT^k}\iint_{\D^t_{-}}|(\Box_A-1) D_Z^k \phi|^2+|\L_Z^k J[\phi]|^2, \qquad  k\le 2,\label{1.14.04.19}
  \end{align}
where we used  $|J[D_Z^k\phi]|\les |D D_Z^k \phi||D_Z^k\phi|$, $J[\L_Z^k \tF]=\L_Z^k J[\phi]$ (due to the Maxwell equation in (\ref{eq:eq4lFandNLF})), (\ref{11.8.5.18}), Cauchy-Schwarz inequality and Gronwall's inequality.

   In particular, if $k=0$, by Gronwall's inequality, the above estimate implies that
  \begin{equation}\label{11.14.1.18}
  \E^{T_0}_{\BT,0}[\phi, \tF](\widetilde{\Sigma}_t)\les \ve_0^2.
  \end{equation}

Next we show for $Z^k\in {\mathbb T}^k$, $k\le 2$
\begin{equation}\label{11.9.5.18}
\|\L_Z^k J [\phi]\|_{L^2(\widetilde{\Sigma}_t)}\les \dn^2.
\end{equation}
Indeed, by using the last inequality in Lemma \ref{lem::Commutator1}, there holds for $ Z^k\in {\mathbb T}^k$,
\begin{align*}
|\L_Z^k J [\phi]|&\les \sum_{l_1+l_2\le k} |D_Z^{l_1}\phi||D D_Z^{l_2}\phi|+\sum_{l_1+l_2\le k-1}|D^{l_1}_Z\phi||\L_Z^{l_2}F| |\phi|.
\end{align*}
Note that by  (\ref{11.8.4.18}) and (\ref{11.14.2.18})
\begin{equation}\label{11.9.4.18}
\|\L_Z \rp{\le 1} F\|_{L^2(\widetilde{\Sigma}_t)}\les \E^{T_0}_{\BT, \le 1}[F]^\f12(\widetilde{\Sigma}_t)\les 1.
\end{equation}
With $1\le k\le 2$, by using the pointwise estimate for the scalar field in  (\ref{11.8.5.18}),
\begin{align*}
&\sum_{l_1+l_2\le k} \||D_Z^{l_1}\phi||D D_Z^{l_2}\phi|\|_{L^2(\widetilde{\Sigma}_t)}\les \dn \E_{\BT, \le  k}^{T_0}[\phi]^\f12(\widetilde\Sigma_t)\les \dn^2,\\
&\sum_{l_1+l_2\le k-1}\||D_Z^{l_1}\phi||\L_Z^{l_2}F| |\phi|\|_{L^2(\widetilde{\Sigma}_t)}\les \dn^2 \|\L_Z\rp{\le 1} F\|_{L^2(\widetilde{\Sigma}_t)}\les \dn^2,
\end{align*}
where we employed (\ref{11.8.4.18}) for deriving the last step in the first inequality and (\ref{11.9.4.18}) for the second inequality.
This gives (\ref{11.9.5.18}).

It remains to consider  the second term  on the right hand side in (\ref{1.14.04.19}). By Lemma \ref{lem::Commutator1}, there hold
\begin{equation*}
(\Box_A -1)D_X \phi= Q(F, \phi, X), \quad \forall X\,\in {\mathbb T}
\end{equation*}
and
\begin{equation}\label{11.9.3.18}
\begin{split}
\sum_{X, Y\in {\mathbb T}}|(\Box_A-1)D_X D_Y\phi|&\le \sum_{X, Y\in {\mathbb T}}\{|Q(F, D_X \phi, Y)|+|Q(F, D_Y\phi, X)|+|Q(\L_X F, \phi, Y)|\\
&+|F_{X\mu} \tensor{F}{_Y^\mu} \phi|\},
\end{split}
\end{equation}
where, due to the definition of the quadratic form of $Q$  in (\ref{comm_1}) and for  $Z\in \BT$,
\begin{equation}\label{11.9.2.18}
|Q(G, f, Z)|\les |G| |Df |+|J[G]||f|, \mbox{ with } J[G]_\nu=\p^\mu G_{\nu\mu}.
\end{equation}
Therefore we can obtain for all $X\in {\mathbb T}$, with the help of the Maxwell equation in (\ref{eqn::mMKG}), that
\begin{equation*}
|(\Box_A -1)D_X\phi|\les (|F|+|\phi|^2)|D\phi|
\end{equation*}
and obtain from (\ref{11.9.3.18}) and (\ref{11.9.2.18}) by using $|F|\les 1$ in (\ref{11.8.5.18}) that
\begin{align*}
\sum_{X, Y\in {\mathbb T}}|(\Box_A-1)D_X D_Y \phi|&\les \sum_{Z\in {\mathbb T}}\left(|D D_Z\phi|+(|D\phi|^2+|J[\L_Z F]|+1) |\phi|+|\L_Z F||D\phi|\right).
\end{align*}
 With the help of  (\ref{11.8.5.18}) and (\ref{11.8.4.18}), the above two estimates can be summarized as
\begin{align*}
\|(\Box_A -1)D_Z^k\phi\|_{L^2(\widetilde{\Sigma}_t)}&\les \|D D_\p^{k-1} \phi\|_{L^2(\widetilde{\Sigma}_t)}+\||D\phi||\L_\p\rp{\le k-1} F|\|_{L^2(\widetilde{\Sigma}_t)}\\
&+\|\phi\|_{L^2(\widetilde{\Sigma}_t)}+\dn^3,
\end{align*}
where we used (\ref{eqn::mMKG}), $J[\L_Z F]=\L_Z J[\phi]$, and the estimate (\ref{11.9.5.18}) for $\L_Z J[\phi]$.

Consider the second term on the right hand side. If $k=1$, we can directly use $|F|\les 1$ to bound the curvature term; if $k=2$, we employ the Sobolev embedding and an estimate similar to (\ref{11.9.4.18}). This implies with $k=1, 2$,
\begin{align*}
\||D\phi||\L_\p\rp{\le k-1} F|\|_{L^2(\widetilde{\Sigma}_t)}&\les \| D\rp{\le k-1} D\phi\|_{L^2(\widetilde{\Sigma}_t)}(\| \L\rp{\le k}_\p F\|_{L^2(\widetilde{\Sigma}_t)}+1)\\
&\les \|D\rp{\le k-1} D\phi\|_{L^2(\widetilde{\Sigma}_t)}(\E^{\f12}_{\BT, \le k}[F](\widetilde{\Sigma}_t)+1)\les\|D\rp{\le k-1} D\phi\|_{L^2(\widetilde{\Sigma}_t)}.
\end{align*}
Substituting the above two sets of estimates and (\ref{11.9.4.18})  into (\ref{1.14.04.19}) gives
\begin{align*}
\E^{T_0}_{\BT, k}[\phi, \tF](\widetilde{\Sigma}_t)&\les \ve_0^2+\int^{t_0}_{t} \E^{T_0}_{\BT, \le k-1}[\phi](\widetilde{\Sigma}_{t'}) dt'+\dn^4+\sup_{t\le t'\le 2R}\E^{T_0}_{\BT, 0}[\phi](\widetilde{\Sigma}_{t'})\\
&\les \ve_0^2 +\dn^4
\end{align*}
by using the $0$-order energy estimate in (\ref{11.14.1.18}) and induction. Hence, we have obtained
\begin{equation*}
\E^{T_0}_{\BT, k}[\phi, \tF](\widetilde{\Sigma}_t)<C (\ve_0^2+\dn^4), \mbox{ where } k=0, 1,2,
\end{equation*}
with  the universal constant $C>1$.  The above inequality holds with the right hand side replaced by $<2\dn^2$,   as long as we choose
   $$
\dn=C\varepsilon_0, \quad C\dn^2<1.
   $$
Thus the first inequality in  (\ref{11.08.1.18}) is proved.

The remaining inequality can be derived by repeating the energy estimate in the region enclosed by $\widetilde{\Sigma}_{t_0}$ and $\H_{\tau_0}$, while the nonlinear error integral can be treated by substituting the first estimate in (\ref{11.08.1.18})  directly.
	\end{proof}
	
	\subsection{Sobolev inequalities}
	
In this subsection, we  derive in $\D^+$ various  Sobolev inequalities on hyperboloids, which will be crucial for giving decay properties of solutions. In particular,
	we  denote by $\H_\tau^i, \H_\tau^e$ the interior and exterior parts on $\H_\tau$ respectively, which read
\begin{equation*}
		\H_\tau^i=\{r\le \frac{t}{3}\}\cap \H_{\tau},\quad \H_\tau^e =\{r> \frac{t}{3}\} \bigcap \H_\tau.
	\end{equation*}
Recall that  $\H_\tau$ denotes the set $\{(t,x): \sqrt{t^2-|x|^2}=\tau,\, t-t_0\ge r-R\}$ for $\tau\ge \tau_0$  and  $S_{\tau,r}=\{|x|=r\}\cap \H_\tau$.
Let $r_{\max}(\tau)=\max\{r: S_{\tau, r}\subset \H_\tau\}$.
Since in the exterior region, the solution of (\ref{eqn::mMKG}) is highly nontrivial, we especially need to confirm that the trace of a given  field  on $S_{\tau, r_{\max}(\tau)}$ with a weight of $r$ does not appear on the right hand side of the Sobolev inequalities (which is in contrast to \cite[Sections 4.1 and 4.3]{Psarelli1999}). Otherwise we have to derive additionally a set of $r$-weighted trace estimates whenever using the Sobolev inqualities on $\H_\tau$, which is cumbersome and unnecessary. This is achieved mainly by  Lemma \ref{lemma::GaugeInvariantSobolev1} and its applications. As a remark, the Sobolev inequalities here are general results in $\D^+$, where the constants $C$, $c$ and those in ``$\les $" in this subsection are general constants, independent of any of the quantities $\M_0, \delta, \ep, \ga_0$ introduced in Section \ref{1.08.1.19}.

We will constantly employ the following facts, which can be derived by using $\L_\Omega \Tb=0$, $\L_{\Omega_{\mu\nu} }\gb=0$, and direct calculations.
\begin{lemma}\label{11.11.2.18}
\begin{itemize}
\item[(1)]  If $G$ is $\H_\tau$-tangent, for the generator of the Lorentz group  $\{\Omega_{\mu\nu}, 0\le \mu<\nu\le 3\}$  in the Minkowski space, $\L_{\Omega_{\mu\nu}}G$ is $\H_\tau$-tangent and
\begin{equation*}
|{\Omega_{\mu \nu}}|G||\le |\L_{\Omega_{\mu\nu}} G|,
\end{equation*}
where the norms for the tensor fields are taken by $\gb$.
\item[(2)] There holds the identity
\begin{equation}\label{11.10.7.18}
 \sum_{a=1}^3\Omega_{0a}^{\mu}\Omega_{0a}^{\nu}=t^2 \Pi^{\mu\nu}+\tau^2 \Nb^\mu\Nb^\nu,
 \end{equation}
 where $\Pi^{\mu\nu}$ is the projection tensor to the sphere $S_{\tau, r}$ defined in Section 2.1, and it gives the induced metric $\ga$ on the sphere.  As a straightforward consequence, due to $\tau\le t$,
 with $\ud D$ the  covariant derivative associated to $A$  on  $\H_\tau$, i.e. $\ud D_\nu=\bar \Pi^\mu_\nu D_\mu$, for a complex scalar field $f$,
\begin{equation}\label{10.27.9.18}
|{\ud D} f|^2\le \tau^{-2} \sum_{a=1}^3|D_{\Omega_{0a}} f|^2.
\end{equation}
The same estimate holds for real-valued functions $f$ with $\ud D$ replaced by  $\ud\nabla$ and $D_{\Omega_{0a}} $ replaced by $\p_{\Omega_{0a}}$.
	\end{itemize}
\end{lemma}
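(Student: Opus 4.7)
The plan is to derive both parts from the Killing property of the Lorentz generators on $(\mathbb{R}^{3+1}, \bm)$ combined with a short algebraic computation in Cartesian coordinates.

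For part (1), I would first confirm the hint $\L_\Omega \Tb = 0$ noted just before the lemma. The Euler field $S = x^\mu \p_\mu$ satisfies $[\Omega_{\mu\nu}, S] = 0$ (a one-line computation: $\Omega_{\mu\nu}^\beta \p_\beta x^\alpha = \Omega_{\mu\nu}^\alpha$ equals $x^\beta \p_\beta \Omega_{\mu\nu}^\alpha$), and the scalar $\tau^2 = -\bm(S,S)$ is Lorentz-invariant, so $\L_{\Omega_{\mu\nu}}\Tb = \L_{\Omega_{\mu\nu}}(\tau^{-1}S) = 0$. For any $\H_\tau$-tangent tensor $G$, the Leibniz rule applied to $G(\Tb,\ldots) = 0$ then yields $(\L_{\Omega_{\mu\nu}}G)(\Tb,\ldots) = 0$, so $\L_{\Omega_{\mu\nu}}G$ is again $\H_\tau$-tangent. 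For the pointwise norm bound, since $\Omega_{\mu\nu}$ is Killing for $\bm$ and preserves $\Tb$, it is also Killing for the induced metric $\gb$; Leibniz on $|G|^2 = \gb(G,G)$ then gives $2|G|\,\Omega_{\mu\nu}(|G|) = \Omega_{\mu\nu}(|G|^2) = 2\gb(\L_{\Omega_{\mu\nu}}G, G)$, and the Cauchy--Schwarz inequality for the Riemannian metric $\gb$ on $\H_\tau$ delivers the stated bound on $\{|G|>0\}$, extending to the full hyperboloid by continuity.

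For part (2), a direct Cartesian computation suffices. Writing $\Omega_{0a}^\mu = -t\delta^\mu_a - x^a \delta^\mu_0$ and expanding produces
\[
\sum_{a=1}^3 \Omega_{0a}^\mu \Omega_{0a}^\nu = t^2 \sum_{a=1}^3 \delta^\mu_a \delta^\nu_a + t\bigl(T_0^\mu \omega^\nu + T_0^\nu \omega^\mu\bigr) + r^2 T_0^\mu T_0^\nu,
\]
with $\omega^\mu := x^a\delta^\mu_a = rN^\mu$. The spatial Kronecker identity $\sum_a \delta^\mu_a \delta^\nu_a = \Pi^{\mu\nu} + N^\mu N^\nu$, obtained from the radial/spherical decomposition of the spatial tangent space, combined with $\tau\Nb = rT_0 + tN$ from (\ref{11.11.1.18}), collapses the last three terms into $(tN+rT_0)^\mu(tN+rT_0)^\nu = \tau^2 \Nb^\mu \Nb^\nu$, giving (\ref{11.10.7.18}).

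The consequence (\ref{10.27.9.18}) then follows immediately. The tangential projector to $\H_\tau$ equals $\bar\Pi^{\mu\nu} = \Pi^{\mu\nu} + \Nb^\mu \Nb^\nu$ and $|\ud D f|^2 = \bar\Pi^{\mu\nu} D_\mu f \, \overline{D_\nu f}$, so dividing (\ref{11.10.7.18}) by $\tau^2$ and invoking $t \ge \tau$ within $\D^+$ yields the quadratic-form bound
\[
\tau^{-2}\sum_{a=1}^3 \Omega_{0a}^\mu \Omega_{0a}^\nu = (t/\tau)^2\, \Pi^{\mu\nu} + \Nb^\mu \Nb^\nu \ge \bar\Pi^{\mu\nu}.
\]
Contracting with $D_\mu f\,\overline{D_\nu f}$ produces $|\ud D f|^2 \le \tau^{-2}\sum_a |D_{\Omega_{0a}} f|^2$, and the real-valued version is identical upon replacing $D$ by $\p$. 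No essential obstacle arises; the only care needed is to keep index conventions consistent when passing between spacetime tensors and their hyperboloidal restrictions, and to handle zeros of $|G|$ in part (1) by a standard continuity argument.
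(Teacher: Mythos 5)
Your proof is correct and is exactly the elaboration of what the paper leaves unproved (the paper simply notes that the lemma follows from $\L_\Omega\Tb=0$, $\L_{\Omega_{\mu\nu}}\gb=0$, and direct calculation). One small citation slip: the identity $\tau\Nb = rT_0 + tN$ you invoke is (\ref{11.16.1.18}), not (\ref{11.11.1.18}).
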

We first recall the standard Sobolev inequality on $\H_\tau$.
\begin{lemma}\label{sob_1}
 For any  $\H_\tau$-tangent, real-valued tensor field $U$, there hold with a Sobolev constant $C>0$ that
\begin{equation}\label{9.16.1}
\|U\|_{L^p(\H_\tau)}\le C( \|\ud \nabla U\|^{\frac{3}{2}-\frac{3}{p}}_{L^2(\H_\tau)}\|U\|_{L^2(\H_\tau)}^{\frac{3}{p}-\frac{1}{2}}+\tau^{-(\frac{3}{2}-\frac{3}{p})}\|U\|_{L^2(\H_\tau)}),\, 2 \le  p\le 6;
\end{equation}
 and the trace inequality
\begin{equation}\label{trace}
\|U\|_{L^4(S_{\tau, r})}\le C( \|\ud{\nabla} U\|_{L^2(\H_\tau)}+\tau^{-1}\|U\|_{L^2(\H_\tau)}).
\end{equation}
They also hold for complex scalar field $f$ if $\ud \nabla$ is replaced by $\ud D$.
\end{lemma}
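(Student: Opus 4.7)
My approach is to exploit the exact Lorentzian dilation symmetry $\Phi_\tau : \H_1 \to \H_\tau$, $(t,x)\mapsto (\tau t,\tau x)$, which pulls back the induced Riemannian metric as $\Phi_\tau^*\gb = \tau^2\gb_1$. Since rescaling a metric by a positive constant preserves the Christoffel symbols, the Levi-Civita connection transforms naturally: $\Phi_\tau^*(\ud\nabla U) = \ud\nabla(\Phi_\tau^* U)$. Writing $\tilde U:=\Phi_\tau^* U$ for an $\H_\tau$-tangent $(0,k)$-tensor, a direct computation in coordinates gives
\begin{equation*}
\|U\|_{L^p(\H_\tau)} = \tau^{3/p-k}\|\tilde U\|_{L^p(\H_1)},\quad \|\ud\nabla U\|_{L^2(\H_\tau)} = \tau^{1/2-k}\|\ud\nabla \tilde U\|_{L^2(\H_1)},
\end{equation*}
and $\|U\|_{L^4(S_{\tau,r})} = \tau^{1/2-k}\|\tilde U\|_{L^4(S_{1,r/\tau})}$. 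Power-counting in \eqref{9.16.1} with $\alpha=3/2-3/p$ shows that every term on both sides carries the prefactor $\tau^{3/p-k}$; for \eqref{trace} every term carries $\tau^{1/2-k}$. Hence the $\tau$-dependence cancels and both inequalities reduce to their scale-invariant $\tau=1$ counterparts on the unit hyperboloid.

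\textbf{The inequalities on $\H_1$.} The set $\H_1 = \{(t,x)\in\Real^{3+1}: t^2-|x|^2 = 1,\, t > 0\}$, with its induced Riemannian metric, is a complete, smooth $3$-manifold isometric to hyperbolic $3$-space, and in particular has bounded geometry: positive injectivity radius and uniformly bounded curvature. The standard localization technique---a uniformly locally finite atlas of normal coordinate charts of fixed size, a subordinate partition of unity, and the Euclidean Gagliardo--Nirenberg inequality together with the classical Euclidean trace theorem on each chart---yields both the interpolation bound \eqref{9.16.1} and the trace bound \eqref{trace} on $\H_1$ with a universal constant. Since we work on a truncation $\H_\tau \cap \{t-t_0 \ge r-R\}$ whose image in $\H_1$ varies with $\tau$, I would first apply a bounded linear extension operator across the smooth boundary sphere to extend $\tilde U$ to all of $\H_1$ with $W^{1,2}$-norm controlled by a universal constant; the rescaled boundary spheres $\Phi_\tau^{-1}(S_{\tau,r_{\max}(\tau)})$ form a geometrically controlled family (each is the intersection of $\H_1$ with a uniformly shaped rescaled lightcone), so principal curvatures stay in a fixed compact range and the extension constant is independent of $\tau$.

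\textbf{Complex scalar case.} For a complex scalar $f$, the diamagnetic (Kato) inequality $|\ud\nabla|f|| \le |\ud D f|$ holds pointwise on $\H_\tau$, and $\||f|\|_{L^p} = \|f\|_{L^p}$; applying the real-valued inequalities to $|f|$ immediately produces the gauge-covariant version with $\ud\nabla$ replaced by $\ud D$. The main technical point to verify is that the extension step in the second paragraph remains $\tau$-uniform in the tensor case, where one must check that the local frames needed to read off tensor components across the boundary carry $\tau$-independent transition bounds; this follows from the bounded geometry of the rescaled truncation described above.
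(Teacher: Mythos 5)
The paper gives no proof of this lemma; it is introduced with the words ``We first recall the standard Sobolev inequality on $\H_\tau$'' and treated as known, so your proposal is a supplement rather than something to measure against a written argument. It is, however, fully consistent with the paper's own machinery: the rescaling $x^\mu = \tau y^\mu$ is set up in the paragraph immediately following this lemma (there used to prove Lemma~\ref{lemma::GaugeInvariantSobolev1}). Your power-counting is correct: every term in \eqref{9.16.1} carries $\tau^{3/p-k}$ and every term in \eqref{trace} carries $\tau^{1/2-k}$, so both reduce to their counterparts on $\H_1$ over the $\tau$-dependent truncation $\tilde{\H}_\tau = \{(s,y)\in\H_1 : s-|y|\ge R/\tau\}$. (Incidentally, your exponents implicitly use the relation $d\H_\tau = \tau^3\, d\H_1$, which is the correct volume scaling for a $3$-dimensional hypersurface under the dilation; the paper's displayed $d\H_\tau = \tau^2\, d\H_1$ in Section~3.2 appears to be a typo, and you should not propagate it.) The genuinely nontrivial point is $\tau$-uniformity of the constant over the increasing family of domains $\tilde\H_\tau$ in $\H_1\cong\mathbb{H}^3$, and you identify it correctly: the boundary $\{s-|y|=R/\tau\}\cap\H_1$ is a geodesic sphere in $\mathbb{H}^3$ whose hyperbolic radius is bounded below because $\tau\ge\tau_0$, so its second fundamental form $\coth\rho$ is uniformly bounded above, giving a $\tau$-uniform collar extension. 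As an alternative that avoids the global extension for \eqref{9.16.1}, one can simply cover $\tilde\H_\tau$ by hyperbolic unit balls of uniformly bounded multiplicity (bounded geometry of $\mathbb{H}^3$ plus uniform Lipschitzness of the boundary) and sum the local Euclidean Gagliardo--Nirenberg estimates; \eqref{trace} then only needs a localization near a single coordinate sphere $S_{1,r/\tau}$, again with bounded-geometry control. Your Kato/diamagnetic reduction for complex scalars is correct, and for real tensors the analogous pointwise Kato inequality $|\ud\nabla\,|U||\le|\ud\nabla U|$ reduces to the scalar case; this last reduction is implicit in your write-up and is worth stating explicitly. I see no genuine gap, only that the extension/covering step is left at the level of a plausible sketch rather than fully written out.
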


We will employ the following scaling argument to derive Sobolev inequalities on $\H_\tau$.
		Let $(x^\mu)_{\mu=0}^3 $ be the original coordinates on $\H_\tau$, $\tau\ge\tau_0$. Define the rescaled variables $(y^\mu)$ on $\H_1$ as $x^\mu = \tau y^\mu$ where $\mu=0,1,2,3$. Then introduce a new function $\tilde{\phi}((y^\mu))$ and a new connection $\tilde{A}((y^\mu))$ on the truncated unit hyperboloid $\H_1$ by:\begin{equation*}
			\begin{split}
				\tilde{\phi}((y^\mu)) &= \phi((x^\mu)),\quad
				\tilde{A}((y^\mu)) = \tau A((x^\mu)).
			\end{split}
		\end{equation*}
		Then, partial derivatives scale as
\begin{equation*}
			\partial_{y^\mu}\tilde{\phi} = \frac{\partial\phi}{\partial x^\mu}\tau,
\end{equation*}
and the covariant derivatives relative to the connection $\tilde{A}$ scale as
\begin{equation*}
\tilde{D}_\mu \tilde{\phi} = \frac{\partial \tilde{\phi}}{\partial y^\mu} + i\tilde{A}_\mu \tilde{\phi} = \tau D_\mu\phi.
\end{equation*}
For simplicity, we will not use $\,\widetilde{}\,$ to distinguish the quantities under the rescaled coordinates from the original ones whenever there is no confusion. The volume elements of $\H_\tau$, $\H_1$ and $dx$ on $\H_\tau$ are related in the following way: $d\H_\tau = \tau^2d\H_1,$ and  $ dx = \frac{t}{\tau}d\H_\tau$ on $\H_\tau$.

\begin{lemma} \label{lemma::GaugeInvariantSobolev1}
(1)
		Let $\phi$ be a complex scalar field defined on the future directed truncated unit hyperboloid $\H_1$ of ${\mathbb R}^{3+1}$. Then there is a constant $C> 0$ such that \begin{equation*}
			\begin{split}
				\left(\int_{\H_1}(1+r^2)^3|\phi|^6\,dx \right)^\frac{1}{6} &+ \sup_{S_{1, r}\subset \H_1}\left(\int_{S_{1,r}}(1+r^2)^2|\phi|^4\,dS_{1,r} \right)^\frac{1}{4}
				\\&\le  C\left(\int_{\H_1}(|\phi|^2 + \sum_{a=1}^3|D_{\Omega_{0a}}\phi|^2)\,dx\right)^\frac{1}{2},
			\end{split}
		\end{equation*}
where $dx = r^2drd\omega, \omega\in {\mathbb S}^2$.

(2) The above estimate holds for real-valued  scalar functions with $D_{\Omega_{0a}}$ replaced by $\Omega_{0a}^\mu\p_\mu$.

(3) There holds for any smooth $\H_1$-tangent tensor field $G$ that
\begin{equation*}
			\begin{split}
				\left(\int_{\H_1}(1+r^2)^3|G|^6\,dx \right)^\frac{1}{6}& + \sup_{S_{1, r}\subset \H_1}\left(\int_{S_{1,r}}(1+r^2)^2|G|^4\,dS_{1,r} \right)^\frac{1}{4}\\
				&\le  C\left(\int_{\H_1}(|G|^2 + \sum_{a=1}^3|\L_{\Omega_{0a}}G|^2)\,dx\right)^\frac{1}{2}.
			\end{split}
		\end{equation*}

	\end{lemma}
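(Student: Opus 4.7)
The plan is to reduce parts (1) and (3) to the real-scalar case (2), and then to reduce (2) to a weighted Sobolev inequality on $\Real^3$ that is handled by an explicit substitution.

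For the core step, I identify $\H_1$ with a subset of $\Real^3$ via the diffeomorphism $(t,x) \mapsto x$, under which the volume element $dx = r^2\,dr\,d\omega$ becomes Lebesgue measure on $\Real^3$. For a real scalar $u$, extend it trivially to spacetime by $\tilde u(t, x) := u(x)$; since $\Omega_{0a}$ is tangent to $\H_1$, the value of $\Omega_{0a}(u)$ on $\H_1$ is extension-independent, and for this specific extension $\Omega_{0a}(\tilde u) = t\, \pa_a u$. Using $t = \sqrt{1+r^2}$ on $\H_1$ then yields
\[
\sum_{a=1}^3 |\Omega_{0a}(u)|^2 \big|_{\H_1} = (1 + r^2) |\nabla u|^2,
\]
so (2) is equivalent to the weighted Euclidean Sobolev inequality
\[
\bigl\|(1+r^2)^{1/2} u\bigr\|_{L^6(\Real^3)} + \sup_{r > 0}\bigl\|(1+r^2)^{1/2} u\bigr\|_{L^4(S_r)} \les \Bigl(\int_{\Real^3}u^2 + (1+r^2)|\nabla u|^2\, dx\Bigr)^{1/2}.
\]

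To prove this weighted Euclidean inequality, set $v := (1+r^2)^{1/2} u$; the chain rule gives $|\nabla v|^2 \les u^2 + (1+r^2)|\nabla u|^2$, so the classical Sobolev embedding $\|v\|_{L^6(\Real^3)} \les \|\nabla v\|_{L^2(\Real^3)}$ yields the $L^6$ bound at once. For the trace bound, the fundamental theorem of calculus in the radial direction gives $|v(r,\omega)|^4 \le 4\int_r^\infty |v|^3|\pa_\rho v|\,d\rho$; integrating over $\omega \in {\mathbb S}^2$, using $r \le \rho$ to pass to a volume integral on $\Real^3$, then applying Cauchy--Schwarz followed by Sobolev, produces $\|v\|_{L^4(S_r)}^4 \les \|v\|_{L^6}^3 \|\nabla v\|_{L^2} \les \|\nabla v\|_{L^2}^4$, from which the claim follows upon taking fourth roots.

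Finally, (1) and (3) follow from (2) via diamagnetic/Kato-type identities. Since the connection $A$ is a real $U(1)$ form and $\Omega_{0a}$ is a real vector field, the identity $\Omega_{0a}(|\phi|^2) = 2 \Re(\overline{\phi}\cdot D_{\Omega_{0a}} \phi)$ gives the pointwise estimate $|\Omega_{0a}(|\phi|)| \le |D_{\Omega_{0a}} \phi|$, and applying (2) to $u = |\phi|$ proves (1). For (3), since $\Omega_{0a}$ is a Killing vector field of $\bm$ (hence $\L_{\Omega_{0a}}\gb = 0$ on $\H_1$ by Lemma \ref{11.11.2.18}), one has $\Omega_{0a}(|G|^2_{\gb}) = 2\gb(G, \L_{\Omega_{0a}} G)$, so $|\Omega_{0a}(|G|)| \le |\L_{\Omega_{0a}} G|$, and (2) applied to $|G|$ concludes. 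The main subtlety is the handling of the truncation of $\H_1$: one must verify that the Sobolev extension of $u$ across the boundary $\pa \H_1 \subset C_0$ can be performed so that the extension cost is absorbed into the weighted $L^2$-term and the constant remains universal; since the weighted norm is equivalent (up to constants depending only on $t_0$ and $R$) to the standard $H^1$ norm on the truncated piece, a standard extension argument suffices.
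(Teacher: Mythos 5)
Your reduction of parts (1) and (3) to the real-scalar case (2) via the diamagnetic/Kato inequalities $|\Omega_{0a}(|\phi|)|\le |D_{\Omega_{0a}}\phi|$ and $|\Omega_{0a}(|G|)|\le |\L_{\Omega_{0a}}G|$ (the latter using $\L_{\Omega_{0a}}\gb=0$) is correct and in fact mirrors the role that Lemma \ref{11.11.2.18}(1) plays in the paper. Your identity $\sum_a|\Omega_{0a}(u)|^2\big|_{\H_1}=(1+r^2)|\nabla u|^2$, obtained by the trivial temporal extension and tangency of $\Omega_{0a}$ to $\H_1$, is also correct and gives a cleaner, more explicit reduction to a weighted Euclidean Sobolev inequality than the paper's route (which works directly on $\H_1$ via the isoperimetric inequality on the spheres $S_{1,r}$).

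The genuine gap is in your treatment of the truncation boundary. Recall that $\H_1$ here is obtained by rescaling $\H_\tau$ by $\tau$, so after projecting to $\mathbb R^3$ it is the ball $\{|x|\le r_{\max}\}$ where $r_{\max}=r_{\max}(\tau)\to\infty$ as $\tau\to\infty$; the constant $C$ in the lemma must be uniform in this truncation. Your closing claim that ``the weighted norm is equivalent (up to constants depending only on $t_0$ and $R$) to the standard $H^1$ norm on the truncated piece'' is false: the weight $(1+r^2)$ is unbounded and the truncation radius is not, so the equivalence constant degenerates as $r_{\max}\to\infty$. Correspondingly, your trace step $|v(r,\omega)|^4\le 4\int_r^\infty|v|^3|\partial_\rho v|\,d\rho$ is not valid as written: $v$ is defined only on $\{|x|\le r_{\max}\}$, the FTC picks up a trace at $r_{\max}$ that you have not controlled, and removing it requires a scale-invariant extension to all of $\mathbb R^3$ with $\dot H^1$ cost $\lesssim \|\nabla v\|_{L^2(B_{r_{\max}})}+r_{\max}^{-1}\|v\|_{L^2(B_{r_{\max}})}$, which is not what your ``standard extension'' remark supplies. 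The paper avoids this entirely by running the radial FTC between $r$ and a \emph{fixed interior} radius $r_0\approx 1$ (the boundary between $\H_1^i$ and $\H_1^e$), controlling the trace on $S_{1,r_0}$ by the standard trace inequality on $\H_1$ and then absorbing. Your argument can be repaired in the same way—replace $\int_r^\infty$ by $\int_{r_0}^r$ and bound the fixed-radius trace via (\ref{trace}) and (\ref{10.27.9.18})—but as stated the extension/boundary step is a gap, not a routine footnote.
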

	\begin{proof}
Noting that in the region $\H_1^i$, $r\les 1$, $t\approx 1$, the area element $dx \approx d\H_1$.  We can then apply Lemma \ref{sob_1}, Lemma \ref{11.11.2.18}  (2)  and the fact that $t/\tau\ge 1$ to obtain
\begin{equation*}
			\begin{split}
				\left(\int_{\H_1^i}(1+r^2)^3|\phi|^6\,dx \right)^\frac{1}{6} &+ \sup_{S_{1, r}\subset \H^i_1}\left(\int_{S_{1,r}}(1+r^2)^2|\phi|^4\,dS_{1,r} \right)^\frac{1}{4}\\
&		\les  \left(\int_{\H_1}(|\phi|^2 + \sum_{a=1}^{3}|D_{\Omega_{0a}}\phi|^2)\,dx\right)^\frac{1}{2}.
			\end{split}
		\end{equation*}
It now suffices to consider the estimate on $\H_1^e$.
Recall the standard isoperimetric inequality for a real-valued function $f$ on $2$-sphere $S$,
\begin{equation*}
			\int_S|f-\overline{f}|^2 \le c^2\left(\int_S|\sn f|\right)^2,
		\end{equation*}
		where $\sn$ denotes the Levi-Civita connection of the induced metric $\ga$ on $S$, $c$ is a constant independent of the radius of $S$, and $\overline{f}$ denotes the average of $f$ over $S$.
		We first apply the above isoperimetric inequality to $f = (1+r^2)^\frac{3}{2}|\phi(t(r),r\omega)|^3$, where  $r=|x|$ and $\omega\in {\mathbb S}^2$. Note that in $\H_1^e$, $r\approx t$. Let $r_0=\min\{|x|: x\in \H_1^e\}$. By definition, we can calculate $r_0\approx 1$.

 Then integrating with respect to $r$, we obtain that
 \begin{align*}
			\int_{\H_1^e}(1+r^2)^3|\phi|^6 dx&\les \int_{\H_1^e}dr\left[\frac{1}{r^2}\left(\int_{S_{1,r}}(1+r^2)^\frac{3}{2}|\phi|^3\right)^2 + \left(\int_{S_{1,r}}(1+r^2)^\frac{3}{2}|\phi|^2\sn|\phi|\right)^2\right]\\
			&\les \sup_{S_{1,r}\subset \H_1^e}\left(\int_{S_{1,r}}(1+r^2)^2|\phi|^4\right)\left(\int_{\H_1^e}(\frac{1+r^2}{r^2}|\phi|^2 + \sum_{a=1}^3|D_{\Omega_{0a}}\phi|^2) dx\right)\\
			&\les \sup_{S_{1,r}\subset \H_1^e} \left(\int_{S_{1,r}}(1+r^2)^2|\phi|^4 \right)\left(\int_{\H_1^e}(|\phi|^2 + \sum_{a=1}^3|D_{\Omega_{0a}}\phi|^2) dx\right).
		\end{align*}
 The second line follows from H\"{o}lder's inequality, $\p_i-\omega_i \p_r= -t^{-1}\sum_{j=1}^3(\Omega_{0i}-\omega_i \omega_j \Omega_{0j})$, and $r\approx t$ in $\H_1^e$.

		Now, we estimate $\int_{S_{1,r}}(1+r^2)^2|\phi|^4$ on $\H_1^e$ in the following manner:
\begin{align*}
\abs{\int_{S_{1,r}}(1+r^2)^2|\phi|^4 -\int_{S_{1,r_0}}(1+r^2)^2|\phi|^4 }&\les \int^r_{r_0}\int_{S_{1,\lambda}} |\phi|^3 (\abs{\p_\lambda|\phi|}+\jb{\lambda}^{-1}|\phi|)(1+\lambda^2)^2\, dS_{1,\lambda}d\lambda\\
			&\les \int_{r_0}^r\int_{S_{1,\lambda} }|\phi|^3(1+\lambda^2)^\frac{3}{2}|(\lambda\p_\lambda)\rp{\le 1}|\phi||\, dS_{1,\lambda} d\lambda\\
&\les \left(\int_{\H_1}(1+r^2)^3|\phi|^6 dx\right)^\frac{1}{2}\left(\int_{\H_1}\sum_{a=1}^3|D\rp{\le 1}_{\Omega_{0a}} \phi|^2 dx\right)^\frac{1}{2}.
		\end{align*}
To derive the last line, we used the fact that $\lambda\p_{\lambda}=\tau \Nb= \sum_{a=1}^3\frac{x^a}{r}(t\partial_a+x_a\partial_0)$   on $\H_1$.
		
It follows by the standard trace inequality  (\ref{trace}) and (\ref{10.27.9.18}) that,
\begin{equation*}
(\int_{S_{1,r_0}}|\phi|^4)^\frac{1}{4}\les \|{\ud D}\rp{\le 1}\phi\|_{L^2(\H_1)}\les \sum_{a=1}^3\|D\rp{\le 1}_{\Omega_{0a}}\phi\|_{L^2(\H_1)}\les\left(\int_{\H_1}\sum_{a=1}^3|D\rp{\le 1}_{\Omega_{0a}} \phi|^2 dx\right)^\frac{1}{2}.
\end{equation*}
		Combining the above three estimates for $\int_{S_{1,r}}(1+r^2)^2|\phi|^4$ and $\int_{\H_1^e}(1+r^2)^3|\phi|^6$ yields (1) in the lemma. The other two results can be derived in the same way by using Lemma \ref{11.11.2.18} and Lemma \ref{sob_1}.
	\end{proof}
Next we give the Sobolev embedding for the decay estimates.
\begin{prop}\label{lemma::ScalarDecayExterior}
Let  $\B$ represent all the elements of $\{\Omega_{0a}, a=1,2,3\}$. With a constant $C>0$,

\noindent(1)
		There holds for  any $\H_\tau$-tangent tensor field $G$ that
\begin{equation}\label{11.10.5.18}
			\sup_{\H_\tau}(\tau_+^{\frac{3}{2}}|G|) \le C\left(\int_{\H_\tau} \frac{t}{\tau} |\L\rp{\le 2}_\B G|^2 d\H_\tau\right)^{\frac{1}{2}};
		\end{equation}
(2) There holds for any complex scalar field $\phi$ that
 \begin{equation}\label{11.10.6.18}
			\sup_{\H_\tau}(\tau_+^{\frac{3}{2}}|\phi|) \le C\left(\int_{\H_\tau}\frac{t}{\tau}|D\rp{\le 2}_\B \phi|^2d\H_\tau \right)^{\frac{1}{2}}.
		\end{equation}
	\end{prop}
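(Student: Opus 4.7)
The plan is to reduce both inequalities to the case $\tau=1$ via the rescaling $x^\mu=\tau y^\mu$ introduced just before Lemma \ref{lemma::GaugeInvariantSobolev1}, and then combine the weighted $L^4$-on-spheres bound in Lemma \ref{lemma::GaugeInvariantSobolev1} with the Sobolev embedding $W^{1,4}(S^2)\hookrightarrow L^\infty(S^2)$ on each $S_{1,r}\subset\H_1$. In total this costs two boost derivatives: one to go from $L^2(\H_1)$ to $L^4(S_{1,r})$, and a second to go from $L^4(S_{1,r})$ to $L^\infty(S_{1,r})$, which matches the two $\L_\B$ (resp.\ $D_\B$) derivatives on the right hand side of \eqref{11.10.5.18} and \eqref{11.10.6.18}.

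\textbf{Step 1: Rescaling.} First I would use $x^\mu=\tau y^\mu$ exactly as in the setup preceding Lemma \ref{lemma::GaugeInvariantSobolev1}. Under this rescaling $\H_\tau$ becomes $\H_1$, one has $\tau_+/\tau\approx (1+\tilde r^2)^{1/2}$ with $\tilde r=|y|$, the area element transforms as $d\H_\tau=\tau^2 d\H_1$, and the weighted measure on the right hand side satisfies $\tfrac{t}{\tau}d\H_\tau=dx$ with $dx$ the Euclidean $3$-volume pulled back to $\H_1$. Since the boosts $\Omega_{0a}$ and the rotations $\Omega_{ij}$ are scale-invariant, and since the rescaled connection $\tilde A=\tau A$ is defined so that $\tilde D_\mu\tilde\phi=\tau D_\mu\phi$, both $|\L_\B^{\le 2}G|$ and $|D_\B^{\le 2}\phi|$ transform homogeneously. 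It therefore suffices to prove the $\tau=1$ versions
\[
\sup_{\H_1}\!\big((1+\tilde r^2)^{3/4}|G|\big)\les \|\L_\B^{\le 2}G\|_{L^2(\H_1,dx)},\qquad \sup_{\H_1}\!\big((1+\tilde r^2)^{3/4}|\phi|\big)\les \|D_\B^{\le 2}\phi\|_{L^2(\H_1,dx)}.
\]

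\textbf{Step 2: Sobolev on $\H_1$.} I treat $\H_1^i$ and $\H_1^e$ separately. On $\H_1^i$ the weight is uniformly bounded and $\tilde t\approx 1$, so the claim follows from the standard $H^2\hookrightarrow L^\infty$ embedding in $3$D, with the intrinsic $\H_1$-gradients $\ud\nabla$ (respectively $\ud D$) bounded by $\Omega_{0a}$-derivatives through Lemma \ref{11.11.2.18}(2). On $\H_1^e$, the exterior part, I first apply Lemma \ref{lemma::GaugeInvariantSobolev1}(3) to each of $G$ and $\L_{\Omega_{0b}}G$ ($b=1,2,3$) to get, uniformly in $r$,
\[
\left(\int_{S_{1,r}}(1+r^2)^2\big(|G|^4+\textstyle\sum_b|\L_{\Omega_{0b}}G|^4\big)\,dS_{1,r}\right)^{1/4}\!\!\les \|\L_\B^{\le 2}G\|_{L^2(\H_1,dx)}.
\]
Then on each $2$-sphere $S_{1,r}$ I invoke $W^{1,4}(S^2)\hookrightarrow L^\infty(S^2)$, properly rescaled for radius $r$, to obtain for every $p\in S_{1,r}$
\[
(1+r^2)^{3/4}|G|(p)\les \|(1+r^2)^{1/2}|G|\|_{L^4(S_{1,r})}+\|(1+r^2)^{1/2}\, r\,\sn|G|\|_{L^4(S_{1,r})}.
\]
The angular gradient $r\sn|G|$ is then dominated by $\sum_{1\le i<j\le 3}|\L_{\Omega_{ij}}G|$ via Lemma \ref{11.11.2.18}(1), and these rotation Lie derivatives are in turn controlled by boost Lie derivatives using the identity $\Omega_{ij}=\frac{x_i}{t}\Omega_{0j}-\frac{x_j}{t}\Omega_{0i}$, whose coefficients are bounded by $1$ on $\H_1$ (since $t\ge 1$ there). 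Combining these inequalities yields the target estimate on $\H_1$.

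\textbf{Step 3: Conclusion and main obstacle.} Undoing the rescaling restores the factor $\tau_+^{3/2}$ on the left and the factor $t/\tau$ in the measure on the right, which is exactly \eqref{11.10.5.18}. The scalar inequality \eqref{11.10.6.18} follows from the identical scheme, using Lemma \ref{lemma::GaugeInvariantSobolev1}(1) in place of (3) and $D_{\Omega_{0a}}$ in place of $\L_{\Omega_{0a}}$; note that Lemma \ref{11.11.2.18}(1) (needed to reduce angular derivatives to rotations) is replaced for scalars by the trivial pointwise identity $|\Omega_{ij}f|=|\Omega_{ij}|f|\cdot\mathrm{sgn}(f)|$ combined with $|D_{\Omega_{ij}}f|\ge |\Omega_{ij}|f||$. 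The main technical nuisance is the passage from sphere $L^4$ to $L^\infty$: one must exhibit tangential derivatives on $S_{1,r}$ through $\Omega_{0a}$ with coefficients that are scale-invariant and uniformly bounded on all of $\H_1$; this is precisely what the decomposition $\Omega_{ij}=\tfrac{x_i}{t}\Omega_{0j}-\tfrac{x_j}{t}\Omega_{0i}$ provides, making $t\ge 1$ on $\H_1$ the crucial geometric input.
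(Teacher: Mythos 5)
Your overall strategy—rescale to $\H_1$, treat $\H_1^i$ and $\H_1^e$ separately, use Lemma \ref{lemma::GaugeInvariantSobolev1} to get the weighted $L^4(S_{1,r})$ bound and then a sphere Sobolev embedding to pass to $L^\infty$—matches the paper's proof and is sound. The scalar case \eqref{11.10.6.18} is correct as you wrote it: $D_{\Omega_{ij}}\phi=\tfrac{x_i}{t}D_{\Omega_{0j}}\phi-\tfrac{x_j}{t}D_{\Omega_{0i}}\phi$ holds because the covariant derivative is $C^\infty$-linear in its direction, and then Kato's inequality on $|\phi|$ finishes the reduction to boosts.

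However, there is a genuine gap in the tensor case \eqref{11.10.5.18}, specifically in the chain $r\,\sn|G|\les\sum_{i<j}|\L_{\Omega_{ij}}G|\les\sum_a|\L_{\Omega_{0a}}G|$. The second inequality is claimed via the vector field identity $\Omega_{ij}=\tfrac{x_i}{t}\Omega_{0j}-\tfrac{x_j}{t}\Omega_{0i}$, but the Lie derivative is \emph{not} $C^\infty$-linear in the direction: for a function $f$ and vector field $X$, $\L_{fX}G=f\L_X G+(\text{corrections involving }df\otimes\iota_X G)$, so $\L_{\Omega_{ij}}G\neq\tfrac{x_i}{t}\L_{\Omega_{0j}}G-\tfrac{x_j}{t}\L_{\Omega_{0i}}G$. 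You would need to estimate the correction terms (they turn out to be $O(|G|)$ on $\H_\tau$, but you neither identify them nor bound them). The paper avoids the issue entirely by applying the vector-field decomposition at the scalar level: contracting \eqref{11.10.7.18} with $\p_\mu|G|\,\p_\nu|G|$ gives $t\,|\sn|G||\le\big(\sum_a|\Omega_{0a}|G||^2\big)^{1/2}$, and only then does one invoke Lemma \ref{11.11.2.18}(1) to replace $|\Omega_{0a}|G||$ by $|\L_{\Omega_{0a}}G|$. With $r\le t$ on $\D^+$, this yields $r\,\sn|G|\les\sum_a|\L_{\Omega_{0a}}G|$ directly, with no detour through rotation Lie derivatives of tensors and no correction terms to track. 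Reordering your last step accordingly (first the scalar decomposition of $\Omega_{ij}|G|$, then Lemma \ref{11.11.2.18}(1)) would close the gap.
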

	\begin{proof}
In the region $\H_1^i$,  by using the standard Sobolev embedding in Lemma \ref{sob_1} for an $\H_1$-tangent tensor $U$ and  Lemma \ref{11.11.2.18}
\begin{equation}\label{11.10.4.18}
\|U\|_{L^6(\H_1^i)}\les \left(\int_{\H_1}(|U|^2 + |\L_\B U|^2)\,d{\H_1}\right)^\frac{1}{2}.
\end{equation}
Note that,  due to the Sobolev embedding on $\H_1$ and  (2) in Lemma \ref{11.11.2.18},
\begin{equation*}
\|G\|_{L^\infty(\H_1^i)}\les \|\ud\nabla |G|\|_{L^6(\H_1)}+\|G\|_{L^6(\H_1)}\les \|\B\rp{\le 1}|G|\|_{L^6(\H_1)}.
\end{equation*}
Hence by using (1) in Lemma \ref{11.11.2.18} and the above two inequalities, we obtain that
\begin{equation}\label{11.10.3.18}
\|G\|_{L^\infty(\H_1^i)}\les \|\L\rp{\le 1}_\B G\|_{L^6(\H_1)}.
\end{equation}
Apply (\ref{11.10.4.18}) to $G$ and $\L_\B G$ , where the latter is still $\H_1$-tangent due to $\L_{\Omega_{0a}}\Tb=0$. We conclude that
\begin{equation*}
\|G\|_{L^\infty(\H_1^i)}\les \|\L\rp{\le 2}_{\B} G\|_{L^2(\H_1)}.
\end{equation*}
Rescaling the above inequality back  to $\H_\tau$ and  using the fact that $\tau_+\approx \tau$ in $\H_\tau^i$ lead to
\begin{equation*}
\|\tau_+^{\frac{3}{2}} G\|_{L^\infty(\H_\tau^i)}\les \|\L\rp{\le 2}_\B G\|_{L^2(\H_\tau)}.
\end{equation*}

  We can similarly prove by  using the classical Sobolev inequality $||\phi||_{L^\infty(\H_1^i)}\les ||\phi||_{L^6(\H_1)} + ||{\ud\nabla}|\phi|||_{L^6(\H_1)}$  that
\begin{equation}\label{11.11.3.18}
	\|\tau_+^\frac{3}{2}\phi \|_{L^\infty(\H_\tau^i)} \les
			\|D\rp{\le 2}_{\B} \phi\|_{L^2(\H_\tau)}.
\end{equation}
Now it remains to derive the estimate on $\H_\tau^e$. For brevity, we will only consider the estimate for complex scalar fields $\phi$.

We recall the classical Sobolev inequality on $S_{1,r}$ given by
\begin{equation*}
\sup_{S_{1,r}}r^{\frac{1}{2}}|\phi|\les \left(\int_{S_{1,r}}|\phi|^4 + r^4|\sn|\phi||^4\right)^\frac{1}{4}.
\end{equation*}
Note that  due to (2) in Lemma \ref{11.11.2.18} and $r<t$ in $\D^{\tau_*}$,  $r|\sn|\phi||\les \sum_{a=1}^3 |D_{\Omega_{0a}}\phi|$. 	We then apply \Cref{lemma::GaugeInvariantSobolev1} for $D\rp{\le 1}_{\Omega_{0a}} \phi$
to obtain the following inequality for $S_{1,r}\subset \H_1^e$
\begin{equation*}
		\sup_{S_{1,r}\subset \H_1^e}r^\f12 (1+r^2)^\frac{1}{2} |\phi|\les\left(\int_{\H_1^e}|D\rp{\le 2}_\B\phi|^2 dx \right)^\frac{1}{2}.
		\end{equation*}
Note $r\approx t$ in $\H_1^e$. (\ref{11.10.6.18}) follows by rescaling the above inequality and combining (\ref{11.11.3.18}).  The estimate for an $\H_\tau$-tangent tensor field $G$ in $\H_\tau^e$ can be similarly derived with the help of (3) in Lemma \ref{lemma::GaugeInvariantSobolev1} and  Lemma \ref{11.11.2.18}.  Thus the proof is completed.
\end{proof}

In view of Lemma \ref{lemma::GaugeInvariantSobolev1}, we also have derived the following $L^p$ estimates by interpolation and the facts $t\approx \tau_+$ and $|\B(t^p)|\les t^p, p\in {\mathbb R}$ in $\{t-t_0\ge r-R, t\ge R\}$.
\begin{prop}\label{sob}
Let $2\le p\le 6$, $U$ be an $\H_\tau$-tangent real tensor field and $V$ be  an $\H_\tau$-tangent complex tensor field. There hold with a constant $C>0$ that
\begin{align}
&\|(\tau_+^2\tau)^{\frac{1}{2}-\frac{1}{p}}  U\|_{L^p(\H_\tau)}\le C\|\L\rp{\le 1}_\B U\|^{\frac{3}{2}-\frac{3}{p}}_{L^2 (\H_\tau)}\|U\|^{\frac{3}{p}-\f12}
_{L^2(\H_\tau)}\label{soblp},\\
&\|(\tau_+^2\tau)^{\f12-\frac{1}{p}} V\|_{L^p(\H_\tau)}\le C \| D\rp{\le 1}_{\B} V\|^{\frac{3}{2}-\frac{3}{p}}_{L^2(\H_\tau)}\|V\|^{\frac{3}{p}-\f12}_{L^2(\H_\tau)}\label{sobsclp}.
\end{align}
\end{prop}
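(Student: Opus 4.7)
The plan is to prove Proposition \ref{sob} as a two-step corollary of Lemma \ref{lemma::GaugeInvariantSobolev1}: first deduce the endpoint $p=6$ case by rescaling from $\H_1$ to $\H_\tau$, then interpolate down to the intermediate $p\in[2,6]$ by H\"older. I will carry out the argument for a real $\H_\tau$-tangent tensor field $U$; the complex tensor case follows the same way upon replacing Lemma \ref{lemma::GaugeInvariantSobolev1}(3) by (1) and $\L_\B$ by $D_\B$.

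Step 1 (endpoint $p=6$). Using the rescaling $x^\mu = \tau y^\mu$ of Section 3.2, the tensor scaling is chosen so that $|\tilde U|_{\tilde\gb}=|U|_\gb$ and $|\tilde\L_\B \tilde U|_{\tilde\gb}=|\L_\B U|_\gb$ at corresponding points (this uses $\Omega_{0a}^{y}=\tau^{-1}\Omega_{0a}^{x}$ in components together with the rescaled metric $\tilde\gb = \tau^{2}\gb$). Apply Lemma \ref{lemma::GaugeInvariantSobolev1}(3) to $\tilde U$ on $\H_1$ and pull back using $(1+r_y^2)=t_x^2/\tau^2$ and $dx_y=\tau^{-3}dx_x$ to obtain, after absorbing the powers of $\tau$ on both sides,
\[
    \|t\,U\|_{L^6(\H_\tau)} \;\le\; C\,\|\L_\B^{\le 1} U\|_{L^2(\H_\tau)}.
\]
Since $t\approx \tau_+$ in the region $\{t-t_0\ge r-R,\ t\ge R\}$ and $(\tau_+^2\tau)^{1/3}\le \tau_+$ (as $\tau\le\tau_+$), this gives
\[
    \|(\tau_+^2\tau)^{1/3}\,U\|_{L^6(\H_\tau)} \;\le\; C\,\|\L_\B^{\le 1} U\|_{L^2(\H_\tau)}.
\]

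Step 2 (interpolation). For $p\in[2,6]$ set $\alpha=(6-p)/4\in[0,1]$, so $1-\alpha=(p-2)/4$. A direct check of exponents gives the pointwise identity
\[
    \bigl[(\tau_+^2\tau)^{1/2-1/p}|U|\bigr]^p \;=\; \bigl[|U|^{2}\bigr]^{\alpha}\,\bigl[(\tau_+^2\tau)^{2}|U|^{6}\bigr]^{1-\alpha},
\]
since $2\alpha+6(1-\alpha)=p$ and $2(1-\alpha)=p/2-1$. Integrating and applying H\"older with exponents $1/\alpha$ and $1/(1-\alpha)$ yields
\[
    \int (\tau_+^2\tau)^{p/2-1}|U|^p \;\le\; \Bigl(\int|U|^2\Bigr)^{\alpha}\Bigl(\int(\tau_+^2\tau)^{2}|U|^6\Bigr)^{1-\alpha}.
\]
Taking $p$-th roots and inserting the endpoint estimate from Step 1 produces the bound with exponents $2\alpha/p = 3/p-1/2$ on $\|U\|_{L^2}$ and $6(1-\alpha)/p = 3/2-3/p$ on $\|\L_\B^{\le 1}U\|_{L^2}$, which is exactly \eqref{soblp}.

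The only delicate point is the weight bookkeeping in Step 1: one must verify that the tensor rescaling is compatible with $\L_\B$ so that the $\|\L_\B U\|$ norms transform correctly, and that the factors $\tau^{-5/3}$ from the LHS and $\tau^{-3/2}$ from $dx_y$ combine to leave no stray powers of $\tau$. Everything else is mechanical manipulation together with $t\approx \tau_+$ and $|\B(t^q)|\lesssim t^q$ mentioned in the paragraph preceding the statement.
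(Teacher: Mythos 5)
Your overall strategy — rescale Lemma \ref{lemma::GaugeInvariantSobolev1} to obtain the $p=6$ endpoint on $\H_\tau$, then interpolate — is the paper's own route, and your Step 2 (the H\"older interpolation between $L^2$ and $L^6$) is correct. But Step 1 has a genuine gap in the measure bookkeeping. Lemma \ref{lemma::GaugeInvariantSobolev1} is stated for the Euclidean measure $dx=r^2\,dr\,d\omega$, whereas the $L^p(\H_\tau)$ norms in Proposition \ref{sob} are taken with respect to the induced measure $d\H_\tau$, and the two are related by the \emph{nonconstant} factor $dx=\frac{t}{\tau}\,d\H_\tau$. Your rescaling, which only tracks $dx_y=\tau^{-3}dx_x$ and $1+r_y^2=t^2/\tau^2$, produces
\[
\Bigl(\int_{\H_\tau}t^6|U|^6\,dx\Bigr)^{1/6}\le C\Bigl(\int_{\H_\tau}(|U|^2+|\L_\B U|^2)\,dx\Bigr)^{1/2},
\]
and rewriting both sides in $d\H_\tau$ inserts the extra factor $\frac{t}{\tau}\ge 1$ into \emph{each} integrand. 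That factor cannot be discarded from the right-hand side, so the claimed endpoint $\|t\,U\|_{L^6(\H_\tau)}\le C\|\L_\B^{\le 1}U\|_{L^2(\H_\tau)}$ — with $L^p(\H_\tau)$ meaning $L^p(d\H_\tau)$, as the applications in Section 4.4 force (e.g.\ bounding $\|V\|_{L^2(\H_\tau)}$ by $\E^{T_0}_{\le 2}[\phi]^{1/2}$ requires the weight $\frac{t}{\tau}$, not $(\frac{t}{\tau})^2$) — is not what your computation yields.

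The missing ingredient is precisely the fact $|\B(t^q)|\les t^q$ which you list at the end as "mechanical" but never use: apply the rescaled inequality not to $U$ but to $(\tau/t)^{1/2}U$, which is legitimate because $\tau$ is constant on $\H_\tau$ and $|\B((\tau/t)^{1/2})|\les(\tau/t)^{1/2}$. Then on the right the measure factor cancels, $\frac{t}{\tau}\cdot\frac{\tau}{t}=1$, while on the left the weight becomes $\frac{t^7}{\tau}\cdot\frac{\tau^3}{t^3}=t^4\tau^2\approx(\tau_+^2\tau)^2$, giving exactly $\|(\tau_+^2\tau)^{1/3}U\|_{L^6(\H_\tau)}\le C\|\L_\B^{\le 1}U\|_{L^2(\H_\tau)}$. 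In particular, the $\tau^{1/2-1/p}$ inside $(\tau_+^2\tau)^{1/2-1/p}$ is \emph{not} a slack downgrade of $\tau_+^{1/2-1/p}$, as your remark "$(\tau_+^2\tau)^{1/3}\le\tau_+$" suggests; it is forced by the change of measure, and it is exactly what the applications in Section 4.4 consume. With the endpoint so corrected, your interpolation produces \eqref{soblp}, and \eqref{sobsclp} follows identically from Lemma \ref{lemma::GaugeInvariantSobolev1}(1) with $D_\B$ in place of $\L_\B$.
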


\section{Proof of Theorem \ref{them::mainThm}}
	\subsection{Bootstrap Assumptions}
	
	Let $\tau_*>\tau_0$ be fixed, and $\D^{\tau_*} = \bigcup_{\tau\in[\tau_0,\tau_*]}\H_\tau$. For pproving Theorem \ref{them::mainThm}, we make the following bootstrap assumptions for $\tau_0< \tau\le \tau_*$,
	\begin{equation}\label{BA}
		\begin{split}
	&\E^{T_0}_0[\phi](\tau)\le \dn^2, \quad 	\mathcal{E}_{\ell}^{T_0}[\phi](\tau) \le \dn^2\jb{\tau}^{2\delta},\, \ell=1,2\\
	&		 \mathcal{E}_{k}^{S}[\tF](\tau) \le \dn^2, \quad k=0,1, 2,
		\end{split}
	\end{equation}
where  $0<\delta \le \frac{1-\ep}{2}$ is fixed,  for  the small constant $\ep>0$ in Lemma \ref{lemma:decay:lF}, and $\dn^2=C \ve_0^2$ with the constant $C\ge 4$, greater than the implicit constant in (\ref{11.7.2.18}) and to be further determined.

In $\D^{\tau_*}$,  $\tau_0\le \tau\le t\approx \tau_+$, $\tau_{-}\ge R$ and $\tau^2=\tau_+\tau_{-}$. These facts will be frequently employed.
Since in $\D^{\tau_*}$,  $\tau\ge \tau_0$, we  have $\tau\approx \jb{\tau}$.

	\subsection{Pointwise Decay for the Maxwell Field}\label{2.05.1.19}	
	
	In this subsection, we prove the following pointwise decay for the Maxwell field under the above bootstrap assumptions.
	\begin{prop}
\label{Ptw:Max}
		Let $\tF$ be defined in (\ref{eq:eq4lFandNLF}). In  $\D^{\tau_*}$,  under the bootstrap assumptions  (\ref{BA}), there hold the following decay estimates for $\tilde{F}$:
\begin{equation}\label{dc:Max_ex}
\sup_{\H_\tau}\left(\tau_+^2|\tilde\a| +  \tau_-\tau_+(|\tilde\ab|+|\tF|)+\tau_+\tau (|\tilde \rho|+|\tilde \sigma|+|\nt{\tF}|)\right)\les \dn,
\end{equation}
where $\{\tilde\ab, \tilde\a,\tilde\rho, \tilde\sigma \}$ are the components of  $\tF$ under the null tetrad $\{L, \Lb, e_1, e_2\}$ and  $\nt{\tF}$ represents  the components of $\tF$ decomposed by the hyperboloidal frames  (see Section \ref{sec_2}).
In particular together with the decay estimates in Lemma \ref{lemma:decay:lF} for the  Maxwell field $\hat{F}$ defined in (\ref{eq:eq4lFandNLF}), the full Maxwell field $F$ verifies
\begin{align}
&\sup_{\H_\tau} \tau_+ \tau |\nt{F}|+ \sup_{\H_\tau} \tau^2  |F|\les 1\label{10.27.4.18}.
	\end{align}
	\end{prop}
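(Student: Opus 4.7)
The plan is to convert the bootstrap energy bounds $\mathcal{E}^S_{\le 2}[\tF](\tau)\lesssim\dn^2$ from (\ref{BA}) into pointwise decay of $\tF$ via weighted Sobolev embedding on the hyperboloids $\H_\tau$, and then combine with the linear bounds for $\hat F$ from Lemma \ref{lemma:decay:lF} to obtain the full Maxwell decay (\ref{10.27.4.18}).

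First I would use (\ref{11.13.2.18}) and (\ref{11.13.3.18}) to extract from $\mathcal{E}^S_k[\tF](\tau)$ both the hyperboloidal bound $\int_{\H_\tau}\tau|\nt{\L_Z^k\tF}|^2 d\H_\tau\lesssim\dn^2$ and the finer null-component bounds $\int_{\H_\tau}\bigl(\frac{\tau_+^2}{\tau}|\a[\L_Z^k\tF]|^2+\frac{\tau_-^2}{\tau}|\ab[\L_Z^k\tF]|^2+\tau|\rho[\L_Z^k\tF]|^2+\tau|\sigma[\L_Z^k\tF]|^2\bigr)d\H_\tau\lesssim\dn^2$ for every $Z^k\in\Pp^k$ with $k\le 2$. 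Then I would apply Proposition \ref{lemma::ScalarDecayExterior}(3) with the boosts $\B=\{\Omega_{0a}\}$ as commutators to the $\H_\tau$-tangent 1-forms $\tF(\Tb,\cdot)$ and $\tF(\Nb,\cdot)$ and to the scalars $\rho[\tF],\sigma[\tF]$. Because the boosts are Killing and satisfy $\L_\B\Tb=0$ (Lemma \ref{11.11.2.18}), Lie differentiation passes cleanly through the $\Tb$-projection; the residual commutators with $\Nb$ and $e_A$ are tangential and handled in the spirit of Lemma \ref{12.13.1.17}.

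Converting the resulting hyperboloidal bounds back to the null frame via (\ref{eqn::MaxwellHyperboloidalandNull}) yields the sharp component-wise decay: $\tau_+^2|\tilde\a|\lesssim\dn$ comes from applying Sobolev to $\tF(\Tb,\cdot)+\tF(\Nb,\cdot)\propto\frac{\tau_+}{\tau}\a$ together with the heavy weight $\frac{\tau_+^2}{\tau}$ on $\a$; analogously $\tau_-\tau_+|\tilde\ab|\lesssim\dn$ from the difference $\tF(\Tb,\cdot)-\tF(\Nb,\cdot)\propto\frac{\tau_-}{\tau}\ab$ with the weight $\frac{\tau_-^2}{\tau}$; and the bounds on $\rho,\sigma$ are direct. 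Here the fact that $\tau_-\ge R=1$ throughout $\D^{\tau_*}$ is essential: it secures weight inequalities of the form $\frac{t}{\tau^2}\lesssim\tau_-^{-1}\lesssim 1$ needed to absorb the Sobolev input $\int\frac{t}{\tau}|\cdot|^2 d\H_\tau$ into the finer component energies. Summing the null components and invoking (\ref{1.10.2.19}) gives $\tau_-\tau_+|\tF|+\tau_+\tau|\nt\tF|\lesssim\dn$. Finally, for $F=\hat F+\tF$, combining with Lemma \ref{lemma:decay:lF} — whose bounds (\ref{eq:decay:lF}) and (\ref{11.16.2.18}) furnish $\hat F$ decay at least as strong as $\tF$ up to the harmless factor $\tau_-^{-\ep}\lesssim 1$ — delivers (\ref{10.27.4.18}).

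The main obstacle is in recovering the sharp, component-specific decay rates without loss of $\tau_-$ powers: a crude application of Sobolev to $|\nt\tF|$ alone only produces the uniform rate $|\nt\tF|\lesssim\dn\tau_+^{-3/2}$ and would leak a factor of $\tau_-^{1/2}$ compared with the claimed $\tau_+\tau|\nt\tF|\lesssim\dn$. Extracting the missing $\tau_-^{-1/2}$ requires the judicious choice of the $\H_\tau$-tangent quantities $\tF(\Tb\pm\Nb,\cdot)$ so that the individual weights $\frac{\tau_+^2}{\tau}$ and $\frac{\tau_-^2}{\tau}$ in (\ref{11.13.2.18}) are fully exploited, together with careful commutator bookkeeping that does not spoil these weights. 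This is where the structural features $\tau_-\ge R$, the Killing property of the boosts, and the hyperboloidal frame all enter in an essential way.
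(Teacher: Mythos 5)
Your high-level plan is the right one: convert $\E^S_{\le 2}[\tF](\tau)\les\dn^2$ into pointwise decay by a weighted Sobolev embedding on $\H_\tau$ with boosts as commutators, then add in the linear bounds for $\hat F$. You also correctly diagnose that applying the embedding crudely to $|\nt\tF|$ costs a factor of $\tau_-^{1/2}$. But the repair you propose opens a genuine gap. You want to apply a Sobolev inequality separately to the $\H_\tau$-tangent pieces $\tF(\Nb,\cdot)$, $\rho[\tF]$, $\sigma[\tF]$ (or to $\tF(\Tb\pm\Nb,\cdot)$) and feed in the finer null-component weights of (\ref{11.13.2.18}). This requires control of $\L_\B^{\le 2}$ applied to those components — and $\L_\B\Nb\neq 0$, $\L_\B e_A\neq 0$. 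The resulting commutators mix the components in $r,t$-dependent ways and must be bounded to two orders; this is exactly the ``involved'' frame-comparison analysis the paper explicitly flags (see the remark immediately following the statement of this Proposition, referring to \cite[Section 4.3]{Psarelli1999}). Appealing to Lemma \ref{12.13.1.17} does not close this: that lemma compares null components of $\L_Z(\tF-\ck F)$ on the cone $C_0$ (where $\tau_-\equiv R$), not Lie derivatives of $\Nb$ or $e_A$ along boosts on hyperboloids. (A smaller slip: Proposition \ref{lemma::ScalarDecayExterior} has no part (3); you mean Lemma \ref{lemma::GaugeInvariantSobolev1}(3) or Proposition \ref{lemma::ScalarDecayExterior}(1).)

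Moreover, the finer null weights of (\ref{11.13.2.18}) are not needed at all. The paper's mechanism avoids every commutator you are worried about. Pass to the hyperboloidal electric-magnetic decomposition $\tE=\ud E[\tF]$, $\tH=\ud H[\tF]$; these are $\H_\tau$-tangent, and since $\L_\Omega\Tb=0$ and $\Omega$ is Killing, Lie differentiation commutes \emph{exactly} with taking electric and magnetic parts, i.e.\ (\ref{Com:EH}), with zero commutator error. The $\tau_-^{1/2}$ loss is eliminated not by extracting extra null information but by applying Proposition \ref{lemma::ScalarDecayExterior}(1) to the weighted tensor $t^{-1/2}\tE$ (and $t^{-1/2}\tH$) rather than to $\tE$ itself: since $|\B^l(t^{-1/2})|\les t^{-1/2}$, the weight passes through $\L^{\le 2}_\B$, and $t^{-1}$ cancels the Sobolev weight $t/\tau$ to leave exactly $\int_{\H_\tau}\tau|\L^{\le 2}_\B\tE|^2\les\E^S_{\Omega,\le 2}[\tF]\les\dn^2$. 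This yields the sharp $\tau_+\tau(|\tE|+|\tH|)\les\dn$, hence $\tau_+\tau|\nt\tF|\les\dn$ with no loss. The component-wise weights in (\ref{dc:Max_ex}) then fall out for free from the frame relations (\ref{eqn::MaxwellHyperboloidalandNull}) and $\tau^2=\tau_+\tau_-$: the factors $\tau_+/\tau$ and $\tau_-/\tau$ in that identity are exactly what upgrade $\tau_+\tau$ to $\tau_+^2$ on $\tilde\a$ and to $\tau_+\tau_-$ on $\tilde\ab$. In short, the ``judicious choice of $\tF(\Tb\pm\Nb,\cdot)$'' you describe is both unnecessary and unworkable as stated, whereas weighting the electric-magnetic parts and letting (\ref{eqn::MaxwellHyperboloidalandNull}) do the component bookkeeping is clean and complete.
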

\begin{remark}
Usually to derive the pointwise decay for $\tF$, one may apply the Sobolev inequality to components of $\tF$ relative to radial tetrads such as the null tetrad $\{L, \Lb, e_1, e_2\}$ (see the Comparison theorems in \cite[Chapter 7]{CK} and \cite[Section 4.3]{Psarelli1999})  or, alternatively, relative to $\{\Tb, \Nb, e_1, e_2\}$, with $\{e_A\}_{A=1}^2$ the orthonormal basis on $2$-spheres.  These treatments  involve  $\L_\B L$ and $\L_\B\Lb$, or  $\L_\B \Nb$, etc. One can refer to  \cite{Psarelli1999} for a rough treatment with the null decomposition of curvature, which is actually incomplete. Note that the energy of $\E^S_k[\tF](\tau)$ takes a rather simple form in view of the hyperboloidal orthonormal frame. This indicates that it suffices to consider the decay property of $|\nt{\tF}|$ by using the Sobolev embedding on $\H_\tau$. For this purpose, we employ the electric-magnetic decomposition  under the hyperboloidal orthonormal frame, which decomposes $\nt \tF$ into the electric and magnetic parts $\tE$ and $\tH$, both of which  are $\H_\tau$-tangent. The decay estimate then follows by applying  (\ref{11.10.5.18}) to both parts. The only  derivative on the frame involved in our proof is $\L_\Omega \Tb=0$, which leads to the vast simplification.
\end{remark}
	\begin{proof}
In view of \eqref{eqn::MaxwellHyperboloidalandNull} and (\ref{1.10.2.19}), we have
\begin{align*}
 |(\rho,\sigma)[\tF]|+\frac{\tau_+}{\tau}|\a[\tilde{F}]|+\frac{\tau_{-}}{\tau}|\ab[\tilde{F}]|\approx |\nt{\tF}|.
\end{align*}
Hence to prove estimate \eqref{dc:Max_ex} it suffices to derive the pointwise decay estimate for $|\nt{\tF}|$.
From the comparison relation \eqref{11.13.3.18} and the definition of $\E^S_{\Omega,k}[\tF](\tau) $, we derive that
\begin{equation*}
\E^S_{\Omega, k}[\tF](\tau)\approx\int_{\H_\tau}\tau |\nt{\L_{\Omega}^k\tF}|^2.
\end{equation*}

We define the electric and magnetic part of a 2-form $G$ relative to the hyperboloidal orthonormal frame $\{\Tb, \eb_i\}$  by
\begin{equation*}
{\ud E}[G]_\ib=G_{\Tb \ib}, \qquad {\ud H}[G]_\ib={}^\star G_{\Tb \ib},
\end{equation*}
where $\{\eb_i, i=1,\cdots 3\}$ is the orthonormal frame on $\H_\tau$,\begin{footnote}{To distinguish them from the standard Cartesian frame, we denote by $V_{\ib}$ or $V^{\ib}$, with $\ib=1,2,3$, if the 1-tensor field $V$ is evaluated by the orthonormal frame $\{\eb_i\}_{i=1}^3$. }\end{footnote} and $\star$ denotes the Hodge dual of the 2-form. Let us fix the convention that
\begin{align*}
&\tE_\ib=\ud E[\tF]_\ib, \quad \tH_\ib=\ud H[\tF]_\ib;\quad \ud{\hE}_\ib=\ud E[\hat F]_\ib, \quad \ud{\hH}_\ib=\ud H[\hat F]_\ib.
\end{align*}
Due to $\L_\Omega \Tb=0$ and the fact that $\Omega$ are Killing vector fields in the Minkowski space,
		 there hold for any 2-form $G$ and $0\le \mu<\nu\le 3$ that
\begin{align*}
\L_{\Omega_{\mu\nu}} (\Tb^\a G_{\a\b})\eb_i^{\b}=\Tb^\a \L_{\Omega_{\mu\nu}} G_{\a\b} \eb^\b_i, \quad \L_{\Omega_{\mu\nu}} (\tensor{\ep}{_{
\Tb\b}^{\ga\delta}}G_{\ga\delta}) \eb_i^\b=\tensor{\ep}{_{\Tb \ib}^{\ga\delta}}\L_{\Omega_{\mu\nu}}G_{\ga\delta}.
\end{align*}
This implies the $k=1$ case in the following identity
 \begin{equation}\label{Com:EH}
\L_Z^k \ud E[G]=\ud E[\L_Z^k G],\quad \L_Z^k \ud H[G]=\ud H[\L_Z^k G], \,\forall\, Z^k\in \Omega^k, k\ge 0,
		\end{equation}
where the  higher order cases can be proved by induction.

Recall the standard property of the electric-magnetic decomposition for 2-forms,
\begin{equation}\label{1.15.1.19}
|\nt G|^2\approx |\ud E[G]|^2+|\ud H[G]|^2.
\end{equation}
	By using (\ref{Com:EH}), (\ref{1.15.1.19}) and (\ref{11.13.3.18}), we can write
\begin{equation*}
\E^S_{\Omega,k}[\tF](\tau)\approx\int_{\H_\tau}\tau(|\L_\Omega^k \tE|^2+|\L_\Omega^k \tH|^2).
\end{equation*}
Since $|\B^l(t^{\ga})|\les t^\ga$ for $\ga\in \mathbb R$ and $l\le 2$ in $\D^{\tau_*}$, by applying Proposition \ref{lemma::ScalarDecayExterior} (1) to $\tau t^{-\f12}(|\tE|+|\tH|)$,  we can obtain
\begin{equation*}
\tau_+^2 \tau^2  (|\tE|^2+|\tH|^2)\les \sum_{k=0}^2 \E^S_{\Omega, k}[\tF](\tau)\les \dn^2.
\end{equation*}
Thus  \eqref{dc:Max_ex} is proved.
The decay estimate \eqref{10.27.4.18} for $|\nt F|$ then follows by combining  the above estimate with (\ref{11.16.2.18}) in view of $F=\tilde{F}+\hat{F}$. Using the first estimate in (\ref{10.27.4.18}), the other estimate can be obtained by using $|F|\les \frac{\tau_+}{\tau} |\nt F|$. (The inequality  is  incorporated in (\ref{10.27.8.18}) and can be proved by using (\ref{dcp_3}) and $F_{T_0 N}=F_{\Tb\Nb}$.)
	\end{proof}
In a similar fashion, we can obtain
\begin{corollary}\label{cor_12_30}
Let $2\le p\le 6$. There holds  for a $2$-form $G$,
\begin{equation}\label{12.30.1.18}
\|(\tau_+^2\tau)^{\f12-\frac{1}{p}}\nt G\|_{L^p(\H_\tau)}\les \|\nt\L_\B\rp{\le 1} G\|_{L^2(\H_\tau)}.
\end{equation}
\end{corollary}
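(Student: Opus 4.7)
The plan is to imitate the proof strategy used for Proposition \ref{Ptw:Max}, reducing the statement about a $2$-form $G$ to the scalar/tensor Sobolev inequality in Proposition \ref{sob} via the electric-magnetic decomposition adapted to the hyperboloidal orthonormal frame.

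First, I would recall the decomposition $\tE[G]_{\ib} = G_{\Tb\ib}$, $\tH[G]_{\ib}={}^\star G_{\Tb \ib}$, introduced just above in the proof of Proposition \ref{Ptw:Max}. These are both $\H_\tau$-tangent real $1$-tensor fields (in the sense that contracting with $\Tb$ yields zero), and by (\ref{1.15.1.19}) they satisfy the pointwise comparison
\begin{equation*}
|\nt G|^2 \approx |\tE[G]|^2 + |\tH[G]|^2.
\end{equation*}
Since $\B\subset \Omega$ and $\L_\Omega \Tb=0$, the key commutation identity (\ref{Com:EH}) gives
\begin{equation*}
\L_\B\rp{\le 1} \tE[G] = \tE[\L_\B\rp{\le 1} G], \qquad \L_\B\rp{\le 1} \tH[G] = \tH[\L_\B\rp{\le 1} G],
\end{equation*}
and in particular $|\L_\B\rp{\le 1}\tE[G]| + |\L_\B\rp{\le 1}\tH[G]| \les |\nt \L_\B\rp{\le 1} G|$.

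Next, I would apply inequality (\ref{soblp}) from Proposition \ref{sob} separately to the $\H_\tau$-tangent real tensor fields $U=\tE[G]$ and $U=\tH[G]$. For each such $U$, since the interpolation exponents satisfy $(3/2 - 3/p)+(3/p-1/2)=1$ and $\|U\|_{L^2(\H_\tau)} \le \|\L_\B\rp{\le 1} U\|_{L^2(\H_\tau)}$ trivially, Young's inequality collapses the product into
\begin{equation*}
\|(\tau_+^2\tau)^{\f12-\frac{1}{p}} U\|_{L^p(\H_\tau)} \les \|\L_\B\rp{\le 1} U\|_{L^2(\H_\tau)}.
\end{equation*}
Combining this with the commutation identities and the pointwise comparison yields
\begin{equation*}
\|(\tau_+^2\tau)^{\f12-\frac{1}{p}} \nt G\|_{L^p(\H_\tau)} \les \|\nt \L_\B\rp{\le 1} G\|_{L^2(\H_\tau)},
\end{equation*}
as desired.

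There is no substantive obstacle here; the proof is a transcription of the scalar/tensor Sobolev inequality through the electric-magnetic decomposition, exactly parallel to how Proposition \ref{Ptw:Max} was established. The only point requiring a moment of care is verifying that the commutator $[\L_\B,\tE]$ and $[\L_\B,\tH]$ vanish, which is precisely the content of (\ref{Com:EH}) and ultimately rests on $\L_\Omega \Tb=0$ together with $\Omega$ being Killing for the Minkowski metric.
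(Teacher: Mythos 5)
Your proof is correct and follows the same route as the paper's (which is stated as a one-line combination of (\ref{1.15.1.19}), (\ref{Com:EH}), and (\ref{soblp})): reduce to the electric-magnetic decomposition, commute $\L_\B$ past it, and apply the tensor $L^p$ Sobolev inequality. One small nit: the collapse of $\|\L_\B\rp{\le 1} U\|^{3/2-3/p}\|U\|^{3/p-1/2}$ to $\|\L_\B\rp{\le 1}U\|$ is just monotonicity plus the fact that the exponents sum to $1$, not Young's inequality; and the paper's notation for the electric-magnetic parts of a general $G$ is $\ud E[G], \ud H[G]$ rather than $\tE[G], \tH[G]$ (the latter being reserved for $G=\tF$), but neither affects the argument.
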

\begin{proof}
The proof is a straightforward combination of the property of electric-magnetic decomposition (\ref{1.15.1.19}), (\ref{Com:EH}) and (\ref{soblp}).
\end{proof}

Next we give more relations of a 2-form contracted by different coordinates or frames in the region $\D^{\tau_*}$, especially when the  2-form appears in null structures.
\begin{lemma}
\label{11.13.5.18}
Let $Z\in \Omega\cup\{S\}$ (it may be different at different places). In the region $\D^{\tau_*}$, there hold the following comparison results for any 2-form $G$
\begin{equation}\label{10.27.8.18}
 \tau(|G_{\mu \mub}|+|G|)+|G_{Z \mub}| \les\tau_+ |\nt{G}|, \qquad |G_{Z Z}|+\tau|G_{\mu Z}|\les \tau_+^2 |\nt{G}|,
\end{equation}
where $G_{\mu\mub}=G(\p_\mu, X)$ with $X$ in the hyperboloidal tetrad $\{\underline{T}, \underline{N}, e_1, e_2\}$, and $\{e_A\}_{A=1}^2$ the orthonormal frame on $S_{\tau, r}$. Moreover there hold
\begin{align}
&|G_{Z\mu} W^\mu|\les (\tau |W|+\frac{\tau_+}{\tau}\sum_{a=1}^3|W_{\Omega_{0a}}|)| \nt{G}|\label{11.13.6.18},\\
&|G_{\mu T_0} W^\mu|\les |\nt{G}| |W|+|\sl{W}||G|,\label{11.13.7.18}
\end{align}
for any real or complex valued vector field $W$. Here $\sl{W}^\mu =\Pi_\nu^\mu W^\nu$
denotes the angular part of $W$. In particular we can estimate that
\begin{align}
&|\hat F_{Z\mu} W^\mu|\les (\tau_+^{-1}|W|+\tau^{-2}|W_\Omega|)\tau_{-}^{-\ep}, \quad  |\tF_{Z\mu} W^\mu|\les (\tau_+^{-1}|W|+\tau^{-2}|W_\Omega|)\dn\label{11.13.10.18},\\
&|\hat {F}_{\mu T_0} W^\mu|\les \tau_{-}^{-\ep}\tau_+^{-1}(\tau^{-1} |W|+\tau_{-}^{-1}|\sl{W}|), \quad |\tF_{\mu T_0} W^\mu|\les \dn\tau_+^{-1}(\tau^{-1} |W|+\tau_{-}^{-1}|\sl{W}|)\label{11.13.11.18}.
\end{align}
\end{lemma}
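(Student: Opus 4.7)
My plan is a frame-decomposition bookkeeping exercise built on three structural facts: (i) the antisymmetry $G(\Tb,\Tb)=0$, which kills the most dangerous $(t/\tau)^2$-weighted contributions arising from Cartesian-to-hyperboloidal conversions; (ii) the tangency of the Lorentz generators to $\H_\tau$, namely $S=\tau\Tb$, $\L_\Omega\Tb=0$, together with the size bounds $|\Omega_{ij}|\les r$ and $|\Omega_{0a}|_h\les t$; and (iii) the commutator identity (\ref{11.10.7.18}), which upon contraction with $W$ yields $\sum_a|W_{\Omega_{0a}}|^2\approx t^2|\sl W|^2+\tau^2(W^{\Nb})^2$, controlling the angular and $\Nb$-components of $W$ by $\sum_a|W_{\Omega_{0a}}|$ (but not the $\Tb$-component).

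First I would establish (\ref{10.27.8.18}). The one-Cartesian-index bound $\tau|G_{\mu\mub}|\les\tau_+|\nt G|$ is immediate once each $\p_\mu$ is expanded in $\{\Tb,\Nb,e_A\}$ with coefficients of size $O(t/\tau)$, using (\ref{dcp_3}) and $\p_i=\omega_i N+\omega_i^A e_A$. The two-Cartesian-index bound $\tau|G|\les\tau_+|\nt G|$ is the delicate one: naively one would pay $(t/\tau)^2|\nt G|$, but antisymmetric cancellations of the type $\tau^2 G(N,T_0)=(t^2-r^2)G(\Nb,\Tb)=\tau^2 G(\Nb,\Tb)$ reduce the weight to $t/\tau$, as required. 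For $|G_{Z\mub}|$ and $|G_{ZZ}|$ I simply plug in $|Z|_h\les\tau_+$ from (ii), again invoking $G(\Tb,\Tb)=0$ when $Z=Z'=S$.

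Next I would prove (\ref{11.13.6.18}) and (\ref{11.13.7.18}) by the decomposition $W=W^\Tb\Tb+W^\Nb\Nb+W^A e_A$. For (\ref{11.13.7.18}), substituting $T_0=\tfrac{t}{\tau}\Tb-\tfrac{r}{\tau}\Nb$ and collecting the $(\Tb,\Nb)$-cross terms, the radial/time part of $W$ contributes at most $|W||\nt G|$ (the algebraic identity $rW^\Tb+tW^\Nb=\tau W^i\omega_i$ cancels the dangerous $1/\tau$), while the angular part of $W$ is better handled directly in the Cartesian frame via $G(W^A e_A,T_0)=W^A e_A^i G_{0i}\les|\sl W||G|$, which is sharper than routing through $|\nt G|$. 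For (\ref{11.13.6.18}), when $Z=S$ we get $G(S,W)=\tau[W^\Nb G(\Tb,\Nb)+W^A G(\Tb,e_A)]$ and (\ref{11.10.7.18}) turns the $(W^\Nb,W^A)$-bounds into the stated $\tfrac{\tau_+}{\tau}\sum_a|W_{\Omega_{0a}}|$-term; when $Z\in\Omega$, the $W^\Tb$-contribution $|W^\Tb G(Z,\Tb)|\les\tfrac{t^2}{\tau}|W||\nt G|$ is reshaped via $\tfrac{t^2}{\tau}=\tau+\tfrac{r^2}{\tau}$, the first piece matching $\tau|W||\nt G|$ and the second dominated by $\tfrac{\tau_+}{\tau}\sum_a|W_{\Omega_{0a}}||\nt G|$ through the angular identity in (iii).

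Finally, (\ref{11.13.10.18}) and (\ref{11.13.11.18}) follow by substituting the pointwise decay of $\hat F$ and $\tF$ into (\ref{11.13.6.18}) and (\ref{11.13.7.18}). From Lemma \ref{lemma:decay:lF}, (\ref{11.16.2.18}), and (\ref{eq:decay:lF}) one reads off $|\nt{\hat F}|\les\tau_-^{-\ep}\tau_+^{-1}\tau^{-1}$ and $|\hat F|\les\tau^{-2}\tau_-^{-\ep}$; Proposition \ref{Ptw:Max} gives $|\nt{\tF}|\les\dn\tau_+^{-1}\tau^{-1}$ and $|\tF|\les\dn\tau_+^{-1}\tau_-^{-1}$. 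Combined with $\tau^2=\tau_+\tau_-$, these produce the displayed bounds. The main obstacle throughout is preserving every factor of $\tfrac{\tau_+}{\tau}$ that antisymmetry or $\H_\tau$-tangency makes available, which is exactly the weight flagged in the introduction around (\ref{1.10.1.19}) as the margin needed to upgrade $\tau_-^{-\ep}$ to $\tau^{-\ep}$ in the nonlinear error integrals; once the identities above are organized cleanly, the remainder is direct algebra.
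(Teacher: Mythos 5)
Your treatment of \eqref{10.27.8.18}, \eqref{11.13.7.18}, and the final substitution of the decay rates of $\hat F$ and $\tF$ is sound and follows the same route as the paper, modulo cosmetic differences (you expand $W$ in the hyperboloidal tetrad and use $rW^{\Tb}+tW^{\Nb}=\tau W^i\omega_i$, whereas the paper expands $W$ directly in $\{T_0,N,e_C\}$ so that $G(T_0,T_0)=0$ kills the time component with no extra identity needed; both are correct).

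There is, however, a genuine gap in your proof of \eqref{11.13.6.18} for $Z\in\Omega$. You bound the $\Tb$--contribution by
$|W^{\Tb}G(Z,\Tb)|\les\tfrac{t}{\tau}|W|\cdot\tau_+|\nt G|\approx\tfrac{t^2}{\tau}|W||\nt G|$
and then split $\tfrac{t^2}{\tau}=\tau+\tfrac{r^2}{\tau}$, asserting that the $\tfrac{r^2}{\tau}|W||\nt G|$ piece is dominated by $\tfrac{\tau_+}{\tau}\sum_a|W_{\Omega_{0a}}||\nt G|$. This last domination is false: by your own item (iii), $\sum_a|W_{\Omega_{0a}}|$ controls only $t|\sl W|$ and $\tau|W_{\Nb}|$, not $|W|$. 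Taking $W=\Tb$ makes the point concretely: then $\sum_a|W_{\Omega_{0a}}|=0$ while $\tfrac{r^2}{\tau}|W|\approx r^2 t/\tau^2\neq 0$, so the claimed domination fails. In fact the intermediate bound $\tfrac{t^2}{\tau}|W|\approx t^3/\tau^2$ already overshoots the true size $|G(Z,\Tb)|\approx\tau_+\approx t$ for this $W$, so no amount of scalar re-bracketing of $\tfrac{t^2}{\tau}$ can repair it; the information needed to trade $|W|$ for $\sum_a|W_{\Omega_{0a}}|$ was discarded the moment you used the crude inequality $|W^{\Tb}|\les\tfrac{t}{\tau}|W|$.

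The correct move, and what the paper does, is to bound $W_{\Tb}$ sharply \emph{before} multiplying: write $\Tb=\tfrac{\tau}{t}T_0+\tfrac{r}{t}\Nb$ from \eqref{dcp_3}, so $W_{\Tb}=\tfrac{\tau}{t}W_{T_0}+\tfrac{r}{t}W_{\Nb}$, and then use $|W_{T_0}|\le|W|$ together with $|W_{\Nb}|\le\tau^{-1}\sum_a|W_{\Omega_{0a}}|$ (from \eqref{11.10.7.18}) to obtain
$|W_{\Tb}|\les\tfrac{\tau}{t}|W|+\tau^{-1}\sum_{a=1}^3|W_{\Omega_{0a}}|$.
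Only then multiply by $|G(Z,\Tb)|\les\tau_+|\nt G|$; this directly produces the stated $\bigl(\tau|W|+\tfrac{\tau_+}{\tau}\sum_a|W_{\Omega_{0a}}|\bigr)|\nt G|$. Your $Z=S$ subcase is fine since $G(S,\Tb)=0$, but the $Z\in\Omega$ case needs this sharper decomposition of $W_{\Tb}$.
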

\begin{proof}
The estimate $|G_{T_0\mub}|+|G_{N \mub}|\les \frac{\tau_+}{\tau} |\nt{G}|$
 can be derived by using  (\ref{dcp_3}).  It immediately implies  the estimates in (\ref{10.27.8.18}) for
$
\tau |G_{\mu \mub}|$, and the one for $|G|$  by using the facts that $G$ is a 2-form and that $G_{T_0N}=G_{\Tb\Nb}$.

 The estimates on $G_{Z\mub}$ and $G_{ZZ}$ follow by using $S=\tau \Tb$ and (\ref{11.10.7.18}) in Lemma \ref{11.11.2.18}. Using (\ref{dcp_3}) again, $|G_{Z \mu}|\les \frac{\tau_+}{\tau} |G_{Z\mub}|$. The estimate for $|G_{Z\mu}|$ can then be derived in view of the one for $|G_{Z\mub}|$. Thus we have completed the proof of (\ref{10.27.8.18}).

For (\ref{11.13.6.18}), decompose
$
|G_{Z\mu} W^\mu|\les |G_{Z \ib} W^{\ib}|+|G_{Z\Tb} W^{\Tb}|$ with
$\{\eb_i\}_{i=1}^3$ the orthonormal basis on  $\H_\tau$.
In view of (\ref{11.10.7.18}), we can obtain
$\tau|W^{\ib}|\les \sum_{a=1}^3|W_{\Omega_{0a}}|$.
Then it follows by using (\ref{dcp_3}) that
\begin{equation*}
|W_{\Tb}|\les \frac{\tau}{t}|W|+\tau^{-1}\sum_{a=1}^3|W_{\Omega_{0a}}|.
\end{equation*}
 Hence,
\begin{equation*}
|G_{Z\mu}W^\mu|\les |\nt{G}_Z|(\tau^{-1} \sum_{a=1}^3|W_{\Omega_{0a}}|+\frac{\tau}{\tau_+}|W|).
\end{equation*}

(\ref{11.13.6.18}) can then be proved by applying the estimate for $|G_{Z\mub}|$ in (\ref{10.27.8.18}) to the factor of $|\nt{G}_Z|$.

 Finally for (\ref{11.13.7.18}), we can carry out the radial  decomposition for $G$:
$$
G_{\mu 0} W^\mu=G_{N 0} W_N+G_{C 0} W^C.
$$
 Here $\{e_C\}_{C=1}^2$ denotes the orthonormal frame on the 2-sphere $S_{\tau,r}$. (\ref{11.13.7.18}) then follows by  using
 $G_{N0}=G_{\Nb \Tb}$ and $|G_{C0}|\le |G|$.
(\ref{11.13.10.18}) and (\ref{11.13.11.18}) can be derived by using (\ref{11.13.6.18}) and (\ref{11.13.7.18}) respectively, together with using (\ref{11.16.2.18}) and Proposition \ref{Ptw:Max}.
\end{proof}

	\subsection{Pointwise Decay of the Scalar Field}
		In this subsection, we prove the following pointwise decay estimates for the scalar field $\phi$ under the bootstrap assumptions \eqref{BA}.
	\begin{prop}
\label{thm:pointdecay:derScal}
		In  the region $\D^{\tau_*}$, under the bootstrap assumptions \eqref{BA}, there hold
		 \begin{align}\label{11.11.4.18}
			\sup_{\H_{\tau}}\{\tau_+^\frac{3}{2}(|D_L\phi|+|\slashed{D}\phi|+|D_{\bfe_a}\phi|+|\phi|)& +
			\tau_+\tau_-^\frac{1}{2}(|D_\Tb \phi|+|D_\Nb\phi|+|D\phi|+|D_\Lb \phi|)\nn\\
&+\tau_+^{\frac{1}{2}}|D_Z\phi|\}  \les \dn \jb{\tau}^{\delta},
		\end{align}
where $ \bfe_a= t^{-1}\Omega_{0a}$,  $a=1,2,3$ and $Z\in\Omega \cup\{S\}$.
	\end{prop}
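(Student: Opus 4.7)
My plan is to derive all the pointwise bounds by combining the weighted Sobolev embeddings on $\H_\tau$ from Propositions \ref{lemma::ScalarDecayExterior} and \ref{sob} with the vector-field identities \eqref{11.16.1.18}--\eqref{dcp_3} and Lemma \ref{11.11.2.18}, applied to the bootstrap energies \eqref{BA}. The $(t/\tau)$-weight on the right of Proposition \ref{lemma::ScalarDecayExterior}(2) is precisely the weight built into $\E_k^{T_0}[\phi](\tau)$ via \eqref{11.13.1.18}, so the embeddings feed directly off the bootstrap quantities.

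\textbf{Base pointwise bounds.} The estimate $\tau_+^{3/2}|\phi|\les \dn\jb{\tau}^{\delta}$ is immediate from Proposition \ref{lemma::ScalarDecayExterior}(2) applied to $\phi$, since $\sum_{k\le 2}\E^{T_0}_k[\phi](\tau)\les \dn^{2}\jb{\tau}^{2\delta}$ controls $\|\sqrt{t/\tau}\,D_\B^{\le 2}\phi\|_{L^{2}(\H_\tau)}$. For $\tau_+^{1/2}|D_Z\phi|\les \dn\jb{\tau}^{\delta}$ with $Z\in \Omega\cup\{S\}$, I apply the analogous Sobolev-type inequality to the scalar field $D_Z\phi$: the bootstrap still controls $D_Z\phi$ and its $\B$-derivatives, but at one order less than $\phi$ itself, and the weaker $\tau_+^{1/2}$ rate reflects exactly that loss of regularity. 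The commutators $[D_\B, D_Z]\phi = D_{[\B, Z]}\phi + iF(\B, Z)\phi$ are lower order: $[\B,Z]\in \Pp$, and the curvature term is absorbed using Lemma \ref{lemma:decay:lF} and Proposition \ref{Ptw:Max}.

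\textbf{Transfer to other frames.} The three ``good'' derivatives follow from vector-field identities. The identity $\tau_+ L = S + \omega^a \Omega_{0a}$, immediate from $S = tT_0 + rN$ and $\omega^a \Omega_{0a} = r T_0 + t N$, gives $\tau_+|D_L\phi|\les |D_S\phi|+\sum_a|D_{\Omega_{0a}}\phi|\les \dn\jb{\tau}^{\delta}\tau_+^{-1/2}$, and hence $\tau_+^{3/2}|D_L\phi|\les \dn\jb{\tau}^{\delta}$. Lemma \ref{11.11.2.18}(2) yields $t\,|\sD\phi|\les (\sum_a |D_{\Omega_{0a}}\phi|^{2})^{1/2}$, so the same rate transfers to $\sD\phi$ via $t\approx \tau_+$; and $D_{\bfe_a}\phi = t^{-1}D_{\Omega_{0a}}\phi$ is direct. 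For the bad directions, $\tau D_\Tb\phi = D_S\phi$ and $\tau D_\Nb\phi = \omega^a D_{\Omega_{0a}}\phi$ (from \eqref{11.16.1.18}) together with $\tau = \tau_+^{1/2}\tau_{-}^{1/2}$ give $\tau_+\tau_{-}^{1/2}(|D_\Tb\phi|+|D_\Nb\phi|)\les \dn\jb{\tau}^{\delta}$.

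\textbf{Main obstacle.} The remaining bounds on $|D\phi|$ and especially $|D_\Lb\phi|$ with the weight $\tau_+\tau_{-}^{1/2}$ are the most delicate: the naive decomposition $\tau_{-}D_\Lb\phi = D_S\phi - \omega^a D_{\Omega_{0a}}\phi$ (from $\tau_{-}\Lb = \tau(\Tb-\Nb) = S - \omega^a\Omega_{0a}$) combined only with the $\tau_+^{-1/2}$ bound on $D_Z\phi$ falls short of the claim by a factor $(\tau_+/\tau_{-})^{1/2}$. To close this gap my plan is to exploit the $(\tau_{-}/\tau)$-weighted structure of the $D_\Lb$-component in the energy density \eqref{11.13.1.18}: the bootstrap directly controls $\sqrt{\tau_{-}/\tau}\,D_\B^{\le 2}D_\Lb\phi$ in $L^{2}(\H_\tau)$, and the commutators $[D_\B, D_\Lb]$ produced by \eqref{12.21.1.17} create only angular and curvature terms already handled. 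Applying Proposition \ref{lemma::ScalarDecayExterior}(2) to the rescaled scalar $\tau_{-}^{1/2}D_\Lb\phi$ (tracking $\Omega_{0a}(\tau_{-})=-\omega^a\tau_{-}$ in the commutators), or equivalently a weighted variant of that embedding with $\sqrt{\tau_{-}/\tau}$ in place of $\sqrt{t/\tau}$, then delivers the sharp $\tau_+\tau_{-}^{1/2}|D_\Lb\phi|\les \dn\jb{\tau}^{\delta}$. The bound on $|D\phi|$ then follows from $|D\phi|^{2}=(D_{T_0}\phi)^{2}+(D_N\phi)^{2}+|\sD\phi|^{2}$ together with the corresponding weighted-Sobolev bounds on the Cartesian components $D_{T_0}\phi, D_N\phi$ obtained in the same spirit.
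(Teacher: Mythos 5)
There is a genuine gap, concentrated in your Step 2 and in the handling of your ``Main obstacle''.

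\textbf{Step 2 does not close as stated.} You claim $\tau_+^{1/2}|D_Z\phi|\les\dn\jb{\tau}^{\delta}$ for $Z\in\Omega\cup\{S\}$ by ``applying the analogous Sobolev-type inequality to $D_Z\phi$ at one order less''. But Proposition~\ref{lemma::ScalarDecayExterior}(2) is an $H^2\hookrightarrow L^\infty$ embedding on a $3$-dimensional slice; applied to the scalar $D_Z\phi$ it forces $\|(\tfrac{t}{\tau})^{1/2}D_\B^{\le 2}D_Z\phi\|_{L^2(\H_\tau)}$ on the right-hand side, a third-order quantity that the bootstrap \eqref{BA} does not control, and there is no ``one order less'' $L^\infty$ embedding on $\H_\tau$ that fills the gap. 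The paper's device is to apply the embedding to $D_{\bfe_a}\phi=t^{-1}D_{\Omega_{0a}}\phi$: because of \eqref{1.24.1.19}, $(\tfrac{t}{\tau})^{1/2}D_{\bfe_a}D_\B^{\le 2}\phi$ \emph{is} the energy density of $\E^{T_0}_2[\phi]$, so no extra derivative is lost; $D_{\Omega_{0a}}\phi=tD_{\bfe_a}\phi$ then gives the $\tau_+^{1/2}$ rate. Similarly, $D_\Tb\phi$ is bounded by applying the embedding to the rescaled field $(\tau/t)D_\Tb\phi$, whose $\B$-derivatives with the $(\tfrac{t}{\tau})^{1/2}$ weight match the $(\tfrac{\tau}{t})^{1/2}D_\Tb D_\B^{\le 2}\phi$ piece of the energy density \eqref{1.24.2.19}; applying it to $D_S\phi$ directly overshoots the energy by an unbounded factor $t$. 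Your deduction ``$\tau D_\Tb\phi=D_S\phi$'' therefore rests on an input you have not produced.

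\textbf{The $D_\Lb\phi$ bound fails by the proposed route.} The ``weighted variant of Proposition~\ref{lemma::ScalarDecayExterior}(2) with $\sqrt{\tau_-/\tau}$ in place of $\sqrt{t/\tau}$'' is not in the paper and is not a rescaling of \eqref{11.10.6.18}: on $\H_\tau$ one has $\tau_-/\tau=\tau/\tau_+=(t/\tau)^{-1}$, i.e.\ the reciprocal weight, and the proof of Lemma~\ref{lemma::GaugeInvariantSobolev1} cannot be adapted to it. Your alternative, applying \eqref{11.10.6.18} to $\tau_-^{1/2}D_\Lb\phi$, produces a right-hand side $\approx\|\tau^{1/2}D_\B^{\le 2}D_\Lb\phi\|_{L^2(\H_\tau)}$, which exceeds what the energy $\|(\tau_-/\tau)^{1/2}D_\Lb D_\B^{\le 2}\phi\|_{L^2}$ controls by an unbounded factor $\tau_+^{1/2}$. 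Worse, even with the correct $(\tau/t)$ rescaling, the commutator
$[\Omega_{0i},\Lb]=-\omega_i\Lb+\tfrac{\tau_+}{r}\omega_i^A e_A$
from \eqref{12.21.1.17} injects the singular factor $\tau_+/r$ in front of the angular derivative, which blows up as $r\to 0$ on $\H_\tau$ and is not controlled. The paper avoids this entirely: it never applies the Sobolev embedding to $D_\Lb\phi$ or $D_L\phi$. Instead it bounds $D_X\phi$ for $X\in\{\Tb\}\cup\{\p_\mu\}$ — frames for which $[\Omega_{0a},X]$ is $0$ or a Cartesian derivative, with no $\tau_+/r$ degeneration — by applying \eqref{11.10.6.18} to $(\tau/t)D_X\phi$ and invoking \eqref{12.29.1.18}, \eqref{12.29.4.18} and \eqref{1.24.2.19}; the estimates for $D_L\phi$ and $D_\Lb\phi$ then follow from the purely algebraic identities $\tau_+L=\tau(\Tb+\Nb)$, $\Lb=T_0-N$ and $|D_\Lb\phi|\le|D\phi|$, not from a separate Sobolev argument.
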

 We first give a short lemma for  ease of the presentation of the proof.
	\begin{lemma}
Let $X=\Tb$ or $\p$. With $n=1,2$, the following inequality holds for a complex scalar field $\phi$
\begin{equation}\label{12.29.1.18}
|D_\Omega^{n} D_X \phi|\les |D_X D\rp{\le n}_\Omega \phi|+|\L_\Omega\rp{\le n-1} F_{\Omega X}\c \phi|+|F_{\Omega X} \c D_\Omega\rp{\le n-1} \phi|.
\end{equation}
\end{lemma}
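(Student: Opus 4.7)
The plan is an induction on $n$ driven by the standard curvature commutation identity
\begin{equation*}
[D_Y, D_Z]\phi = -iF(Y,Z)\phi + D_{[Y,Z]}\phi,
\end{equation*}
applied with $Y\in\Omega$ and $Z=X$. The two structural inputs I would use are (i) $[\Omega,\Tb]=0$, which follows from $\L_\Omega\Tb=0$ (itself a consequence of $\Omega$ being Killing and $\tau$ being Lorentz invariant), and (ii) $[\Omega,\p]\in\{\p\}$, i.e. the Lorentz algebra sends translations to translations. Together these ensure that the three families of terms appearing on the RHS are closed under repeated commutation with $D_\Omega$.

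For $n=1$, a single commutation gives
\begin{equation*}
D_\Omega D_X \phi = D_X D_\Omega \phi - iF_{\Omega X}\phi + D_{[\Omega,X]}\phi,
\end{equation*}
and the last term is either zero (when $X=\Tb$) or of the form $D_{X'}\phi$ with $X'\in\{\p\}$, hence absorbed in $|D_X D_\Omega\rp{\le 1}\phi|$; the curvature term contributes simultaneously to the second and third summands on the RHS since $\L_\Omega\rp{\le 0}$ and $D_\Omega\rp{\le 0}$ reduce to the identity.

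For $n=2$ I would apply a second $D_{\Omega'}$ to the $n=1$ identity and repeat the commutation. The only nontrivial algebra is the Leibniz expansion
\begin{equation*}
D_{\Omega'}(F_{\Omega X}\phi)=(\Omega' F_{\Omega X})\phi + F_{\Omega X}D_{\Omega'}\phi,\qquad \Omega' F_{\Omega X} = (\L_{\Omega'}F)(\Omega,X) + F(\L_{\Omega'}\Omega, X) + F(\Omega, \L_{\Omega'}X).
\end{equation*}
Thanks to (i) and (ii), each piece on the right of the second identity is a component of $\L_{\Omega'}F$ indexed again by admissible vector fields in $\Omega\cup\{\Tb\}\cup\{\p\}$, so the whole contribution is controlled by $|\L_\Omega\rp{\le 1}F_{\Omega X}||\phi|$; the piece $F_{\Omega X}D_{\Omega'}\phi$ fits into $|F_{\Omega X}\c D_\Omega\rp{\le 1}\phi|$; and every remaining rearrangement collapses into $|D_X D_\Omega\rp{\le 2}\phi|$.

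The main obstacle is essentially bookkeeping: verifying that repeated application of $[D_\Omega,\cdot]$ and the Leibniz rule never produces a term lying outside the three declared families. This closure is guaranteed precisely by (i) and (ii), which is why the inequality can be stated entirely in terms of $F_{\Omega X}$, $\L_\Omega F_{\Omega X}$ and $D_\Omega\rp{\le n-1}\phi$ without any auxiliary objects.
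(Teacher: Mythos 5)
Your proof is correct and follows essentially the same route as the paper: commute $D_\Omega$ past $D_X$ once via $[D_\Omega,D_X]\phi = iF_{\Omega X}\phi + D_{[\Omega,X]}\phi$, then for $n=2$ hit the result with another $D_{\Omega'}$ and expand $D_{\Omega'}(F_{\Omega X}\phi)$ by Leibniz and the Lie derivative formula, using $[\Omega,\Omega]\subset\Omega$ and $[\Omega,X]\in\{X\}\cup\{0\}$ to keep every term in the three declared families. Two cosmetic remarks: the sign in your commutation identity should be $+iF(Y,Z)$ given the paper's convention $F_{\mu\nu}=-i[D_\mu,D_\nu]$ (immaterial for the inequality), and in the sentence following your Leibniz expansion, only the first of the three pieces is a component of $\L_{\Omega'}F$ — the other two are components of $F$ itself with relabelled indices — but both kinds are absorbed by $|\L_\Omega^{\le 1}F_{\Omega X}|$, which includes the $l=0$ term, so the conclusion stands.
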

\begin{proof}
 In view of (\ref{curv_2}),
 \begin{equation}\label{12.29.3.18}
 D_\Omega D_X \phi= D_X D_\Omega \phi+i F_{\Omega X}\phi+D_{[\Omega, X]} \phi.
\end{equation}
With the help of the symbolic formula $[\Omega, X]=X$ or $0 $, the $n=1$ case in (\ref{12.29.1.18}) can be proved.

For $n=2$, we derive by virtue of (\ref{12.29.3.18}), (\ref{curv_2}), and $[X, \Omega]=X $ or $0$  that
\begin{align*}
D_\Omega^2 D_X \phi&=D_X D^2_\Omega \phi+[D_\Omega, D_X]D_\Omega\phi+D_\Omega[D_\Omega, D_X]\phi\\
&=\sum_{l=1}^2D_X D^l_\Omega \phi+i F_{\Omega X} D_\Omega \phi+  D_\Omega(i F_{\Omega X} \phi+D_X\phi).
\end{align*}
By using the symbolic formulae $[\Omega, \Omega]=\Omega$ and $[\Omega, X]=X$ or $0$,  from the calculation
\begin{align*}
D_\Omega(F_{\Omega X} \phi)&= \p_\Omega F_{\Omega X} \phi+F_{\Omega X} D_\Omega \phi=(\L_\Omega F_{\Omega X} +F_{[\Omega, \Omega] X}+F_{\Omega [\Omega, X]})\phi+F_{\Omega X} D_\Omega \phi,
\end{align*}
we can derive for $X=\Tb$ or $ \p$
\begin{equation*}
D_\Omega( F_{\Omega X} \phi)= (\L_\Omega F_{\Omega X} +F_{\Omega X})\phi +F_{\Omega X} D_\Omega \phi.
\end{equation*}
Substituting the above identity and (\ref{12.29.3.18}) into the calculation of $D_\Omega^2 D_X \phi$ gives
\begin{equation*}
D_\Omega^2 D_X \phi= D_X D^{\le 2}_\Omega \phi+i F_{\Omega X} D_\Omega \phi+ i \L_\Omega\rp{\le 1} F_{\Omega X} \phi
\end{equation*}
as desired in (\ref{12.29.1.18}).
\end{proof}

	\begin{proof}[Proof of Proposition \ref{thm:pointdecay:derScal}]
The pointwise decay estimate for $\phi$ follows directly from the Sobolev embedding \eqref{11.10.6.18} and the bootstrap assumption (\ref{BA}). More precisely we can estimate that
\begin{equation*}
\sup_{\H_\tau} \tau_+^\frac{3}{2}|\phi|\les \|(\frac{t}{\tau})^\f12D_\B\rp{\le 2}\phi\|_{L^2(\H_\tau)}\les (\E^{T_0}_{\le 2}[\phi](\tau))^\f12\les \dn \jb{\tau}^{\delta}.
\end{equation*}
By straightforward calculations, there holds
\begin{equation}\label{1.24.1.19}
\int_{\H_\tau}\frac{t}{\tau}(\sum_{a=1}^3|D_{\bfe_a}\psi|^2+|\psi|^2)+\frac{\tau}{t}|D_{T_0}\psi|^2=2\int_{\H_\tau} \T[\psi,0](T_0, \Tb).
\end{equation}
By (\ref{11.11.1.18}), (\ref{11.13.1.18}),  the bound for $D_{T_0}\psi$ in (\ref{1.24.1.19}), and $L=T_0+N$, we obtain
\begin{equation}\label{1.24.2.19}
\int_{\H_\tau} \frac{\tau}{t} (|D_\Tb \psi|^2+|D_\Nb \psi|^2+|D_N\psi|^2)+\frac{t}{\tau}|\sl{D}\psi|^2\les \int_{\H_\tau} \T[\psi,0](T_0, \Tb).
\end{equation}
The fact that  $|\B^l (t^{-1})|\les t^{-1}$ when $t>r$ implies
\begin{equation*}
\sum_{a=1}^3|D_{\B}^{l}D_{\bfe_a}\phi|\les \sum_{a=1}^3|D_{\bfe_a}D_\B\rp{\le l}\phi|.
\end{equation*}
We then apply  Proposition \ref{lemma::ScalarDecayExterior} to $D_{\bfe_a} \phi$ and use (\ref{1.24.1.19}) to derive
\begin{align*}
	\sup_{\H_\tau} \tau_+^3 (\sum_{a=1}^3 |D_{\bfe_a}\phi|^2)&\les \int_{\H_\tau}\frac{t}{\tau}\left(\sum_{a=1}^3 |D_\B\rp{\le 2} D_{\bfe_a}\phi|^2\right)\les \int_{\H_\tau}\frac{t}{\tau}\left(\sum_{a=1}^3 |D_{\bfe_a}D_\B\rp{\le 2} \phi|^2\right)\\
&\les \E^{T_0}_{\le 2}[\phi](\tau) \les \dn^2 \jb{\tau}^{2\delta}.
\end{align*}
Consequently, also by using (\ref{11.10.7.18}), $|D_\O\phi|\approx r|\sl{D}\phi|$, and $r\le t$ in $\D^{\tau_*}$, we can obtain
\begin{equation*}
\tau |D_\Nb \phi|+t|\sl{D}\phi|+|D_\O\phi|\les \tau_+^{-\f12}\dn \jb{\tau}^{\delta}.
\end{equation*}
Hence the estimates for $D_\Omega \phi$, $\sl{D}\phi$ and $D_\Nb \phi$ have been proved.

 Next we consider the estimates for $D_X\phi$ with $X=\Tb, \p$.
By using the Sobolev embedding (\ref{11.10.6.18}) with the help of $|\B^l(t^{-1})|\les t^{-1}$ for $l\le 2$, and using (\ref{12.29.1.18}), we can bound
 \begin{align*}
	\sup_{\H_\tau} \tau^2 t |D_X \phi|^2&\les \int_{\H_\tau}\frac{\tau}{t}|D_\Omega\rp{\le 2} D_X \phi|^2 d\mu_{\H_\tau}\\
&\les \int_{\H_\tau} \frac{\tau}{t} \{|D_X D_\Omega\rp{\le 2} \phi|^2+|\L_\Omega\rp{\le 1} F_{\Omega X} \phi|^2+|F_{\Omega X} D_\Omega\rp{\le 1}\phi|^2\} d\mu_{\H_\tau}.
		\end{align*}
We next prove
\begin{equation}\label{12.29.4.18}
|F_{\Omega X}|\les 1, \qquad \tau_+^{-\frac{3}{2}} (\frac{\tau}{t})^\f12 |\L_\Omega F_{\Omega X}|\les \tau^{-\f12} |\nt{\L_\Omega F}|.
\end{equation}
By Lemma \ref{11.13.5.18}, it holds for any 2-form $G$ that
\begin{align}\label{12.29.5.18}
 |G_{\Omega \Tb}|\les \tau_+ |\nt{G}|; \qquad |G_{\p \Omega}|\les \jb{\tau}^{-1}\tau_+^2 |\nt{G}|.
\end{align}
With  $G=F$,  using the decay estimates for the Maxwell field $F$ in (\ref{10.27.4.18}) and  $\tau^2 =\tau_{-}\tau_+$,
\begin{align*}
\tau |F_{\Omega \Tb}|\les\tau \tau_+ |\nt{F}|\les 1; \qquad |F_{\p \Omega}|\les \jb{\tau}^{-1}\tau_+^2 |\nt{F}|\les \tau_+\tau^{-2}\les 1.
\end{align*}
This gives the first estimate in (\ref{12.29.4.18}).
Applying $G=\L^l _\Omega F$ with $l=0,1$ to (\ref{12.29.5.18}) gives
\begin{align*}
&\tau_+^{-\frac{3}{2}}(\frac{\tau}{t})^\f12| \L_\Omega ^l F_{\Omega \Tb}|\les \tau_+^{-1}\tau^\f12 |\nt{\L^l_\Omega F}|;\quad  \tau_+^{-\frac{3}{2}} (\frac{\tau}{t})^\f12| \L_\Omega^l F_{\Omega \p}|\les \tau^{-\f12} |\nt {\L^l_\Omega F}|,
\end{align*}
which imply the second estimate in (\ref{12.29.4.18}).
It follows by using (\ref{1.24.2.19}), the decay property of $\phi$ in (\ref{11.11.4.18}) and  using (\ref{12.29.4.18})  to control the curvature terms that
\begin{align*}
\sup_{\H_\tau} \tau^2 t |D_X \phi|^2&\les \int_{\H_\tau}\frac{\tau}{t}  (|D_X D\rp{\le 2}_{\Omega} \phi|^2+|D_\Omega\rp{\le 1}\phi|^2) +|\nt{\L\rp{\le 1}_{\Omega}F}|^2\dn^2 \jb{\tau}^{2\delta-1}\\
&\les \E^{T_0}_{\le 2}[\phi](\tau)+\tau^{-2+2\delta}\dn^2(\E^S_{\le 1}[\tF](\tau)+\hat P_{\le 1})\\
&\les \dn^2 \jb{\tau}^{2\delta},
\end{align*}
where we employed (\ref{BA}) and Lemma \ref{lemma:decay:lF} to derive the last inequality.  This gives  the estimates for $|D\phi|$ and $|D_\Tb \phi|$ in (\ref{11.11.4.18}).

Finally  due to $\Lb=T_0-N$ and the identity $
  (t+r)L=\tau(\Tb+\Nb)$  in  (\ref{11.11.1.18}), we can derive that
\begin{align*}
  |D_L\phi|&\les \tau_+^{-1}\tau (|D_{\Tb}\phi|+|D_{\Nb}\phi|)\les \tau_+^{-1}\tau \tau_+^{-1}\tau_{-}^{-\frac{1}{2}} \dn \jb{\tau}^{\delta}\les \tau_+^{-\frac{3}{2}}\dn \jb{\tau}^{\delta},\\
  |D_\Lb \phi|&\le |D\phi|\les \tau_+^{-\f12} \tau^{-1}\dn \jb{\tau}^{\delta}.
\end{align*}
By using $\tau^2=\tau_+\tau_{-}$, we can complete the proof for the proposition.

	\end{proof}
	
	\subsection{Boundedness theorem of energies in $\D^{\tau_*}$}
	
	The main goal of this subsection is to prove \Cref{them::mainThm}, which is done by improving the bootstrap assumptions (\ref{BA}) with the help of Proposition \ref{11.8.1.18}.
		\begin{theorem}\label{thm::EnergyDecay}
		Under the bootstrap assumptions \eqref{BA},  there hold for $\tau_0\le \tau\le \tau_*$ and $k=0,1,2$ that
 \begin{align}
 \label{eqn:BT:imp:scal}
			\mathcal{E}_k^{T_0}[\phi](\tau) &\les (\varepsilon_0^2 + \dn^3) \jb{\tau}^{k\delta},\\
\label{eqn:BT:imp:Max}
			\mathcal{E}_k^{S}[\tF](\tau) &\les \varepsilon_0^2 +\dn^3.
		\end{align}
	\end{theorem}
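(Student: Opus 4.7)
The plan is to apply the energy identity \eqref{eq:EnergyID} with multiplier $T_0$ to the covariant wave equation satisfied by $D_Z^k\phi$, and with multiplier $S$ to the Maxwell equation satisfied by $\L_Z^k\tF$, for all $Z^k\in \Pp^k$ with $k\le 2$. The initial energies on $\H_{\tau_0}$ and boundary fluxes on $C_0^\tau$ are bounded by $\les\ve_0^2$ via Proposition \ref{11.8.1.18}. The deformation contributions vanish in both cases: $\pi^{T_0}=0$ since $T_0$ is Killing, and $\T[\L_Z^k\tF]_{\mu\nu}(\pi^S)^{\mu\nu}$ equals the trace of the Maxwell stress-energy in dimension four, which vanishes. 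Closing the bootstrap therefore reduces, via Gronwall, to controlling the bulk error integrals
\begin{align*}
\mathrm{Err}^\phi_k &= \sum_{Z^k\in\Pp^k}\iint_{\D^\tau}\Re\bigl(\overline{(\Box_A-1)D_Z^k\phi}\,D_{T_0}D_Z^k\phi\bigr) + F_{T_0\mu}J^\mu[D_Z^k\phi], \\
\mathrm{Err}^{\tF}_k &= \sum_{Z^k\in\Pp^k}\iint_{\D^\tau} (\L_Z^k\tF)_{S\mu}\,\L_Z^k J[\phi]^\mu,
\end{align*}
where $J[\L_Z^k\tF]=\L_Z^k J[\phi]$ follows from \eqref{eq:eq4lFandNLF} together with the Killing property of every $Z\in\Pp$.

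For the scalar bound \eqref{eqn:BT:imp:scal} I would expand $(\Box_A-1)D_Z^k\phi$ via Lemma \ref{lem::Commutator1} into a sum of schematic type $F_{Z\mu}D^\mu D_Z^{\le k-1}\phi$, $\L_Z^{\le k-1}F\cdot D D_Z^{\le k-1}\phi$, and cubic scalar tails of type $|\phi|^2|D\phi|$ coming from $J[F]=J[\phi]$. Isolate the borderline piece $F_{Z\Tb}D^\Tb\phi$ with $Z\in\B$, split $F=\hat F+\tF$, and use \eqref{1.09.1.19} for $D^\Tb\phi$. Lemma \ref{11.13.5.18} combined with the linear estimate \eqref{11.13.10.18} and the weight $(t/\tau)^{1/2}$ inherent in \eqref{11.13.1.18} converts the $\tau_-^{-\ep}$ slow decay of $\hat F$ into the integrable rate $s^{-1-\ep}$, harmless after Cauchy--Schwarz against the bootstrap. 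The $\tF$-piece, using the sharp $|\tF_{Z\Tb}|\les\dn\tau^{-1}$ from Proposition \ref{Ptw:Max}, produces at worst a $\log\tau$ growth, which is exactly absorbed by the factor $\jb\tau^{k\delta}$ on the right of the target. All remaining terms ($|F|^2|\phi|$ commutators, the Lorentz-type $F_{T_0\mu}J^\mu[D_Z^k\phi]$, and the genuinely cubic tails) are quartic in the small scalar and contribute $\les\dn^3\jb\tau^{k\delta}$ through Proposition \ref{thm:pointdecay:derScal} and the weighted Sobolev estimate of Proposition \ref{sob}. Induction on $k=0,1,2$ then closes \eqref{eqn:BT:imp:scal}.

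For the Maxwell bound \eqref{eqn:BT:imp:Max}, the cases $k\le 1$ are soft: using $S=\tau\Tb$ and Lemma \ref{11.13.5.18}, $(\L_Z^k\tF)_{S\mu}\L_Z^k J[\phi]^\mu$ exposes only $\H_\tau$-tangent components of $\L_Z^k\tF$ paired against a $J$-current quartic in $(\phi,D\phi)$, and the pointwise scalar estimates of Proposition \ref{thm:pointdecay:derScal} with Cauchy--Schwarz yield $\les\dn^3$. The decisive case is $k=2$, where the borderline piece
\begin{equation*}
I_\star = \iint_{\D^\tau} S^\nu\,\Im\bigl(\phi\cdot\overline{D^\mu D_Z^2\phi}\bigr)\,(\L_Z^2\tF)_{\mu\nu}
\end{equation*}
overshoots by $\log\tau$ if estimated naively. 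Following the mechanism highlighted in the introduction, I would integrate by parts in $D^\mu$: the piece with the divergence landing on $\L_Z^2\tF$ is rewritten through \eqref{eq:eq4lFandNLF} as $-\L_Z^2 J[\phi]$, turning that contribution into a harmless quartic scalar integrand; the companion volume term $S^\nu\,\Im(D^\mu\phi\cdot\overline{D_Z^2\phi})(\L_Z^2\tF)_{\mu\nu}$ only sees the $\H_\tau$-tangent part of $D\phi$, thanks to $S=\tau\Tb$ combined with $(\L_Z^2\tF)_{\Tb\Tb}=0$, and is controlled by Proposition \ref{sob} against $\E_{\le 1}^{T_0}[\phi]$ without loss; the boundary terms produced on $\H_{\tau_0}$ and $C_0^\tau$ reduce to fluxes already handled by Proposition \ref{11.8.1.18}. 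Summing, $|I_\star|\les\dn^3$.

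The principal obstacle is exactly this top-order trilinear estimate: the direct bound loses a $\log\tau$ that would destroy the uniform-in-$\tau$ Maxwell energy, and its resolution rests on two orthogonal cancellations, namely that contracting $S=\tau\Tb$ into the antisymmetric $\L_Z^2\tF$ projects out the slow $D^\Tb\phi$ direction, and that $\p^\mu(\L_Z^2\tF)_{\mu\nu}$ trades, through the field equation \eqref{eq:eq4lFandNLF}, for a quartic scalar expression without derivative loss. Once $\mathrm{Err}^\phi_k$ and $\mathrm{Err}^{\tF}_k$ are bounded as above, Gronwall's inequality in $\tau$ yields \eqref{eqn:BT:imp:scal}--\eqref{eqn:BT:imp:Max}; taking $\dn^2=C\ve_0^2$ with $C$ large and $\ve_0$ sufficiently small so that $\ve_0^2+\dn^3<\tfrac14\dn^2$ strictly improves the bootstrap \eqref{BA}, and the continuity argument on $\tau_*$ concludes Theorem \ref{them::mainThm}.
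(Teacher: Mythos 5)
Your proposal matches the paper's proof in all essential respects: the multiplier choices ($T_0$ for the scalar energies, $S$ for the Maxwell energies), the use of Proposition~\ref{11.8.1.18} for the initial and $C_0$ boundary data, the splitting of the commutator via Lemma~\ref{lem::Commutator1} with the $\hat F$/$\tF$ decomposition and the conversion of the weak $\tau_-^{-\ep}$ decay to $\tau^{-\ep}$ by the $t/\tau$ weight, and crucially the integration-by-parts trick that trades $\partial^\mu(\L_Z^2\tF)_{\mu\nu}$ for $-\L_Z^2 J[\phi]_\nu$ via the Maxwell equation while the antisymmetry of the $2$-form combined with $S=\tau\Tb$ kills the slow $D^\Tb$ direction. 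The only imprecision is your treatment of the boundary terms produced by the integration by parts: you omit the future piece on $\H_\tau$ and ascribe the piece on $\H_{\tau_0}$ to Proposition~\ref{11.8.1.18}, when in fact both hyperboloidal boundary contributions vanish identically — the surface normal is $\Tb=\tau^{-1}S$ and $(\L_Z^2\tF)_{SS}=0$ by antisymmetry — and this vanishing is essential (a nontrivial $\H_\tau$ contribution would be circular with the quantity being estimated); the only surviving boundary term is on $C_0^\tau$, which is then controlled by the flux estimate \eqref{11.7.3.18} together with the pointwise bound on $|\phi|$ from Theorem~\ref{ext_stb}.
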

		
		We divide the proof into four steps. In Step 1-Step 2, we improve the energy estimates for $\E^{T_0}_k[\phi](\tau)$ with $0\le k\le 2$. In Step 3-Step 4, we complete the energy estimates for $\tF$.

\subsubsection{Energy estimates for the scalar field $\phi$}\label{scal_1}
Apply the energy identity \eqref{eq:EnergyID} for the fields $(D_Z^k\phi, 0)$, $Z^k\in \Pp^k$ and $k\le 2$.
Since $T_0$ is a Killing vector field, by using Proposition \ref{11.8.1.18}, we derive with $k\le 2$ that
 \begin{align}
\int_{\H_\tau} \ET[D_Z^k\phi](T_0, \Tb)& \le \int_{\H_{\tau_0}} \ET[D_Z^k\phi](T_0, \Tb) +\iint_{\mathcal{D}^\tau}|\p^\mu \ET[D_Z^k\phi]_{\mu 0}|+\int_{C_0^\tau} \ET[D_Z^k \phi](T_0, L) \nn\\
&\les \ve_0^2+\iint_{\mathcal{D}^\tau}|\p^\mu \ET[D_Z^k\phi]_{\mu 0}|,\label{11.17.1.18}
		\end{align}
where the volume element  in the each integral is omitted for simplicity.  For the error integral on the right hand side, we recall from identity \eqref{eq:div4TfG} with $(D_Z^k \phi, 0)$ that
\begin{equation}\label{2.09.1.19}
			\p^\mu \ET[D_Z^k\phi ]_{\mu \nu} =
\Re([\Box_A, D_Z^k]\phi \overline{D_\nu D_Z\phi}) + F_{\nu\mu}\Im(D_Z^k\phi\overline{D^\mu D_Z^k\phi}),\quad k\le 2.
		\end{equation}
		\paragraph{\bf Step 1.} Estimate of $\mathcal{E}_{\le 1}^{T_0}[\phi](\tau)$.

By using Lemma \ref{lem::Commutator1} for commutators, we can show that with $k=0,1$
 \begin{equation}\label{1.04.1.18}
			|\p^\mu \ET[D_Z^k\phi ]_{\mu 0} |\le   2|D_0D_Z^k\phi||Z^\nu F_{\mu\nu}D^\mu \phi + \partial^\mu(Z^\nu F_{\mu\nu})\phi| + |D_Z^k\phi||F_{\mu0} D^\mu D_Z^k\phi|.
		\end{equation}
In particular, the first term on the right hand side vanishes if $k=0$.

In view of (\ref{eq:decomposition4F}), if $k=0$, $\iint_{\D^\tau}|\p^\mu\T[\phi]_{\mu 0}|\le I_0$ with
$I_0=\iint_{\D^{\tau}}( |\tF_{\mu 0} D^\mu \phi|+|\hat F_{\mu 0} D^\mu \phi| )|\phi|.$
By applying  (\ref{11.13.11.18}) to $W=D\phi$,
\begin{align*}
I_0&\les \iint_{\D^\tau} (\frac{s_+}{s})^\f12|\phi|\cdot (\frac{s}{s_+})^\f12(s^{-1}|D\phi|+s_{-}^{-1}|\sl{D}\phi|)s_+^{-1}(s_{-}^{-\ep} +\dn)\\
&\les\int_{\tau_0}^\tau \E^{T_0}_0 [\phi](s) s^{-2} ds,
\end{align*}
where  $s_{-}$, $s$ and $s_{+}$  represent the varying parameters corresponding to $\tau_{-}$, $\tau$ and $\tau_{+}$ of a fixed point.
Substituting the estimate of $I_0$ into (\ref{11.17.1.18}) and  by  using Gronwall's inequality, we conclude
\begin{equation}\label{11.17.2.18}
\E_0^{T_0}[\phi](\tau)\les \ve_0^2.
\end{equation}

 When $k=1$ in (\ref{1.04.1.18}), we bound  $\iint_{\D^\tau}|\p^\mu \ET[D_Z\phi]_{\mu 0}|$  by the following three integrals,
\begin{align*}
			I_1 &= \iint_{\mathcal{D}^{\tau}} |D_0D_Z\phi||Z^\nu \lF_{\mu\nu}D^\mu \phi + \partial^\mu(Z^\nu \lF_{\mu\nu})\phi| + |D_Z\phi|| \lF_{\mu0} D^\mu D_Z\phi|,\\ 
			I_2 &=\iint_{\mathcal{D}^\tau}|D_0 D_Z\phi||Z^\nu \tF_{\mu\nu}D^\mu \phi + \partial^\mu(Z^\nu \tF_{\mu\nu})\phi|,\\
			I_3 &= \iint_{\mathcal{D}^\tau}|\tF_{\mu0} D^\mu D_Z\phi||D_Z\phi|.
		\end{align*}
	
		\subparagraph{\it Bounding $I_1$:}
		
	 Note that $\p^\mu \lF_{\mu\nu}=0$. We can bound
\begin{align*}
				I_1&\le \iint_{\mathcal{D}^\tau} |D_0D_Z\phi|(|Z^\nu \lF_{\mu\nu} D^\mu \phi| + |\partial^\mu Z^\nu ||\lF_{\mu\nu}||\phi|) + |D_Z\phi|| \lF_{\mu0} D^\mu D_Z\phi|.
		\end{align*}
 Applying
 the first estimates in (\ref{11.13.10.18})  and (\ref{11.13.11.18}) to $W=D  \phi$ and $W= D D_Z\phi$ respectively, also using  $|\partial^\mu Z^\nu ||\lF_{\mu\nu}|\les s_+^{-1} s_{-}^{-\ep}$ due to  Lemma \ref{lemma:decay:lF}, we derive by virtue of $s_+s_{-}=s^{2}$ for $I_1$ that
\begin{align*}
  I_1&\les \int_{\tau_0}^{\tau}\int_{\H_{s}}  s_+^{-1}s_{-}^{-\epsilon} |D_0 D_Z\phi|( |D \phi| +| D_{\Omega}\phi| + |\phi|) + s_+^{-1}s_{-}^{-\epsilon}|D_Z\phi|(s^{-1}| D D_Z\phi|+s_{-}^{-1}| \sD D_Z\phi|)\\
  &\les \int_{\tau_0}^{\tau} s^{-1-\epsilon}  (\int_{\H_{s}}\frac{s_{-}}{s}|DD_Z\phi|^2)^{\frac{1}{2}} (\int_{\H_{s}}\frac{s_+}{s}(\sum_{\Ga\in \Pp}|D_{\Gamma}\phi|^2+|\phi|^2))^{\frac{1}{2}} + s^{-1-\epsilon} \int_{\H_{s}}\frac{s_+}{s}| \sD\rp{\le 1} D_Z\phi|^2\\
  &\les \ve_0^2+\int_{\tau_0}^\tau s^{-1-\ep} \E_1^{T_0}[\phi](s) ds.
\end{align*}
For deriving the last inequality we employed (\ref{11.17.2.18}).  The last term on the right hand side will be treated by using Gronwall's inequality.
 	
		\subparagraph{\it Bounding $I_2$:}
		
By using (\ref{eq:eq4lFandNLF}) for $\tF$, we first can estimate that
		\begin{equation} \label{eqn::I2Decomp}
			\begin{split}
				I_2 
				&\les \iint_{\mathcal{D}^\tau}| \tF_{\mu Z} D^\mu\phi|| D_0D_Z\phi| + \iint_{\mathcal{D}^\tau}(|J[\phi]_Z|+|\tF|)|\phi||D_0D_Z\phi|.
			\end{split}
		\end{equation}
 Applying the second inequality in \eqref{11.13.10.18} to $W=D\phi$ implies
\begin{equation*}
  | \tF_{\mu Z} D^\mu\phi|\les  \dn (|D \phi| \tau_+^{-1}+ |D_\Omega\phi|\tau^{-2}).
\end{equation*}
By using the bootstrap assumptions \eqref{BA}, we therefore can derive that
\begin{align*}
  \iint_{\mathcal{D}^\tau}| \tF_{\mu Z} D^\mu\phi|| D_0D_Z\phi|&\les \dn \int_{\tau_0}^{\tau}  \int_{\H_s} | D_0D_Z\phi|(|D \phi| s_+^{-1}+ |D_\Omega\phi|s^{-2})\\
   &\les \dn \int_{\tau_0}^{\tau} s^{-1} (\int_{\H_{s}} \frac{s}{s_+}| D_0D_Z\phi|^2)^{\frac{1}{2}} (\E_0^{T_0}[\phi]^\f12(s) +s^{-1}\E_1^{T_0}[\phi]^\f12 (s)  )\\
   &\les \dn^3 \int_{\tau_0}^{\tau} s^{-1} \jb{s}^{\delta}ds\les  \dn^3 \jb{\tau}^{\delta}.
\end{align*}

For the second integral on the right hand side of \eqref{eqn::I2Decomp}, using the pointwise decay estimates for $\phi$ and the Maxwell field $\tF$  in
 (\ref{11.11.4.18}) and (\ref{dc:Max_ex}),  we have
\begin{align*}
  \iint_{\mathcal{D}^\tau}(|J[\phi]_Z||D_0D_Z\phi| + |\tF||D_0D_Z\phi|)|\phi| &\les \iint_{\mathcal{D}^\tau}|D_Z\phi| |\phi|^2|D_0D_Z\phi| + |\tF||\phi||D_0D_Z\phi|\\
  &\les\dn \iint_{\mathcal{D}^\tau}s_+^{-3+2\delta}  |D_Z\phi| |D_0D_Z\phi| + s^{-2} |\phi||D_0D_Z\phi|\\
  &\les \dn \int^\tau_{\tau_0}s^{-2} (\int_{\H_{s}} \frac{s_{-}}{s} |D_0D_Z\phi|^2)^{\frac{1}{2}} (\int_{\H_{s}} \frac{s_{+}}{s} |D\rp{\le 1}_Z\phi|^2)^{\frac{1}{2}}\\
  &\les \dn^3 \int^\tau_{\tau_0}s^{-2} \jb{s}^{2\delta}ds \les \dn^3.
\end{align*}
 We therefore conclude
\begin{equation*}
  I_2\les  \dn^3 \jb{\tau}^{\delta}.
\end{equation*}
		
		\subparagraph{\it Bounding $I_3$:}
		
		For $I_3$, we employ  the pointwise estimate for $\tF$ in (\ref{dc:Max_ex}) and (\ref{BA}) to derive
 \begin{align*}
   I_3&\les \iint_{\mathcal{D}^\tau}|\tF_{\mu0} D^\mu D_Z\phi||D_Z\phi|  \les \iint_{\mathcal{D}^\tau}|D_Z\phi|  |\tF||D D_Z\phi|\\
    &\les \dn \int^\tau_{\tau_0} s^{-2}(\int_{\H_{s}}\frac{s_{-}}{s}|DD_Z\phi|^2)^{\frac{1}{2}} (\int_{\H_{s}}\frac{s_{+}}{s}|D_Z\phi|^2)^{\frac{1}{2}} \\
    &\les \dn^3 \int^\tau_{\tau_0} s^{-2}\jb{s}^{2\delta}\les \dn^3.
    \end{align*}
 Combining the estimates for $I_1$, $I_2$ and $I_3$ gives
 \begin{align*}
   \mathcal{E}_{1}^{T_0}[\phi](\tau)\les \ve_0^2+ \dn^3 \jb{\tau}^{\delta}+\int_{\tau_0}^{\tau}s^{-1-\epsilon}\mathcal{E}_{1}^{T_0}[\phi](s)ds.
 \end{align*}
In view of Gronwall's inequality, this  leads to
\begin{equation*}
  \mathcal{E}_{1}^{T_0}[\phi](\tau)\les \ve_0^2+ \dn^3 \jb{\tau}^{\delta}.
\end{equation*}
Hence \eqref{eqn:BT:imp:scal} holds for $k=1$.	
\medskip
		\paragraph{\bf Step 2.} Estimate of $\mathcal{E}_{2}^{T_0}[\phi](\tau)$.
The estimate for $\mathcal{E}_2^{T_0}[\phi](\tau)$ is more involved since the error integral is much more complicated. Noting that with $k=2$ in (\ref{11.17.1.18}), there holds for $Z^2\in \Pp^2$ that
 \begin{equation}\label{1.05.1.19}
			\int_{\H_\tau} \ET[D_Z^2\phi](T_0, \Tb) \les \ve_0^2 +\iint_{\mathcal{D}^\tau}|\p^\mu \ET[D_Z^2\phi]_{\mu 0}|.
		\end{equation}
    In view of (\ref{2.09.1.19}) and  Lemma \ref{lem::Commutator1},
 \begin{align*}
			|\p^\mu \ET[D_Z^2\phi ]_{\mu 0} |&\les |D D_Z^2\phi|\big(\sum_{Z_1\sqcup Z_2=Z^2}(|Q(F, D_{Z_1}\phi, Z_2)|+|Q(\L_{Z_1} F, \phi, Z_2)|+|F_{Z_1\mu}F^{\mu}_{\ Z_2}||\phi|)\\
&+\sum_{X\in \Pp} |Q(F, \phi, X)|\big) + |F_{\mu0}D^\mu D_Z^2\phi||D_Z^2\phi|,
		\end{align*}
where the notation $Z_1\sqcup Z_2=Z^2$ stands for the two cases that $Z_1Z_2=Z^2$ and $Z_2 Z_1=Z^2$, and we used the fact that the set $\Pp$ is closed under the Lie bracket $[\cdot,\cdot]$.
   Combining the decay estimates in (\ref{10.27.4.18}) with (\ref{10.27.8.18}) implies
\begin{equation}\label{10.27.5.18}
|F_{Z_1 \mu} \tensor{F}{^\mu_Z_2}|\les {\tau_+}^2 |\nt{F}|^2\les\tau^{-2}.
\end{equation}
Also by using $|F|\les \tau^{-2}$ in (\ref{10.27.4.18}) and (\ref{11.17.2.18}) again, we can bound
\begin{align*}
  \iint_{\mathcal{D}^\tau} |D D_Z^2\phi| &|\phi|\sum_{Z_1\sqcup Z_2=Z^2}|F_{Z_1\mu}F^{\mu}_{\ Z_2}| + |F_{\mu0}||D_Z^2\phi|| D^\mu D_Z^2\phi|\\
  &\les  \int_{\tau_0}^{\tau}  s^{-2}( \int_{\H_{s}} \frac{s_{-}}{s}|DD_Z^2\phi|^2)^{\frac{1}{2}}( \int_{\H_{s}} \frac{s_+}{s} (|D_Z^2\phi|^2+|\phi|^2)  )^{\frac{1}{2}}\\
  &\les \ve_0^2+\int_{\tau_0}^{\tau}  s^{-2} \mathcal{E}_2^{T_0}[\phi](s)ds.
\end{align*}
The last term will be treated by using Gronwall's inequality.

Next we move on to estimate  $\iint_{\D^\tau}|D D_Z^2\phi||Q(F, D_{Z_1}\phi, Z_2)|$. In view of the equation (\ref{eqn::mMKG}) for the Maxwell field $F$, we can bound
\begin{align*}
  |Q(F, D_{Z_1}\phi, Z_2)|&= |Z_2^\nu F_{\mu\nu}D^\mu D_{Z_1}\phi + \partial^\mu(Z_2^\nu F_{\mu\nu})D_{Z_1}\phi|\\
  &\les |F_{\mu Z_2} D^\mu D_{Z_1}\phi| + (|F|+ |D_{Z_2}\phi| |\phi|) |D_{Z_1}\phi|.
\end{align*}
The second term is easy to bound as there is sufficient decay from $|F|+|D_{Z_2}\phi||\phi|$. We may denote $Z_1$ or $Z_2$ by $Z$ for short in the sequel.  Indeed from (\ref{10.27.4.18}) and  Proposition \ref{thm:pointdecay:derScal},  and using $0<\delta\le \frac{1-\ep}{2}$,  we can bound
\begin{equation*}
  |F|+ |D_Z\phi| |\phi|\les \tau^{-2}+ \dn^2  \tau_+^{-2+2\delta}\les \tau^{-2+2\delta}\les \tau^{-1-\ep}.
\end{equation*}
 As for $|F_{\mu Z_2}D^\mu D_{Z_1}\phi|$, we employ (\ref{eq:decomposition4F}) and (\ref{11.13.10.18})  to obtain
\begin{align*}
  |F_{\mu Z_2} D^\mu D_{Z_1}\phi|&\les|\lF_{\mu Z_2} D^\mu D_{Z_1}\phi|+|\tF_{\mu Z_2} D^\mu D_{Z_1}\phi|\\
  &\les \tau ^{-1-\epsilon} \sum_{\Ga\in \Pp}|D_{\Gamma}D_{Z_1}\phi|+ \dn \tau^{-1}|DD_{Z_1}\phi|.
\end{align*}
Thus,
\begin{equation*}
|Q(F, D_{Z_1}\phi, Z_2)|\les (\tau^{-1-\epsilon}+\dn \tau^{-1}) \sum_{\Ga\in \Pp}|D\rp{\le 1}_{\Gamma}D_{Z_1}\phi|
\end{equation*}
and  similarly there holds for any $X\in \Pp$,
\begin{equation*}
|Q(F, \phi, X)|\les (\tau^{-1-\epsilon}+\dn \tau^{-1})\sum_{\Ga\in \Pp} |D_{\Gamma}\rp{\le 1}\phi|.
\end{equation*}
Therefore,
\begin{align*}
  \iint_{\mathcal{D}^\tau}&|D D_Z^2\phi|\big(\sum_{Z_1\sqcup Z_2=Z^2} |Q(F, D_{Z_1}\phi, Z_2)|+\sum_{X\in \Pp} |Q(F, \phi, X)|\big)\\
  &\les \int_{\tau_0}^{\tau}  \big(s^{-1-\epsilon}+\dn s^{-1}\big)\int_{\H_{s}}|D D_Z^2\phi|\sum_{Z\in \{Z_1, Z_2\}}\sum_{\Ga\in \Pp} |D\rp{\le 1}_{\Gamma}D\rp{\le 1}_Z\phi|\\
  &\les \int_{\tau_0}^{\tau} (s^{-1-\epsilon}+\dn s^{-1})( \int_{\H_{s}} \frac{s_{+}}{s}(\sum_{\Ga\in \Pp}|D_{\Gamma}D\rp{\le 1}_Z\phi|^2+|D\rp{\le 1}_Z\phi|^2))^{\frac{1}{2}}\c ( \int_{\H_{s}}\frac{s_{-}}{s}|DD_Z^2\phi|^2 )^{\frac{1}{2}} ds\\
  &\les \int_{\tau_0}^{\tau} (s^{-1-\epsilon}+\dn s^{-1}) (\mathcal{E}_{2}^{T_0}[\phi](s)+\E_{\le 1}^{T_0}[\phi](s))ds\\
  &\les (\ve_0^2+\dn^3) \jb{\tau}^{2\delta}+\int_{\tau_0}^{\tau} s^{-1-\epsilon} \mathcal{E}_{2}^{T_0}[\phi](s)ds,
\end{align*}
where we employed the Cauchy-Schwarz inequality and the proven estimate for $k\le 1$ in (\ref{eqn:BT:imp:scal}) in the above calculation to treat the lower order energy for $\phi$. In the last step we also employed the bootstrap assumption \eqref{BA} to bound $s^{-1}\dn\E_2^{T_0}[\phi](s)$. Again the last term in the last line will be treated by using Gronwall's inequality.

It remains to control the other quadratic term $|Q(\L_{Z_1} F, \phi, Z_2)|$.
We first estimate that
 \begin{align}
   \iint_{\mathcal{D}^\tau}&|D D_Z^2\phi|\sum_{Z_1\sqcup Z_2=Z^2} |Q(\L_{Z_1} F, \phi, Z_2)|\nn\\
   &\les  \int_{\tau_0}^{\tau}  (\int_{\H_{s}}\frac{s_{-}}{s}|D D_Z^2\phi|^2)^{\frac{1}{2}}(\int_{\H_{s}}\frac{s_{+}}{s}\sum_{Z_1\sqcup Z_2=Z^2} |Q(\L_{Z_1} F, \phi, Z_2)|^2)^{\frac{1}{2}}ds \nn\\
   &\les \int_{\tau_0}^{\tau}  (\mathcal{E}_{2}^{T_0}[\phi](s))^{\frac{1}{2}}(\int_{\H_{s}}\frac{s_{+}}{s}\sum_{Z_1\sqcup Z_2=Z^2} |Q(\L_{Z_1} F, \phi, Z_2)|^2)^{\frac{1}{2}}ds.\label{1.05.2.19}
 \end{align}
Next we consider the second factor in the last line. By using the equation for the Maxwell field in  (\ref{eqn::mMKG}) and the formula in Lemma \ref{lem::Commutator1}, we derive
  \begin{align*}
  |Q(\L_{Z_1} F, \phi, Z_2)|&= |{Z_2}^\nu (\L_{Z_1} F)_{\mu\nu}D^\mu \phi + \partial^\mu(Z_2^\nu (\L_{Z_1} F)_{\mu\nu})\phi|\\
  &\les | (\L_{Z_1} F)_{\mu Z_2}D^\mu  \phi| +  (|\L_{Z_1} F|+|Z_2^\nu (\L_{Z_1} J[\phi])_{\nu}|)|\phi|\\
  &\les | (\L_{Z_1} F)_{\mu Z_2}D^\mu  \phi| +  (|\L_{Z_1} F|+|D_{Z_1}\phi| |D_{Z_2}\phi| + |\phi||D_Z^2 \phi|+| F_{Z_1 Z_2}||\phi|^2 )|\phi|.
\end{align*}
For the first term in the last line,   applying (\ref{11.13.6.18}) to $G_\mu=\L_{Z_1}F_{\mu Z_2}$ and $W=D\phi$ implies
\begin{equation*}
|{\L_{Z_1} F}_{\mu Z_2} D^\mu \phi| \les  \jb{\tau_+} |\nt{\L_{Z_1}F}| (\tau^{-1}|D_\Omega \phi|+\frac{\tau}{\tau_+} |D \phi|).
\end{equation*}

 We can treat the term of  $F_{Z_1Z_2}$ by using  estimates (\ref{10.27.8.18}) and (\ref{10.27.4.18}),
 \begin{equation}
 \label{eq:bd4FZZ}
 |F_{Z_1Z_2}|\les \jb{\tau_+}^2|\nt{F}| \les \frac{\tau_+}{\tau},  \mbox{ if } Z_1, Z_2 \in \Pp\cup\{S\}.
 \end{equation}

 In view of  the above two estimates, also by using  Proposition \ref{thm:pointdecay:derScal}, we can further derive
 \begin{align}
   |Q(\L_{Z_1} F, \phi, Z_2)|
  &\les \tau_+|\nt{ \L_{Z_1} F}|\big(\tau^{-1}|D_\Omega  \phi|+\frac{\tau}{\tau_+}| D\phi|\big)+ |\L_{Z_1} F| |\phi| \nn\\
   &\quad +(|D_Z\phi|   + \tau_+^{-1}|D_Z^2 \phi|+\tau^{-1} |\phi| )\dn^2 \tau_+^{-2+2\delta}\nn\\
  &\les \frac{\tau_+}{\tau}| \nt{\L_{Z_1} F} | |D_\Omega \phi |+\tau |\nt{\L_{Z_1} F}||D\phi| + |\L_{Z_1}F ||\phi|+\dn^2 \tau_+^{-2+2\delta}|D_Z\rp{\le 2}\phi|.\label{10.27.11.18}
 \end{align}
 The last term in (\ref{10.27.11.18}) is favorable due to the sufficient decay in $\tau_+$. To control the remaining terms, recall from (\ref{11.13.4.18}),  (\ref{BA}) and due to (\ref{eq:decomposition4F}) that
 \begin{equation}\label{1.25.1.19}
 \|\tau^\f12 \nt\L_Z^l F\|_{L^2(\H_\tau)}\les \dn+1\les 1, \forall\, Z^l\in \Pp^l, \, l\le 2.
 \end{equation}
 Using the above estimate,  applying (\ref{sobsclp})  to  $(\frac{t}{\tau})^\f12 D_\Omega \phi$ with $p=4$, and applying Corollary \ref{cor_12_30} to $\nt\L_{Z_1}F$, we  derive
 \begin{align*}
 \| (\frac{\tau_+}{\tau})^\frac{3}{2} |\nt{ \L_{Z_1} F}| |D_\Omega \phi|\|_{L^2(\H_\tau)}&\les \tau^{-2}\| \tau_+^\f12 \tau^\frac{3}{4} \nt{\L_{Z_1} F}\|_{L^4(\H_\tau)}\|(\frac{\tau_+}{\tau})^\f12\tau_+^\frac{1}{2}\tau^\frac{1}{4}  D_\Omega \phi\|_{L^4(\H_\tau)}\\
&\les\tau^{-2} \|\tau^\f12\nt\L\rp{\le 1}_\Omega{\L_{Z_1} F}\|_{L^2(\H_\tau)}\E^{T_0}_{\le 2}[\phi]^\f12(\tau)\les \tau^{-2}\E_{\le 2}^{T_0}[\phi]^\f12(\tau).
 \end{align*}
 Applying Corollary \ref{cor_12_30} to  $(\frac{\tau}{t})^\f12 \nt{ \L_{Z_1} F}$ and  (\ref{sobsclp}) to $(\frac{t}{\tau})^\f12 D_\p \phi$ yields
 \begin{align*}
 \|\tau_+ |\nt{\L_{Z_1} F}|| D\phi|\|_{L^2(\H_\tau)}&\les\tau^{-1}\| (\frac{\tau}{\tau_+})^\f12\tau^{\f12} (\tau_+^2{\tau})^\frac{1}{4}\nt{ \L_{Z_1} F}\|_{L^4(\H_\tau)}\| \tau_+^\f12\tau^\frac{1}{4} (\frac{\tau_+}{\tau})^\f12 D\phi\|_{L^4(\H_\tau)}\\
 &\les \tau^{-1}\E_{\le 2}^{T_0}[\phi]^\f12(\tau)  \|(\frac{\tau}{\tau_+})^\f12\tau^\f12\nt{\L\rp{\le 1}_\Omega \L_{Z_1} F}\|_{L^2(\H_\tau)}.
 \end{align*}
 We  use (\ref{10.27.8.18}) to bound $|\frac{\tau}{\tau_+}\L_{Z_1}F|\les |\nt{\L_{Z_1}F}|$. It then follows by using (\ref{11.10.6.18})  and (\ref{1.25.1.19}) that
 \begin{align*}
 \|(\frac{\tau_+}{\tau})^\f12 |\L_{Z_1} F||\phi|\|_{L^2(\H_\tau)}&\les \|\frac{\tau}{\tau_+}\L_{Z_1} F|\|_{L^2(\H_\tau)}\| (\frac{\tau_+}{\tau})^\frac{3}{2}\phi\|_{L^\infty(\H_\tau)}\\
 &\les \tau^{-2}\E_{\le 2}^{T_0}[\phi]^\f12(\tau) \|\tau^\f12 \nt{\L_{Z_1} F}\|_{L^2(\H_\tau)}\\
 &\les  \tau^{-2}\E_{\le 2}^{T_0}[\phi]^\f12(\tau).
 \end{align*}

 We thus summarize the above three calculations in view of (\ref{10.27.11.18}) and (\ref{BA})
 \begin{equation}\label{10.27.12.18}
 \|(\frac{\tau_+}{\tau})^\f12 Q(\L_{Z_1} F, \phi, Z_2)\|_{L^2(\H_\tau)}\les \E_{\le 2}^{T_0}[\phi]^\f12(\tau) ( \tau^{-1}\|(\frac{\tau}{\tau_+})^\f12 \tau^\f12 \nt{\L\rp{\le 1}_\Omega\L_{Z_1} F}\|_{L^2(\H_\tau)}+\tau^{-2+2\delta}).
 \end{equation}

In view of (\ref{11.13.4.18}), the fact that $\tau^2=\tau_+\tau_{-}$ and (\ref{BA}),
\begin{equation*}
\|(\frac{\tau}{\tau_+})^\f12 \tau^\f12 \nt{\L_\Omega\rp{\le 1}\L_{Z_1} F}\|_{L^2(\H_\tau)}\les \E^S_{\le 2} [\tF]^\f12(\tau)+\tau^{-\ep}\les \tau^{-\ep}+\dn.
\end{equation*}
Substituting this estimate  into (\ref{10.27.12.18}), by using the already proven $k=0,1$ cases in (\ref{eqn:BT:imp:scal}), $0<\delta\le \f12(1-\ep)$ and (\ref{BA}), we obtain the bound of (\ref{1.05.2.19}) for $Z_1\sqcup Z_2=Z^2$
\begin{align*}
   \iint_{\mathcal{D}^\tau}|D D_Z^2\phi||Q(\L_{Z_1} F, \phi; Z_2)|&\les \int_{\tau_0}^{\tau}  \mathcal{E}_{2}^{T_0}[\phi](s) (s^{-1-\epsilon}+s^{-1}\dn )ds+(\varepsilon^2_0+\dn^3) \jb{\tau}^{2\delta}\\
   &\les  \int_{\tau_0}^{\tau}  \mathcal{E}_{2}^{T_0}[\phi](s) s^{-1-\epsilon} ds +(\varepsilon^2_0+\dn^3) \jb{\tau}^{2\delta}.
\end{align*}
Hence we conclude for  the error integral that
\begin{align*}
  \iint_{\mathcal{D}^\tau}|\p^\mu \ET[D_Z^2\phi]_{\mu 0}|\les \int_{\tau_0}^{\tau}  \mathcal{E}_{2}^{T_0}[\phi](s) s^{-1-\epsilon} ds +(\varepsilon_0^2+\dn^3) \jb{\tau}^{2\delta}.
\end{align*}
From (\ref{1.05.1.19}), this implies that
\begin{equation*}
  \mathcal{E}_{2}^{T_0}[\phi](\tau)\les  \int_{\tau_0}^{\tau}  \mathcal{E}_{2}^{T_0}[\phi](s) s^{-1-\epsilon} ds +(\varepsilon_0^2+\dn^3) \jb{\tau}^{2\delta}.
\end{equation*}
\eqref{eqn:BT:imp:scal} for $k=2$ then follows by using Gronwall's inequality.

\subsubsection{Energy estimates for the Maxwell fields }\label{maxwell_1}
		
Note that for the scaling vector field $S=t\p_t+r\p_r$ and  any 2-form $G$,
\[
\T[G]_{\mu\nu} (\pi^S)^{\mu\nu}= \T[G]_{\mu\nu} \textbf{m}^{\mu\nu} =0.
\]		
Applying Proposition \ref{prop::EnergyEqMKGModified} with $(0, \L_Z^k \tF)$ and $X=S$, in view of Proposition \ref{11.8.1.18}, we have 
\begin{align}
			\int_{\H_\tau}\ET[\L_Z^k \tF](S,\Tb)&\le \int_{\H_{\tau_0}}\ET[\L_Z^k \tF](S,\Tb)+\int_{C_0^\tau}\ET[\L_Z^k \tF](S,L)+|\iint_{\mathcal{D}^\tau}\p^\mu(\ET[\L_Z^k\tF])_{\mu\nu}S^\nu  |\nn\\
&\les \ve_0^2+ |\iint_{\mathcal{D}^\tau}\p^\mu(\ET[\L_Z^k\tF])_{\mu\nu}S^\nu |,\quad \tau_0<\tau\le \tau_*. \label{1.05.3.19}
\end{align}
  It is important that the absolute value in the last integral is taken after the integration. This term will be treated by  an integration by parts when $k=2$. Using (\ref{eqn::mMKG}), the equation for $\L_Z^k \tF$ in Lemma \ref{lem::Commutator1} and applying \eqref{eq:div4TfG} to $(0, \L_Z^k \tF)$, we have
		\begin{equation}\label{10.27.13.18}
			\p^\mu(\ET[\L_Z^k\tF])_{\mu\nu}S^\nu=J[\L_Z^k\tF]^{\mu}S^{\nu} (\L_Z^k\tF)_{\mu\nu}= (\L_Z^k J[\phi])^{\mu}S^{\nu} (\L_Z^k\tF)_{\mu\nu}.
		\end{equation}
For any vector field $\sJ$ and 2-form $G$,  we adopt the radial decomposition by using the radial normal $\Nb$ and  the orthonormal basis $\{e_B\}_{B=1}^2$ of $S_{\tau, r}$:
$$
  \sJ^{\mu} S^{\nu}G_{\mu\nu}=\sJ^{\Nb} G_{\Nb S}+\sJ^{B} G_{BS}.
$$
 By using (\ref{11.10.7.18}), we derive that for any $1$-tensor $\sJ$
\begin{equation}
\label{eq:keyObservation}
  |\sJ^{\mu} S^{\nu}G_{\mu\nu}|\les  |\sJ(\B)| (|G_{\Tb \Nb}|+\tau_+^{-1} \tau|G_{\Tb B}|)\les |\sJ(\Omega)||\nt{G}|,
\end{equation}
where $|\sJ(\Omega)|=\sum_{Z\in \Omega} |\sJ(Z)|$.

Thus we can conclude from (\ref{10.27.13.18}) that
\begin{equation}\label{10.27.14.18}
|\p^\mu(\ET[\L_Z^k\tF])_{\mu\nu}S^\nu|\les |\L_Z^k J[\phi](\Omega)| |\nt{\L_Z^k\tF}|, \quad k\le 2.
		\end{equation}

\medskip
		\paragraph{\bf Step 3.} Estimate of $\mathcal{E}_{\le 1}^{S}[\tF](\tau)$.
Now for the case when $k=0$, from (\ref{10.27.13.18}), the decay property of $\phi$ in (\ref{11.11.4.18}) and (\ref{eq:keyObservation}), we derive by using (\ref{BA}) that
\begin{align*}
  |\iint_{\mathcal{D}^\tau}\p^\mu(\ET[ \tF])_{\mu\nu}S^\nu \ | &\les \iint_{\mathcal{D}^\tau}|J[\phi]^{\mu}S^\nu \tF_{\mu\nu}|\les \iint_{\mathcal{D}^\tau}|\phi| |D_{\Omega}\phi||\nt{\tF}| \\
 &\les  \dn \int_{\tau_0}^{\tau}    s^{-\frac{3}{2}+\delta} (\int_{\H_{s}} \frac{s_+}{s}|D_{\Omega}\phi|^2)^{\frac{1}{2}}(\int_{\H_{s}}\frac{s_{-}}{s} |\nt{\tF}|^2)^{\frac{1}{2}}\\
 &\les \dn^3 \int_{\tau_0}^{\tau}    s^{-2+2\delta} ds\les \dn^3.
\end{align*}
Substituting the estimate into (\ref{1.05.3.19}) yields
\begin{equation*}
  \mathcal{E}_{0}^{S}[\tF](\tau)\les \ve_0^2+ \dn^3.
\end{equation*}
Thus \eqref{eqn:BT:imp:Max} holds for $k=0$.

 With $k=1$ in (\ref{10.27.14.18}), it follows by using Lemma \ref{lem::Commutator1} that
\begin{equation*}
  |\p^\mu(\ET[\L_Z^k\tF])_{\mu\nu}S^\nu|\les  (\sum\limits_{l_1+l_2\leq 1}|D_Z^{l_1}\phi||D_{\Omega} D_{Z}^{l_2}\phi|+ |F_{Z\Omega}||\phi|^2)|\nt{\L_Z \tF}|.
\end{equation*}
Now from \eqref{eq:bd4FZZ} and the pointwise bounds for $\phi$ in (\ref{11.11.4.18}), we can show that
\begin{align*}
  \sum\limits_{l_1+l_2\leq 1}|D_Z^{l_1}\phi||D_{\Omega} D_{Z}^{l_2}\phi|+ |F_{Z\Omega}||\phi|^2\les \dn\tau^{-1+\delta}\tau_{+}^{-\frac{1}{2}}|\phi|+\dn \tau_+^{-\frac{3}{2}+\delta}|D_{\Omega}D_Z\phi|+|D_{Z}\phi||D_{\Omega}\phi|.
\end{align*}		
 Therefore by using  (\ref{BA}),  we can bound
 \begin{align*}
   &|\iint_{\mathcal{D}^\tau}\p^\mu(\ET[ \L_Z \tF])_{\mu\nu}S^\nu \ | \\
   &\les  \int_{\tau_0}^{\tau}   \big(\int_{\H_{s}}  \dn^2 s^{-3+2\delta}(|D_{\Omega}D_Z\phi|^2+|\phi|^2)+|D_Z\phi|^2 |D_{\Omega}\phi|^2\big)^\f12 \cdot (\int_{\H_{s}}|\nt{\L_Z\tF}|^2)^\f12\\
 &\les \dn^3 
  + \dn \int_{\tau_0}^{\tau}    s^{-\frac{1}{2}}(\int_{\H_{s}}  |D_Z\phi|^2 |D_{\Omega}\phi|^2)^{\frac{1}{2}}ds.
 \end{align*}
To treat the last line, the decay estimate  $|D_{Z}\phi| \les \dn\tau_+^{-\frac{1}{2}}\tau^{\delta}$ in Proposition \ref{thm:pointdecay:derScal}  is insufficient to give  the boundedness of the above integral uniformly. Since a similar term  will appear again when $k=2$ in (\ref{1.05.3.19}), we employ the $L^p$ estimates of Sobolev embedding in Lemma \ref{sob_1} and Proposition \ref{sob} to treat such terms in the following result.
\begin{lemma} \label{prop:bd4DphiDDphi}
		For  $k\leq 2$ and $X\in \Pp$,  there holds
\begin{equation*}
  (\int_{\H_s}  |D_X\phi|^2 |D_{Z}^k \phi|^2)^{\frac{1}{2}}\les \dn^2 \jb{s}^{-1+2\delta}, \quad\mbox{ with } Z^k\in \Pp^k,\, s\ge \tau_0.
\end{equation*}
	\end{lemma}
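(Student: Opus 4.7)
My plan is to combine H\"older's inequality with the weighted Sobolev embedding \eqref{sobsclp} from Proposition \ref{sob}. For the cases $k \le 1$ I would apply H\"older in the $L^4 \times L^4$ form, $\|D_X\phi \cdot D_Z^k\phi\|_{L^2(\H_s)} \le \|D_X\phi\|_{L^4(\H_s)}\|D_Z^k\phi\|_{L^4(\H_s)}$, and control each factor by \eqref{sobsclp} at $p=4$. Using the pointwise lower bound $(\tau_+^2\tau)^{1/4} \ge s^{3/4}$ on $\H_s$, the Sobolev estimate takes the form $\|V\|_{L^4(\H_s)} \lesssim s^{-3/4}\|D_\B^{\le 1}V\|_{L^2}^{3/4}\|V\|_{L^2}^{1/4}$. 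The $L^2$ norms on the right are bounded by the energies $\E^{T_0}_{\le 2}[\phi]$ via $\|V\|_{L^2(\H_s)}^2 \le \int (t/\tau)|V|^2\,d\H_s \lesssim \E^{T_0}_\ell[\phi](s)$, where $\ell$ is the derivative order of $V$; the commutator $[D_\B, D_X]$ (or $[D_\B, D_Z^k]$) is handled through \Cref{lem::Commutator1}, producing only lower-order curvature--scalar corrections that are absorbed using Proposition \ref{Ptw:Max} and Lemma \ref{lemma:decay:lF}. The bootstrap assumption \eqref{BA} then yields a total bound of the form $\dn^2 s^{-3/2}\jb{s}^{c\delta}$ with some small $c > 0$, which is strictly stronger than $\dn^2 \jb{s}^{-1+2\delta}$ when $\delta$ is small.

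For the top-order case $k=2$, \eqref{sobsclp} cannot be applied to $V = D_Z^2\phi$ since it would require $\|D_\B D_Z^2 \phi\|_{L^2}$, a third-order quantity outside the available $\E^{T_0}_{\le 2}[\phi]$. I would instead replace $L^4 \times L^4$ by an $L^\infty \times L^2$ H\"older: $\|D_X\phi \cdot D_Z^2\phi\|_{L^2(\H_s)} \le \|D_X\phi\|_{L^\infty(\H_s)}\|D_Z^2\phi\|_{L^2(\H_s)}$. For $X = \p$, the improved pointwise bound $|D_\p\phi| \lesssim \dn \tau_+^{-1}\tau_-^{-1/2}\jb{s}^\delta$ of Proposition \ref{thm:pointdecay:derScal}, combined with $\tau_+\tau_- = \tau^2 \ge s^2$ on $\H_s$ and $\|D_Z^2\phi\|_{L^2(\H_s)}^2 \lesssim \dn^2 \jb{s}^{4\delta}$, directly produces a bound strictly stronger than $\dn^2\jb{s}^{-1+2\delta}$. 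For $X \in \Omega$, where only the weaker bound $|D_X\phi| \lesssim \dn\tau_+^{-1/2}\jb{s}^\delta$ is at hand, the sharp decay is recovered by a refined weighted Cauchy--Schwarz: pair the pointwise-bounded $\tau_+^{1/2}|D_X\phi|$ against $\|\tau_+^{-1/2}D_Z^2\phi\|_{L^2(\H_s)}^2$, which via $\int (t/\tau)|D_Z^2\phi|^2 \lesssim \E^{T_0}_2[\phi](s)$ and $\tau_+ \ge s$ is controlled by $s^{-1}\dn^2\jb{s}^{4\delta}$; the additional $s^{-1/2}$ factor missing from a naive pointwise--energy pairing is harvested via the boost--angular decomposition \eqref{dcp_3} of $X\in\Omega$ into $\Tb$ and tangential parts, exploiting $\L_\Omega\Tb = 0$ from Lemma \ref{11.11.2.18}.

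The main obstacle is precisely this top-order case with $X\in\Omega$: the pointwise decay of $|D_\Omega\phi|$ is tight at the tip $r=0$ of $\H_s$ where $\tau_+\approx s$, and the absence of any third-order energy control forces one to lean on the detailed hyperboloidal frame geometry together with the smallness of $\delta$ (recall $\delta \le (1-\ep)/2$ in Theorem \ref{them::mainThm}) to arrive at the claimed $\dn^2\jb{s}^{-1+2\delta}$. This decay is exactly what is needed in the subsequent Maxwell energy estimates: paired with the outer factor $s^{-1/2}$ coming from Cauchy--Schwarz in the time integral, it produces the convergent integrand $\dn^3 s^{-3/2+2\delta}$, integrable on $[\tau_0,\infty)$ provided $\delta < 1/4$.
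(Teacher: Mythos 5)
Your $L^4\times L^4$ argument for $k\le 1$ and the $L^\infty\times L^2$ argument for $k=2$, $X=\partial$ are both fine and in fact deliver bounds slightly stronger than required. The genuine gap is exactly where you flag the main obstacle, $k=2$ with $X\in\Omega$, and the proposed rescue does not close it. As your own bookkeeping shows, the weighted Cauchy--Schwarz gives $\|\tau_+^{-1/2}D_Z^2\phi\|_{L^2(\H_s)}\les s^{-1/2}\dn\jb{s}^{\delta}$, so pairing with $\|\tau_+^{1/2}D_X\phi\|_{L^\infty(\H_s)}\les\dn\jb{s}^{\delta}$ yields only $\dn^2 s^{-1/2}\jb{s}^{2\delta}$, a factor $s^{1/2}$ short of the target $\dn^2\jb{s}^{-1+2\delta}$. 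The claim that the missing $s^{-1/2}$ can be ``harvested via the boost--angular decomposition (\ref{dcp_3})'' is not an argument and cannot work here: every $\Omega_{\mu\nu}$ is already $\H_\tau$-tangent (it has no $\Tb$-component to peel off), and decomposing $\Omega_{0a}$ via (\ref{11.10.7.18}) into $t\cdot(S_{\tau,r}\text{-tangential})+\tau\,\Nb$ gives, by Proposition~\ref{thm:pointdecay:derScal}, two pieces that each still saturate the same $\tau_+^{-1/2}\jb{\tau}^{\delta}$ pointwise bound near $r=0$ (since $t|\sl D\phi|$ and $\tau|D_\Nb\phi|$ are both $\approx\tau_+^{-1/2}\jb{\tau}^{\delta}$ there). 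With no third-order energy available, no fixed-weight $L^\infty\times L^2$ pairing can deliver the extra $s^{-1/2}$.

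The paper's actual proof sidesteps this by an \emph{asymmetric} H\"older split $L^2\le L^3\times L^6$: the \emph{high-order} factor $D_Z^k\phi$ is put in $L^3$ and the \emph{low-order} factor $D_X\phi$ in $L^6$. The key point is that the $L^3$ Sobolev estimate (\ref{9.16.1}) costs only the intrinsic derivative $\ud D$, and $\|(\frac{s}{t})^{1/2}\ud D\,D_Z^k\phi\|_{L^2(\H_s)}$ is bounded directly by $\E^{T_0}_k[\phi](s)^{1/2}$ --- it is already part of the energy density of $D_Z^k\phi$ (compare (\ref{1.24.2.19})), not a $(k+1)$-th commuted quantity --- so the top-order factor costs zero additional commuting vector fields. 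The $L^6$ Sobolev (\ref{sobsclp}) on $D_X\phi$ then costs one $D_\B$ and is absorbed by $\E^{T_0}_{\le 2}[\phi](s)$. The weights $(s/t)^{\pm 1/2}$ and the spare factor of $s$ are distributed precisely so both Sobolev inequalities are used at their natural weights, and the argument never meets the $s^{-1/2}$ obstruction your approach runs into.
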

\begin{proof}
Note that $|\B((\frac{t}{s})^p)|+|s\ud \nabla ((\frac{t}{s})^p)|\les (\frac{t}{s})^p$ holds for $p\in {\mathbb R}$.  By using  (\ref{9.16.1}), (\ref{sobsclp}) and (\ref{BA})  we have
  \begin{align*}
    s\|  |D_X\phi| |D_{Z}^k \phi|\|_{L^2(\H_s)} &\les \|(\frac{s}{t})^{\frac{1}{2}} D_{Z}^k \phi\|_{L^3(H_s)}\|(\frac{t}{s})^{\frac{1}{2}}s D_X\phi\|_{L^6(\H_s)}\\
    &\les \|(\frac{s}{t})^\f12 s^{-1}(s \ud D)\rp{\le 1} D_{Z}^k\phi \|_{L^2(\H_s)}   \|(\frac{t}{s})^\f12 D\rp{\le 1}_{\Omega}D_X\phi\|_{L^2(\H_s)}\\
    &\les \E^{T_0}_k[ \phi]^\f12(s)\E^{T_0}_{\le 2}[ \phi]^\f12(s)\les \dn^2 \jb{s}^{2\delta}.
  \end{align*}
  \end{proof}
From this lemma, we  can derive in view of (\ref{1.05.3.19}) that
\begin{align*}
  \mathcal{E}_{1}^{S}[\tF](\tau)\les \ve_0^2+|\iint_{\mathcal{D}^\tau}\p^\mu(\ET[ \tF])_{\mu\nu}S^\nu \ | &\les \ve_0^2+\dn^3 
  + \dn \int_{\tau_0}^{\tau}    s^{-\frac{1}{2}}(\int_{\H_{s}}  |D_Z\phi|^2 |D_{\Omega}\phi|^2)^{\frac{1}{2}}ds\\
  &\les \ve_0^2+\dn^3
  + \dn^3 \int_{\tau_0}^{\tau}    s^{-\frac{1}{2}-1+2\delta}ds\les \ve_0^2+\dn^3 .
\end{align*}
Therefore  \eqref{eqn:BT:imp:Max} holds for $k=1$.
\medskip
\paragraph{\bf Step 4.} Estimate of $\mathcal{E}_{2}^{S}[\tF](\tau)$.

 Recall from (\ref{1.05.3.19}) and (\ref{10.27.13.18}) that
 \begin{equation}\label{10.28.1.18}
		\mathcal{E}_{2}^{S}[\tF](\tau)\les \ve_0^2+ \dn^3+\sum_{Z^2\in \Pp^2}|\iint_{\mathcal{D}^\tau}(\L_Z^2 J[\phi])^{\mu}S^{\nu} (\L_Z^2\tF)_{\mu\nu}|.
		\end{equation}
  In view of Lemma \ref{lem::Commutator1}, $\L_Z^2 J[\phi]$ can be expanded as several nonlinear terms. Following the treatment of (\ref{10.27.14.18}) in Step 3, which worked  well for estimating $\mathcal{E}_{1}^{S}[\tF](\tau)$, results in losing the uniform boundedness of the top order energy $\mathcal{E}_{2}^{S}[\tF](\tau)$.

We observe that the obstacle lies in controlling the ``low-high" interaction  term
$\mathscr{F}_\mu=\Im(\phi\cdot\overline{D_\mu D_Z^2\phi})$
 in $\L_Z^2 J[\phi]$ (see the formula in Lemma \ref{lem::Commutator1}). By (\ref{10.27.14.18}),
 $$ |S^\nu\mathscr{F}^\mu \L_Z^2 \tF_{\mu\nu}|\les |\mathscr{F}(\Omega)||\nt\L_Z^2 \tF|.$$
 Note that $|\mathscr{F}(\Omega)|\le |\phi||D_{\Omega}D_Z^2\phi|.$ The higher order derivative reaches $D_Z^3,\, Z^3\in \Pp^3$. Meanwhile we only expect to nicely control energies induced by two commuting vector fields for $\phi$. By a careful analysis, such treatment leads to a growth of $\tau^{2\delta}$ for the top order energy $\mathcal{E}_{2}^{S}[\tF](\tau)$.

For this term, we carry out integration by parts to pass one derivative from the highest order term  to the lower order terms. The integration by parts on $$\I=\abs{\iint_{\D^\tau} \Im(\phi\c \overline{D^\mu D_Z^2\phi})\L_Z^2 \tF_{\mu S}}$$ imposes a very high order derivative on  Maxwell field, i.e.
$
\p^\mu\L_Z^2 \tF_{\mu \nu}.
$
 Since we only expect the bound of $\|\L_Z\rp{\le 2} \tF_{\mu\nu}\|_{L^2(\H_\tau)}$, this term already exceeds the expectation by one order of derivative. Fortunately, by using the Maxwell equation in (\ref{eq:eq4lFandNLF}) and  the commutation property in Lemma \ref{lem::Commutator1}, this term is actually $-\L_Z^2J[\phi]_\mu$. The original trilinear term in the integrand of $\I$  is now reduced to quartic terms of $\phi$ and its derivatives, which have much better asymptotic behavior. Such a trilinear estimate for treating  the last integral in (\ref{10.28.1.18}) exploits the additional cancelation structure supplied by the Maxwell equation. It enables us to  achieve sharp control on the Maxwell field.

More precisely, since $\p^\mu S^{\nu}=\textbf{m}^{\mu\nu}$ is symmetric while the 2-form $(\L_Z^2 \tF)_{\mu\nu}$ is antisymmetric, we compute that
\begin{align*}
 &S^{\nu} \Im(\phi\cdot\overline{D^\mu D_Z^2\phi}) (\L_Z^2\tF)_{\mu\nu}\\
 &= S^{\nu} (\p^\mu \Im(\phi\cdot\overline{ D_Z^2\phi}) -\Im(D^\mu \phi \cdot \overline{D_Z^2\phi}))(\L_Z^2\tF)_{\mu\nu}\\
 &= \p^\mu(S^{\nu} \Im(\phi\cdot\overline{ D_Z^2\phi}) (\L_Z^2\tF)_{\mu\nu}) -S^{\nu} \Im(D^\mu \phi \cdot \overline{D_Z^2\phi})(\L_Z^2\tF)_{\mu\nu}- S^{\nu} \Im(\phi\cdot\overline{D_Z^2\phi}) \p^\mu(\L_Z^2 \tF)_{\mu\nu}\\
 &= \p^\mu(S^{\nu} \Im(\phi\cdot\overline{ D_Z^2\phi}) (\L_Z^2\tF)_{\mu\nu}) -S^{\nu} \Im(D^\mu \phi \cdot \overline{D_Z^2\phi})(\L_Z^2\tF)_{\mu\nu}+ S^{\nu} \Im(\phi\cdot\overline{D_Z^2\phi}) (\L_Z^2 J[\phi])_{\nu}.
\end{align*}
 For the second term we can apply  \eqref{eq:keyObservation}  together with Lemma \ref{prop:bd4DphiDDphi}. In view of Lemma \ref{lem::Commutator1}, the third term mainly consists of  quartic  terms of scalar field and its derivatives which provides sufficient decay to achieve the boundedness of the integral. Thus, with $\textbf{n}^\mu$ the surface normal of $\p \D^\tau$, these treatments lead to
\begin{align}
 \I &\leq |\int_{\p\mathcal{D}^\tau} \Im(\phi\cdot\overline{ D_Z^2\phi}) (\L_Z^2\tF)_{{\mathbf{n}} S}|+ \iint_{\mathcal{D}^\tau}|D_{\Omega}\phi||D_Z^2\phi| |\nt{\L_Z^2\tF}|\nn\\
  & + \iint_{\mathcal{D}^\tau} |\phi||D_Z^2\phi|  \left( \sum_{l_1+l_2\le 2}|D_Z^{l_1}\phi||D_S D_{Z}^{l_2}\phi|+ \sum_{X, Y\in \Pp}(  |(\L_X F)_{Y S}||\phi|^2 +|F_{X S}||\phi| |D\rp{\le 1}_Z \phi|)\right).\nn
  \end{align}
  The second term on the right can be bounded by  $\dn^3\int_{\tau_0}^\tau s^{-\frac{3}{2}+2\delta} ds$, by using Lemma \ref{prop:bd4DphiDDphi} and (\ref{BA}).

 Note that if $l_1=1$ in the last line, we apply Lemma \ref{prop:bd4DphiDDphi}, (\ref{11.11.4.18}) and (\ref{BA}) to bound  the corresponding quartic term  of scalar fields by
 \begin{align*}
 \iint_{\mathcal{D}^\tau} |\phi||D_Z^2\phi|  |D_Z \phi||D_S D_{Z}^{\le 1}\phi|&\les \dn \iint_{\D^\tau} s_+^{-\frac{3}{2}+\delta}|s D_\Tb D_Z\rp{\le 1}\phi|  |D_Z^2\phi|  |D_Z \phi|\\
 &\les \dn^2 \int_{\tau_0}^\tau s^{-\f12+2\delta} \||D_Z \phi||D_Z^2 \phi|\|_{L^2(\H_s)} ds\\
& \les \dn^4\int_{\tau_0}^\tau s^{-\frac{3}{2}+4\delta}.
 \end{align*}
 For other combinations of $l_1+l_2\le 2$, we rely on  (\ref{11.16.1.18}) to obtain $|D_S f|\les |s_+ D f|$ for a smooth scalar field $f$, and on (\ref{11.11.4.18})  to bound $|D_S\rp{\le 1}\phi|$, which gives
 \begin{align*}
 \iint_{\mathcal{D}^\tau} |\phi||D_Z^2\phi|\left(|\phi||D_S D_{Z}^{\le 2}\phi|+|D_Z^{\le 2}\phi||D_S \phi|\right)\les \iint_{\D^\tau} \dn^2 s_+^{-2+2\delta}|D\rp{\le 1}D_Z\rp{\le 2} \phi||D_Z^2 \phi|.
 \end{align*}
Moreover,  the integral  on the boundary $\p\mathcal{D}^{\tau}$  vanishes on  hyperboloids $\H_\tau$ and $\H_{\tau_0}$ since their surface normal is $\Tb=\tau^{-1}S$ and $\L_Z^2 \tF$ is a 2-form. Thus it is  bounded merely by the terms on the null boundary $C_0^\tau$, controlled by  using $\tau_{-}=R$, $|\phi|\les\ve_0 \tau_+^{-\frac{3}{2}}$ in Theorem \ref{ext_stb} and  (\ref{11.7.3.18})
\begin{align*}
\int_{C_0^\tau}|(\L_Z^2 \tF)_{L S}||\Im(\phi\cdot \overline{D_Z^2 \phi})|&\les \int_{C_0} \tau_{-} |\rho[\L_Z^2 \tF]||\Im(\phi\cdot \overline{D_Z^2 \phi})|\\
&\les \ve_0 \|\rho[\L_Z^2 \tF]\|_{L^2(C_0)}\|D_Z^2 \phi\|_{L^2(C_0)}\les \ve_0^3.
\end{align*}

Summarizing the above treatments  leads to
  \begin{align*}
\I  &\les\ve_0^3+ \dn^3 
+   \iint_{\mathcal{D}^\tau}|D_Z^2\phi|\dn^2 s_+^{-2+2\delta} \big ( |D D_Z\rp{\le 2}\phi|+|D_Z\rp{\le 2}\phi|+ \sum_{X\in\Pp}|\nt\L_X F |\dn s_+^{-\frac{1}{2}+\delta} \big).
\end{align*}
 For deriving the last inequality, we employed (\ref{eq:bd4FZZ}) and  the second pair of estimates in (\ref{10.27.8.18}) to treat $|F_{XS}|$ and $|(\L_X F)_{YS}|$ respectively; and we also used  (\ref{11.11.4.18}) to bound $|\phi|$.

 It follows by using (\ref{BA}) for energies of scalar fields and (\ref{1.25.1.19}) for the curvature term that, for $k\le 2$, $X\in \Pp$,
\begin{equation*}
\begin{split}
  &\iint_{\mathcal{D}^\tau}s_+^{-2+2\delta}|D_Z^2\phi|  (  |D D_Z^k\phi|+|D_Z^k\phi|+|\nt\L_X F |\dn s_+^{-\frac{1}{2}+\delta} )\\
  &\les \int_{\tau_0}^{\tau}s^{-2+2\delta} ( \int_{\H_{s}}\frac{s_+}{s} |D_Z^2\phi|^2 )^{\frac{1}{2}} \left( \int_{\H_{s}}\frac{s_{-}}{s} (|D D_Z^k\phi|^2+|D_Z^k\phi|^2+ |\nt\L_X F |^2\dn^2 s_+^{-1+2\delta}) \right)^{\frac{1}{2}}\\
  &\les \int_{\tau_0}^{\tau}s^{-2+4\delta} \dn^2 ds\les \dn^2.
  \end{split}
\end{equation*}

Combining the above estimates yields
\begin{align*}
  \I=|\iint_{\mathcal{D}^\tau}S^{\nu} \Im(\phi\cdot\overline{D^\mu D_Z^2\phi}) (\L_Z^2\tF)_{\mu\nu} \ |\les \ve_0^3+\dn^3+\dn^4\les \ve_0^3+\dn^3.
\end{align*}
 The main difficulty is thus solved. Going back to  (\ref{10.28.1.18}), we estimate the remaining terms in the formula of $\L_Z^2 J[\phi]$ in Lemma \ref{lem::Commutator1}, which are denoted by
\begin{equation*}
\sJ^+_\mu=\L_Z^2 J[\phi]_\mu-\Im(\phi\cdot\overline{D_\mu D_Z^2\phi}).
\end{equation*}

  In view of   \eqref{eq:keyObservation},  we first bound $\sJ^+(\Omega)$. From Lemma \ref{lem::Commutator1}, (\ref{eq:bd4FZZ}) and (\ref{11.11.4.18}), we can derive
\begin{align*}
  |\sJ^+(\Omega)|&\les |D_\Gamma \phi||D_{\Gamma}^2 \phi|+ |(\L_Z F)_{Z\Omega}||\phi|^2+|F_{\Ga\Omega}||\phi| |D\rp{\le 1}_Z \phi|\\
  &\les |D_\Gamma \phi||D_{\Gamma}^2 \phi|+ |(\L_Z F)_{Z\Omega}| |\phi|^2+\tau_+^{\frac{1}{2}}\tau_{-}^{-\frac{1}{2}} |\phi| |D\rp{\le 1}_Z \phi|\\
  &\les  |D_\Gamma \phi||D_{\Gamma}^2 \phi|+ \dn^2 \tau_+^{-3+2\delta}|(\L_Z F)_{Z\Omega} |+\dn|D_Z\rp{\le 1}\phi| \tau_+^{-1+\delta}\tau_{-}^{-\frac{1}{2}}.
\end{align*}
Here $|D_{\Gamma}^l\phi|$ stands for $\sum \limits_{\Gamma^l\in \Pp^l}|D^l_\Gamma\phi|$, for $l\le 2$.
By using (\ref{eq:keyObservation}) and (\ref{BA}), we  obtain
\begin{align*}
  |\iint_{\mathcal{D}^\tau}S^{\nu} \sJ^+_\mu \tensor{(\L_Z^2\tF)}{^\mu_\nu} \ |
  &\les \iint_{\mathcal{D}^\tau}|\sJ^+(\Omega)| |\nt{\L_Z^2\tF}|
 \les \dn \int_{\tau_0}^\tau s^{-\f12} \|\sJ^+(\Omega)\|_{L^2(\H_s)} ds.
 \end{align*}
  Therefore, by using  Lemma \ref{prop:bd4DphiDDphi},  Proposition \ref{thm:pointdecay:derScal}, the second pair of estimates in Lemma \ref{11.13.5.18} and (\ref{BA}), we can estimate
 \begin{align*}
  |\iint_{\mathcal{D}^\tau}S^{\nu} \sJ^+_\mu \tensor{(\L_Z^2\tF)}{^\mu_\nu} \ | &\les  \dn \int_{\tau_0}^{\tau}s^{-\frac{1}{2}} \big(s^{-\frac{3}{2}+\delta}\dn \|D\rp{\le 1}_\Gamma \phi\|_{L^2(\H_s)}+\||D _\Gamma \phi||D_\Gamma^2 \phi|\|_{L^2(\H_s)}\\
  &+\dn^2\| s_+^{-3+2\delta} (\L_\Gamma F)_{\Gamma\Omega}\|_{L^2(\H_s)}\big) \\
  &\les  \dn^3 \int_{\tau_0}^{\tau}s^{-\frac{1}{2}} \big( s^{-1+2\delta}+ \|s_+^{-1+2\delta} \nt{\L_\Gamma F}\|_{L^2(\H_s)}\big) \\
  &\les  \dn^3 \int_{\tau_0}^{\tau}s^{-\frac{1}{2}} ( s^{-1+2\delta}+ s^{-\frac{3}{2}+2\delta}  )ds\les \dn^3,
\end{align*}
where we have used (\ref{1.25.1.19}) to bound $\nt \L_\Ga F$.
Combining the above estimate with the one for $\I$ yields
\begin{align*}
 |\iint_{\mathcal{D}^\tau}S^{\nu} (\L_Z^2 J[\phi])^{\mu} (\L_Z^2\tF)_{\mu\nu} \ |&\les  |\iint_{\mathcal{D}^\tau}S^{\nu} \sJ^+_\mu \tensor{(\L_Z^2\tF)}{^\mu_\nu} \ |+ |\iint_{\mathcal{D}^\tau}S^{\nu} \Im(\phi\cdot\overline{D^\mu D_Z^2\phi}) (\L_Z^2\tF)_{\mu\nu} \ |\\
 &\les \dn^3+\ve_0^3.
\end{align*}
In view of (\ref{10.28.1.18}),  \eqref{eqn:BT:imp:Max} is proved for $k=2$.
	
	\subsection{Improving the bootstrap assumptions}
	
	Let $C\ge 1$ be the maximum of the implicit constants in \Cref{thm::EnergyDecay} and (\ref{11.7.2.18}), which is independent of $\varepsilon_0$ and $\dn$.   Let $\varepsilon_0, \dn$ verify the following conditions:
\begin{equation}\label{1.31.1.19}
		C\varepsilon_0^2 \le \frac{1}{4}\dn^2,\quad C\dn \le \frac{1}{4}.
	\end{equation}
	This improves the bootstrap assumptions (\ref{BA}) with each $\dn^2$ replaced by $\f12 \dn^2$. Hence Theorem \ref{them::mainThm} holds in the entire interior region $\D^+$ for all $\tau\ge\tau_0$ by the continuity argument.
	
		\section{Proof of Theorem \ref{ext_stb}}\label{ext}
\subsection{Set-up and bootstrap assumptions}
 Recall from  (\ref{eq:decomposition4F}) that the full Maxwell field $F$  is decomposed into the linear part $\hat{F}$, and the perturbation part $\tF$ with its chargeless part denoted by $\ck F$ (see \eqref{12.31.2.18}). This section is devoted to the proof of  Theorem \ref{ext_stb} on the solution in the exterior region $\{t-t_0\le r-R, t\ge t_0\}$ with various decay properties  on $(\ck F, \phi)$. The proof will be based on controlling the propagation of energies in the exterior region, for which, up to the second order,
 Proposition \ref{11.7.1.18} has provided the initial weighted energy on $\Sigma_{t_0}^e $ for $\ck F$, and the initial weighted energy of $\phi$ has been given by $\E_{2, \ga_0}$.

   The proof for Theorem \ref{ext_stb} has a similar part which treats the terms contributed by  $\tF$ in the nonlinear analysis as that of the main result in \cite{KWY}, including the terms contributed by the slow decaying charge. This set of calculations will be omitted to avoid repetition. We will focus on the analysis that involves the general large linear field $\hat{F}$ when proving Theorem \ref{ext_stb}.

  The control of the energy fluxes for the Maxwell field $\ck F$ is similar as in \cite{KWY}, with details given in Proposition \ref{prop:BT:imp:Max} and proved with the help of Proposition \ref{flux_ext}. The main difference lies in controlling the higher order energy fluxes for the scalar field, which is proved in Proposition \ref{11.4.3.18} with the help of the crucial Lemma \ref{com_est}. Note that when the signature for $D_Z^l\phi$, i.e. $\zeta(Z^l)$, $l\le 2$  has a smaller value, the error integral in the corresponding energy estimates is simpler and the fluxes are expected to decay better in $u_+$. We control  in Lemma \ref{com_est}  the error integral for bounding the flux of $D_Z^ l \phi$ for $l\le 2$,  and manage to show that the weak  decay terms  in the bound are either of lower  signature  than  $\zeta(Z^l)$ or of lower order derivatives than  $D_Z^l\phi$.
     We then bound the energy fluxes for $D_Z\phi$ inductively in terms of the value of $\zeta(Z)$. For the top order energy, we first bound the weighted flux on $\H_u^{-u_2}$  for $u_+^{-\zeta(Z^2)} D_Z^2 \phi$. This type of flux provides an important improvement over Lemma \ref{com_est}, which then gives the complete set of energy fluxes for the scalar fields in Theorem \ref{ext_stb}.

The decay properties for the linear Maxwell field in the exterior region will be important for the proof of Theorem \ref{ext_stb}.
Let $1<\ga_0<2$ be the fixed number  in the assumption of Theorem \ref{ext_stb}, with the given bound $\M_0>1$ in (\ref{asmp}).
 Recall that the linear Maxwell field $\hat{F}$ verifies the following equation
\begin{equation*}
\p^\mu \L_Z^k\hat F_{\nu\mu}=0,\quad \forall\, Z^k\in \Pp^k, \quad k\leq 2
\end{equation*}
by the definition in  (\ref{eq:eq4lFandNLF}) and the commutation property.
The multiplier approach in \cite{KWY} directly implies the following decay properties for $\hat F$ in the exterior region $\{r\ge t-t_0+R\}$ with $t\ge t_0$.
\begin{prop}
\label{linear_ex}
In the exterior region, the linear Maxwell field $\hat{F}$ verifies the following energy decay estimates
\begin{align}
\label{10.30.4.18}
&(u_1)_+^{\ga_0}  E[\L_Z^k \hat F](\H_{u_1}^{-u_2})+ (u_1)_+^{\ga_0} E[\L_Z^k\hat F](\Hb_{-u_2}^{u_1})
+\int_{\H_{u_1}^{-u_2}}r^{\ga_0}|\a[\L_Z^k \hat F]|^2\\
\notag
 &+\iint_{\mathcal{D}_{u_1}^{-u_2}}r^{\ga_0-1}|(\a, \rho,\sigma)[\L_Z^k\hat F]|^2\nn
+\int_{\Hb_{-u_2}^{u_1} }r^{\ga_0}(|\rho[\L_Z^k\hat F]|^2+|\si[\L_Z^k\hat F]|^2) \les (u_1)_+^{2\zeta(Z^k)}\M_0^2
\end{align}
as well as the pointwise decay estimates
\begin{equation}\label{10.31.2.18}
|\hat\a|^2+|\hat \rho|^2+|\hat \sigma|^2\les \M_0^2 r^{-2-\ga_0} u_+^{-1},\qquad |\hat \ab|^2 \les \M_0^2 r^{-2} u_+^{-\ga_0-1}
\end{equation}
for all $u_2<u_1\leq -\frac{R}{2}$, $Z^k\in \Pp^k$, $k\leq 2$. Here $\hat{\alpha}$, $\hat{\ab}$, $\hat{\rho}$, $\hat{\si}$ are the null components for $\hat{F}$.
There also hold
\begin{equation}\label{11.02.2.18}
\begin{split}
\|r^{\frac{\ga_0}{2}} \a[\L_Z\hat F], r^{\frac{\ga_0}{2}} \rho[\L_Z\hat F], r^{\frac{\ga_0}{2}}\si[ \L_Z\hat F], u_+^{\frac{\ga_0}{2}}\ab[\L_Z\hat F]\|^2_{ L^4(S_{u, v})} \les \mathcal{M}_0^2 u_+^{2\zeta(Z)-1}r^{-1}
\end{split}
\end{equation}
for all $-v\leq u\leq -\frac{R}{2}$ and $Z\in \Pp$, with the constant bounds in ``$\les$" depending merely on $\ga_0$ and $\ep$.
\end{prop}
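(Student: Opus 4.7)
\medskip
\noindent\textbf{Proposal for the proof of Proposition \ref{linear_ex}.}

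The plan is to reduce the statement to a purely linear exterior problem for $\lF$ and apply the $r^p$-weighted multiplier machinery of \cite{KWY} without having to absorb any nonlinear source terms. Since each $Z\in \Pp$ is a Killing vector field of $(\mathbb{R}^{3+1}, \bm)$, the identity $\p^\nu\L_Z^k \lF_{\mu\nu}=0$ propagates to all commuted fields, so the entire analysis reduces to applying well-chosen energy-momentum multipliers to the tensor fields $\{\L_Z^k \lF\}_{k\le 2}$, each of which is a closed, co-closed 2-form on Minkowski space. The initial control on these fields at $\Sigma_{t_0}^e$ is provided by \eqref{IDMKG} combined with Proposition \ref{11.7.1.18}: translations $\p$ carry a loss of one power of $r$-weight (hence the signature $\zeta(\p)=-1$) while rotations and boosts preserve the $r$-weighted energies (signature $0$), which is precisely the origin of the factor $(u_1)_+^{2\zeta(Z^k)}$ on the right of \eqref{10.30.4.18}.

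First I would establish \eqref{10.30.4.18}. Apply the energy identity (obtained from $\p^\mu \T[\L_Z^k \lF]_{\mu\nu}=0$) in the domain $\mathcal{D}_{u_1}^{-u_2}$ with the vector field $X= u_+^{\ga_0}\Lb+ r^{\ga_0} L$. The resulting flux on $\H_{u_1}^{-u_2}$ controls $r^{\ga_0}|\a[\L_Z^k\lF]|^2$ together with $|\rho|^2+|\sigma|^2$ weighted by a combination of $u_+^{\ga_0}$ and $r^{\ga_0}$, and the flux on $\Hb_{-u_2}^{u_1}$ controls the $r^{\ga_0}$-weighted $\rho,\sigma$ and $u_+^{\ga_0}$-weighted $\ab$. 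The deformation tensor of $X$ is non-negative against the energy-momentum tensor up to lower-order terms that can be absorbed by a Gronwall argument in $u$, exactly as in \cite[Sections 3--4]{KWY}. The bulk term $\iint r^{\ga_0-1}(|\a|^2+\rho^2+\sigma^2)$ arises from the positive contribution of $\pi^X$ paired with $\T[\lF]$, giving the spacetime integral bound. For $k\ge 1$, one takes $Z^k\in \Pp^k$, notes that $\zeta$ tracks exactly the $r$-weight lost or preserved at the initial slice, and applies the same multiplier to $\L_Z^k \lF$; since there is no nonlinearity, no bootstrap is required.

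Next I would deduce the pointwise bounds \eqref{10.31.2.18} and the sphere $L^4$ estimates \eqref{11.02.2.18}. The strategy is the standard Klainerman--Sobolev on outgoing cones adapted to the null frame: commute further with $\O$ to gain one angular derivative, then use the trace-type inequality on $S_{u,v}\subset \H_u$ (or $\Hb_v$) of the form
\[
\|G\|_{L^4(S_{u,v})}^2 \lesssim r^{-1}\bigl(\|G\|_{L^2(\H_u^v)}\|\sn G\|_{L^2(\H_u^v)}+r^{-1}\|G\|_{L^2(\H_u^v)}^2\bigr),
\]
applied to each null component of $\L_Z\lF$ with the appropriate $r$- or $u_+$-weight. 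Iterating one more derivative via Sobolev embedding on $S_{u,v}$ produces the pointwise bounds. The discrepancy in decay rates between $(\hat\a,\hat\rho,\hat\sigma)$ and $\hat\ab$ reflects the asymmetry of the null frame components under the multiplier $X$: radially outgoing components peel in $r^{-1-\ga_0/2}u_+^{-1/2}$, whereas $\hat\ab$ peels as $r^{-1}u_+^{-(1+\ga_0)/2}$.

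The main technical obstacle will be in the commutator analysis for translations $\p$: while $\p$ is Killing, the norm of $\L_\p^k \lF$ relative to the $r^{\ga_0}$-weighted energy on $\Sigma_{t_0}^e$ is only controlled after one exchanges $\p$ for $r^{-1}\Omega$-type derivatives at the cost of a factor of $r^{-1}$, which is exactly the origin of $\zeta(\p)=-1$. Keeping a careful bookkeeping of these weights so that the final estimate scales as $u_+^{2\zeta(Z^k)}$ (rather than a weaker $r^{2\zeta(Z^k)}$ which would be wasteful inside the cone) will require the sharp form of the multiplier identity. Everything else is routine and parallels \cite{KWY} with the simplification that the right-hand side of the Maxwell equation vanishes identically, so no bootstrap or smallness assumption on $\phi$ enters.
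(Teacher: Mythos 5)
Your proposal is correct in outline but takes a genuinely different route on the energy estimates. The paper's (very brief) proof defers to \cite{KWY} and says that \eqref{10.30.4.18} follows ``by using $\p_t$ and $r^{\ga_0}L$ as multipliers'' separately: $\p_t$ gives the unweighted fluxes, with the $(u_1)_+^{-\ga_0}$ decay extracted from the fact that the initial data for $\D_{u_1}^{-u_2}$ live in $\{r\gtrsim(u_1)_+\}$ where the $r^{\ga_0}$-weight in $\M_{k,\ga_0}$ can be peeled off; $r^{\ga_0}L$ then gives the $r^{\ga_0}$-weighted $\a$-flux on $\H_u$, the $r^{\ga_0}$-weighted $\rho,\sigma$-flux on $\Hb_v$, and the bulk $\iint r^{\ga_0-1}|(\a,\rho,\sigma)|^2$. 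You instead propose a single hierarchic multiplier $X=u_+^{\ga_0}\Lb+r^{\ga_0}L$, analogous to the Lindblad--Sterbenz multiplier the paper uses in the interior. This works, and in fact gives the estimate \eqref{10.30.4.18} in a somewhat more unified way since $u\equiv u_1$ on $\H_{u_1}^{-u_2}$; you should, however, actually verify the sign of the deformation tensor term you assert. A direct computation with $\p_\mu u_+=\tfrac12 L_\mu$ and $\pi^\Lb_{\mu\nu}=-r^{-1}\Pi_{\mu\nu}$ gives $\T_{\mu\nu}(\pi^{u_+^{\ga_0}\Lb})^{\mu\nu}=u_+^{\ga_0-1}(\rho^2+\sigma^2)\bigl(\tfrac{\ga_0}{2}-\tfrac{u_+}{r}\bigr)$, which is nonnegative in the exterior region because there $v\ge u_+$ and hence $u_+/r\le\tfrac12<\tfrac{\ga_0}{2}$ for $\ga_0>1$; so the combined bulk retains the positive $r^{\ga_0}L$ contribution and you genuinely get \eqref{10.30.4.18} with no Gronwall loss. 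One imprecision worth flagging: the factor $u_+^{2\zeta(\p)}=u_+^{-2}$ per translation commutation is not obtained by ``exchanging $\p$ for $r^{-1}\Omega$-type derivatives''; it comes directly from the data norm $\M_{k,\ga_0}$ carrying the weight $(1+r)^{\ga_0+2l}$ on $l$-th order derivatives, so that over $\{r\gtrsim(u_1)_+\}$ one extracts $(u_1)_+^{-2l}$ while keeping $\M_0^2$. Your treatment of the pointwise bounds \eqref{10.31.2.18} and the sphere $L^4$ estimates \eqref{11.02.2.18} via angular commutation plus trace/Sobolev on $S_{u,v}$ coincides with what the paper indicates.
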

\begin{proof}
The energy decay  and the weighted energy decay estimates in \eqref{10.30.4.18} follow by using $\p_t$ and $r^{\ga_0}L$ as multipliers. The  estimates \eqref{10.31.2.18} and \eqref{11.02.2.18}  are consequences of (\ref{10.30.4.18}) together with Sobolev embedding adapted to null hypersurfaces and spheres. For the details of the proof, we refer to \cite{KWY}.
\end{proof}

For convenience, we allow the constants  in ``$\les $" to depend on $\M_0$, $\ep$, $\ga_0$ in this section.
Recall the definition of $\ck F$ from (\ref{12.31.2.18}). Let $v_*$ be a large fixed number.  To prove Theorem \ref{ext_stb}, we make the following bootstrap assumptions on $(\ck F, \phi)$ in the exterior region,
\begin{align}
&E[D_Z^k\phi, \L_Z^k\ck{F}](\H_{u_1}^{-u_2})+E[D_Z^k\phi, \L_Z^k\ck{F}](\Hb_{-u_2}^{u_1})\leq \dn^2 (u_1)_+^{-\ga_0+2\zeta(Z^k)},\label{eq:BT:PWE:scal}\\
&\int_{\H_{u_1}^{-u_2}}r|D_L D_Z^k\phi|^2 +\int_{\Hb_{-u_2}^{u_1} }r(|\slashed{D} D_Z^k\phi|^2+|D_Z^k\phi|^2)  \leq \dn^2(u_1)_+^{1-\ga_0+2\zeta(Z^k)},
\label{eq:BT:PWE:Max}\\
&\int_{\H_{u_1}^{-u_2}}r^{\ga_0}|\a[\L_Z^k \ck{F}]|^2 +\iint_{\mathcal{D}_{u_1}^{-u_2}}r^{\ga_0-1}|(\a, \rho,\sigma)[\L_Z^k\ck{F}]|^2,\nn\\
&\qquad \qquad+\int_{\Hb_{-u_2}^{u_1} }r^{\ga_0}(|\rho[\L_Z^k\ck{F}]|^2+|\si[\L_Z^k\ck{F}]|^2) \leq \dn^2(u_1)_+^{2\zeta(Z^k)}, \label{eq:BT:energy}
\end{align}
for all $ -v_*\le u_2<u_1\leq -\frac{R}{2}$, $Z^k\in \Pp^k$, $k\leq 2$, where $\dn>0$, $\dn^2=C_1\E_{2,\ga_0}$ with $C_1\ge 4$ to be further determined. The  local well-posedness result guarantees  the above estimates hold for some $v_*>\frac{R}{2}$. Our goal is to achieve the same set of estimates with $\dn^2$ on the right hand sides all improved to be $\f12\dn^2$. Then by the continuity argument, the above estimates hold for all $u_2<u_1\le -\frac{R}{2}$.
It follows from the above bootstrap assumptions that
\begin{prop}\label{prop:decay:scal&Max}
Under the bootstrap assumptions \eqref{eq:BT:PWE:scal}-\eqref{eq:BT:energy}, there hold the decay estimates for the scalar field $\phi$ and the chargeless perturbation part $\ck{F}$ of the Maxwell field in $\D_{-\frac{R}{2}}^{v_*}$,
\begin{align}
\label{eq:decay:scal}
&r^{\frac{1}{2}+\epsilon}u_+^{\f12}|\phi|^2+r^2|\slashed{D}\phi|^2+u_+^2|D_{\Lb}\phi|^2+r^2|D_L \phi|^2 +u_+^{-2\zeta(Z)}|D_Z\phi|^2\les \dn^2 r^{-\frac{5}{2}+\ep}u_+^{\f12-\ga_0},\\
\label{eq:decay:Max:checkF}
&r^{-\ga_0}u_+^{\ga_0}|\ck{\ab}|^2+|\ck{\rho}|^2+|\ck{\a}|^2+|\ck{\si}|^2\les \dn^2 r^{-2-\ga_0}u_+^{-1},
\end{align}
where $Z\in \Pp$ and $u \le -\frac{R}{2}$.
\end{prop}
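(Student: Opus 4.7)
The plan is to derive the pointwise bounds as corollaries of the bootstrap assumptions \eqref{eq:BT:PWE:scal}--\eqref{eq:BT:energy} via a Klainerman--Sobolev argument adapted to the exterior region, i.e.\ using Sobolev embedding on the round spheres $S_{u,v}$ combined with a trace inequality on the incoming null cones $\underline{\H}_v^u$. The angular momentum operators $\O=\{\Omega_{ij}\}$ will play the role of commuting vector fields: they have signature $\zeta(\Omega)=0$, preserve the null frame $\{L,\Lb,e_1,e_2\}$ up to tangential rotation, and commute nicely with the sphere derivatives $\sD$ and with the null components of $\L_Z^k \ck F$.

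First I would record the basic Sobolev/trace estimate. Using the classical $H^2(\mathbb{S}^2)\hookrightarrow L^\infty(\mathbb{S}^2)$ embedding rescaled to $S_{u,v}$ gives, for any complex scalar $f$,
\[
r\,\|f\|_{L^\infty(S_{u,v})}\les \sum_{k\le 2}\|D_\O^{\le k} f\|_{L^2(S_{u,v})},
\]
and a fundamental-theorem-of-calculus argument in $v$ along the incoming cone $\underline{\H}_v^u$ yields
\[
\|r^{\frac12} f\|_{L^2(S_{u,v})}^2\les \int_{\underline{\H}_v^u} r(|D_\Lb f|^2+|f|^2)\les E[f,0](\underline{\H}_v^u)+\int_{\underline{\H}_v^u}r|D_\Lb f|^2,
\]
with the analogous estimate on $\H_u^v$ using $D_L$. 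Commuting with $\O$ preserves both $\zeta$ and the structure of these bounds (modulo lower-order terms handled inductively).

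Armed with these, I would prove the scalar decay \eqref{eq:decay:scal} component by component. For $|\phi|^2$: apply the Sobolev/trace estimate to $D_\O^{\le 2}\phi$ and insert \eqref{eq:BT:PWE:scal} with $\zeta(Z^k)=0$, obtaining $r^2|\phi|^2\les \dn^2 u_+^{-\ga_0}$. For $|\sD\phi|^2$ and $|D_L\phi|^2$: apply the same embedding to $\sD D_\O^{\le 1}\phi$ (resp.\ $D_L D_\O^{\le 1}\phi$) and combine \eqref{eq:BT:PWE:Max} (which gives the $r$-weighted improvement) with \eqref{eq:BT:PWE:scal}; the $r^\ep$ loss is produced by interpolation between the two flux estimates together with a weighted Hardy step along the cone. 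For $|D_\Lb \phi|^2$: extract the flux $\int_{\underline\H}|D_\Lb D_\O^{\le 2}\phi|^2$ from \eqref{eq:BT:PWE:scal} and pay the $u_+^{-2}$ factor by the same trace procedure. For the general $|D_Z\phi|^2 u_+^{-2\zeta(Z)}$: the extra $u_+^{2\zeta(Z)}$ factor is provided directly by the $u_+$-exponent in the bootstrap, and the Sobolev/trace reduction is identical. For the Maxwell components \eqref{eq:decay:Max:checkF}: apply the $L^\infty$--$L^2$ reduction on $S_{u,v}$ to each null component $\a[\L_\O^{\le 2}\ck F]$, $\rho[\L_\O^{\le 2}\ck F]$, $\si[\L_\O^{\le 2}\ck F]$, $\ab[\L_\O^{\le 2}\ck F]$ and substitute \eqref{eq:BT:energy}; the favorable $r^{\ga_0}$ weight for $(\a,\rho,\si)$ and the favorable $u_+^{\ga_0}$ weight for $\ab$ (obtained by pairing \eqref{eq:BT:energy} with the incoming flux \eqref{eq:BT:PWE:scal}) give the asymmetric decay rates in the statement.

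The only place requiring non-routine care is the handling of the commutators $[\sD,\L_\O]$ acting on the null components of $\ck F$ and $[D_\O,D_L]$, $[D_\O,D_\Lb]$ acting on $\phi$: the terms produced are either lower order or of the same signature, so the induction on the number of $\O$-commutations closes; this is analogous to the comparison argument of \cite[Chapter 7]{CK} but substantially simpler because only angular rotations are involved and $\zeta(\O)=0$. With this bookkeeping in place the $r^\ep$ factor arises exactly once, from the interpolation between the $r$-weighted flux in \eqref{eq:BT:PWE:Max} and the plain energy in \eqref{eq:BT:PWE:scal}, and all the exponents in \eqref{eq:decay:scal}--\eqref{eq:decay:Max:checkF} match those coming out of the bootstrap. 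No smallness of $\dn$ or of the data is used at this stage; the proposition is purely a pointwise consequence of the $L^2$ bootstrap bounds.
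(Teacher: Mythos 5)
Your high-level plan matches what the paper does: the paper's own ``proof'' of Proposition \ref{prop:decay:scal&Max} is a one-line citation to \cite[Sections 3.2 and 3.3]{KWY}, which carries out exactly the Sobolev-on-spheres-plus-trace argument you describe, with angular rotations as commutation fields. The trouble is in the specific inequalities you write down. Your trace estimate
$\|r^{1/2}f\|^2_{L^2(S_{u,v})}\les\int_{\Hb_v^u}r(|D_\Lb f|^2+|f|^2)\les E[f,0](\Hb_v^u)+\int_{\Hb_v^u}r|D_\Lb f|^2$
does not hold as a chain and is not covered by the bootstrap. The second ``$\les$'' silently replaces $\int_{\Hb}r|f|^2$ by the unweighted quantity $E[f,0](\Hb_v^u)$, which fails for $r$ large; and the surviving term $\int_{\Hb_v^u}r|D_\Lb f|^2$ is precisely the flux that the assumptions do not give you. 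Look at \eqref{eq:BT:PWE:Max}: on $\Hb$ the $r$-weight sits on $|\sD D_Z^k\phi|^2$ and $|D_Z^k\phi|^2$, while the $r$-weighted $D_L$-flux lives on $\H$, not $\Hb$. This asymmetry in which null derivative carries the $r$-weight on which cone is not cosmetic; it is why the trace lemma in \cite{KWY} has a more careful form than the one you state, and your version of the argument would fail at this step.

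There is also a concrete power-counting inconsistency. You conclude for $|\phi|^2$ that ``$r^2|\phi|^2\les\dn^2 u_+^{-\ga_0}$,'' citing only \eqref{eq:BT:PWE:scal}. But the proposition's bound amounts to $r^3|\phi|^2\les\dn^2 u_+^{-\ga_0}$, and chaining your own two displayed estimates literally (Sobolev gives $r^2|\phi|^2\les\|D_{\O}^{\le 2}\phi\|^2_{L^2(S_{u,v})}$, your trace then gives an extra $r^{-1}$) would produce $r^3$, not $r^2$. So either the statement is a typo, or you are in fact a factor of $r^{-1}$ short; either way the reader cannot tell which bootstrap input --- necessarily \eqref{eq:BT:PWE:Max} applied to $\phi$ itself, which you invoke for $D_L\phi$ and $\sD\phi$ but not for $\phi$ --- is supposed to supply it. Finally, the ``$r^\ep$ loss from interpolation plus a weighted Hardy step'' is asserted in a single sentence, yet the exponents in \eqref{eq:decay:scal} are exactly tight against $\ga_0$ and $\ep$, so this is where the real work is and it cannot be left as a remark. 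The shape of the argument is right, but as written the estimates do not close; I'd transcribe the actual trace lemma from \cite{KWY} and re-derive the weights rather than reconstructing them from scratch.
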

\begin{proof}
  See the proofs  in \cite[Sections 3.2 and 3.3]{KWY}. 
\end{proof}


\subsection{Energy decay estimates for the chargeless perturbation part of the Maxwell field}\label{ext_maxwell}
Now we  improve the bootstrap assumptions \eqref{eq:BT:PWE:scal}-\eqref{eq:BT:energy}.
 Based on Proposition \ref{prop:decay:scal&Max}, we first derive the (weighted) energy estimates for  $\ck{F}$, with data on $\Sigma_{t_0}^e$ controlled by Proposition \ref{data_1}.
\begin{prop}
\label{prop:BT:imp:Max}
Under the bootstrap assumptions \eqref{eq:BT:PWE:scal}-\eqref{eq:BT:energy}, $\ck{F}$ verifies the following energy decay estimates
\begin{align}
\label{eq:BT:imp:Max:energy}
&E[\L_Z^k \ck{F}](\H_{u_1}^{-u_2})+E[\L_Z^k \ck{F}](\Hb_{-u_2}^{u_1})\les (\mathcal{E}_{k, \ga_0}+\dn^4) (u_1)_+^{-\ga_0+2\zeta(Z^k)},
\end{align}
as well as the $r$-weighted energy decay estimates
\begin{align}
 & \iint_{\mathcal{D}_{u_1}^{- u_2}}r^{\ga_0-1} (|\a[\L_Z^k\ck F]|^2+|\rho[\L_Z^k\ck{F}]|^2+|\sigma[\L_Z^k\ck F]|^2)+\int_{\H_{u_1}^{-u_2}}r^{\ga_0}|\a[\L_Z^k\ck F]|^2
 \label{eq:BT:imp:Max:pwe}\\
 &+\int_{\Hb_{-u_2}^{u_1} }r^{\ga_0}(|\rho[\L_Z^k\ck{F}]|^2+|\sigma[\L_Z^k\ck F]|^2)\les (\mathcal{E}_{k, \ga_0}+\dn^4) (u_1)_+^{2\zeta(Z^k)}\nn
\end{align}
for all  $-v_*\le u_2<u_1\le-\frac{R}{2}$,  $Z^k\in \Pp^k$ and  $k\leq 2$ .
\end{prop}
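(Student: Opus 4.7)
The plan is to run the standard multiplier/commutator scheme for the Maxwell equation verified by $\ck F$, closely following the framework of \cite{KWY}, but carefully treating the error integrals produced by the large linear field $\hat F$ that now enters through the commuted source term. By \eqref{eq:decomposition4F}, \eqref{12.31.2.18} and the commutation properties, $\L_Z^k\ck F$ satisfies, for every $Z^k\in\Pp^k$ with $k\le 2$,
\begin{equation*}
\p^{\nu}(\L_Z^k \ck F)_{\mu\nu}=\L_Z^k J[\phi]_\mu+\mbox{(linear charge correction)},
\end{equation*}
where the correction comes from the explicit cut-off function in the definition of $\ck F$ and is supported away from $\Sigma_{t_0}^e\cap\{r\le R/2\}$; its contribution to each estimate is bounded by $|q_0|^2\lesssim \mathcal{E}_{0,\ga_0}^2$ via explicit evaluation and can be absorbed into the initial data term $\mathcal{E}_{k,\ga_0}$. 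With this in hand, I would apply Proposition \ref{prop::EnergyEqMKGModified} in the exterior domain $\mathcal{D}_{u_1}^{-u_2}$ to the pair $(0,\L_Z^k\ck F)$, first with the Killing multiplier $T_0=\p_t$ (yielding \eqref{eq:BT:imp:Max:energy}), and then with the weighted multiplier $r^{\ga_0}L$ (yielding \eqref{eq:BT:imp:Max:pwe}); since $T_0$ is Killing and $\pi^{r^{\ga_0}L}$ produces non-negative interior terms of the form $r^{\ga_0-1}(|\a|^2+\rho^2+\sigma^2)$, the corresponding spacetime weighted norms arise naturally on the left-hand side.

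The essential step is to bound the bulk error
\begin{equation*}
I_k^X:=\iint_{\mathcal{D}_{u_1}^{-u_2}}\L_Z^k J[\phi]^{\mu}\,(\L_Z^k\ck F)_{X\mu}
\end{equation*}
with $X=T_0$ and $X=r^{\ga_0}L$ respectively, for $k\le 2$. Using Lemma \ref{lem::Commutator1}, expand $\L_Z^k J[\phi]$ into schematic terms of the form $D^{l_1}_Z\phi\cdot DD^{l_2}_Z\phi$ with $l_1+l_2\le k$, plus quadratic-in-$\phi$ terms multiplied by $\L_Z^{\le k-1}F$. The decomposition $F=\hat F+\ck F+q_0 r^{-2}\chi dt\wedge dr$ separates the large linear contribution from the small perturbation contribution; the charge piece is handled exactly as in \cite{KWY}. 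For the $\ck F$ and $\phi$ contributions, I would use the pointwise decay in Proposition \ref{prop:decay:scal&Max} together with null-frame inspection: pair the weak-decay component $D_\Lb\phi$ only with the good null components $\a$ or $\rho,\sigma$ of $\L_Z^k\ck F$, so that the interior weights $r^{\ga_0-1}(|\a|^2+\rho^2+\sigma^2)$ and the flux weight $|D_L D_Z^k\phi|^2$ on $\H_u$ produced by the multipliers provide the required smallness after Cauchy--Schwarz. This class of terms contributes $\lesssim\dn^4 (u_1)_+^{2\zeta(Z^k)}$ or $\lesssim\dn^4 (u_1)_+^{-\ga_0+2\zeta(Z^k)}$ by bootstrap assumptions \eqref{eq:BT:PWE:scal}--\eqref{eq:BT:energy}.

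The remaining terms are those containing $\hat F$, and here the main obstacle appears: one cannot treat $\hat F$ as a small perturbation. The cure is to exploit the sharp pointwise decay \eqref{10.31.2.18} for $\hat F$ (which matches that of a free Maxwell field) and the weighted energy \eqref{10.30.4.18}, and to again track null structure: any contribution $\hat F_{Z\mu}D^\mu\phi\cdot(\L_Z^k\ck F)_{X\nu}$ is decomposed using $\hat\a,\hat\rho,\hat\sigma$ (paired with the strong components of $D\phi$) versus $\hat{\underline\a}$ (paired only with $D_L\phi$ or $\sD\phi$, which have the improved bounds in \eqref{eq:BT:PWE:Max} and \eqref{eq:decay:scal}). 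Since the size of $\hat F$ depends only on $\M_0$ but the scalar field carries a factor $\dn$, each such error term ends up bounded by $\dn^2(u_1)_+^{-\ga_0+2\zeta(Z^k)}$ times a constant depending only on $\M_0$, $\ga_0$, $\ep$; the gain in $\dn$ from $\phi$ is what prevents a linear feedback.

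Collecting all estimates and integrating along the foliation by $\H_u^{-u_2}$, one gets
\begin{equation*}
|I_k^X|\lesssim \mathcal{E}_{k,\ga_0}+\dn^4(u_1)_+^{2\zeta(Z^k)-\sigma_X},
\end{equation*}
with $\sigma_X=\ga_0$ for $X=T_0$ and $\sigma_X=0$ for $X=r^{\ga_0}L$, at which point a Gronwall argument in $u_1$, applied level by level in $k=0,1,2$ (using lower-order bounds when controlling commutator remainders in the higher-order case), yields the claimed estimates \eqref{eq:BT:imp:Max:energy} and \eqref{eq:BT:imp:Max:pwe}. The initial data on $\Sigma_{t_0}^e$ contributes $\mathcal{E}_{k,\ga_0}$ via Proposition \ref{11.7.1.18}, and the boundary flux on $C_0$ is non-negative for the multipliers chosen and thus can be discarded. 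The hardest point of the argument is the $k=2$, $X=r^{\ga_0}L$ case with cross-interaction $\hat F\cdot D\phi\cdot\ck F$ at top derivative order, which requires the sharpest pairing of null components and the trace inequality of Lemma \ref{sob_1} on $S_{u,v}$ to absorb one factor of $\L_Z^2\ck F$ in $L^4$ rather than $L^2$.
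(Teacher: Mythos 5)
Your proposal takes essentially the same route as the paper: both reduce the proposition to the multiplier scheme of \cite{KWY} for the commuted Maxwell equation $\p^\mu(\L_Z^k\ck F)_{\nu\mu}=\L_Z^k J[\phi]_\nu$, with the only genuinely new work being the treatment of the source terms containing the large field $\hat F$, handled via the sharp decay and weighted-flux bounds of Proposition~\ref{linear_ex} and the gain of $\dn$ from the scalar factors. A few technical slips worth flagging, though none of them breaks the argument: (i) there is no ``linear charge correction'' to the Maxwell equation for $\ck F$ in the exterior region -- the cutoff $\chi_{\{t-t_0+R/2\le r\}}$ is identically $1$ on $\{t-t_0\le r-R\}$, so $\p^\nu(\L_Z^k\ck F)_{\mu\nu}=\L_Z^k J[\phi]_\mu$ holds exactly; (ii) the energy identity you invoke, Proposition~\ref{prop::EnergyEqMKGModified}, is the hyperboloidal (interior) one; the exterior divergence identity is the one from \cite[Lemma 2(2)]{KWY} with null boundaries $\H_u^v$, $\Hb_v^u$ and initial slice $\Sigma_{t_0}^e$, and the fluxes on both families of null hypersurfaces are what appear on the left, so there is no separate flux on $C_0$ being ``discarded''; (iii) the $L^4$/trace trick on $S_{u,v}$ you mention at the end is not in fact needed for this proposition -- the paper's source estimate (Proposition~\ref{flux_ext}) handles the top-order $\hat F$-terms, including $(\L_X\hat F)_{Y\mu}|\phi|^2$, with pointwise decay of $\phi$ and the $L^2$ weighted fluxes of $\L_X\hat F$ from \eqref{10.30.4.18}; the $L^4$ embedding \eqref{11.02.2.18} is used instead in the scalar-field commutator estimate Lemma~\ref{com_est}.
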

Since $\L_{Z}^k {\ck{F}}$ satisfies the Maxwell equation $\p^\mu (\L_Z^k{\ck{F}})_{\nu\mu}=\L_Z^k J[\phi]_{\nu}$, the proof for the above proposition is the same as that for Proposition 4 in \cite{KWY} once we have the following estimates for the nonlinear terms.
\begin{prop}\label{flux_ext}
Under the bootstrap assumptions \eqref{eq:BT:PWE:scal}-\eqref{eq:BT:energy}, the nonlinear term $J[\phi]$ satisfies
\begin{equation*}
\begin{split}
\iint_{\mathcal{D}_{u_1}^{-u_2}} r^{\ga_0}u_+^{1+\ep}|\L_Z^k \slashed{J}|^2+r^{\ga_0+1}|\L_Z^k J_{L}|^2 +u_+^{1+\ga_0+\ep}|\L_Z^k J_{\Lb}|^2 \les \dn^4 (u_1)_+^{2\zeta(Z^k)}
\end{split}
\end{equation*}
for all $ -v_*\le u_2<u_1\le-\frac{R}{2}$,  $k\leq 2$ and $Z^k\in \Pp^k$, where $\L_Z^k \slashed{J}$, $\L_Z^k J_L$ and $\L_Z^k J_{\Lb}$ represent the angular, $L$ and $\Lb$ components of the 1-form $\L_Z^k J[\phi]$ respectively.
\end{prop}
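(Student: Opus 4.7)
The plan is to reduce Proposition \ref{flux_ext} to a combination of pointwise decay from Proposition \ref{prop:decay:scal&Max} and energy flux control from the bootstrap assumptions \eqref{eq:BT:PWE:scal}--\eqref{eq:BT:PWE:Max}. By the symbolic commutator formulas of Lemma \ref{lem::Commutator1}, one has schematically
\begin{equation*}
\L_Z^k J[\phi]_\mu\;\sim\;\sum_{l_1+l_2\le k}\Im\!\bigl(D_Z^{l_1}\phi\cdot \overline{D_\mu D_Z^{l_2}\phi}\bigr)+\sum_{l_1+l_2\le k-1}\L_Z^{l_1}\!F_{\mu \Ga}\,\Im\!\bigl(\phi\cdot\overline{D_Z^{l_2}\phi}\bigr)
\end{equation*}
with $\Ga\in\Pp$ and the signature $\zeta(Z^{l_1})+\zeta(Z^{l_2})=\zeta(Z^k)$ preserved in each branch. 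This signature bookkeeping is precisely what produces the factor $(u_1)_+^{2\zeta(Z^k)}$ on the right hand side of the claim after the final $du$-integration.

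First I would handle the $L$ and angular components. The null structure selects the good derivative of $\phi$, namely $D_L D_Z^{l_2}\phi$ for $J_L$ and $\sD D_Z^{l_2}\phi$ for $\slashed{J}$, both of which enjoy enhanced $r$-decay via \eqref{eq:decay:scal}. For each bilinear term I would place $D_Z^{l_1}\phi$ in $L^\infty$ via Proposition \ref{prop:decay:scal&Max} and integrate the remaining factor against the weights $r^{\ga_0+1}$ or $r^{\ga_0}u_+^{1+\ep}$ along outgoing cones $\H_u$ using \eqref{eq:BT:PWE:Max}; the subsequent $du$-integration converges thanks to the $u_+^{-\ga_0}$ gain supplied by the $L^\infty$-placed factor. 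The curvature contributions in the second sum are split via $F=\hat F+\ck F+q_0 r^{-2}\chi\,dt\wedge dr$: the linear part uses \eqref{10.31.2.18}, the chargeless perturbation uses \eqref{eq:decay:Max:checkF}, and the charge part is absorbed via the smallness $|q_0|\les \E_{2,\ga_0}$ from \eqref{10.30.5.18}. In each instance the combined pointwise decay of $|F|$ and $|\phi|$ leaves ample margin over the required weights.

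The main obstacle is the $\Lb$-component, since $D_\Lb\phi$ carries only the weak decay $|D_\Lb\phi|^2\les\dn^2 r^{-5/2+\ep}u_+^{-3/2-\ga_0}$ from \eqref{eq:decay:scal} while the target integral carries the large weight $u_+^{1+\ga_0+\ep}$. In the borderline term $l_1=0,\,l_2=k$ of $\L_Z^k J_\Lb$, extracting $|\phi|^2\les \dn^2 r^{-3}u_+^{-\ga_0}$ pointwise reduces the integrand to $\dn^2 r^{-3}u_+^{1+\ep}|D_\Lb D_Z^k\phi|^2$. Foliating $\mathcal{D}_{u_1}^{-u_2}$ by incoming null cones $\Hb_v^{u_1}$, the flux $\int_{\Hb_v^{u_1}}|D_\Lb D_Z^k\phi|^2$ on each leaf is bounded by $\dn^2(u_1)_+^{-\ga_0+2\zeta(Z^k)}$ uniformly in $v$ via \eqref{eq:BT:PWE:scal}; the residual $dv$-integration of $r^{-3}u_+^{1+\ep}$ then converges since $r\gtrsim v$ on $\Hb_v$ for large $v$, delivering the sought bound $\dn^4(u_1)_+^{2\zeta(Z^k)}$. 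Non-borderline top-order terms $l_1\ge 1$ are easier because one can place $D_Z^{l_1}\phi$ in $L^\infty$ with the stronger $r$-decay of \eqref{eq:decay:scal}, and the $u_+^{2\zeta(Z^{l_1})}$ signature shift exactly compensates. Commutators $[D_\mu,D_Z^{l_2}]$ arising when rearranging $D_\mu D_Z^{l_2}$ inject additional $F$-factors of strictly lower order which are already controlled in the previous step, so no Gronwall iteration is required.
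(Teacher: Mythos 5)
Your overall strategy — Leibniz expansion of $\L_Z^k J[\phi]$, $L^\infty$-placement of the lower-order factor, flux control of the higher-order factor foliated along $\H_u$ or $\Hb_v$, and the small-charge treatment of the $q_0$-part — is essentially the approach from \cite{KWY} that the paper defers to, and the $\Lb$-component foliation you describe is sound (extracting $|\phi|^2 \les \dn^2 r^{-3}u_+^{-\ga_0}$ and using the $\Hb_v$ flux from \eqref{eq:BT:PWE:scal} indeed closes with large margin).

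However, there is a genuine gap exactly where the paper's argument is new. In your ``second sum'' the curvature factor can be $\L_{Z_1}\hat F$ (with $l_1=1$, i.e.\ the term $(\L_X\hat F)_{Y\mu}|\phi|^2$ at $k=2$), and you claim ``the linear part uses \eqref{10.31.2.18}''. But \eqref{10.31.2.18} is a pointwise bound only for the \emph{undifferentiated} $\hat F$; no pointwise bound on $\L_X\hat F$ is available under the regularity assumed (only the $L^2$ flux bounds \eqref{10.30.4.18} and the sphere-$L^4$ bounds \eqref{11.02.2.18}). Since $\hat F$ has no smallness, one cannot trade this for a Gr\"onwall argument either. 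The correct treatment is to place the quartic factor $|\phi|^4$ in $L^\infty$ via Proposition~\ref{prop:decay:scal&Max}, obtaining an integrand of the schematic form $\dn^4 r^{-4}u_+^{\ep-2\ga_0+2\zeta(Y)}\bigl(r^{\ga_0}u_+|\ab[\L_X\hat F]|^2 + r^{\ga_0+1}|(\a,\rho,\si)[\L_X\hat F]|^2\bigr)$, and then integrate $\L_X\hat F$ against the weighted energy fluxes of \eqref{10.30.4.18}. Your claim that ``the combined pointwise decay of $|F|$ and $|\phi|$ leaves ample margin'' is simply not available for $\L_X\hat F$, and fixing this step requires a different input from the one you cite. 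Similarly, the current terms generated by $\ck F$ at second order (the $J[\L_Y F]$-type contributions coming from \eqref{eq:Est4commu:1}) need the expansion $J[\L_Y F]=\L_Y J[\phi]$ and a flux argument, not a pointwise one; your proposal does not mention these.

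In short: for the $l_1=0$ curvature terms (undifferentiated $\hat F$) your pointwise-times-flux scheme matches the paper; for the $l_1=1$ term $(\L_X\hat F)_{Y\mu}|\phi|^2$, which is precisely the large-field difficulty singled out in Section~\ref{ext}, the proposal uses a nonexistent pointwise bound and should instead use \eqref{10.30.4.18} with $|\phi|^4$ placed in $L^\infty$.
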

\begin{proof}
  The proof is similar to \cite[Section 3.4]{KWY} except for treating  the terms with a factor that is  the full Maxwell field $F$, which arise from the commutation between the covariant derivatives $D$. By the decomposition $F=\hat F+\tF$,  we focus on treating the part containing $\hat F$, since  the rest of the calculations have been done in \cite{KWY}.
Note that  the case of $k=0$ does not involve any commutation. Hence Proposition \ref{flux_ext} holds as in \cite{KWY}.

  When $k=1$, recall the formula from Lemma \ref{lem::Commutator1}
  \begin{align*}
    \L_X J_{\mu}[\phi]
=\Im(D_X\phi \cdot \overline{D_\mu \phi} )+\Im(\phi\cdot \overline{D_\mu D_X \phi})- F_{X \mu}|\phi|^2.
  \end{align*}
  Estimates for the first two terms are the same (as that in \cite{KWY}) except the last term where the full Maxwell field $F$ arises from commutation.

   When $k=2$, recall that for $Z^2=XY$, there holds for any vector field or the frame in the null tetrad $V$,
  \begin{align*}
|V^\mu\L_X \L_Y J_\mu[\phi]|&\les |D_X D_Y\phi||D_V\phi|+|D_Y\phi| |D_V D_X\phi|+|F_{X V}||\phi||D_Y\phi|
+|D_X\phi||D_V D_Y \phi|\\&
+|\phi||D_V D_X D_Y \phi|+|(\L_X F)_{Y V }||\phi|^2+|F_{[X, Y]V}||\phi|^2+|F_{Y V}| |\phi||D_X\phi|.
\end{align*}
Since $\Pp$ is closed under the commutation $[\cdot,\cdot]$, if $[X,Y]\neq 0$, then $\zeta([X, Y])=\zeta(XY)$ for all $X$, $Y\in \Pp$, 
 we only need to estimate the terms  $|(\L_X F)_{Y V}||\phi|^2$ and
\begin{equation}\label{1.28.1.19}
 \sum_{\zeta(\tilde XZ^l)=\zeta(Z^2), \tilde X\in\Pp}|F_{\tilde X V}||\phi||D_Z^l\phi|,\,l\leq 1.
 \end{equation}
  In view of the decomposition $F=\hat{F}+\ck{F}+q_0 r^{-2}dt\wedge dr$ in (\ref{12.31.2.18}), the part of $\tF=\ck{F}+q_0 r^{-2}dt\wedge dr$ can be controlled in the same way as in \cite{KWY}, with a quicker treatment on the charge part due to the smallness of $|q_0|$  in (\ref{10.30.5.18}). It hence suffices to control the terms contributed by the large linear Maxwell field $\hat{F}$.

Consider the part $|\hat{F}_{\tilde X V}||\phi||D_Z^l\phi|$, $l\le 1$ in (\ref{1.28.1.19}), with $V$ a component of the null tetrad.  Firstly, according to the decay estimates in Proposition \ref{linear_ex}, we have for $X\in \Pp$
\begin{align*}
  |\hat{F}_{X L}| \les r^{\frac{-\ga_0}{2}}u_+^{-\frac{1}{2}+\zeta(X)},  \quad |\hat{F}_{X \Lb}| \les u_+^{-\frac{1+\ga_0}{2}+\zeta(X)}, \quad |\hat{F}_{X e_A}|\les r^{\frac{-\ga_0}{2}}u_+^{-\frac{1}{2}+\zeta(X)},\,\,  A=1,2.
\end{align*}
 Therefore by using the decay estimate for the scalar field $\phi$ in Proposition \ref{prop:decay:scal&Max}, \eqref{eq:BT:PWE:Max} and $\zeta(\tilde X Z^l)=\zeta(Z^2)$, we can estimate that
\begin{align}
  &\iint_{\mathcal{D}_{u_1}^{-u_2}} (r^{\ga_0}u_+^{1+\ep}|\hat{F}_{\tilde X e_A}|^2+r^{\ga_0+1}|\hat{F}_{\tilde X L}|^2 +u_+^{1+\ga_0+\ep}|\hat{F}_{\tilde X \Lb}|^2)|\phi|^2 |D_Z^l \phi|^2\label{2.13.2.19}\\
  &\les \iint_{\mathcal{D}_{u_1}^{-u_2}} u_+^{\ep+2\zeta(\tilde X)-1-\ga_0} \dn^2 r^{-2} |D_Z^l \phi|^2\nn\\
  &\les \dn^2 (u_1)_+^{2\zeta(\tilde X)+\ep-3-\ga_0}\int_{u_2}^{u_1} E[D_Z^l\phi](\H_{u}^{-u_2})du\nn\\
  &\les \dn^4 (u_1)_+^{2\zeta(\tilde X Z^l)}=\dn^4 (u_1)_+^{2\zeta(Z^2)}.\nn
\end{align}
Note if $l=0$ in (\ref{2.13.2.19}), the bound of the inequality is $\dn^4 (u_1)_+^{2\zeta(\tilde X)}$. This controls the term contributed by $\hat F_{\tilde X\mu} |\phi|^2$ in the $k=1$ case.

It now remains to understand $|(\L_X \hat{F})_{YV}| |\phi|^2$, for which using the pointwise estimate for $\phi$ in Proposition \ref{prop:decay:scal&Max} and the estimate (\ref{10.30.4.18}), we can bound
\begin{align*}
  &\iint_{\mathcal{D}_{u_1}^{-u_2}} (r^{\ga_0}u_+^{1+\ep}|(\L_X \hat{F})_{Y e_A}|^2+r^{\ga_0+1}|(\L_X \hat{F})_{Y L}|^2 +u_+^{1+\ga_0+\ep}|(\L_X \hat{F})_{Y\Lb}|^2)|\phi|^4 \\
  &\les \iint_{\mathcal{D}_{u_1}^{-u_2}} \dn^4 r^{-4} u_+^{\ep-2\ga_0+2\zeta(Y)}(r^{\ga_0}u_+ |\ab[\L_X \hat{F}]|^2+r^{\ga_0+1}|(\a, \rho, \si)[\L_X \hat{F}]|^2 )\\
  &\les \dn^4 (u_1)_+^{2\zeta(XY)}.
\end{align*}
Combining the above two estimates  leads to the estimates of Proposition \ref{flux_ext}  for $k=2$. Hence the proposition is proved.
\end{proof}

\subsection{Energy decay estimates for the scalar field}
In this subsection, we derive the following energy decay estimates for the scalar field
\begin{prop}\label{11.4.3.18}
Under the bootstrap assumptions \eqref{eq:BT:PWE:scal}-\eqref{eq:BT:energy}, the scalar field verifies the following energy decay estimates
\begin{align}
\label{eq:BT:imp:scal:energy}
&E[D_Z^k \phi](\H_{u_1}^{-u_2})+E[D_Z^k \phi](\Hb^{u_1}_{-u_2})\les (\E_{k,\ga_0}+\dn^4) (u_1)_+^{-\ga_0+2\zeta(Z^k)}, \\
\label{eq:BT:imp:scal:PWE}
&\int_{\H_{u_1}^{-u_2}}r|D_L D_Z^k\phi|^2 +\int_{\Hb_{-u_2}^{u_1} }r(|\slashed{D} D_Z^k\phi|^2+|D_Z^k\phi|^2) \les (\E_{k, \ga_0} +\dn^4)(u_1)_+^{1-\ga_0+2\zeta(Z^k)}
\end{align}
for all  $-v_*\le u_2<u_1\le-\frac{R}{2}$,  $Z^k\in \Pp^k$ and $k\le 2$.
\end{prop}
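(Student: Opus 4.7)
The plan is to apply the energy identity to $D_Z^k\phi$, treated as a solution of the inhomogeneous Klein--Gordon equation $(\Box_A-1)D_Z^k\phi=[\Box_A,D_Z^k]\phi$, over the region $\D_{u_1}^{-u_2}$. For \eqref{eq:BT:imp:scal:energy} we use the Killing multiplier $T_0=\p_t$, which produces the fluxes $E[D_Z^k\phi](\H_{u_1}^{-u_2})+E[D_Z^k\phi](\Hb_{-u_2}^{u_1})$ on the null boundaries plus the initial flux on $\Sigma_{t_0}^e\cap \D_{u_1}^{-u_2}$, bounded by $\E_{k,\ga_0}$ via Proposition \ref{11.7.1.18}. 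For the $r$-weighted statement \eqref{eq:BT:imp:scal:PWE} we use the multiplier $r^{\ga_0}L$ of Dafermos--Rodnianski type, whose deformation tensor generates the bulk term $\int r^{\ga_0-1}|\sD D_Z^k\phi|^2+\ldots$ whose null boundary integral produces precisely $\int_{\H_{u_1}^{-u_2}}r\,|D_L D_Z^k\phi|^2$ and $\int_{\Hb_{-u_2}^{u_1}}r(|\sD D_Z^k\phi|^2+|D_Z^k\phi|^2)$. In both cases, by \eqref{eq:div4TfG} applied to $(D_Z^k\phi,0)$, the bulk error integral has the schematic form
\begin{equation*}
\iint_{\D_{u_1}^{-u_2}}\Re\bigl(\overline{[\Box_A,D_Z^k]\phi}\,D_\nu D_Z^k\phi\bigr)X^\nu\,+\,F_{\nu\mu}J[D_Z^k\phi]^\mu X^\nu,
\end{equation*}
and the entire proof reduces to controlling these error integrals by $(\E_{k,\ga_0}+\dn^4)(u_1)_+^{\alpha}$ with the correct exponent $\alpha=-\ga_0+2\zeta(Z^k)$ (resp.\ $\alpha=1-\ga_0+2\zeta(Z^k)$).

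To estimate the commutator part, the plan is to expand $[\Box_A,D_Z^k]\phi$ via Lemma \ref{lem::Commutator1} as a linear combination of products $\L_Z^a F\cdot D^\mu D_Z^b\phi$ and lower-order analogues with $a+b\le k-1$, and to split $F=\hat F+\ck F+q_0 r^{-2}\,dt\wedge dr$ according to \eqref{12.31.2.18}. The charge piece is harmless as $|q_0|\les \ve_0^2\les \dn^2$ by \eqref{10.30.5.18}; the chargeless perturbation $\ck F$ is controlled by the bootstrap \eqref{eq:BT:PWE:scal}--\eqref{eq:BT:energy} and the pointwise decay \eqref{eq:decay:Max:checkF}; and, most importantly, the large linear part $\hat F$ and its derivatives $\L_Z^{\le 2}\hat F$ are handled by the energy, $r$-weighted, $L^4$ trace and pointwise bounds of Proposition \ref{linear_ex}. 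The nonlinear term $F_{\nu\mu}J^\mu[D_Z^k\phi]X^\nu$ is treated identically. The signature function $\zeta$ is designed so that every such trilinear or quadrilinear contribution, after inserting the relevant decay, closes with the rate $(u_1)_+^{-\ga_0+2\zeta(Z^k)}\dn^4$ matching the claim.

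The induction is done on $k$ from $0$ up to $2$ and, within each $k$, on the value of $\zeta(Z^k)$ from largest (purely rotational/boost) down to most negative (purely translational). The $k=0$ case is immediate: the commutator is absent, and $F_{\nu T_0}J[\phi]^\nu=F_{\nu T_0}\Im(\phi\overline{D^\nu\phi})$ is bounded pointwise by $|\phi||D\phi|$ times the decay of $F=\hat F+\tF$, yielding $\dn^4(u_1)_+^{-\ga_0}$ via Proposition \ref{prop:decay:scal&Max} and \eqref{10.31.2.18}. For $k=1,2$ the crucial commutator estimate (Lemma \ref{com_est}, as announced in the introduction) is used to repackage each weak-decay term so that it involves either strictly fewer derivatives of $\phi$ than $D_Z^k\phi$ (and hence is bounded by a previously obtained energy at level $k-1$) or strictly smaller value of $\zeta$ at the same level $k$ (already treated earlier in the $\zeta$-sweep). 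This combinatorial organization is the content of the signature inductive scheme.

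The main obstacle is the top-order case $k=2$ with the most negative signature $\zeta(Z^2)=-2$. A direct application of the $T_0$-multiplier leaves a borderline contribution coming from the interaction of $\hat F$ with $D_Z^2\phi$ whose decay in $u_+$ is too weak by exactly the critical rate to give \eqref{eq:BT:imp:scal:energy}. As foreshadowed at the end of the outline of Theorem \ref{ext_stb}, the remedy is to first carry out an auxiliary energy estimate for the quantity $u_+^{-\zeta(Z^2)}D_Z^2\phi$ on $\H_u^{-u_2}$: a multiplier weighted by $u_+^{-2\zeta(Z^2)}$ delivers a flux with strictly better $u_+$-decay, which is then re-injected into the standard $T_0$-identity to absorb the borderline $\hat F$-term. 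Once this improvement is secured, \eqref{eq:BT:imp:scal:energy}--\eqref{eq:BT:imp:scal:PWE} close at $k=2$, and, combining with \eqref{eq:BT:imp:Max:energy}--\eqref{eq:BT:imp:Max:pwe} from Proposition \ref{prop:BT:imp:Max} and the nonlinear control from Proposition \ref{flux_ext}, the bootstrap constant $C_1$ can be fixed so large that $\dn^4\le \f12\dn^2$, strictly improving \eqref{eq:BT:PWE:scal}--\eqref{eq:BT:PWE:Max} and completing the continuity argument.
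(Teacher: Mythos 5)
Your overall plan matches the paper's: run the $T_0$ and $r$-weighted energy identities for $D_Z^k\phi$ over $\D_{u_1}^{-u_2}$, reduce the error integrals to the commutator bound of Lemma \ref{com_est}, and close by an induction that, at top order, passes through the $u_+$-weighted flux $E[u_+^{-\zeta(Z^2)}D_Z^2\phi](\H_{u_1}^{-u_2})$ (which, since $u\equiv u_1$ on $\H_{u_1}^{-u_2}$, is the same object as $(u_1)_+^{-2\zeta(Z^2)}E[D_Z^2\phi](\H_{u_1}^{-u_2})$). That is the paper's argument.

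However, the stated direction of the intra-level induction on $\zeta$ is backwards, and as written the argument would not close. Lemma \ref{com_est} with $k=1$ and $Z=\Omega$ (so $\zeta(Z)=0$) produces a bulk term $\iint u_+^{2\zeta(Z)}|D^{\le 1}\phi|^2 = \iint(|D\phi|^2+|\phi|^2)$, which requires the flux for $D_\p\phi$ at $\zeta=-1$ to have been obtained first; for $Z=\p$ ($\zeta=-1$) the factor $(1+\zeta(Z))=0$ kills the $D_\Omega\phi$ term and the remaining $\iint u_+^{-2}|D^{\le1}\phi|^2$ closes using only $k=0$ plus Gronwall. So the sweep must go from the most negative signature upward, exactly as the paper's own prose states (``the error integral is simpler and the fluxes decay better when $\zeta$ is smaller''). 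Your final sentence of that paragraph --- ``strictly smaller value of $\zeta$ \dots already treated earlier in the $\zeta$-sweep'' --- is consistent only with the smallest-first order, so you appear to have the right structure in mind and simply wrote the direction inverted; but as literally stated the $k=1$ step would be circular.

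A second, related imprecision: the $k=2$ case in the paper is not a continuation of the $\zeta$-sweep (and the obstacle is not localized at $\zeta(Z^2)=-2$, nor attributable specifically to $\hat F$). The $k=2$ right-hand side of Lemma \ref{com_est} contains terms $u_+^{2\zeta(Z_1)}|D^{\le1}D_{Z_2}\phi|^2$ that still involve second-order derivatives with various signatures, creating a cycle among level-2 fluxes that a one-directional signature sweep cannot resolve. The paper's resolution is to sum the weighted fluxes $E[u_+^{-\zeta(XY)}D_XD_Y\phi](\H_{u_1}^{-u_2})$ over all $X,Y\in\Pp$ simultaneously, after which the extra $u_+^{-2}$ in the remaining bulk terms lets a single Gronwall close the whole level-2 system at once. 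Describing this as ``a multiplier weighted by $u_+^{-2\zeta(Z^2)}$'' is misleading: the multiplier is the same; the weight is pulled through the flux because $u$ is constant on the outgoing cone, and the key point is the simultaneous summation over $\Pp^2$, which your write-up omits.
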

\begin{remark}
 Let $C\ge 1$ be the sum of all the implicit constants in ``$\les $" in  Proposition \ref{prop:BT:imp:Max} and Proposition \ref{11.4.3.18}. By choosing
$ C \E_{2,\ga_0}\le\frac{\dn^2}{4},\, C \dn^2 \le \frac{1}{4}$,  the same inequalities in (\ref{eq:BT:PWE:scal})-(\ref{eq:BT:energy}) hold with $\dn^2$ replaced by $\f12\dn^2$ on the right hand side. Then by using the continuity argument, we can  conclude the result of Theorem \ref{ext_stb}.
\end{remark}
The rest of this section is devoted to proving the above proposition. Compared with the proof of Proposition \ref{prop:BT:imp:Max}, the proof is more delicate due to the fact that the covariant derivative  does not commute with the Klein-Gordon operator $(\Box_A -1)$. The commutation  with  $D_Z^k$, $k\le 2$  will generate nonlinearities involving the full Maxwell field $F$.
These nonlinear error terms are controlled in the following crucial result.
\begin{lemma}\label{com_est}
Under the bootstrap assumptions \eqref{eq:BT:PWE:scal}-\eqref{eq:BT:energy}, there hold for $Z\in \Pp$
\begin{equation}\label{11.3.1.18}
\begin{split}
&\iint_{\D_{u_1}^{-u_2}} r^{1+\ep} u_+^{1+\ep} |(\Box_A -1)D_Z\phi|^2\\
 &\les (\E_{1,\ga_0}+\dn^4) (u_1)_+^{2\zeta(Z)+1-\ga_0}+\iint_{\D_{u_1}^{-u_2}}  u_+^{2\zeta(Z)}|D^{\leq 1}  \phi|^2+u_+^{2\zeta(Z)-2}(1+\zeta(Z))|D_{\Omega} \phi|^2,
\end{split}
\end{equation}
 as well as for $Z^2=Z_1 Z_2\in \Pp^2$
\begin{align}
\label{11.3.1.18:2}
&\iint_{\D_{u_1}^{-u_2}} r^{1+\ep} u_+^{1+\ep} |(\Box_A -1)D_Z^2\phi|^2\\
\notag
 &\les (\E_{2,\ga_0}+\dn^4) (u_1)_+^{2\zeta(Z^2)+1-\ga_0}+\iint_{\D_{u_1}^{-u_2}} u_+^{2\zeta(Z^2)}|D_{\Omega}^{\leq 1}D^{\leq 1}  \phi|^2+u_+^{2\zeta(Z^2)-2}|D_{\Omega}^{\leq 2} \phi|^2\\
 \notag
 &\quad +\iint_{\D_{u_1}^{-u_2}}  \sum\limits_{Z_1\sqcup Z_2=Z^2}\left( u_+^{2\zeta(Z_1)}|D^{\leq 1}  D_{Z_2}\phi|^2+u_+^{2\zeta(Z_1)-2}
 |D_{\Omega} D_{Z_2}\phi|^2\right)
\end{align}
for all $-v_*\le u_2<u_1\le-\frac{R}{2}$.
\end{lemma}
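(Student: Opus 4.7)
The plan is to expand $(\Box_A-1)D_Z^k\phi$ via Lemma~\ref{lem::Commutator1}, reducing both left-hand sides to weighted spacetime integrals of commutator expressions $Q(F,\phi,Z)$ and their second-order analogs. I then decompose $F=\hat F+\ck F+q_0 r^{-2}\chi\,dt\wedge dr$ following \eqref{12.31.2.18} and handle the linear part $\hat F$, the chargeless perturbation $\ck F$, and the charge piece separately.

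For \eqref{11.3.1.18} the expansion gives schematically $|(\Box_A-1)D_Z\phi|\les |F_{Z\mu}D^\mu\phi|+|J[F]_Z|\,|\phi|+\cdots$, and I would exploit the null decomposition
\[
F_{Z\mu}D^\mu\phi=-\tfrac12 F_{ZL}D_\Lb\phi-\tfrac12 F_{Z\Lb}D_L\phi+F_{Ze_A}\sD^A\phi
\]
to pair each null component of $F$ with the $\phi$-derivative carrying the matching weight. For the $\hat F$ contribution I would place $|D\phi|$ pointwise via \eqref{eq:decay:scal} and put $|\hat F|$ in $L^2$ using the weighted flux and bulk estimates \eqref{10.30.4.18}--\eqref{11.02.2.18}; this produces the $\E_{1,\ga_0}(u_1)_+^{2\zeta(Z)+1-\ga_0}$ part of the RHS (the implicit constant absorbs $\M_0$). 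For $\ck F$ I would combine the pointwise decay \eqref{eq:decay:Max:checkF} with the bootstrap $\phi$-fluxes \eqref{eq:BT:PWE:scal}, producing the $\dn^4$ contribution; the charge piece contributes only through its $\rho$-component $\sim q_0 r^{-2}$, yielding $O(\dn^4)$ via $|q_0|\les\dn^2$. Pairings that do not close in a single step (e.g.\ when neither factor has enough weight to be placed pointwise while the other sits in $L^2$) reduce to the integrals $\iint u_+^{2\zeta(Z)}|D^{\le 1}\phi|^2$ and $\iint u_+^{2\zeta(Z)-2}|D_\Omega\phi|^2$ appearing in the RHS of \eqref{11.3.1.18}, which are kept as remainders (to be absorbed through Gronwall when this lemma is applied).

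For \eqref{11.3.1.18:2} I would iterate the commutator formula from Lemma~\ref{lem::Commutator1} to write $(\Box_A-1)D_Z^2\phi$ as a sum of $Q(F,D_{Z_2}\phi,Z_1)$, $Q(\L_{Z_1}F,\phi,Z_2)$, $F_{Z_1\mu}F^{\mu}{}_{Z_2}\phi$, and manifestly lower-order pieces. Each $Q(F,D_{Z_2}\phi,Z_1)$-term is reduced to the $k=1$ analysis with $\phi$ replaced by $D_{Z_2}\phi$ and produces the $\sum_{Z_1\sqcup Z_2=Z^2}$ remainder in the RHS. The genuinely new $Q(\L_{Z_1}F,\phi,Z_2)$-terms I would split as $\L_{Z_1}F=\L_{Z_1}\hat F+\L_{Z_1}\ck F$: the linear part is controlled using the higher-order weighted estimates \eqref{10.30.4.18} and the $L^4$ trace bound \eqref{11.02.2.18} combined with the pointwise decay of $\phi$; the perturbation part uses \eqref{eq:BT:energy}; the quartic piece is immediate from combined pointwise decay. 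The main obstacle I anticipate is the weight bookkeeping for the slowest-decaying component $\hat{\underline\a}$ appearing in \eqref{10.31.2.18}: when $\hat{\underline\a}$ is paired with $D_L\phi$, or when a translation $Z$ forces a null-frame decomposition of $Z$ with extra $u_+$- or $r$-factors, one must verify that the combined weights close with exactly the signature $\zeta(Z)$ (respectively $\zeta(Z^2)$) stated on the RHS. Fully exploiting the antisymmetry in $F_{Z\mu}D^\mu\phi$ and the null condition will be essential to avoid a logarithmic loss in $v$ and to realise the precise remainder weights recorded in the statement.
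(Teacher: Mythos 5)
Your plan diverges from the paper at a decisive point: the choice of which factor to place pointwise when bounding the $\hat F$ contributions. You propose to put $D\phi$ pointwise via \eqref{eq:decay:scal} and place $\hat F$ in $L^2$ via the flux estimates \eqref{10.30.4.18}--\eqref{11.02.2.18}. The paper does the opposite: it substitutes the \emph{pointwise} decay for $\hat F$ from \eqref{10.31.2.18} and leaves $\phi$ and its derivatives in $L^2$. That choice is what makes the remainder terms $\iint u_+^{2\zeta(Z)}|D^{\le 1}\phi|^2$ and $\iint u_+^{2\zeta(Z)-2}(1+\zeta(Z))|D_\Omega\phi|^2$ appear on the right-hand side of \eqref{11.3.1.18} --- they are the residue of placing $\hat F$ pointwise, and are exactly what is later absorbed by Gronwall in Proposition~\ref{11.4.3.18}. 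Your allocation would produce a bound $\dn^2\M_0^2(u_1)_+^{\cdots}$ with no such remainders, and for $k=1$ it can in fact be arranged to close (the $r^{-\frac52+\ep}$ factor in \eqref{eq:decay:scal} compensates the weak $r$-decay of $\hat{\ab}$ so that \eqref{11.01.4.18} is not even needed); but it is a genuinely different computation, not the paper's.

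The gap appears at $k=2$ in the term $Q(\L_{Z_1}\hat F,\phi,Z_2)$, where your proposed pairing fails. For $\L_{Z_1}\hat F$ there is no pointwise estimate available (that would require three derivatives of $\hat F$ in $L^2$, beyond the $k\le 2$ regularity), only the $L^4$-sphere bound \eqref{11.02.2.18} and the flux estimates. You propose to combine \eqref{11.02.2.18} with the pointwise decay of $\phi$, but the weight arithmetic does not close: after inserting $\|\a[\L_Y\hat F]\|_{L^4(S_{u,v})}^2 \les \M_0^2 u_+^{2\zeta(Y)-1}r^{-1-\ga_0}$ and $|D_\Lb\phi|^2\les\dn^2 r^{-\frac52+\ep}u_+^{-\frac32-\ga_0}$ into $r^{1+\ep}u_+^{1+\ep}r^{2\zeta(X)+2}|\a[\L_Y\hat F]|^2|D\phi|^2$, one is left with an integrand of order $r^{2\zeta(X)+\frac12+2\ep-\ga_0}u_+^{\cdots}$, and for $\zeta(X)=0$ the exponent $\frac12+2\ep-\ga_0$ is in $(-1,0)$ whenever $1<\ga_0<\frac32+2\ep$, so the $v$-integral diverges. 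The paper's resolution is to put $D\phi$ in $L^4(S_{u,v})$ \emph{as well}, gaining the extra $r^{-1}$ from the Sobolev trace on spheres at the cost of an angular derivative; the result is precisely the $L^2$ remainder $\iint u_+{}^{2\zeta(Z^2)}|D_{\Omega}^{\le 1}D^{\le 1}\phi|^2$ in \eqref{11.3.1.18:2}. In the same spirit, your reduction of $Q(F,D_{Z_2}\phi,Z_1)$ ``to the $k=1$ analysis with $\phi$ replaced by $D_{Z_2}\phi$'' cannot be carried out by your $k=1$ strategy (pointwise $\phi$), because \eqref{eq:decay:scal} provides no pointwise control of $DD_{Z_2}\phi$; here one is again forced to leave $D^{\le 1}D_{Z_2}\phi$ in $L^2$. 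So the paper's consistent choice --- pointwise (or $L^4$) for the large linear Maxwell field, $L^2$ for the scalar derivatives --- is not optional at top order; it is what makes the $v$-integral converge for the entire range $1<\ga_0<2$ and yields the remainder structure stated in the lemma.
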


Similar to the treatment in Section \ref{ext_maxwell}, we focus on the nonlinear terms involved with $\hat F$ in the decomposition for the full Maxwell field $F$, with the remaining terms  controlled in exactly the same way as in \cite[Lemma 10 in Section 3.5]{KWY}. Indeed, by substituting the bootstrap assumptions (\ref{eq:BT:PWE:scal})-(\ref{eq:BT:energy}) and (\ref{10.30.5.18}) to \cite[Lemma 10 in Section 3.5]{KWY}, these remaining terms can be directly bounded by $(\E_{2,\ga_0}+\dn^4)(u_1)_+^{2\zeta(Z^k)+1-\ga_0}$ due to the smallness of the charge $q_0$.
\begin{proof}
According to Lemma \ref{lem::Commutator1}, we note that $(\Box_A-1)D_Z\phi=Q(F, \phi, Z)$ and $(\Box_A-1)D_{Z_1}D_{Z_2}\phi$ can be written as the sum of the same quadratic forms $Q(G, f, Z)$ and cubic terms, with the cubic term appearing only for the second order estimate (\ref{11.3.1.18:2}). The quadratic terms can be classified into two categories. Let us now focus on the first  type, which is
$$
\sum_{\zeta(Z^l X)=\zeta(Z^k), X\in \Pp, l< k}|Q(F, \phi_{, l}, X)|
$$
where
 $\phi_{, 0}=\phi$; and when $l=1$,  $\phi_{,l}$ is  $D_Z \phi$ with $Z^1=Z\in \{Z_1, Z_2\}$. Here for giving the above symbolic formula  we have used the fact that $\zeta([X, Y])=\zeta(XY)$ when $[X, Y]\neq 0$ for any $X, Y\in \Pp$. This quadratic term can be further decomposed into $Q(\hat{F}, \phi_{, l},  X)$ and $Q(F-\hat{F}, \phi_{, l}, X)$. The latter can be estimated in the same way as that in \cite[Lemma 10 in Section 3.5]{KWY}. It hence suffices to treat the former involving the large Maxwell field $\hat{F}$. In view of  (\ref{eq:Est4commu:1}), we have for any $X\in \Pp$
\begin{align*}
 |Q(\hat{F}, \phi_{,l}, X)|^2& \les r^{2\zeta(\tilde X)+2}(|\hat{\a}|^2|D \phi_{,l}|^2+|\ud{ \hat{F}}|^2|D_L \phi_{,l}|^2+|\hat{\si}|^2|\sl{D} \phi_{,l}|^2)\\
& \quad   +u_+^{2\zeta(X)+2}(|\hat{\rho}|^2|D_{\Lb}\phi_{,l}|^2 +|\hat{\ab}|^2|\sl{D} \phi_{,l}|^2)+r^{2\zeta(X)}| \hat{F}|^2|\phi_{,l}|^2,
\end{align*}
where $\ud {\hat{F}}$ represents all the components of the 2-form $\hat{F}$ except $\a[\hat{F}]$. Here we note that $\hat{F}$ verifies the linear Maxwell equation, hence the current terms in the last line of (\ref{eq:Est4commu:1}) vanish. Recall the following improved estimate for $D_L f$:
\begin{equation}\label{11.01.4.18}
r|D_L f|\les |D_{\Omega }f|+u_+|D f|,
\end{equation}
which will be used when $\zeta(X)=0$.
 Then by using the pointwise decay estimates for $\hat{F}$ in Proposition \ref{linear_ex}, in view of $1+2\ep<\ga_0<2$, we can estimate that
\begin{align*}
&\iint_{\D_{u_1}^{-u_2}} |Q(\hat{F}, \phi_{,l}, X)|^2 r^{1+\ep} u_+^{1+\ep}\\
&\les \iint_{\D_{u_1}^{-u_2}} u_+^{\ep} r^{2\zeta(X)+1+\ep}(r^{-\ga_0}|D \phi_{,l}|^2+u_+^{-\ga_0}|D_L \phi_{,l}|^2) +  r^{\ep-1} u_+^{2\zeta( X)+\ep-\ga_0}( u_+^2|\sl{D} \phi_{,l}|^2+|\phi_{,l}|^2) \\
&\les \iint_{\D_{u_1}^{-u_2}}  u_+^{2\zeta(X)}(|D  \phi_{,l}|^2+|\phi_{,l}|^2)+u_+^{2\zeta(X)-2}(1+\zeta(X))|D_{\Omega} \phi_{,l}|^2. 
\end{align*}
In particular this gives  \eqref{11.3.1.18}.  For $k=2$,  if $l=0$, then $X=\L_{Z_1} Z_2$ or $0$.
In this case, the bound on the right is as desired as in (\ref{11.3.1.18:2}).

 In view of (\ref{2.19.1.19}) in Lemma \ref{lem::Commutator1}, the second order estimate \eqref{11.3.1.18:2} involves the control of $Q(F, D_{Z_1}\phi, Z_2)$, $Q(F, D_{Z_2}\phi, Z_1)$ and $Q(F, \phi, \L_{Z_1} Z_2)$ as given above, the estimate of the second type of quadratic terms
\begin{equation}\label{2.19.2.19}
 Q(\L_{Z_1} F, \phi, Z_2)=Q(\L_{Z_1} \hat F, \phi, Z_2)+Q(\L_{Z_1} \tF, \phi, Z_2)
 \end{equation}
   and the cubic term in (\ref{2.19.1.19}). Note that $Q(\L_{Z_1} \tF, \phi, Z_2)$ can be similarly treated as in \cite[Lemma 10 in Section 3.5]{KWY}, except that the current terms in the last line of (\ref{eq:Est4commu:1}) involve the full Maxwell field as a factor. Hence, in view of the decomposition  (\ref{2.19.2.19}) and (\ref{2.19.1.19}), besides the leading term $\sum_{X\sqcup Y=Z^2}|Q(\L_Y \hat F, \phi, X)|$, we will need to bound the two types of cubic terms: (i) $\sum_{X\sqcup Y=Z^2}|F_{X\mu}F^{\mu}_{\ Y}\phi|$; (ii) the current terms in the last line of (\ref{eq:Est4commu:1}) for $\sum_{X\sqcup Y=Z^2}|Q(\L_Y \tF, \phi, X)|$.

For (i), we recall from \cite[Section 2.4]{KWY} that
\begin{equation*}
|F_{Y\mu} F_{\ X}^\mu|\les u_+^{\zeta(XY)}\left(u_+^2 |\ab|^2+r^2(|\sigma|^2+|\a|^2+|\rho|^2+|\a||\ab|)\right).
\end{equation*}
By using the pointwise decay properties for $\hat{F}$ in Proposition \ref{linear_ex} and the decay estimates for $\ck{F}$ in Proposition \ref{prop:decay:scal&Max}, we derive that
\begin{equation*}
|F_{Y\mu} F_X^\mu|^2\les u_+^{2\zeta(XY)} r^{-\ga_0} u_+^{-4+\ga_0}.
\end{equation*}
In particular we can bound
\begin{equation}\label{10.31.3.18}
\iint_{\D_{u_1}^{-u_2}} r^{1+\ep} u_+^{1+\ep} |F_{X\mu} \tensor{F}{^\mu_Y}|^2 |\phi|^2\les \iint_{\D_{u_1}^{-u_2}} u_+^{2\zeta(Z^2)-2+2\ep} |\phi|^2,
\end{equation}
where $\zeta(XY)=\zeta(Z^2)$ since $X\sqcup Y=Z^2$.
Substituting the bootstrap assumption on the energy flux of $\phi$ will not lead to any improvement as there is no additional smallness. The term is incorporated in the lower order terms in the bound of (\ref{11.3.1.18:2}).

For (ii), since $J[\L_Y F]=\L_Y J[\phi]$,  we  first treat the current terms in the third line of (\ref{eq:Est4commu:1}) with the help of  the formula for $\L_Y J_\mu$ in Lemma \ref{lem::Commutator1}.
Recall from \cite[Corollary 1 in Section 3.3]{KWY} that for  $\sI_Y[F]=r(|F_{L Y}|+|F_{AY}|)+u_+|F_{\Lb Y}|$, by using (\ref{10.31.2.18}), (\ref{10.30.5.18}) and Proposition \ref{prop:decay:scal&Max}, there holds
\begin{equation*}
\sI_Y^2[F] \les (1+ r^{2-\ga_0}u_+^{-1})u_+^{2\zeta(Y)},
\end{equation*}
which implies
\begin{align*}
\iint_{\D_{u_1}^{-u_2}}& u_+^{1+\ep+2\zeta(X)} r^{1+\ep} \sI_Y^2[F] |\phi|^4\\
&\les \iint_{\D_{u_1}^{-u_2}} \dn^2 (1+ r^{2-\ga_0}u_+^{-1} ) u_+^{2\zeta(XY)-\ga_0+1+\ep}r^{-3+1+\ep}|\phi|^2\\
&\les \dn^2 \iint_{\D_{u_1}^{-u_2}} u_+^{2\zeta(XY)}|\phi|^2.
\end{align*}
Thus, also by using Proposition \ref{prop:decay:scal&Max} we can complete the estimate by
\begin{align}
&\sum_{X\sqcup Y=Z^2}\iint_{\D_{u_1}^{-u_2}}r^{1+\ep}u_+^{1+\ep} |\phi|^2 \big(u_+^{2\zeta(X)+2}|\L_Y J_\Lb|^2+r^{2\zeta(X)+2}(|\L_Y J_L|^2+|\L_Y \slashed{J}|^2)\big)\nn\\
&\les \dn^2 \sum_{X\sqcup Y=Z^2}\iint_{\D_{u_1}^{-u_2}} r^{2\ep-\frac{5}{2}}u_+^{2\zeta(X)-2\ga_0-\f12+\ep}(|D_Y \phi|^2+ u_+ |D D_Y \phi|^2)+u_+^{2\zeta(Z^2)} |\phi|^2\nn\\
&\les\dn^2 \iint_{\D_{u_1}^{-u_2}}\sum_{X\sqcup Y=Z^2}u_+^{2\zeta(X)-2}|D\rp{\le 1} D_Y\phi|^2+u_+^{2\zeta(Z^2)} |\phi|^2\label{2.13.1.19}
\end{align}
with  the bound incorporated into the right hand side of  (\ref{11.3.1.18:2}).

Next, we consider the quadratic form $Q(\L_Y \hat F, \phi, X)$. Since the Maxwell field $\L_Y\hat{F}$ is large, substituting the pointwise decay estimates for the scalar field in Proposition \ref{prop:decay:scal&Max} directly will not give the desired result in (\ref{11.3.1.18:2}).  
As $\L_Y \hat{F}$ verifies the linear Maxwell equation, in view of  \eqref{eq:Est4commu:1} and using (\ref{11.01.4.18}) to bound  $D_L\phi$, we can derive that
\begin{align*}
 |Q(\L_Y\hat{F}, \phi, X)|^2& \les r^{2\zeta(X)+2}(|\a[\L_Y\hat F]|^2|D \phi|^2+|\L_Y  \hat{F}|^2|D_L \phi|^2+|\si[\L_Y\hat F]|^2|\sl{D} \phi|^2)\\
& \quad   +u_+^{2\zeta(X)+2}(|\rho[\L_Y\hat F]|^2|D_{\Lb}\phi|^2 +|\ab[\L_Y\hat F]|^2|\sl{D} \phi|^2)+r^{2\zeta(X)}| \L_Y\hat{F}|^2|\phi|^2\\
& \les u_+^{2\zeta(X)}(r_+^{2}|\L_Y\hat{\a}|^2+u_+^{2}|\L_Y\hat{F}|^2)|D \phi|^2+
u_+^{2\zeta(X)}|\L_Y  \hat{F}|^2|D_{\Omega}^{\leq 1} \phi|^2.
\end{align*}
Then by using the following Sobolev embedding on the 2-sphere  $S_{u,v}$
\begin{equation*}
r^\f12 \|f \|_{L^4(S_{u,v})}\les \|D\rp{\le 1}_\Omega f\|_{L^2(S_{u,v})}
\end{equation*}
as well as the $L^4$ estimates for the linear Maxwell fields in \eqref{11.02.2.18} and H\"{o}lder's inequality, we can estimate that
\begin{align*}
  &u_+^{-2\zeta(X)}\|Q(\L_Y\hat{F}, \phi, X)\|_{L^2(S_{u, v})}^2\\
  &\les  \|r_+|\a[\L_Y\hat F]|+u_+|\L_Y\hat{F}|\|_{L^4(S_{u, v})}^2\|D\phi\|_{L^4(S_{u, v})}^2 +
  \|\L_Y  \hat{F}\|^2_{L^4(S_{u, v})}\|D_{\Omega}^{\leq 1} \phi\|^2_{L^4(S_{u, v})}\\
  &\les u_+^{2\zeta(Y)-1}r^{-\ga_0}\|D_{\Omega}^{\leq 1}D\phi\|_{L^2(S_{u, v})}^2+
   u_+^{2\zeta(Y)-1-\ga_0}r^{-2}\|D_{\Omega}^{\leq 2}\phi\|_{L^4(S_{u, v})}^2.
\end{align*}
 Therefore
 \begin{align*}
   &\iint_{\D_{u_1}^{-u_2}} r^{1+\ep} u_+^{1+\ep} |Q(\L_Y \hat{F}, \phi, X)|^2\les   \iint_{\D_{u_1}^{-u_2}} u_+^{2\zeta(XY)+2\ep+1-\ga_0} (|D_{\Omega}^{\leq 1}D\phi|^2+  u_+^{-2} |D_{\Omega}^{\leq 2}\phi|^2).
 \end{align*}
 Here again we note that $1+2\ep<\ga_0<2$. Combining  the above estimate with estimates (\ref{10.31.3.18}) and (\ref{2.13.1.19}),  we conclude that estimate \eqref{11.3.1.18:2} holds.
\end{proof}

\begin{proof}[ Proof of Proposition \ref{11.4.3.18}]
Applying  the energy identity in \cite[Lemma 2(2) in Section 2.2]{KWY} to $(D_Z^k\phi, 0)$ with $k\le 2$, we have the energy estimate
\begin{equation*}\label{11.8.6.18}
\begin{split}
&E[D_Z^k \phi](\H_{u_1}^{-u_2})+E[D_Z^k\phi](\Hb^{u_1}_{-u_2})\\
&\les \E_{k, \ga_0}(u_1)_+^{-\ga_0+2\zeta(Z^k)} +\iint_{\D_{u_1}^{-u_2}}\big( |F_{0\mu} J[D^k_Z \phi]^\mu| +|(\Box_A-1)D_Z^k \phi||D_0 D_Z^k \phi|\big).
\end{split}
\end{equation*}
Now by using the pointwise decay estimates for the Maxwell field in Proposition \ref{linear_ex} and Proposition \ref{prop:decay:scal&Max}, we can estimate that
\begin{align*}
  &\iint_{\D_{u_1}^{-u_2}}\big( |F_{0\mu} J[D^k_Z \phi]^\mu| +|(\Box_A-1)D_Z^k \phi||D_0 D_Z^k \phi|\big)\\
  &\les \iint_{\D_{u_1}^{-u_2}}  (r^{-2}+r^{-1}u_+^{-\frac{1+\ga_0}{2}})|D_Z^k\phi||DD_Z^k\phi| +|(\Box_A-1)D_Z^k \phi||D D_Z^k \phi| \\
  &\les \iint_{\D_{u_1}^{-u_2}}  (r^{-3+\ep}+r^{\ep-1}u_+^{-1-\ga_0})|D_Z^k\phi|^2 +r^{1+\ep}|(\Box_A-1)D_Z^k \phi|^2 +r^{-1-\ep}|DD_Z^k\phi|^2.
\end{align*}
In particular we conclude that
\begin{align*}
  E[D_Z^k \phi](\H_{u_1}^{-u_2})+E[D_Z^k\phi]&(\Hb^{u_1}_{-u_2})\les  \E_{k, \ga_0}(u_1)_+^{-\ga_0+2\zeta(Z^k)} +\iint_{\D_{u_1}^{-u_2}}   r^{1+\ep}|(\Box_A-1)D_Z^k \phi|^2 \\ &  +\int_{u_2}^{u_1}u_+^{-1-\ep}E[D_Z^k\phi](\H_{u}^{-u_2})du+\int_{-u_1}^{-u_2}r^{-1-\ep}E[D_Z^k\phi](\Hb_{v}^{u_1})dv.
\end{align*}
By using Gronwall's inequality, we derive that
\begin{equation}
\label{1.27.2.18}
  E[D_Z^k \phi](\H_{u_1}^{-u_2})+E[D_Z^k\phi](\Hb^{u_1}_{-u_2})\les  \E_{k, \ga_0}(u_1)_+^{-\ga_0+2\zeta(Z^k)} +\iint_{\D_{u_1}^{-u_2}}   r^{1+\ep}|(\Box_A-1)D_Z^k \phi|^2 .
\end{equation}
In particular when $k=0$, the energy decay estimate \eqref{eq:BT:imp:scal:energy} holds for $k=0$ as $\Box_A\phi=\phi$.

Next we consider the weighted energy bounds in \eqref{eq:BT:imp:scal:PWE}.
Recall from \cite[Section 2.2 and 3.5]{KWY} the following weighted energy estimate
\begin{align*}
  &\int_{\H_{u_1}^{-u_2}}r|D_L D_Z^k\phi|^2 +\int_{\Hb_{-u_2}^{u_1} }r(|\slashed{D} D_Z^k\phi|^2+|D_Z^k\phi|^2)\\
  &\les \E_{k,\ga_0} (u_1)_+^{1-\ga_0+2\zeta(Z^k)}+\iint_{\D_{u_1}^{-u_2}} |D_Z^k \phi|^2+ r |F_{L \mu} J[D_Z^k\phi]^\mu|+|(\Box_A-1)D_Z^k \phi||D_L (r D_Z^k \phi)|.
\end{align*}
Note that (\ref{10.31.2.18}) and (\ref{eq:decay:Max:checkF})  imply that
\begin{equation*}
|\rho|\les r^{-2}+r^{-1-\frac{\ga_0}{2}}u_+^{-\f12}, \quad |\a|\les r^{-1-\frac{\ga_0}{2}}u_+^{-\f12}.
\end{equation*}
 In view of the above estimates and by using the Cauchy-Schwarz inequality we can show that,  with $ D^\sharp$ being either $D_L $ or $\sl{D}$,
\begin{align*}
  &\iint_{\D_{u_1}^{-u_2}} r |F_{L \mu} J[D_Z^k\phi]^\mu|+|(\Box_A-1)D_Z^k \phi||D_L (r D_Z^k \phi)| \\
  &\les \iint_{\D_{u_1}^{-u_2}}   r^{-1}|D_Z^k\phi||D_L D_Z^k\phi|+r^{-\frac{\ga_0}{2}}u_+^{-\f12}|D_Z^k\phi|| D^\sharp D_Z^k\phi|+|(\Box_A-1)D_Z^k \phi||D_L (r D_Z^k \phi)|\\
  &\les \iint_{\D_{u_1}^{-u_2}}   u_+^{-1-\ep}(|D_Z^k\phi|^2+|\slashed{D}D_Z^k\phi|^2+r|D_LD_Z^k\phi|^2)+ru_+^{1+\ep}|(\Box_A-1)D_Z^k \phi|^2.
\end{align*}
 The first two terms can be estimated by energy flux on $\H_u^{-u_2}$ with $u_2< u\le u_1$. The third term can be absorbed by using Gronwall's inequality. Therefore we conclude that
 \begin{align}
 \label{eq:BT:imp:scal:PWE:0}
   &\int_{\H_{u_1}^{-u_2}}r|D_L D_Z^k\phi|^2 +\int_{\Hb_{-u_2}^{u_1} }r(|\slashed{D} D_Z^k\phi|^2+|D_Z^k\phi|^2)\les \E_{k,\ga_0} (u_1)_+^{1-\ga_0+2\zeta(Z^k)}\nn\\
   &+(u_1)_+^{1-\ga_0}\sup_{u_2\le u\le u_1}u_+^{\ga_0}E[D_Z^k \phi](\H_u^{-u_2})+\iint_{\D_{u_1}^{-u_2}} r^{1+\ep}u_+^{1+\ep}|(\Box_A-1)D_Z^k \phi|^2.
 \end{align}
 In particular with $k=0$ in (\ref{eq:BT:imp:scal:PWE:0}),  since \eqref{eq:BT:imp:scal:energy} is already proved for $k=0$,  \eqref{eq:BT:imp:scal:PWE} holds for $k=0$ as $\Box_A\phi=\phi$. Therefore Proposition \ref{11.4.3.18} is proved for the case when $k=0$.

 Next we prove Proposition \ref{11.4.3.18} for the case when $k=1$ by using Lemma \ref{com_est}. In view of \eqref{1.27.2.18} and  \eqref{11.3.1.18}, we derive that
 \begin{align*}
  E[D \phi](\H_{u_1}^{-u_2})+E[D\phi](\Hb^{u_1}_{-u_2})\les ( \E_{1, \ga_0}+\dn^4)(u_1)_+^{-\ga_0-2} +(u_1)_+^{-1-\ep}\iint_{\D_{u_1}^{-u_2}}   u_+^{-2}|D^{\leq 1}\phi|^2,
\end{align*}
where we used the fact that $\zeta(\partial)=-1$.
 By using Gronwall's inequality and the energy decay estimate for $\phi$ in \eqref{eq:BT:imp:scal:energy} with $k=0$, we conclude that  \eqref{eq:BT:imp:scal:energy} holds for $k=1$ and $Z=\partial$.  \eqref{1.27.2.18} and  \eqref{11.3.1.18} also imply that
  \begin{align*}
  E[D_{\Omega} \phi](\H_{u_1}^{-u_2})+E[D_{\Omega}\phi](\Hb^{u_1}_{-u_2})&\les ( \E_{1, \ga_0}+\dn^4)(u_1)_+^{-\ga_0}\\
  &+(u_1)_+^{-1-\ep}\iint_{\D_{u_1}^{-u_2}}(|D^{\leq 1}\phi|^2+u_+^{-2}|D_{\Omega}\phi|^2).
\end{align*}
 Then by using  \eqref{eq:BT:imp:scal:energy} for $k=0$ and for $D\phi$ with $k=1$, together with Gronwall's inequality, we conclude that \eqref{eq:BT:imp:scal:energy} holds for $k\leq 1$, $Z\in \Pp$. As a byproduct, we also have the improved estimate for (\ref{11.3.1.18})
 \begin{align*}
   \iint_{\D_{u_1}^{-u_2}} r^{1+\ep} u_+^{1+\ep} |(\Box_A -1)D_Z\phi|^2\les (\E_{1,\ga_0}+\dn^4) (u_1)_+^{2\zeta(Z)+1-\ga_0}.
 \end{align*}
 This estimate together with \eqref{eq:BT:imp:scal:PWE:0} implies that  \eqref{eq:BT:imp:scal:PWE} holds for $k\leq 1$.


Finally we prove Proposition \ref{11.4.3.18} for $k=2$ based on Lemma \ref{com_est} and the proven estimates in \eqref{eq:BT:imp:scal:energy} and  \eqref{eq:BT:imp:scal:PWE} with $k\leq 1$. The idea is to consider the $|u|$-weighted energy fluxes along the outgoing null hypersurface. Since $u\equiv u_1$ along $\H_{u_1}^{-u_2}$, from \eqref{1.27.2.18} and  \eqref{11.3.1.18:2}, we obtain that
 \begin{align*}
  &\sum\limits_{X, Y\in \Pp} E[u_+^{-\zeta(XY)}D_X D_Y \phi](\H_{u_1}^{-u_2})\\
  &\les (\E_{2, \ga_0}+\dn^4)(u_1)_+^{-\ga_0} +\sum_{Z\in \Pp}\iint_{\D_{u_1}^{-u_2}} u_+^{-2}\big( |D_{\Omega}^2 \phi|^2 +   |u_+^{1-\zeta(Z)} D  D_{Z}\phi|^2+
 |u_+^{-\zeta(Z)}D_{\Omega} D_{Z}\phi|^2\big)\\
  &\les (\E_{2, \ga_0}+\dn^4)(u_1)_+^{-\ga_0} +\sum\limits_{X', Y'\in \Pp} \iint_{\D_{u_1}^{-u_2}} u_+^{-2} |u_+^{-\zeta(X'Y')}D_{X'} D_{Y'}\phi|^2,
 \end{align*}
 where the lower order terms have been estimated by using \eqref{eq:BT:imp:scal:energy} when $k\leq 1$.
 By using Gronwall's inequality, we  derive that
 \begin{align*}
   E[ D_X D_Y \phi](\H_{u_1}^{-u_2})\les (\E_{2, \ga_0}+\dn^4)(u_1)_+^{2\zeta(XY)-\ga_0},\quad \forall\, X, Y\in \Pp.
 \end{align*}
Since this estimate holds for any $-v_*\le u_2< u_1\le -\frac{R}{2}$, this  estimate improves  estimate (\ref{11.3.1.18:2}) to
 \begin{align*}
&\iint_{\D_{u_1}^{-u_2}} r^{1+\ep} u_+^{1+\ep} |(\Box_A -1)D_X D_Y\phi|^2
 \les (\E_{2,\ga_0}+\dn^4) (u_1)_+^{2\zeta(XY)+1-\ga_0}.
\end{align*}
By using the above estimate,  the estimates \eqref{eq:BT:imp:scal:energy} and \eqref{eq:BT:imp:scal:PWE} when $k=2$  follow from   \eqref{1.27.2.18} and \eqref{eq:BT:imp:scal:PWE:0} respectively. We finished the proof for Proposition \ref{11.4.3.18}.
\end{proof}

\appendix
\section{Commutators}
	
	In this section, we list expressions for the commutators that will be used. Calculations for these are standard and can be found in \cite{Yang2015, KWY}. For any complex scalar field $f$, closed 2-form $G$, and vector field $Z$, define the quadratic form
\begin{equation}\label{comm_1}
Q(G, f, Z)=2iZ^\nu G_{\mu\nu}D^\mu f + i\partial^\mu(Z^\nu G_{\mu\nu})f.
\end{equation}
We have the following commutator formulae.
	\begin{lemma}
\label{lem::Commutator1}
		Let $(\phi, F)$ be the solution of MKG equations, and let $X, Y, Z\in \Pp$. 
Then the following commutator formulae hold
\begin{align}
				(\Box_A-1)D_Z\phi&=[\Box_A, D_Z]\phi = Q(F, \phi, Z), \nn\\
    (\Box_A-1) D_X D_Y \phi&=Q(F, D_X\phi; Y)+Q(F, D_Y\phi, X)+Q(\L_X F,\phi, Y)\nn\\
    &+ Q(F, \phi, [X, Y])-2 F_{X\mu}F^{\mu}_{\  Y}\phi,\label{2.19.1.19}\\
				\partial^\mu(\L_Z^k \tF)_{\mu\nu} &= (\L_Z^k  J[\phi])_\nu.\nn
		\end{align}
Moreover we can expand the Lie derivative on the 1-form $J[\phi]$
\begin{equation*}
  \begin{split}
    \L_Z J_\mu[\phi] &= \Im(D_Z\phi\cdot \overline{D_\mu\phi}) + \Im(\phi\cdot\overline{D_\mu D_Z\phi}) - F_{Z\mu}|\phi|^2,\\
    \L_X\L_Y J_\mu[\phi] &= \Im(D_X D_Y\phi\cdot\overline{D_\mu \phi}) + \Im(D_Y\phi\cdot(\overline{D_\mu D_X\phi +iF_{X\mu}\phi})) + \Im(D_X\phi\cdot\overline{D_\mu D_Y\phi})\\
				& +\Im(\phi\cdot(\overline{D_\mu D_X D_Y \phi + i F_{X\mu}D_Y\phi})) - (\L_XF)_{Y\mu}|\phi|^2 - F_{[X,Y]\mu}|\phi|^2 - F_{Y\mu}D_X|\phi|^2.
  \end{split}
\end{equation*}
In particular, for any smooth vector field $X$, we have the bound with $k\le 2$
\begin{footnote}{We cancel out $Z^0$  when it appears in ``$\cdots\sqcup Z^0\cdots$", since $Z^0=\mbox{id}$. }\end{footnote}
\begin{align*}
  |X^\mu \L_Z^k J_\mu[\phi]|&\les \sum\limits_{l_1+l_2\leq k}|D_Z^{l_1}\phi||X^\mu D_\mu D_{Z}^{l_2}\phi|+|\phi|^2 |X^\mu F_{[Z^{l_1}, Z^{l_2}]\mu}|\\
  &+\sum\limits_{l_1+l_2\leq k-1, Z\sqcup Z^{l_1}\sqcup Z^{l_2}=Z^k} |X^\mu (\L_Z^{l_1}F)_{Z\mu}||\phi| |D_Z^{l_2}\phi|.
\end{align*}
\end{lemma}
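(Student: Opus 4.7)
The overall strategy is to reduce every identity to a direct algebraic computation resting on three ingredients: the curvature relation $[D_\mu, D_\nu] = iF_{\mu\nu}$ from \eqref{curv_2}, the system \eqref{eqn::mMKG}--\eqref{eq:eq4lFandNLF}, and the Killing property of every $Z \in \Pp$ on Minkowski space, which gives antisymmetry of $\partial^\mu Z^\nu$ together with $\partial^\alpha\partial_\alpha Z^\nu = 0$. For the first identity, the equality $(\Box_A - 1)D_Z\phi = [\Box_A, D_Z]\phi$ is immediate from $D_Z(\Box_A - 1)\phi = 0$. To evaluate $[\Box_A, D_Z]\phi = D^\mu[D_\mu, D_Z]\phi + [D^\mu, D_Z]D_\mu\phi$, I would expand $[D_\mu, D_Z]\phi = \partial_\mu Z^\nu D_\nu\phi + iZ^\nu F_{\mu\nu}\phi$ and observe that the collected derivative contribution $2\partial^\mu Z^\nu D_\mu D_\nu\phi$ collapses by antisymmetry of $\partial^\mu Z^\nu$ to a single curvature term $i\partial^\mu Z^\nu F_{\mu\nu}\phi$, which exactly cancels the corresponding term produced by differentiating $iZ^\nu F_{\mu\nu}\phi$. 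What remains is precisely $2iZ^\nu F_{\mu\nu}D^\mu\phi + i\partial^\mu(Z^\nu F_{\mu\nu})\phi = Q(F, \phi, Z)$.

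For the second-order commutator I would iterate the first identity: $(\Box_A - 1)D_X D_Y\phi = [\Box_A, D_X]D_Y\phi + D_X(\Box_A - 1)D_Y\phi = Q(F, D_Y\phi, X) + D_X Q(F, \phi, Y)$. The main obstacle is cleanly expanding $D_XQ(F, \phi, Y)$ so that the output regroups into the four stated pieces with the correct constants. Applying Leibniz to $Q(F, \phi, Y) = 2iY^\nu F_{\mu\nu}D^\mu\phi + i\partial^\mu(Y^\nu F_{\mu\nu})\phi$, the derivative falling on $\phi$ gives $Q(F, D_X\phi, Y)$; falling on the $Y^\nu F_{\mu\nu}$ factors it produces $Q(\L_X F, \phi, Y)$ after recognizing $X^\alpha\partial_\alpha F_{\mu\nu} + \partial_\mu X^\alpha F_{\alpha\nu} + \partial_\nu X^\alpha F_{\mu\alpha} = \L_X F_{\mu\nu}$ and also $Q(F, \phi, [X,Y])$ after using $[X,Y]^\nu = X^\alpha\partial_\alpha Y^\nu - Y^\alpha\partial_\alpha X^\nu$; and finally commuting $D_X$ past $D^\mu\phi$ yields the cubic remainder $-2F_{X\mu}F^\mu_{\ Y}\phi$ via one more application of $[D_X, D^\mu]\phi = -iF^\mu_{\ X}\phi$. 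The delicate bookkeeping is verifying that all the $\partial^\mu X^\nu$ and $\partial_\mu X^\alpha F_{\alpha\nu}$ pieces assemble into $\L_X F$ and $[X,Y]$ without residue; antisymmetry of $\partial^\mu X^\nu$ is what makes these cancellations exact.

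For $\partial^\mu(\L_Z^k\tF)_{\mu\nu} = (\L_Z^k J[\phi])_\nu$ I would induct on $k$: the base case is the Maxwell equation in \eqref{eq:eq4lFandNLF}, and the step uses that for any Killing $Z$ and any 2-form $G$, the divergence commutes with $\L_Z$, namely $\partial^\mu(\L_Z G)_{\mu\nu} = \L_Z(\partial^\mu G_{\mu\nu})$, which in turn follows from $\L_Z$ commuting with the exterior derivative and from $\partial_\mu Z^\mu = 0$. The Lie-derivative expansions of $J[\phi]_\mu = \Im(\phi \cdot \overline{D_\mu\phi})$ come from direct Leibniz: the already-derived identity $[D_\mu, D_Z]\phi = \partial_\mu Z^\nu D_\nu\phi + iZ^\nu F_{\mu\nu}\phi$ converts into $\L_Z(D_\mu\phi) = D_\mu D_Z\phi - \partial_\mu Z^\nu D_\nu\phi - iZ^\nu F_{\mu\nu}\phi$, and iterating this for $\L_X\L_Y J[\phi]$ generates the $iF_{X\mu}\phi$ and $iF_{X\mu}D_Y\phi$ corrections listed in the statement, along with the curvature terms $-(\L_XF)_{Y\mu}|\phi|^2$ and $-F_{[X,Y]\mu}|\phi|^2$ from differentiating the $F_{Z\mu}|\phi|^2$ piece of the first-order formula. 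The final pointwise estimate is then a termwise dominance: after contracting with an arbitrary $X^\mu$, each summand in $\L_Z^k J[\phi]$ is schematically $D_Z^{l_1}\phi \cdot X^\mu D_\mu D_Z^{l_2}\phi$ or $|\phi|^2 X^\mu (\L_Z^{l_1} F)_{Z,\mu}$ with $l_1 + l_2 \le k$, and the quartic term $F_{X\mu}\tensor{F}{^\mu_Y}\phi$ is absorbed into the $|\phi|^2|X^\mu F_{[Z^{l_1}, Z^{l_2}]\mu}|$ sum through the Jacobi-type identity on $\Pp$.
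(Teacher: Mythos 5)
The paper gives no proof of this lemma; it merely lists the formulae and cites \cite{Yang2015, KWY} for the calculations. Your direct algebraic route --- expanding through $[D_\mu,D_\nu]=iF_{\mu\nu}$, using the Killing antisymmetry of $\partial^\mu Z^\nu$ together with $\partial\partial Z=0$ for $Z\in\Pp$, iterating the first-order identity, and invoking $\L_Z$-invariance of the divergence --- is exactly the standard one and is sound in outline.

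Two of your bookkeeping claims are off, though. In the first-order computation, the curvature piece $i\partial^\mu Z^\nu F_{\mu\nu}\phi$ extracted from $2\partial^\mu Z^\nu D_\mu D_\nu\phi$ by antisymmetry does not \emph{cancel} the term coming from differentiating $iZ^\nu F_{\mu\nu}\phi$: it \emph{adds} to $iZ^\nu\partial^\mu F_{\mu\nu}\phi$ so as to assemble the full divergence $i\partial^\mu(Z^\nu F_{\mu\nu})\phi$ that appears in $Q(F,\phi,Z)$. (In the split $D^\mu[D_\mu,D_Z]\phi+[D^\mu,D_Z]D_\mu\phi$ the same derivative terms instead appear as the symmetrized pair $\partial^\mu Z^\nu(D_\mu D_\nu+D_\nu D_\mu)\phi$, which vanishes outright; your description mixes the two bookkeepings and asserts a cancellation that occurs in neither.) More substantively, your closing sentence conflates the two second-order identities: the cubic remainder $-2F_{X\mu}\tensor{F}{^\mu_Y}\phi$ appears only in the formula for $(\Box_A-1)D_XD_Y\phi$, not in $\L_X\L_Y J_\mu[\phi]$. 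The displayed Lie-derivative expansion of $J$ contains no $F\cdot F\cdot\phi$ term, so no ``Jacobi-type absorption'' is needed: the piece $-F_{[X,Y]\mu}|\phi|^2$ is already present and directly furnishes the $|\phi|^2|X^\mu F_{[Z^{l_1},Z^{l_2}]\mu}|$ summand of the stated bound, while $(\L_XF)_{Y\mu}|\phi|^2$ and the remaining cross-terms of schematic form $F\cdot\phi\cdot D\phi$ account for the third sum. The end formulae are unaffected, but as written your argument is not self-consistent at these two points.
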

\begin{lemma}\label{lem:Est4commu:1}
{\it
For any $2$-form $G=(\a, \rho, \si, \underline{\a})$    and any complex scalar field $\phi$, in the exterior region $\{t-t_0+R\leq r\}$ we have
\begin{equation}
\label{eq:Est4commu:1}
\begin{split}
|Q(G, \phi, Z)|&\les r^{\zeta(Z)+1}(|\a||D\phi|+|\ud G||D_L\phi|+|\si||\sl{D}\phi|) \\
& \quad\, +u_+^{\zeta(Z)+1}(|\rho||D_{\Lb}\phi|+|\ab||\sl{D}\phi|)\\
&\quad \,+\left(u_+^{\zeta(Z)+1}|J_{\Lb}|+r^{\zeta(Z)+1}(|J_L| +|\slashed{J}|)+r^{\zeta(Z)}|G|\right)|\phi|
\end{split}
\end{equation}
for all $Z\in \Pp$, where  $\ud G$ denotes all the components of $G$ except $\a[G]$.   Here  $(J_L, J_{\Lb}, \slashed{J})$ with $\slashed{J}=(J_{e_1}, J_{e_2})$  are  the  components of  the  null decomposition  of  the current $J_\nu=-\p^\mu G_{\mu\nu}$.
}
\end{lemma}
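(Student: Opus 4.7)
The plan is to decompose $Q(G,\phi,Z)$ into three kinematic pieces and then to exploit the null structure of the $2$-form $G$ together with the sharp size of the null components of $Z\in\Pp$. Using $J_\nu = -\p^\mu G_{\mu\nu}$ and the Leibniz rule, I would split
\[
Q(G, \phi, Z) \;=\; 2i\, Z^\nu G_{\mu\nu} D^\mu \phi \;+\; i(\p^\mu Z^\nu) G_{\mu\nu}\,\phi \;-\; i\, Z^\nu J_\nu\, \phi,
\]
and bound each of the three contributions separately.

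The essential input is a uniform bound on the null-frame components of every $Z\in\Pp$ in the exterior region: direct computation yields $|Z_L|\les u_+^{1+\zeta(Z)}$ and $|Z_\Lb|,|Z_A|\les r^{1+\zeta(Z)}$. The decisive point is the good bound on $Z_L$: rotations are tangent to the spheres $S_{u,v}$, so $(\Omega_{ij})_L=0$; for a boost one computes $(\Omega_{0i})_L=\omega^i(t-r)$, which has size $|t-r|\les u_+$ in the exterior; and translations carry the trivial bound $|(\p)_L|\les 1$. These $Z$-component estimates are precisely what produces the split between the $u_+^{1+\zeta(Z)}$ weights and the $r^{1+\zeta(Z)}$ weights in the statement.

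For the main quadratic piece I would expand $Z$ and $D^\mu$ in the null frame and invoke the null decomposition of $G$. The antisymmetry of $G$ then reorganises $Z^\nu G_{\mu\nu}D^\mu\phi$ into four types of bilinear pairs, schematically of the form
\[
\rho\,(Z_L D_\Lb\phi - Z_\Lb D_L\phi),\quad \a_A(Z_A D_\Lb\phi - Z_\Lb D_A\phi),\quad \ab_A(Z_A D_L\phi - Z_L D_A\phi),\quad \sigma\,\ep_{AB}Z_A D_B\phi,
\]
up to overall factors of $\tfrac{1}{2}$. This is the classical mMKG null structure: the bad component $\ab$ is paired either with $Z_L$ (producing the $u_+$ weight) or with $Z_A$ multiplied by the good derivative $D_L\phi$; symmetrically, the bad derivative $D_\Lb\phi$ pairs either with $\rho$ against $Z_L$ or with $\a$ against $Z_A$. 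Substituting the size bounds on the null components of $Z$ then yields exactly the $r^{\zeta(Z)+1}\bigl(|\a||D\phi|+|\ud G||D_L\phi|+|\sigma||\sD\phi|\bigr)+u_+^{\zeta(Z)+1}\bigl(|\rho||D_\Lb\phi|+|\ab||\sD\phi|\bigr)$ portion of the claim.

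For the two remaining pieces, the deformation term $(\p^\mu Z^\nu)G_{\mu\nu}$ vanishes identically when $Z$ is a translation, while for $Z=\Omega_{\a\b}$ a direct calculation gives $(\p^\mu Z^\nu)G_{\mu\nu}=2G_{\a\b}$, trivially bounded by $|G|$; this contributes the $r^{\zeta(Z)}|G||\phi|$ term (vacuously for $Z=\p$ with $\zeta=-1$, and with weight $r^0$ for $Z\in\Omega$). The current term is expanded in the same null frame as $Z^\nu J_\nu = -\tfrac{1}{2}Z_\Lb J_L -\tfrac{1}{2}Z_L J_\Lb + Z_A J_A$ and bounded, once more by the null-component estimates on $Z$, by $r^{\zeta(Z)+1}(|J_L|+|\sl{J}|) + u_+^{\zeta(Z)+1}|J_\Lb|$, which matches the remaining terms on the right. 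No substantive obstacle arises: this is essentially a careful bookkeeping computation, and the only place that requires attention is the uniform bound $|Z_L|\les u_+^{1+\zeta(Z)}$ across $\Pp$, which is exactly the null-structure input on which the entire exterior argument rests.
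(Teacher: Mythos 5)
Your proof is correct and is the standard computation: split $Q$ via Leibniz into the bilinear piece $Z^\nu G_{\mu\nu}D^\mu\phi$, the deformation piece $(\p^\mu Z^\nu)G_{\mu\nu}\phi$, and the current piece $Z^\nu J_\nu\,\phi$, expand everything in the null frame, and feed in the exterior bounds $|Z_L|\les u_+^{1+\zeta(Z)}$, $|Z_\Lb|,|Z_A|\les r^{1+\zeta(Z)}$ for $Z\in\Pp$ (with $|t-r|\les u_+$ the key exterior fact behind the $Z_L$ estimate). The paper does not prove this lemma itself but defers to \cite{Yang2015, KWY}, where the identical calculation appears; your derivation matches that argument.
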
	

\end{document}